\renewcommand{\[}{\begin{equation}\begin{aligned}}
\renewcommand{\]}{\end{aligned} \end{equation}}
\newtheorem{thm}{Theorem}
\newtheorem{prop}[thm]{Proposition}
\newtheorem{lemma}[thm]{Lemma}
\newtheorem{cor}[thm]{Corollary}
\theoremstyle{remark}
\newtheorem{remark}[thm]{Remark}
\theoremstyle{definition}
\newtheorem{definition}[thm]{Definition}
\numberwithin{equation}{section}
\numberwithin{thm}{section}
\author{G\'abor Sz\'ekelyhidi}
\address{Department of Mathematics, University of Notre Dame, Notre Dame, IN 46556}
\email{gszekely@nd.edu}
\title{Uniqueness of certain cylindrical tangent cones}
\date{}
\begin{document}

\begin{abstract}
We show that the cylindrical tangent cone $C\times \mathbf{R}$ for an area-minimizing hypersurface is unique, where $C$ is  the Simons cone $C_S= C(S^3\times S^3)$. Previously Simon~\cite{Simon94} proved a uniqueness result for cylindrical tangent cones that applies to a large class of cones $C$, however not to the Simons cone. The main new difficulty is that the cylindrical cone $C_S\times \mathbf{R}$ is not integrable, and we need to develop a suitable replacement for Simon's infinite dimensional \L{}ojasiewicz inequality~\cite{Simon83} in the setting of tangent cones with non-isolated singularities. 
\end{abstract}
\maketitle
\setcounter{tocdepth}{1}
\tableofcontents
\section{Introduction}
Let $M$ be an area-minimizing hypersurface in $\mathbf{R}^{n+1}$, with $0\in M$. Given any sequence $\lambda_i \to \infty$, it is well known that a subsequence of the blown up surfaces $\lambda_i M$ converges to a minimal cone $C$, called a tangent cone to $M$ at 0. A basic question is whether this tangent cone is unique, or if different sequences of rescalings can lead to different cones. Two landmark results, due to Allard-Almgren~\cite{AA81} and Simon~\cite{Simon83}, show that uniqueness holds if at least one tangent cone $C$ is smooth and multiplicity one away from the vertex 0. In particular since the singular set of an area-minimizing hypersurface has codimension at least 7 by Simons~\cite{Simons}, it follows that for $n\leq 7$ tangent cones are unique.

Beyond this, there is a large literature on uniqueness of tangent cones in various circumstances. For two dimensional area minimizing currents, and related objects, in arbitrary codimension see for instance \cite{Tay76,White83,CES,BK17,DSS}. For results on uniqueness of the tangent cone at almost every point, for the appropriate dimensional Hausdorff measure, see e.g. \cite{Simon93_1,Simon95,NV1,NV2}. 
The most relevant result for us on the uniqueness of specific tangent cones of minimal hypersurfaces is due to Simon~\cite{Simon94}. It is shown there that multiplicity one tangent cones of the form $C\times \mathbf{R}$ are unique for minimal cones $C$ which are smooth away from the vertex and satisfy certain conditions on their space of Jacobi fields (see Conditions (\ddag) in Section~\ref{sec:CRk}). This covers many interesting cases, such as the cones $C(S^p\times S^q)$ over a product of two spheres when $p+q \geq 7$, but unfortunately the only known 7-dimensional examples, $C(S^3\times S^3)$ and $C(S^2\times S^4)$, are not covered and the uniqueness of the corresponding cylindrical tangent cones has remained open. Our main result is the following.

\begin{thm}\label{thm:main} Let $M$ be an area-minimizing hypersurface in a neighborhood of $0\in \mathbf{R}^9$, that admits $C\times \mathbf{R}$ as a multiplicity one tangent cone at the origin, where $C = C(S^3\times S^3)$ is the Simons cone. Then $C\times \mathbf{R}$ is the unique tangent cone at 0.
\end{thm}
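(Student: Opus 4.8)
The plan is to follow the overall strategy of Simon's proofs of tangent cone uniqueness \cite{Simon83,Simon94}, replacing the one ingredient --- the infinite dimensional \L{}ojasiewicz inequality --- that fails because the cone $C\times\mathbf{R}$ is not integrable and has a one-dimensional singular set $\ell=\{0\}\times\mathbf{R}$. \emph{Reduction.} First I would show that for all small scales $\rho$, after rescaling $M$ by $\rho^{-1}$, the surface is, over a fixed annulus, the graph of a small $C^{2,\alpha}$ section $u$ of the normal bundle of $C\times\mathbf{R}$: away from $\ell$ this is Allard's theorem together with the smoothness of $C$ off its vertex; near $\ell$ one uses that $C$ realises the lowest singular dimension of an area-minimizing hypersurface, so that a second $\varepsilon$-regularity argument keeps $M$ graphical over $C\times\mathbf{R}$ with scale-invariant $C^{2,\alpha}$ bounds down to $\ell$, and confines the singular set of $M$ near $0$ to a narrow cone around $\ell$. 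With this in hand, uniqueness of the tangent cone at $0$ is equivalent to showing that the rescalings $\rho^{-1}M$ converge as $\rho\to 0$, i.e.\ form a Cauchy family in the scale-invariant $L^2$-distance on the fixed annulus, after optimally repositioning $C\times\mathbf{R}$ by an element of $SO(9)$ (and a small translation along $\ell$) at each scale.

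\emph{The evolution equation and variational set-up.} Passing to cylindrical coordinates $(\rho,\xi)$ adapted to $0\in\mathbf{R}^9$ --- with $\xi$ in the link $\widetilde\Sigma\subset S^8$ of $C\times\mathbf{R}$, a minimal hypersurface of $S^8$ with exactly two singular points, the poles $\pm e_9$ where $\ell$ meets $S^8$, each modelled on $C$ --- and setting $t=-\log\rho$, the graph function $u=u(t,\cdot)$ satisfies a quasilinear elliptic equation
\[
\partial_t^2 u+\beta\,\partial_t u+\mathcal{L}u=\mathcal{Q}(u,\partial u,\partial^2 u),
\]
where $\mathcal{L}=\Delta_{\widetilde\Sigma}+|A_{\widetilde\Sigma}|^2$ is the Jacobi operator of the link --- singular at the poles, where it is modelled on the Jacobi operator of $C$ --- and $\mathcal{Q}$ collects the quadratic and higher terms. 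The cone $C\times\mathbf{R}$ is the critical point $u\equiv 0$ of a renormalised area functional $\mathcal{F}$ on a suitable weighted H\"older space of sections; $\mathcal{F}$ is real-analytic with $\mathcal{F}''(0)=\mathcal{L}$, and almost-monotonicity of the density ratio supplies the conserved-type quantity governing the $t\to\infty$ limit.

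\emph{A \L{}ojasiewicz inequality for $C\times\mathbf{R}$.} The crux is to establish $|\mathcal{F}(u)|^{1-\theta}\le C\,\|\nabla\mathcal{F}(u)\|$ near $u\equiv 0$, modulo the $SO(9)$-action, for some $\theta\in(0,\tfrac12)$, and here Simon's reasoning must be replaced, for two reasons. First, $\mathcal{L}$ is singular at the poles of $\widetilde\Sigma$; its Fredholm and Schauder theory must be set up in H\"older spaces weighted by a power $r_\ell^{\delta}$ of the distance to $\ell$, with $\delta$ in a gap of the set of indicial roots $\tfrac12\big(-5\pm\sqrt{1+4\mu}\,\big)$ of the Simons cone, where $\mu$ runs over the eigenvalues $0,6,12,16,\dots$ of $-\Delta$ on $S^3(1/\sqrt2)\times S^3(1/\sqrt2)$. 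Second, $C\times\mathbf{R}$ is not integrable: besides the Jacobi fields of homogeneity $0$ and $1$ coming from translations and rotations of $\mathbf{R}^9$, there are Jacobi fields of $C$ of homogeneity strictly between $1$ and $2$ that are tangent to no family of minimal cones --- since $S^3(1/\sqrt2)\times S^3(1/\sqrt2)$ is infinitesimally rigid in $S^7$ modulo rotations --- so Simon's device of flowing along such a family to close up the Lyapunov--Schmidt reduction is unavailable. The plan is: (a) use the $SO(9)$-action and small translations along $\ell$ to strip off the homogeneity $0$ and $1$ parts of $u$, reducing to sections in a fixed closed complement; (b) develop the weighted Fredholm theory for $\mathcal{L}$ on that complement, using the strict minimising property of $C$ --- the Hardt--Simon foliation --- to control the slowly-decaying radial modes; and (c) prove the \L{}ojasiewicz inequality for the restricted functional by a compactness--contradiction argument, combined with a three-annulus (frequency monotonicity) lemma for $\mathcal{L}$ on the singular link $\widetilde\Sigma$ and the analyticity of $\mathcal{F}$. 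This (c) is the substitute for Simon's infinite dimensional \L{}ojasiewicz inequality in the non-integrable, non-isolated-singularity setting, and it is what turns the non-integrability into a worse, but still admissible, exponent $\theta$.

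\emph{Iteration and the main obstacle.} Feeding the \L{}ojasiewicz inequality into the evolution equation, as in \cite{Simon83,Simon94}, yields $-\tfrac{d}{dt}\mathcal{F}(u(t))\gtrsim\|\partial_t u(t)\|^2$ and $|\mathcal{F}(u(t))|^{1-\theta}\lesssim\|\partial_t u(t)\|$ up to errors controlled by the repositioning in (a), hence $\int^\infty\|\partial_t u(t)\|\,dt<\infty$; so $u(t,\cdot)$ converges as $t\to\infty$, the rescalings $\rho^{-1}M$ converge, and $C\times\mathbf{R}$ is the unique tangent cone. I expect essentially all of the work, and the only genuinely new mathematics, to be in the \L{}ojasiewicz inequality above: building the weighted analytic framework on the singular link $\widetilde\Sigma$ and proving the inequality in the non-integrable case. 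A secondary technical point is the regularity of $M$ near $\ell$ in the reduction step, which should follow from the known structure theory for the singular set of codimension-one area minimizers once it is used that $C$ realises the minimal singular dimension.
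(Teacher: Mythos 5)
Your plan is to resurrect Simon's \L{}ojasiewicz--Simon machinery on the singular link $\widetilde\Sigma$, proving the inequality by a Lyapunov--Schmidt reduction plus weighted Fredholm theory plus \emph{analyticity of $\mathcal{F}$} plus compactness. This is precisely the route the paper argues is blocked, and the blockage is not a technical inconvenience but the essential difficulty of the problem. The issue: Simon's Lyapunov--Schmidt argument reduces the \L{}ojasiewicz inequality for $\mathcal{F}$ to the classical finite-dimensional \L{}ojasiewicz inequality for the \emph{reduced} function, which holds because that reduced function is real-analytic. Here the link $\widetilde\Sigma$ is singular (two conical points modelled on $C$), the smoothings $\Sigma_\delta$ are produced by a singular perturbation gluing in rescaled copies of the Hardt--Simon smoothing $\pm\delta^{1/3}H$, and the reduced function $h(\delta)$ defined by $m(\Sigma_\delta)=h(\delta)\zeta$ has no a priori claim to analyticity in $\delta$. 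The paper is explicit about this (Remark~\ref{rem:S2S4}): if $h$ were non-zero but vanished to infinite order at $\delta=0$, the method would fail, and analyticity --- available to Simon --- cannot rule this out in the singular setting. You invoke "the analyticity of $\mathcal{F}$" as if it were a free ingredient; in this setting it is not, and you give no substitute.

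The paper does carry out a Lyapunov--Schmidt reduction (Sections~\ref{sec:Sigmadelta}), but then, instead of abstract analyticity, it computes $h(\delta)=c_6 b\,\delta^{4/3}+O(\delta^{4/3+\kappa})$ explicitly via a refined second-order expansion of $H$ as a graph over $C$ (Proposition~\ref{prop:refinedf}, using Davini's ODE description), showing $b\neq 0$. This non-degeneracy is what the \L{}ojasiewicz inequality would have delivered, and nothing in your proposal supplies it. Moreover, the paper does not then feed this into an evolution equation for the link section $u$; instead it constructs minimal comparison surfaces $T_\delta$ on large annuli by correcting the cone $V_\delta=C(\Sigma_\delta)$ with a logarithmic term $-7^{-1}h(\delta)\phi_\delta\ln\rho$ (Section~\ref{sec:Tdelta}), proves an $L^\infty$ non-concentration result adapted to the Hardt--Simon foliation (Proposition~\ref{prop:nonconcentration}, with the distance $D$ defined through trapping between leaves $\pm\epsilon H$), runs a three-annulus argument with $T_\delta$ as the reference surface (Proposition~\ref{prop:D3annulus}), and closes the loop with an area-drop estimate (Proposition~\ref{prop:areadrop}: when $M$ is much closer to $T_\delta$ than $|h(\delta)|$, the excess $\mathcal{A}(M)-\mathcal{A}(2M)\gtrsim|h(\delta)|^2$) combined with monotonicity. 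Your proposal contains none of these ingredients and, as written, would stall exactly at the \L{}ojasiewicz step.

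Two smaller but real inaccuracies. First, you locate the non-integrability in "Jacobi fields of $C$ of homogeneity strictly between $1$ and $2$." That is not what happens: the obstruction is the degree-one Jacobi field $\phi=y^3 r^{-2}-y$ on $C\times\mathbf{R}$ (Lemma~\ref{lem:zetaJacobi}), built from the $r^{-2}$ Jacobi field on $C$ and not arising from any rigid motion of $\mathbf{R}^9$. Second, your condition $\theta\in(0,\tfrac12)$ and the sentence "$C$ realises the lowest singular dimension of an area-minimizing hypersurface" suggest you expect the regularity/graphicality near $\ell$ to follow from general structure theory alone; in fact controlling $M$ near the singular ray is exactly what the non-concentration result is for, and it requires the barrier constructions of Section~\ref{sec:nonconc} built from $H$, not an $\varepsilon$-regularity bootstrap.
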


Most of the ideas used in the proof can also be applied to more general cones $C$ such as $C(S^2\times S^4)$. The missing piece in proving the corresponding uniqueness result is a refined asymptotic expansion as in Proposition~\ref{prop:refinedf} for the smooth minimal surface asymptotic to $C(S^2\times S^4)$ constructed by Hardt-Simon~\cite{HS85}. See Remark~\ref{rem:S2S4}. 

The basic tool in both \cite{AA81} and \cite{Simon83} is that if $C$ is a tangent cone of $M$ at 0, then on an annulus, say $B_1 \setminus B_{1/2}$ around 0, sufficiently large rescalings $\lambda_iM$ are perturbations of the corresponding smooth annulus in $C$. The decay of $M$ towards $C$ can then be analyzed by linearizing the minimal surface equation. The main difficulty when dealing with tangent cones such as $C\times \mathbf{R}$, with non-isolated singular sets, is that near the singular set the linearization is no longer a good model for the original problem, and one must show that this effect is negligible. That is, we need to show that $M$ cannot ``concentrate'' near the singular set. Such a non-concentration result, in the sense of $L^2$-distance, is at the heart of Simon's works~\cite{Simon94, Simon93} and others.

In this paper we introduce a different kind of non-concentration result for an $L^\infty-$distance adapted to the geometry of the smooth minimal hypersurface $H$ asymptotic to $C(S^3\times S^3)$ constructed by Bombieri-De Giorgi-Giusti~\cite{BDG} in this case, and Hardt-Simon~\cite{HS85} for more general cones. The basic idea is that if a hypersurface $M$ is sufficiently close to the cone $C\times\mathbf{R}$ in the unit ball $B_1\subset \mathbf{R}^8\times\mathbf{R}$, then it must be contained between the surfaces $\pm \epsilon H\times \mathbf{R}$ for some $\epsilon > 0$, and we define the distance $D_{C\times\mathbf{R}}(M; B_1)$ between $M$ and $C\times \mathbf{R}$ to be $\epsilon^3$ for the smallest such $\epsilon$. The reason for the power 3 is that $\epsilon H$ is asymptotically the graph of $\epsilon^3 r^{-2}$ over $C$. The idea of ``trapping'' a minimal hypersurface between leaves of the Hardt-Simon foliation has appeared in previous works, such as \cite{SS86, ES20}, to control the decay/growth of a minimal surface near a cone $C$. 

Our non-concentration result is the following, stated informally. Let $r$ denote the distance from the singular ray $\{0\}\times \mathbf{R}$, and suppose that on a region of the form $r > r_0$ the surface $M$ is much closer to $C\times\mathbf{R}$ than the distance $d =D_{C\times\mathbf{R}}(M; B_1)$, i.e. $M$ is concentrating near the singular ray. Then when we pass to a smaller ball $B_{1/2}$ the distance must drop significantly: $D_{C\times\mathbf{R}}(M; B_{1/2}) < \gamma d$ for a small $\gamma > 0$. By adjusting $r_0$ and how close $M$ is to the cone on the region $r > r_0$, the constant $\gamma$ can be taken as small as we like. See Proposition~\ref{prop:nonconc2} for a precise statement. Using this we give a new proof of the following result due to Simon~\cite{Simon94}, illustrating some of the ideas in this simpler setting. 
\begin{thm}\label{thm:CRk}
  Let $C$ be a minimal cone satisfying Conditions (\ddag) in Section~\ref{sec:CRk}. If $k\geq 1$ and $C\times \mathbf{R}^k$ is a multiplicity one tangent cone to a minimal hypersurface at 0, then this tangent cone is unique, and $M$ converges to it at a polynomial rate. 
\end{thm}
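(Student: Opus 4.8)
The plan is to reduce the uniqueness statement to a quantitative decay estimate for the $L^\infty$-distance $D_{C\times\mathbf{R}^k}$, together with a control on the optimal position of the singular plane across scales. Concretely, I want to prove that there are constants $\epsilon_0>0$, $\theta\in(0,1)$ and $\mu'>0$ with the following property: whenever, after translating and rotating so that $C\times\mathbf{R}^k$ is optimally fitted to $M$ in $B_\rho$, the distance $D_{C\times\mathbf{R}^k}(M;B_\rho)$ is below $\epsilon_0$, then passing to $B_{\theta\rho}$ decreases it by the factor $\theta^{\mu'}$, at the cost of updating the fitting motion by an amount comparable to $D_{C\times\mathbf{R}^k}(M;B_\rho)$. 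Granting this, one starts from the small scales $\rho_i=\lambda_i^{-1}$ along which $\lambda_iM$ is known to converge to $C\times\mathbf{R}^k$, iterates, and reads off both uniqueness of the tangent cone and the polynomial rate of convergence, as in the arguments of Allard-Almgren and Simon. To set things up, since $C\times\mathbf{R}^k$ is smooth and multiplicity one away from its singular plane $\{0\}\times\mathbf{R}^k$, the monotonicity formula and Allard's regularity theorem show that once $D_{C\times\mathbf{R}^k}(M;B_\rho)$ is small, the rescaling $\rho^{-1}M$ is, on $B_2\setminus B_{1/2}$ and outside a thin tube $\{r<\tau\}$ about the singular plane, a smooth graph over the fitted copy of $C\times\mathbf{R}^k$ of a function $u$ with small $C^{2,\alpha}$ norm, while inside the tube $\rho^{-1}M$ is trapped between two leaves $\pm\epsilon\,H\times\mathbf{R}^k$ of the Hardt-Simon foliation associated to $C$; this is precisely the geometric content recorded by $D_{C\times\mathbf{R}^k}$.

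The heart of the argument is the one-step decay estimate. Away from the singular plane, $u$ solves the Jacobi equation of $C\times\mathbf{R}^k$ up to a quadratic error, so one expands $u$ into homogeneous solutions $r_{\mathrm{cone}}^{\,\gamma}\phi$ of the linearized operator. Here Conditions (\ddag) furnish the decisive spectral gap: the only homogeneous Jacobi fields of $C\times\mathbf{R}^k$ whose homogeneity lies in the closed interval $[0,1]$ are those generated by the ambient isometries that preserve $\{0\}\times\mathbf{R}^k$ setwise -- translations moving the singular plane, and rotations tilting it or rotating $C$ -- and every other admissible homogeneity exceeds $1$ by a fixed amount $\mu>0$. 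Consequently, after subtracting from $u$ the infinitesimal motion that cancels these neutral modes, the remainder satisfies $\sup_{B_\theta\setminus B_\tau}|u|\le C\theta^{\mu}\sup_{B_2\setminus B_{1/2}}|u|$, which translates into $D_{C\times\mathbf{R}^k}(M;B_{\theta\rho})\le C\theta^{\mu}D_{C\times\mathbf{R}^k}(M;B_\rho)$, modulo the announced update of the fitting motion and a further error term coming from the part of $M$ inside the tube $\{r<\tau\rho\}$.

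That tube error is exactly what Proposition~\ref{prop:nonconc2} is designed to handle. It gives a dichotomy: either $M$ restricted to $\{r>\tau\rho\}$ is already comparable to $C\times\mathbf{R}^k$ at scale $\rho$, in which case the linear estimate above dominates and we obtain the decay directly; or $M$ is much flatter there than the overall distance $D_{C\times\mathbf{R}^k}(M;B_\rho)$, i.e.\ it concentrates near the singular plane, and then $D_{C\times\mathbf{R}^k}(M;B_{\rho/2})<\gamma\,D_{C\times\mathbf{R}^k}(M;B_\rho)$ with $\gamma$ as small as we wish. Choosing $\theta$, $\tau$ and $\gamma$ appropriately, the two cases combine into the clean inequality $D_{C\times\mathbf{R}^k}(M;B_{\theta\rho})\le\theta^{\mu'}D_{C\times\mathbf{R}^k}(M;B_\rho)$ for some $0<\mu'<\mu$, up to a comparably small correction of the fitting motion. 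Iterating over $\rho=\theta^{j}$ gives $D_{C\times\mathbf{R}^k}(M;B_{\theta^{j}})\le C\theta^{\mu' j}$; since the successive corrections to the fitting motions are then bounded by a geometric series, they converge, the limiting copy of $C\times\mathbf{R}^k$ is the unique tangent cone, and $M$ approaches it at the polynomial rate $\rho^{\mu'}$.

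I expect the main obstacle to be the treatment of the neutral Jacobi modes -- the homogeneity $0$ and $1$ fields -- in the one-step estimate: one must show that at every scale the best infinitesimal motion can be identified and subtracted cleanly, and that the accumulated corrections remain summable rather than, say, drifting logarithmically. This is precisely where Conditions (\ddag), and in particular the integrability of the Jacobi fields of $C$, are essential, since they allow the direct spectral-gap argument above to stand in for Simon's infinite-dimensional \L{}ojasiewicz inequality; in the non-integrable setting of Theorem~\ref{thm:main} this step genuinely fails, which is the difficulty the rest of the paper is devoted to overcoming.
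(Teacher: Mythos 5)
Your broad strategy---the $L^\infty$-distance built from the Hardt--Simon foliation, the non-concentration estimate of Proposition~\ref{prop:nonconc2} to control the tube around the singular plane, and an iterated decay estimate with updated best-fit motions---is the same as the paper's. However, the decisive spectral claim on which your one-step decay rests is false, and this is a genuine gap. You assert that every homogeneous Jacobi field on $C\times\mathbf{R}^k$ with $r^{-\mu}u\in L^\infty_{loc}$ and homogeneity at most $1$ is generated by an isometry, while all others have homogeneity exceeding $1$ by a fixed amount. This is not what Conditions~(\ddag) give you. The Hardt--Simon Jacobi field $r^\mu\phi(\omega)$ (with $\phi$ the lowest eigenfunction of $-L_\Sigma$ and $\mu\in(\tfrac{3-n}{2},0)$), together with its relatives $r^\mu y^{\mathbf l}\phi(\omega)$ for multi-indices $\mathbf{l}$ with $|\mathbf{l}|<1-\mu$, are admissible, are not generated by any isometry of $\mathbf{R}^{n+k}$, and have degree $\mu+|\mathbf l|<1$. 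Condition~(\ddag c) only forbids \emph{integer} degrees in $(\tfrac{3-n}{2},0)$, so it tells you $\mu\notin\mathbf{Z}$ but does not remove these modes, and (\ddag b) only constrains degree $0$ and $1$. A mode of degree $\gamma<1$ makes $D_{C\times\mathbf{R}^k}(\rho^{-1}M;B_1\setminus B_{\rho_0})$ scale like $\rho^{\gamma-1}$, which \emph{grows} as $\rho\to 0$; hence the inequality $\sup_{B_\theta\setminus B_\tau}|u|\le C\theta^{\mu'}\sup_{B_2\setminus B_{1/2}}|u|$ cannot be extracted from the spectrum alone, no matter how the fitting motion is chosen.

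What the conditions (\ddag) actually furnish is that the degree-$1$ Jacobi fields are exactly the rotational ones (the content of the Proposition preceding Definition~7.3 in the paper), so that after best-fitting a rotated cone there is no degree-$1$ component in the remainder. The decay then comes from Simon's three-annulus lemma, Lemma~\ref{lem:L23annulus}, turned into the $L^\infty$-distance version Proposition~\ref{prop:D3ann2}: a Jacobi field without a degree-$1$ component must either grow definitely inward or grow definitely outward. The inward-growing branch---which carries precisely the $r^\mu$-type modes you omit---is eliminated not by a spectral inequality but by a compactness/contradiction argument using the standing hypothesis that $\lambda_i M\to C\times\mathbf{R}^k$ along the original rescaling sequence; this is why the paper iterates the quantity $E_B(M)$ comparing two scales, and it is the content of step (a2) in the proof of Proposition~\ref{prop:EBdecay} (which Proposition~\ref{prop:decay10} invokes in simplified form). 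You gesture toward ``the arguments of Allard--Almgren and Simon'' and correctly identify the non-concentration estimate as the tool for the tube, but the dichotomy-and-exclusion step is where the real work lies, and the spectral gap you rely on would shortcut it only if it were true. The integrability of the degree-$1$ Jacobi fields (condition (\ddag b)) is indeed what makes this case tractable without a \L ojasiewicz inequality, but its role is to make the best-fit rotation absorb the entire degree-$1$ piece, not to clear out everything of degree $<1$.
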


Condition (\ddag c) does not hold for the Simons cone $C=C(S^3\times S^3)$, since in this case the cone $C\times \mathbf{R}$ admits a degree one homogeneous Jacobi field $\phi$ that does not arise from symmetries of $\mathbf{R}^8\times\mathbf{R}$. Let us denote by $\Sigma_0$ the link of $C\times\mathbf{R}$, i.e. the spherical suspension of $S^3\times S^3$, which has two singular points modeled on $C$. It is natural to try constructing minimal deformations of $\Sigma_0$ modeled on $\delta\phi$ for small $\delta$, smoothing out its singularities by gluing in scaled down copies of the surface $H$. It turns out that this is not possible, so the cone $C\times\mathbf{R}$ is not ``integrable'' in this sense (see Lemma~\ref{lem:nearbycones}). 

In the work of Simon~\cite{Simon83}, the key ingredient for showing the uniqueness of the tangent cone in such a non-integrable situation is an infinite dimensional \L{}ojasiewicz inequality for the area functional, for surfaces near to the link. This idea has since been used in numerous other related uniqueness problems, see e.g. \cite{Simon93_1, CM, ESV}, and the examples of harmonic maps with non-unique tangent maps due to White~\cite{White92} show that in non-analytic situations, where the \L{}ojasiewicz inequality fails, uniqueness may fail too. 
Since our link $\Sigma_0$ is singular, it seems difficult to prove such a \L{}ojasiewicz-Simon inequality for nearby surfaces, and we are not aware of successful attempts to do so. Instead we show that we can construct minimal deformations  $T_\delta$ of the cone $C\times\mathbf{R}$ modeled on $\delta\phi$, and that the information usually deduced from the \L{}ojasiewicz inequality is encoded in properties of $T_\delta$ (see Proposition~\ref{prop:Tm}). These $T_\delta$ are logarithmic perturbations of cones, defined on large annuli, and are closely related to the logarithmically decaying surfaces constructed by Adams-Simon~\cite{AS88} in non-integrable situations.

The broad strategy to prove Theorem~\ref{thm:main} is then somewhat standard, going back to Allard-Almgren's and Simon's works~\cite{AA81, Simon83}. At a scale where our surface $M$ is very close to $C\times\mathbf{R}$ we view it as a perturbation of the best fit surface out of the family of $T_\delta$ and their rotations. Using a more elaborate version of the non-concentration result, Proposition~\ref{prop:nonconcentration}, we see that relative to $T_\delta$ the surface $M$ is modeled on a Jacobi field over $C\times\mathbf{R}$ that has no degree one component. Therefore the three annulus lemma of Simon~\cite{Simon83} can be used to show that one of the following holds:
\begin{itemize}
\item[(i)]  at a smaller scale $M$ is much further from $T_\delta$,
\item[(ii)] at a smaller scale $M$ is much closer to $T_\delta$.
\end{itemize}
If $T_\delta$ were a cone, then in case (i) the three annulus lemma would imply that $M$ diverges from $T_\delta$ at all subsequent scales too, eventually reaching a point where $M$ cannot be close to any cone, which is a contradiction (see also Cheeger-Tian~\cite{CT94} for this approach to the uniqueness problem in the setting of Einstein manifolds). Thus we must be in case (ii), and iterating this leads to polynomial convergence of $M$ to its tangent cone, which must then be unique. This is what happens in the proof of Theorem~\ref{thm:CRk}.

In the setting of Theorem~\ref{thm:main} there are two subtleties: $T_\delta$ is only defined on an annulus, and so in case (i) the three annulus lemma can only be applied on a finite number of scales, potentially stopping short of getting a contradiction. In addition $T_\delta$ is not a cone, so decaying towards $T_\delta$ in case (ii) may not mean that $M$ is becoming more ``cone-like'' sufficiently quickly to deduce the uniqueness. The main new ingredient is that these two issues only arise if $M$ is very close to $T_\delta$ relative to $\delta$, and this implies a certain decay rate for the area of $M$ (see Proposition~\ref{prop:areadrop} and case (ii) in Proposition~\ref{prop:EBdecay}). This decay, together with the monotonicity of area eventually leads to uniqueness of the tangent cone. It is worth noting that, even in the setting of non-integrable tangent cones with an isolated singularity, this gives an alternative approach to uniqueness, relying on properties of logarithmically decaying minimal surfaces modeled by the degree one Jacobi fields, in place of a \L{}ojasiewicz-Simon inequality for all surfaces near to the cross section of the cone.  

We conclude this introduction by outlining the contents of the rest of the paper. In the next section we will list some of the notation that we use. Sections \ref{sec:Jfield} to \ref{sec:mainargument} are devoted to the proof of Theorem~\ref{thm:main}, and in particular $C$ will denote the Simons cone $C(S^3\times C^3)$ in these sections. In Section~\ref{sec:Jfield} we will describe the space of degree one Jacobi fields on $C\times\mathbf{R}$, and some of their properties such as Simon's $L^2$ three annulus lemma. In Section~\ref{sec:Sigmadelta} we construct smoothings of the link $\Sigma_0$ of $C\times\mathbf{R}$, modeled on the Jacobi field $\phi$. We will use these in Section~\ref{sec:Tdelta} to construct minimal perturbations $T_\delta$ of the cone $C\times \mathbf{R}$ on large annular regions, modeled on $\phi$. In  Section~\ref{sec:nonconc} we will define the distance $D_{T_\delta}(M)$ for minimal surfaces $M$ near $T_\delta$ and prove the corresponding non-concentration result. An important application will be a three annulus lemma for $D_{T_\delta}$, which we use in Section~\ref{sec:mainargument} to put the pieces together and prove Theorem~\ref{thm:main}. Finally in Section~\ref{sec:CRk} we will turn to the case of tangent cones $C\times\mathbf{R}^k$ for $C$ satisfying Conditions (\ddag), thus excluding the Simons cone, and we will reprove Simon's  Theorem~\ref{thm:CRk}. A portion of the proof is the same as the proof of Theorem~\ref{thm:main}, but since these cones are integrable, there are substantial simplifications. 

\subsection*{Acknowledgements} I am indebted to Nick Edelen for introducing me to this problem, and I am also grateful to him and Luca Spolaor for many insightful discussions. This work was supported in part by NSF grant DMS-190348.

\subsection{Notation and conventions}
We let $\mathcal{M}$ denote the set of oriented boundaries of least area in the ball $B_2(0)\subset \mathbf{R}^{n+k}$. In particular $\mathcal{M}$ is a multiplicity one class of codimension-one minimal submanifolds in the terminology of Simon~\cite{Simon93, Simon94}. Note that the area minimizing property is only used in Proposition~\ref{prop:areadrop}. In particular it is not needed in the proof of Theorem~\ref{thm:CRk}, which therefore holds for more general multiplicity one classes as in \cite{Simon94}.  
We refer to \cite{SimonGMT} for general background material on area minimizing currents.

The following is some of the notation that we will use in Sections~\ref{sec:Jfield} to \ref{sec:mainargument}. 
\begin{itemize}
\item $C$ denotes the Simons cone $C(S^3\times S^3)$ and $V_0= C\times\mathbf{R}$. 
\item On $\mathbf{R}^8\times \mathbf{R}$ we use the notation $x\in \mathbf{R}^8$ and $y\in \mathbf{R}$. The distance from the $y$-subspace is $r = |x|$, while the distance from the origin is $\rho = (r^2 + y^2)^{1/2}$. We write $B_\rho$ to denote balls in $\mathbf{R}^8\times \mathbf{R}$.
\item We wite $L_M$ for the Jacobi operator on a minimal hypersurface $M\subset \mathbf{R}^{n+k}$, i.e. $L_M f = \Delta f + |A_M|^2 f$ in terms of the second fundamental form $A_M$. 
\item $H$ is the Hardt-Simon smoothing of $C$, normalized so that asymptotically $H$ is the graph of $r^{-2} + b r^{-3} + O(r^{-5})$ over $C$ for some $b\in \mathbf{R}$. 
\item $\Sigma_0$ is the link of $V_0$, i.e. the spherical suspension of $S^3\times S^3$. We will view $\Sigma_0$ as sitting in the unit sphere of $\mathbf{R}^8\times\mathbf{R}$. 
\item $\phi = y^3 r^{-2} - y$ denotes a particular Jacobi field, either on $V_0$ or on $\Sigma_0$. 
\item $\zeta$ is a function defined on $\Sigma_0$, supported in a neighborhood of $y=0$, so that $\zeta - \phi$ is $L^2$-orthogonal to $\phi$ on $\Sigma_0$. See Definition~\ref{defn:zeta}.
\item $\Sigma_\delta$ for small $\delta$ is a smoothing of $\Sigma_0$ modeled on the Jacobi field $\delta \phi$ on $\Sigma_0$. Its mean curvature satisfies $m(\Sigma_\delta) = h(\delta)\zeta$ for a function $h(\delta) \sim \delta^{4/3}$. See Proposition~\ref{prop:Sdelta}. In Section~\ref{sec:Sigmadelta} we will let $\epsilon = \delta^{1/3}$. 
\item $V_\delta = C(\Sigma_\delta)$ is the cone over $\Sigma_\delta$.
\item $T_\delta$ denotes a minimal surface in an annulus $B_A\setminus B_{A^{-1}}$ for $\ln A = |\delta|^{-\kappa}$, where $\kappa > 0$ is small, to be chosen below. $T_\delta$ is constructed as a perturbation of the cone $V_\delta$, see Proposition~\ref{prop:Tdelta1}. The cones $W_\delta$ are perturbations of $V_\delta$ that are closer to $T_\delta$, see Remark~\ref{rem:Wcone}. 
\item $\mathcal{W}, \mathcal{T}$ are the families of surfaces $W_\delta, T_\delta$ and their rotations. 
\item $D_S(S'; U)$ measures the distance of the surface $S'$ from $S$, on the set $U$. The definition of the distance is adapted to the geometry of the surface $H$. See Definition~\ref{defn:D}. If the set $U$ is not specified, it is understood to be $B_1\setminus B_{\rho_0}$ for the $\rho_0$ in Lemma~\ref{lem:L23annulus}. 
\end{itemize}

\section{Jacobi fields on $C\times\mathbf{R}$}\label{sec:Jfield}
In this section we collect some results about Jacobi fields on the cone $C\times\mathbf{R}$, where $C = C(S^3\times S^3)$. On $C$ the Jacobi fields of degrees in the interval $(-3, 1]$ are spanned by: $r^{-2}$; degree 0 Jacobi fields induced by translations; degree 1 Jacobi fields induced by rotations; see \cite[\S 2]{SS86}.
The cross section of $C\times\mathbf{R}$ is singular and we will only be interested in Jacobi fields $u$ for which $r^{2.1}u$ is locally bounded away from the origin. Recall that $r=|x|$ is the distance from the singular ray and note that the power $r^{2.1}$ ensures that we only allow Jacobi fields that blow up slightly faster than the Jacobi field $r^{-2}$ corresponding to the Hardt-Simon smoothing $H$ of the cone $C$ at infinity. Since there are no Jacobi fields on $C$ with degrees in $(-3,-2)$, it will follow that actually $r^2u$ is locally bounded. Note that these are also the Jacobi fields that are in $W^{1,2}_{loc}$ away from the origin. 

We first characterize the homogeneous degree one Jacobi fields as well as functions satisfying a slightly more general equation.
\begin{definition}\label{defn:zeta}
  Let $\zeta$ denote an $O(4)\times O(4)$-invariant function on $\Sigma_0$, which is an odd function of $y$, supported in a small neighborhood of the cross section $y=0$, and such that
  \[ \int_{\Sigma_0} \zeta\phi = \int_{\Sigma_0} \phi^2. \]
  For example we can let $\zeta = c\chi\phi$, where $\chi = \chi(y)$ is an even cutoff function supported in a neighborhood of $y=0$, while $c$ is a suitable constant chosen to satisfy the integral normalization. We extend $\zeta$ to the cone $V_0$ over $\Sigma_0$ as a homogeneous degree one function. 
  \end{definition}

We have the following.
\begin{lemma}\label{lem:zetaJacobi}
  Let $u$ be a homogeneous degree one function on $C\times\mathbf{R}$ such that $r^{2.1} u$ is locally bounded away from the origin, and away from the singular ray $u$ satisfies the equation
  \[ \label{eq:ueq1} L_{C\times\mathbf{R}} u + a \zeta\rho^{-2} = 0 \]
  for a constant $a\in \mathbf{R}$. Then $u$ can be decomposed as
  \[ u = f + \lambda(y^3r^{-2} - y) \]
  where $f$ corresponds to a rotation of $\mathbf{R}^8\times \mathbf{R}$ and $\lambda\in \mathbf{R}$. More precisely, write $z_i=x_i$ for $i=1,\ldots, 8$ and $z_9=y$. Then  $f$ is of the form $f(z) = Az \cdot \nu(z)$ for $z\in C\times\mathbf{R}$, where $A\in \mathfrak{so}(9)$ is a skew-symmetric matrix and $\nu(z)$ is the unit normal to $C\times\mathbf{R}$ at $z$. We then say that $f$ corresponds to the rotation $\mathrm{exp}(A)$. 
\end{lemma}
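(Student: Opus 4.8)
The plan is to reduce the statement to an ODE analysis after separating variables. Write $u$ as a homogeneous degree one function on $V_0 = C\times\mathbf{R}$. On the link $\Sigma_0$, which is the spherical suspension of $S^3\times S^3$, introduce the angular coordinate $\theta \in (-\pi/2,\pi/2)$ along the suspension (so $y = \rho\sin\theta$, $r = \rho\cos\theta$), together with the coordinates on $S^3\times S^3$. The Jacobi operator $L_{C\times\mathbf{R}}$ on a homogeneous degree one function separates into a radial part and the Jacobi operator $L_{\Sigma_0}$ on the link; homogeneity degree one corresponds to the eigenvalue determined by $\mu$ with $\mu(\mu + n - 1) = $ (the relevant constant, here $n=8$), i.e. degree one means $L_{\Sigma_0}\psi = -(n-1)\psi = -7\psi$ in the normalization where degree $\gamma$ corresponds to eigenvalue $-\gamma(\gamma+n-2) - n + 1$ — I would fix the precise normalization from the known fact (cited from \cite{SS86}) that $r^{-2}$, the translations, and the rotations are exactly the Jacobi fields on $C$ of degree in $(-3,1]$.

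**Main steps.**

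First I would decompose $u$ into $O(4)\times O(4)$-isotypic pieces on the $S^3\times S^3$ factor, since $\zeta$ is $O(4)\times O(4)$-invariant and an odd function of $y$, so the inhomogeneous term only feeds the invariant sector. On each non-invariant sector the equation is homogeneous: $L_{V_0}u' = 0$ with $u'$ degree one, and I would argue, using the eigenvalue gap on $S^3\times S^3$ coming from the structure of the Simons cone together with the growth bound $r^{2.1}u$ locally bounded (which on the link near the two singular points, modeled on $C$, forces $u$ to be controlled by the $r^{-2}$ Jacobi field of $C$ and no faster), that the only contributions are the rotation fields — the rotations of $\mathbf{R}^8$ that are not in $O(4)\times O(4)$, giving the "mixing" rotations, plus possibly rotations in the $y$-plane. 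This is the indicial-roots / separation-of-variables bookkeeping: near each conical singular point of $\Sigma_0$ the admissible local behaviors are governed by the indicial roots of $L_C$, and $r^{2.1}u$ bounded selects exactly the roots $\geq -2$, which together with $W^{1,2}_{loc}$ leaves a finite-dimensional space. Second, in the $O(4)\times O(4)$-invariant, $y$-odd sector I would solve the inhomogeneous ODE in $\theta$ for $L_{\Sigma_0}\psi + a\zeta = -7\psi$ (schematically), noting $\phi = y^3r^{-2}-y = \rho(\,\sin^3\theta/\cos^2\theta - \sin\theta\,)$ restricted to the link is precisely the degree one invariant $y$-odd function that is a Jacobi field of $C\times\mathbf{R}$ away from the singular ray — I would verify directly that $L_{C\times\mathbf{R}}\phi = 0$ using that $y^3 r^{-2}$ and $y$ are each annihilated (the former because $r^{-2}$ is a Jacobi field on $C$ and $y^3$ is harmonic in $y$ with the right homogeneity matching — this is a short computation I would do but not reproduce here). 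The inhomogeneous solution then differs from a multiple of $\phi$ by a homogeneous solution, which by the growth bound again lies in the rotation space; the normalization $\int_{\Sigma_0}\zeta\phi = \int_{\Sigma_0}\phi^2$ is what makes $a$ and $\lambda$ consistent, and one gets $u = f + \lambda(y^3r^{-2}-y)$.

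**The main obstacle.**

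The delicate point is the behavior near the singular ray $\{0\}\times\mathbf{R}$, equivalently near the two conical singular points of $\Sigma_0$. There the operator $L_{\Sigma_0}$ degenerates and one must show that the weighted bound $r^{2.1}u$ locally bounded genuinely excludes all the "bad" indicial roots — in particular that there is no homogeneous degree one Jacobi field with a $\log$ or a $r^{-2}$-type singularity along the ray other than the ones accounted for by $\phi$ and the rotations. This requires knowing the indicial roots of the Jacobi operator of the Simons cone $C$ at its vertex precisely enough to see that the only root in the window $(-2.1, 0]$ that can appear is the one matching $r^{-2}$ (giving the $\phi$ direction and, paired with the $y$-translation, the field $y^3 r^{-2}$), plus the roots giving smooth behavior (the rotations). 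I would cite the description of Jacobi fields on $C$ from \cite{SS86} for the indicial root data, and the rest is the separation-of-variables matching. The claim that the surviving solutions "correspond to rotations of $\mathbf{R}^8\times\mathbf{R}$" via $f(z) = Az\cdot\nu(z)$ with $A\in\mathfrak{so}(9)$ is then just the standard identification of the degree one part of the Jacobi field space generated by the isometry group with $\mathfrak{so}(9)$ acting on the graph; one checks that the rotations mixing the $x$- and $y$-coordinates produce exactly the degree one Jacobi fields one has found, while the rotations internal to $\mathbf{R}^8$ that lie in $O(4)\times O(4)$ are trivial on the link and those outside it, together with the $y$-plane rotations, exhaust the remaining solutions.
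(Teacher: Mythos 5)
Your overall strategy — separate variables on the cross section, use the indicial roots of $L_C$ at the vertex together with the growth bound $r^{2.1}u$ bounded to kill the bad roots, and match the survivors with the rotation/translation Jacobi fields of $C$ from \cite{SS86} — is essentially the route the paper takes, which invokes Simon's explicit expansion of Jacobi fields on $C\times\mathbf{R}$ from \cite[Appendix 1]{Simon94}. But there are two genuine problems in your sketch.

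First, the deduction $a=0$ is not actually made, and your phrasing suggests a misconception. You write that in the $O(4)\times O(4)$-invariant, $y$-odd sector you would ``solve the inhomogeneous ODE'' and that ``the inhomogeneous solution then differs from a multiple of $\phi$ by a homogeneous solution,'' with the normalization $\int_{\Sigma_0}\zeta\phi=\int_{\Sigma_0}\phi^2$ ``making $a$ and $\lambda$ consistent.'' This does not work: if $a\neq 0$, then $u-\lambda\phi$ satisfies $L_{\Sigma_0}(u-\lambda\phi)=-a\zeta\neq 0$, so it is not a homogeneous solution, and there is in fact \emph{no} solution of $L_{\Sigma_0}\psi=-a\zeta$ with the required growth unless $a=0$. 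The correct mechanism is the Fredholm alternative: pair the equation with $\phi$, integrate by parts on $\Sigma_0$ using $L_{\Sigma_0}\phi=0$, and obtain $a\int_{\Sigma_0}\zeta\phi=0$, so $a=0$ precisely because the normalization makes $\int\zeta\phi\neq 0$. This is the first (and crucial) step of the paper's proof. Note also that the integration by parts has to be justified at the singular ray, which the paper does by observing that $r^{2.1}u$ bounded together with the equation gives $r^{3.1}|\nabla u|$ bounded. Your sketch omits this, and since the whole lemma is about controlling behavior at the singular ray, this cannot be waved away.

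Second, the parenthetical verification you propose is factually wrong: $y^3r^{-2}$ and $y$ are \emph{not} each annihilated by $L_{C\times\mathbf{R}}$. One has $L_{C\times\mathbf{R}}(y)=|A_C|^2\,y=6y/r^2$ (since $\Delta_{C\times\mathbf{R}}y=0$) and $L_{C\times\mathbf{R}}(y^3r^{-2})=y^3L_C(r^{-2})+\partial_y^2(y^3)\,r^{-2}=6y\,r^{-2}$, so only the difference $\phi=y^3r^{-2}-y$ is a Jacobi field, and the cancellation comes exactly from the $|A_C|^2$ term. The claim that ``$y$ is annihilated'' would be the statement that $y$-translations generate a nontrivial Jacobi field, but those translations are tangent to $C\times\mathbf{R}$ and give the zero Jacobi field, not the coordinate function $y$. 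This is a small computational slip, but it is precisely the computation the lemma hinges on, so it should be done correctly.
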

\begin{proof}
  We first show that $a=0$, which implies that $u$ is a Jacobi field. For this we work on the link $\Sigma_0$, given by the unit sphere in $C\times\mathbf{R}$. Since $u$ has degree one, Equation~\eqref{eq:ueq1} is equivalent to
  \[ L_{\Sigma_0} u + a\zeta = 0. \]
  We multiply by $\phi = y^3r^{-2} - y$, and integrate by parts using that $L_{\Sigma_0}\phi = 0$. The integration by parts can be justified using that $r^{2.1}u$ is locally bounded, which together with \eqref{eq:ueq1} also implies that $r^{3.1}|\nabla u|$ is locally bounded. We then conclude that $\int_{\Sigma_0} a\phi\zeta = 0$, which by the choice of $\zeta$ implies that $a=0$.

  We now have $L_{C\times\mathbf{R}} u =0$. We can follow Simon~\cite[Appendix 1, see also Equation (13) on p. 24]{Simon94} to see that $u$ must have an expansion of the form
  \[ u = \sum_j r^{\mu_j} \sum_{k,l \geq 0} a^j_{k,l} r^{2k} y^l \phi_j(\omega), \]
  where $\mu_j + 2k+l = 1$ for $j,k,l$ such that $a^j_{k,l}\not=0$. Here $\phi_j(\omega)$ denotes the $j^{\mathrm{th}}$ eigenfunction of $-L_{S^3\times S^3}$ with eigenvalue $\lambda_j$ (the link of $C$ being $S^3\times S^3$), and
  \[ \mu_j = -\frac{5}{2} + \sqrt{ \frac{25}{4} + \lambda_j}. \]
  The possible values for $\mu_i \leq 1$ are: 1, corresponding to rotations of $C$ in $\mathbf{R}^8$; 0, corresponding to translations of $C$ in $\mathbf{R}^8$; $-2$, corresponding to the constant eigenfunction on $S^3\times S^3$. The values 1, 0 give rise to the term of the form $Az\cdot\nu(z)$ in $u$ for a skew symmetric matrix $A$ (see \cite[Equation (17), p. 25]{Simon94}). It remains to consider Jacobi fields $v$ of the form
  \[ v = r^{-2} \sum_{2k+l=3} a^1_{k,l} r^{2k}y^l = r^{-2} (a^1_{0,3}y^3 + a^1_{1,1}y). \]
  Substituting this into the equation $L_{C\times\mathbf{R}} v=0$, we find that $v$ is a multiple of $\phi$. It follows 
  that $u$ is of the required form $u= Az \cdot \nu(z) + \lambda\phi$. 
\end{proof}

Next we have the following $L^2$ three annulus lemma, due to Simon~\cite{Simon83}. This holds on the singular cone $C\times \mathbf{R}$ as well, since it is a consequence of spectral decomposition on the link and our assumption that $r^2u$ is locally bounded ensures that $u$ is in $L^2$ on the link. For a given $\rho > 0$ let us use $\Vert u\Vert_i$ to denote the following $L^2$-norm on an annulus:
  \[ \Vert u\Vert_i^2 = \int_{(C\times \mathbf{R})\cap (B_{\rho^{i}}\setminus B_{\rho^{i+1}})} |r^{-1} u|^2 r^{-m}, \]
  in terms of $m=\dim C\times\mathbf{R}  =8$. Note that for a homogeneous degree one function $u$ the norm $\Vert u\Vert_i$ is independent of $i$. 
  
  \begin{lemma}\label{lem:L23annulus}
    There are small $0 < \alpha_1 < \alpha_2$ and  $\rho_0 > 0$ satisfying the following. 
  Let $u$ be a Jacobi field on the cone $C\times \mathbf{R}$, defined in the annulus $B_1\setminus B_{\rho^3}$, such that $r^2u \in L^\infty$ and let $\rho < 2\rho_0$. Then we have:
  \begin{itemize}
  \item[(i)]  If $\Vert u\Vert_1 \geq \rho^{-\alpha_1} \Vert u\Vert_0$, then $\Vert u\Vert_2 \geq \rho^{-\alpha_2} \Vert u\Vert_1$.
  \item[(ii)] If $\Vert u\Vert_1 \geq \rho^{-\alpha_1} \Vert u\Vert_2$, then $\Vert u\Vert_0\geq \rho^{-\alpha_2} \Vert u\Vert_1$. 
  \end{itemize}
If in addition $u$ has no degree one component then the conclusion of either (i) or (ii) must hold. 
\end{lemma}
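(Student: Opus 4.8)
The plan is to expand $u$ in terms of the eigenfunctions on the link $\Sigma_0$ of $C\times\mathbf{R}$, reducing everything to the behavior of scalar ODEs in the radial variable, and to track the $L^2$-norms on consecutive annuli. Writing $\rho^i = e^{-it}$ with $t = -\ln\rho > 0$ small, a Jacobi field on the cone decomposes as $u = \sum_j u_j(\rho)\,\psi_j(\omega)$, where $\psi_j$ are the eigenfunctions of $-L_{\Sigma_0}$ with eigenvalues $\lambda_j$, and each $u_j$ is a linear combination of $\rho^{\gamma_j^\pm}$ with $\gamma_j^\pm = -\tfrac{m-2}{2}\pm\sqrt{(\tfrac{m-2}{2})^2+\lambda_j}$. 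The key point is that $\Vert u\Vert_i^2$ is a sum over $j$ of terms comparable to $\int$ of $|u_j|^2$ against the appropriate weight on the $i$-th annulus, and since the weighted norm of a single homogeneous mode $\rho^{\gamma}\psi_j$ is independent of $i$ up to a factor $\rho^{\pm|\gamma - 1|\cdot(\text{something})}$, one controls how each mode grows or decays from annulus to annulus. The hypothesis $r^2 u\in L^\infty$ (equivalently $r^2 u$ locally bounded, so $u\in L^2$ on the link, by the discussion preceding the lemma) is exactly what guarantees the expansion converges and that the exponents $\gamma_j^-$ below the threshold for $L^2$-integrability are excluded, so all relevant $\gamma_j$ lie in a discrete set bounded below.

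The heart of the argument is the standard Simon three-annulus dichotomy for such ODE systems: choose $\alpha_2$ smaller than the spectral gap, namely smaller than the minimal value of $|\gamma_j - 1|$ over all $j$ with $\gamma_j\neq 1$ appearing in $u$ (here the degree-one eigenspace on $\Sigma_0$, by Lemma~\ref{lem:zetaJacobi}, is spanned by the rotation fields together with $\phi$), and then choose $\alpha_1 < \alpha_2$ and $\rho_0$ small. If on the middle annulus $\Vert u\Vert_1$ dominates $\Vert u\Vert_0$ by the factor $\rho^{-\alpha_1}$, this forces the "growing" modes (those with $\gamma_j < 1$, which blow up faster as $\rho\to 0$) to dominate the decomposition at scale $\rho^1$, which by convexity of $t\mapsto \ln\Vert\sum_{\gamma_j<1} u_j\psi_j\Vert$ propagates to $\Vert u\Vert_2 \geq \rho^{-\alpha_2}\Vert u\Vert_1$; symmetrically for (ii). This is a soft convexity/log-convexity statement about sums of exponentials $\sum c_j \rho^{2\gamma_j t}$, once the purely degree-one part is handled: a homogeneous degree-one summand contributes equally to all three norms, so it can only help the relevant inequality, never obstruct it.

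For the final sentence, the point is that if $u$ has no degree-one component then every $\gamma_j$ with $a_{k,l}^j\neq 0$ satisfies $|\gamma_j - 1|\geq \alpha_2' > \alpha_2$ for a definite gap $\alpha_2'$, so the ratio $R_i := \Vert u\Vert_{i+1}/\Vert u\Vert_i$ is either always $\geq \rho^{-\alpha_2'}$-ish (dominated by blowing-up modes) or always $\leq \rho^{\alpha_2'}$-ish (dominated by decaying modes), more precisely $\ln R_i$ is monotone in $i$ by log-convexity; hence one of the two alternatives "$\Vert u\Vert_1 \geq \rho^{-\alpha_1}\Vert u\Vert_0$" or "$\Vert u\Vert_1\geq \rho^{-\alpha_1}\Vert u\Vert_2$" is automatically satisfied, triggering (i) or (ii) respectively. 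The main obstacle I expect is a careful treatment of the singular link: one must justify the spectral decomposition and the integration-by-parts/convexity estimates on $\Sigma_0$ despite the two conical singular points, using precisely the weighted bound $r^2 u\in L^\infty$ (and the resulting $r^3|\nabla u|$ bound) to show no boundary contributions arise near the singular ray; modulo this, the argument is the classical one from \cite{Simon83}.
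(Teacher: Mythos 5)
The approach you take—expanding $u$ in eigenfunctions of $-L_{\Sigma_0}$, reducing to radial ODEs, and exploiting the spectral gap between the degree-one indicial root and the others—is exactly the classical Simon~\cite{Simon83} argument, and it is what the paper implicitly invokes: the paper supplies no proof of this lemma, only the remark that the spectral decomposition works on the singular link because $r^2u\in L^\infty$ forces $u\in L^2(\Sigma_0)$. Your explanation of why the $r^2 u\in L^\infty$ hypothesis is needed (it makes the spectral expansion converge and excludes the indicial roots below $-2$) matches the paper, and your handling of the degree-one part in alternatives (i) and (ii) is sound: adding a degree-one summand can only push the hypothesis of (i) (resp.\ (ii)) toward failure, so the implication is preserved.

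However, the argument for the final sentence contains a genuine gap. You claim that if $u$ has no degree-one component, then the ratio $R_i=\Vert u\Vert_{i+1}/\Vert u\Vert_i$ is either ``always $\geq\rho^{-\alpha_2'}$-ish'' or ``always $\leq\rho^{\alpha_2'}$-ish,'' and hence that one of the hypotheses of (i) or (ii) is automatically satisfied. Both statements are false. Take $u=u_-+u_+$ with $\Vert u_-\Vert_i^2 = \rho^{-2\eta i}$ and $\Vert u_+\Vert_i^2 = \rho^{-4\eta}\rho^{2\eta i}$ (here $\eta>0$ is the spectral gap). Then $\Vert u\Vert_0^2\approx\rho^{-4\eta}$, $\Vert u\Vert_1^2 = 2\rho^{-2\eta}$, $\Vert u\Vert_2^2\approx\rho^{-4\eta}$, so $R_0\approx\sqrt2\,\rho^{\eta}\ll1$ while $R_1\approx\rho^{-\eta}/\sqrt2\gg1$: the ratio jumps across $1$ between consecutive annuli, and consequently \emph{neither} hypothesis $R_0\geq\rho^{-\alpha_1}$ nor $R_1\leq\rho^{\alpha_1}$ holds. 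Log-convexity only gives $R_0\leq R_1$, which does not rule out this crossover. What is true—and what the lemma asserts—is that the \emph{conclusion} of (i) or (ii) holds; in the example above, in fact both conclusions hold. The fix is a direct case split at scale $1$: if $\Vert u_-\Vert_1\geq\Vert u_+\Vert_1$, then $\Vert u\Vert_2^2\geq\Vert u_-\Vert_2^2\geq\rho^{-2\eta}\Vert u_-\Vert_1^2\geq\tfrac12\rho^{-2\eta}\Vert u\Vert_1^2$, giving the conclusion of (i) for $\alpha_2<\eta$ and $\rho$ small; if $\Vert u_-\Vert_1<\Vert u_+\Vert_1$, the symmetric computation at scale $0$ gives the conclusion of (ii). This is the argument you should use in place of the ``hypothesis is automatically satisfied'' step.
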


Finally we need an $L^2$ to $L^\infty$ estimate for Jacobi fields on the singular cone.
\begin{lemma}\label{lem:L2Linfty}
 Let $u$ be a Jacobi field on $C\times \mathbf{R}$, such that $r^2u$ is in $L^\infty$ on $B_1(0)$. Then we have the estimate
  \[ \sup_{B_{1/2}(0)} |r^2 u| \leq C \Vert u \Vert_{L^2(B_1)}. \]
\end{lemma}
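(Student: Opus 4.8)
The plan is to decompose $u$ into spherical harmonics on the link $S^3\times S^3$ of $C$ (equivalently, into eigenfunctions of $-L_{S^3\times S^3}$) and estimate each mode separately, then reassemble. Write $u = \sum_j u_j$, where $u_j(r\omega, y)$ is the projection onto the $j$-th eigenspace $\phi_j(\omega)$. Since $L_{C\times\mathbf{R}}u = 0$, each $u_j$ solves an ODE-type equation in the $(r,y)$ half-plane, and separating further in the radial-type variable one gets that each homogeneous piece has degree $\mu$ with $\mu(\mu+m-2) = \lambda_j$ (with the appropriate shift coming from the extra $\mathbf{R}$ factor, exactly as in the expansion quoted in the proof of Lemma~\ref{lem:zetaJacobi}). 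The hypothesis that $r^2 u \in L^\infty(B_1)$ forces each mode to have degree $\mu_j \geq -2$; combined with the spectral gap on $S^3\times S^3$ (there are no Jacobi-field degrees in $(-3,-2)$), the modes with $\mu_j = -2$ are multiples of $r^{-2}\phi_j$ and all other relevant modes have $\mu_j \geq -1$.

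The main step is then a standard elliptic/ODE estimate on each annulus $B_{2^{-k}}\setminus B_{2^{-k-1}}$: on such an annulus a homogeneous degree $\mu$ solution $u_j$ with $\mu\geq -2$ satisfies $\sup |r^2 u_j| \lesssim 2^{(\mu+2)(-k)} \cdot(\text{scale-invariant }L^2\text{ norm of }u_j)$, and summing a geometric series in $k$ over all the scales inside $B_1$ — the ratios being controlled because $\mu + 2 > 0$ strictly for the higher modes and equal to the borderline exponent for the $\mu=-2$ mode — yields $\sup_{B_{1/2}}|r^2 u_j| \leq C\|u_j\|_{L^2(B_1)}$ with $C$ independent of $j$ once $\lambda_j$ is large (the interior-estimate constant improves, not worsens, as $\lambda_j \to \infty$). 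For the finitely many low modes one invokes instead the explicit description of the degree $\leq 1$ Jacobi fields recalled at the start of Section~\ref{sec:Jfield} (the $r^{-2}$ field, translations, rotations, and $\phi$), for each of which the claimed $L^2\to L^\infty$ bound is an elementary computation. Finally, Cauchy–Schwarz in $j$ (using that $\sum_j \|u_j\|_{L^2(B_1)}^2 = \|u\|_{L^2(B_1)}^2$ and that the per-mode constants $C_j$ are summable in square) gives $\sup_{B_{1/2}}|r^2 u| \leq \sum_j \sup_{B_{1/2}}|r^2 u_j| \leq C\|u\|_{L^2(B_1)}$.

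Alternatively — and this is probably cleaner to write — one can avoid mode-by-mode bookkeeping by a scaling and compactness argument: suppose the estimate fails, so there is a sequence $u^{(i)}$ of Jacobi fields with $\|u^{(i)}\|_{L^2(B_1)} = 1$ but $\sup_{B_{1/2}}|r^2 u^{(i)}| \to \infty$. Away from the singular ray, standard interior estimates for the Jacobi operator give uniform $C^{1,\alpha}_{loc}$ bounds on compact subsets of $B_1 \setminus (\{0\}\times\mathbf{R})$, so a subsequence converges to a Jacobi field $u^{(\infty)}$ with $\|u^{(\infty)}\|_{L^2(B_1)} \leq 1$; the difficulty is then entirely concentrated near $r=0$, where one must rule out blow-up of $r^2 u^{(i)}$. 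This is where the structure of the cone enters: the constraint $r^2 u^{(i)} \in L^\infty$ together with the spectral gap means $r^2 u^{(i)}$ cannot oscillate or grow as $r\to 0$ faster than a bounded function, and a barrier/maximum-principle argument against $c_1 + c_2 r^{\epsilon}$ (using that the first genuinely singular admissible rate beyond $r^{-2}$ is $r^{-2+\delta_0}$ for some $\delta_0 > 0$ coming from $\mu_j \geq -1$ on higher modes) bounds $\sup |r^2 u^{(i)}|$ near the ray in terms of its $L^2$ norm plus its sup on the region $r \geq r_0$, which is already controlled by interior estimates.

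I expect the main obstacle to be precisely the behavior near the singular ray $\{0\}\times\mathbf{R}$: the interior Schauder theory only applies on the regular part, so one genuinely needs the spectral-gap input — "no Jacobi degrees in $(-3,-2)$" — to convert an a priori $L^\infty$ control of $r^2 u$ into a quantitative bound by the $L^2$ norm. Everything else (the geometric series over dyadic annuli, Cauchy–Schwarz in the spherical modes, the explicit low-mode check) is routine. I would most likely present the mode-decomposition version, since it makes the role of the exponent $-2$ and of the gap completely transparent and parallels the expansion already used in the proof of Lemma~\ref{lem:zetaJacobi}.
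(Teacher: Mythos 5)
Your diagnosis of the crux — that the difficulty is quantitative control near the singular ray, driven by the a priori $L^\infty$ bound and the absence of Jacobi degrees in $(-3,-2)$ — is correct, but both of your sketches have real gaps precisely where that control must be established. In the first approach, decomposing by spherical harmonics on $S^3\times S^3$ (the link of $C$) does not give homogeneous pieces: each $u_j = f_j(r,y)\phi_j(\omega)$ still depends on both radial variables $r$ and $y$, so the per-dyadic-annulus sup bound is not well-posed until you further separate by degree of $\rho$-homogeneity — equivalently, by eigenmode of $-L_{\Sigma_0}$ on the \emph{singular} link of $C\times\mathbf{R}$. The $L^2$-orthogonality on balls centered at the origin of homogeneous pieces of distinct degrees $\geq -2$ is a genuine ingredient that must be invoked, and it is exactly the statement the paper calls on. Moreover, the parenthetical that the elliptic constant ``improves, not worsens, as $\lambda_j\to\infty$'' is not a safe foundation: the $L^\infty$-to-$L^2$ ratio for eigenfunctions on the link in fact grows polynomially in the eigenvalue, and what rescues the sum is the exponential factor in the degree, which your sketch does not track. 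In the second approach, the barrier comparison near $\{r=0\}$ is the entire content of the estimate and is left unargued; note that $v = r^2 u$ satisfies a degenerate drift equation $\mathrm{div}(r^{-4}\nabla v) = 0$, for which a maximum principle through the ray is not automatic.

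The paper's proof takes a different, tighter route that avoids all per-mode bookkeeping. The orthogonality of homogeneous pieces of distinct degrees $\geq -2$ implies that $m(t) = \ln\int_{B_{e^t}}|u|^2$ is convex. The a priori bound $|r^2 u| \leq A$ is then used \emph{only qualitatively}: it gives $m(t) \leq 4t + C_A$, which together with convexity forces $m'(t) \geq 4$ for all $t < 0$, and hence the $A$-independent growth $\int_{B_a}|u|^2 \leq a^4\int_{B_1}|u|^2$. Finally, for $z=(x,0)$ with $|x|=a/2$, the ball $B_{a/4}(z)\subset B_a(0)$ has uniformly bounded geometry after rescaling by $a^{-1}$, so a single interior elliptic estimate yields $|u(z)|^2 \leq Ca^{-8}\int_{B_{a/4}(z)}|u|^2 \leq Ca^{-4}\int_{B_1}|u|^2$, i.e.\ $|r^2 u(z)|^2 \leq C\int_{B_1}|u|^2$; translating in $y$ covers the rest of $B_{1/2}$. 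The two devices you were missing are (i) using the $L^\infty$ hypothesis only to anchor the slope of a convex function, so the final constant is independent of the a priori bound $A$, and (ii) converting orthogonality into convexity of the ball-$L^2$ growth, rather than into a geometric series of per-mode estimates.
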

\begin{proof}
  Since $|u| = r^{-2} |r^2u|$ and $r^{-2} \in L^2(B_1)$, we have that $ u\in L^2(B_1)$. In addition under the decomposition of $u$ into homogeneous terms, there can be no terms of degree less than $-2$. Any two homogeneous Jacobi fields on $C\times\mathbf{R}$ with distinct degrees that are at least $-2$ are $L^2$ orthogonal on balls centered at the origin. From this it follows that $u$ satisfies a stronger form of the 3-annulus lemma, namely that the function 
  \[ m(t) = \ln \int_{B_{e^t}} |u|^2 \]
  is convex for $t\in (-\infty, 0)$.  The assumption that $|r^2u| \leq A$ on $B_1$ for some constant $A$ implies that
  \[ m(t) \leq \ln \int_{B_{e^t}} A^2 r^{-4} \leq 4t +C_A, \]
  for a constant $C_A$ depending on $A$. 
  In particular $m'(t) \geq 4$ for all $t < 0$, since if $m'(t_0) < c < 4$, then by convexity $m'(t) < c$ for all $t < t_0$ eventually contradicting the inequality above. It follows that $m(t) \leq m(0) + 4t$ for all $t < 0$ and so for any $ a < 1$ we have
  \[ \int_{B_a} |u|^2 \leq a^4 \int_{B_1} |u|^2. \]

  We can use this to estimate the values of $r^2u$ along $C\times\{0\}$, i.e. the set where $\rho=r$. Indeed, consider a point $z = (x,0)\in C\times\mathbf{R}$, with $|x| = a/2$. From the inequality above we get
  \[ \frac{1}{a^8} \int_{B_{a/4}(z)} |u|^2 \leq  a^{-4} \int_{B_1} |u|^2. \]
  Since after scaling by $a^{-1}$ the ball $B_{a/4}(z)$ in $C\times \mathbf{R}$ has uniformly bounded geometry, we can apply elliptic estimates to get
  \[ |u(z)|^2 \leq C a^{-8}\int_{B_{a/4}(z)} |u|^2,\]
  for a uniform $C$. 
  Combining the inequalities, using that $r=a/2$ at $z$, we have
  \[ | r^2 u(z)|^2 \leq C \int_{B_1}|u|^2. \]
 To estimate $r^2 u(z)$ at other points $z\in B_{1/2}(0)$ we can apply the same argument to translations of $u$. 
\end{proof}
  
\section{Construction of the smoothings $\Sigma_\delta$} \label{sec:Sigmadelta}
Let $\Sigma_0\subset S^8$ denote the link of the cone $V_0 = C\times \mathbf{R}$, so that $\Sigma_0$ is the spherical suspension of the link $S^3\times S^3$ of the Simons cone $C$. Our goal in this section is to construct smoothings $\Sigma_\delta$ of $\Sigma_0$ for sufficiently small $\delta\not=0$. The construction will be invariant under the action of the group $O(4)\times O(4)$ acting on $S^8$ through its action on the cross section $S^7$, and so $\Sigma_\delta$ can be thought of as a surface of revolution. The function $\phi = y^3 r^{-2} - y$ on $V_0$ is a linear growth Jacobi field and it induces an $O(4)\times O(4)$ invariant Jacobi field, also denoted by $\phi$, on $\Sigma_0$. In fact up to scaling $\phi$ is the only $O(4)\times O(4)$ invariant Jacobi field on $\Sigma_0$ which is $O(r^{-2})$ near the singularities, by Lemma~\ref{lem:zetaJacobi}. 
Since the linearization of the minimal surface equation is self-adjoint, the presence of $\phi$ in the kernel suggests that we cannot expect to find minimal perturbations $\Sigma_\delta$, or in other words the cone $V_0$ may not be integrable. Instead, using Lyapunov-Schmidt reduction, we will construct smoothings $\Sigma_\delta$ that are minimal modulo the one dimensional space spanned by $\phi$. This is similar to what is also done in the proof of \cite[Theorem 3]{Simon83}, however the singularities of $\Sigma_0$ mean that we need to use a singular perturbation technique.

  The main result of this section is the following. Recall the function $\zeta$ from Definition~\ref{defn:zeta}.

  \begin{prop}\label{prop:Sdelta}
    There are smoothings $\Sigma_\delta$ of $\Sigma_0$ for sufficiently small $\delta$, with the following properties.
    \begin{itemize}
    \item[(i)] $m(\Sigma_\delta) = h(\delta)\zeta$, where
      \[ \label{eq:hdeltaexpand} h(\delta) &= c \delta^{4/3} + O(|\delta|^{4/3 + \kappa}), \\
        h'(\delta) &= \frac{4}{3} c\delta^{1/3} + O(|\delta|^{1/3+\kappa}), \]
        for some $c < 0$ and $\kappa > 0$. In particular $h(\delta) < 0$ for small $\delta\not=0$. 
    \item[(ii)] Let $V_\delta$ denote the cone over $\Sigma_\delta$. On the set where $y=1$ and $r < 1$, the hypersurface $V_\delta$ is the graph of a function $f$ over $\delta^{1/3}H$, where $|f| < C|\delta| r^{-2+\kappa}$ for some $C, \kappa > 0$ independent of $\delta$.
    \item[(iii)] The regularity scale of $\Sigma_\delta$ at each point is bounded below by $C^{-1}r$ for $C$ independent of $\delta$. Recall that at $p\in \Sigma_\delta$ the regularity scale is defined to be the supremum of
      the radii $R > 0$ such that the second fundamental form of $\Sigma_\delta$ is bounded by $R^{-1}$ on $B_R(p)$.
      \end{itemize}
    \end{prop}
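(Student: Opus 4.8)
The plan is to construct $\Sigma_\delta$ by a singular perturbation (gluing) argument combined with Lyapunov--Schmidt reduction. First I build an approximate solution $\Sigma_\delta^{\mathrm{app}}$: near each of the two conical singularities of $\Sigma_0$ (each modeled on the Simons cone $C$) I insert a scaled copy $\epsilon H$ of the Hardt--Simon smoothing, with $\epsilon = \delta^{1/3}$ for $\delta>0$ and the opposite leaf of the foliation for $\delta<0$; near $y=0$ I take the normal graph of $\delta\phi$ over $\Sigma_0$; and in an intermediate annular region $r\sim r_*(\delta)$ I interpolate the two with a cutoff that varies logarithmically in $r$. The matching is consistent to leading order because near a singularity $\phi = y^3r^{-2}-y\approx r^{-2}-1$, so $\delta\phi\approx\delta r^{-2}=\epsilon^3 r^{-2}$, which is exactly the leading asymptotics of $\epsilon H$ written as a graph over $C$. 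Using the refined expansion $\epsilon H = \mathrm{graph}_C\big(\epsilon^3 r^{-2}+b\epsilon^4 r^{-3}+O(\epsilon^6 r^{-5})\big)=\mathrm{graph}_C\big(\delta r^{-2}+b\,\delta^{4/3}r^{-3}+O(\delta^2 r^{-5})\big)$, the only obstruction to matching at an order below $\delta^2$ comes from the degree $-3$ Jacobi mode $b\,\delta^{4/3}r^{-3}$, the constant $-\delta$ in $\phi$, and the curvature corrections of $\Sigma_0$ relative to $C$. Inserting a few explicit lower-order correction terms into the outer ansatz and choosing $r_*$ suitably, the mean curvature of $\Sigma_\delta^{\mathrm{app}}$, in weighted norms adapted to the singularities (at a rate strictly between the indicial roots $-2$ and $-3$ of $L_C$ on the invariant sector), is $O(|\delta|^{4/3+\kappa})$ after projecting off $\phi$, while its $\phi$-component is $O(\delta^{4/3})$.

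Next I carry out the reduction. The linearization of the mean curvature operator at $\Sigma_\delta^{\mathrm{app}}$ is modeled in the outer region on $L_{\Sigma_0}$, whose only kernel element of the permitted growth is $\phi$ by Lemma~\ref{lem:zetaJacobi}, and near each singularity, after rescaling by $\epsilon^{-1}$, on the Jacobi operator of $H$, which is invertible on the relevant weighted spaces because the only invariant Jacobi field on $H$ that is regular at the neck and decaying is the scaling field of the foliation (which corresponds to varying $\delta$). Gluing right inverses from the two regimes, with the conical weight fixed strictly between $-2$ and $-3$ so that the $r^{-2}$ behavior is allowed but $r^{-3}$ registers as an obstruction, produces a right inverse for the linearization on the $L^2(\Sigma_0)$-orthogonal complement of $\phi$, with norm bounded independently of $\delta$ up to a power of $\ln(1/|\delta|)$. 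A contraction mapping argument in a small weighted ball then yields, for all small $\delta\ne 0$, an $O(4)\times O(4)$-invariant, $y$-odd hypersurface $\Sigma_\delta$ with $m(\Sigma_\delta)$ in a one-dimensional space; I use $\zeta$ rather than $\phi$ to span this space because $\Sigma_\delta$ is exactly minimal near its smoothed ends where $\phi$ is singular, and $\zeta$ has the same $L^2$-pairing with $\phi$, so $m(\Sigma_\delta)=h(\delta)\zeta$ for some $h(\delta)$, and $\Sigma_\delta-\Sigma_\delta^{\mathrm{app}}$ obeys the weighted bound inherited from the error estimate. Applying the implicit function theorem to the fixed-point equation, after factoring the $\epsilon=\delta^{1/3}$ rescaling out of the inner problem, gives smooth dependence on $\delta\ne 0$ and hence the formula for $h'(\delta)$.

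To identify the asymptotics of $h$, I pair $m(\Sigma_\delta)=h(\delta)\zeta$ with the Jacobi field $\phi$ and integrate by parts, on $\Sigma_0$ via the graph function. Since $L_{\Sigma_0}\phi=0$ and $\int_{\Sigma_0}\zeta\phi=\int\phi^2$, all bulk terms either vanish or are $O(\delta^2)$, and the surviving leading term is the boundary contribution near the singularities, where the graph function behaves like $\delta\phi + b\,\delta^{4/3}r^{-3}$ while $\phi\sim r^{-2}$: since $\phi\,\partial_r(r^{-3})$ weighted by the cross-sectional volume $\sim r^6$ of $S^3\times S^3$ is $O(1)$ as $r\to 0$, this term is a nonzero universal geometric constant times $b\,\delta^{4/3}$. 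Hence $h(\delta)\int\phi^2 = c'\,b\,\delta^{4/3}+O(|\delta|^{4/3+\kappa})$, which gives $h(\delta)=c\,\delta^{4/3}+O(|\delta|^{4/3+\kappa})$ with $c\ne 0$; the sign $c<0$ is then read off from the sign of the subleading coefficient $b$ in the Hardt--Simon expansion, equivalently from the fact that the leaves of the Hardt--Simon foliation lie strictly on one side of $C$ and strictly decrease area. Differentiating the same identity in $\delta$ yields the formula for $h'$. Property (ii) is the statement that near a singularity $\Sigma_\delta$ coincides with $\epsilon H$ up to the weighted perturbation of size $O(|\delta|r^{-2+\kappa})$ produced above, transcribed to the slice $\{y=1\}$ of the cone $V_\delta=C(\Sigma_\delta)$; property (iii) follows because $\Sigma_0$ has regularity scale $\sim r$ away from its singular points, $\epsilon H$ has regularity scale $\sim r$ by scaling from the smooth asymptotically conical surface $H$, the outer correction $\delta\phi$ and the fixed-point perturbation are small in $C^2$ at this scale, and the interpolation cutoff varies on scale $r$.

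I expect the main obstacle to be the uniform-in-$\delta$ linear theory across the transition region: producing a right inverse of the linearization on the complement of $\phi$ whose norm does not degenerate as $\delta\to 0$, with the weights threaded precisely between the indicial roots so that the sole obstruction is the one-dimensional span of $\phi$ --- this requires a careful match of the conical parametrix for $L_{\Sigma_0}$ with the rescaled Jacobi parametrix on $H$ and uses the nondegeneracy of $H$. A secondary difficulty is pinning down the sign $c<0$, which needs the sign of the Hardt--Simon subleading coefficient $b$ together with a correct accounting of the matching integral.
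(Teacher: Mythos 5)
Your strategy is essentially the one in the paper: glue $\pm\epsilon H$ (with $\epsilon=\delta^{1/3}$) into the graph of $\delta\phi$ over $\Sigma_0$ to build an approximate solution, invert the linearization in weighted spaces by matching a conical parametrix for $L_{\Sigma_0}$ (restricted to the complement of $\phi$) with the rescaled Jacobi parametrix on $H$, solve by contraction, and extract the leading behavior of $h(\delta)$ by pairing $m(\Sigma_\delta)=h(\delta)\zeta$ with $\phi$ and locating the $O(\delta^{4/3})$ contribution in the gluing region, where the subleading Hardt--Simon coefficient $b$ enters. Parts (ii) and (iii) as consequences of the construction are also as in the paper. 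One small discrepancy: the paper's Proposition~\ref{prop:invertLdelta} produces a right inverse bounded uniformly in $\delta$, without the logarithmic loss you allow; this is harmless here but worth knowing is achievable.

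There is, however, a genuine gap at the single place where the argument is truly delicate. You state that the refined expansion $H=\mathrm{graph}_C(r^{-2}+br^{-3}+O(r^{-5}))$ has $b\neq 0$, and that its sign $b<0$ can be ``read off \ldots equivalently from the fact that the leaves of the Hardt--Simon foliation lie strictly on one side of $C$ and strictly decrease area.'' That is not a proof, and I do not believe it is correct as stated. One-sidedness of the foliation is equivalent to the positivity of the leading coefficient of $r^{-2}$ (after normalization, it is $1$) and says nothing about the sign of the coefficient of the next indicial mode $r^{-3}$; similarly the area comparison reflects the degree-$(-2)$ scaling Jacobi field, not the degree-$(-3)$ correction. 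In the paper the nonvanishing and the sign of $b$ are the content of Proposition~\ref{prop:refinedf}, proved by an explicit ODE sub/supersolution analysis of Davini's reduction of the Bombieri--De Giorgi--Giusti surface. This is precisely the input singled out in Remark~\ref{rem:S2S4} as what is currently missing for $C(S^2\times S^4)$; had it been a soft consequence of one-sidedness, that remark would be vacuous. Without an actual argument for $b\neq 0$ (and, for the sign claim in (i), for $b<0$), the conclusion $h(\delta)=c\delta^{4/3}+O(|\delta|^{4/3+\kappa})$ with $c<0$ is not established, and (i) fails. To repair the proposal you need either to reproduce the ODE analysis of Proposition~\ref{prop:refinedf} or to supply some other rigorous determination of $b$.
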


    To construct $\Sigma_\delta$ we first construct an approximate solution $\tilde{\Sigma}_\delta$ by gluing together the graph of $\delta\phi$ with scaled down copies $\pm \delta^{1/3} H$ of the Hardt-Simon smoothing of $C$. We then construct $\Sigma_\delta$ as a graph over $\tilde{\Sigma}_\delta$. The construction of $\Sigma_\delta$ with $m(\Sigma_\delta) = h(\delta)\zeta$ for sufficiently small $\delta$ is given in Sections~\ref{subsec:s2} to \ref{subsec:s4}, and we will give the proofs of part (ii) and (iii) of Proposition~\ref{prop:Sdelta} at the end of Section~\ref{subsec:s4}. Sections \ref{subsec:s1} and \ref{subsec:s5} are concerned with finding the expansion \eqref{eq:hdeltaexpand} of $h(\delta)$, which we will prove in Corollary~\ref{cor:hexpand}. 

    \begin{remark}\label{rem:S2S4}
      The same strategy would apply to more general cones $C$ as well, such as $C(S^2\times S^4)$, but to obtain the leading order behavior of the function $h(\delta)$ we need to know the second term in the expansion of $H$ as a graph over $C$, as in Proposition~\ref{prop:refinedf}. Without this, in general we would still have an asymptotic expansion for $h(\delta)$ in terms of powers of $\delta$ and $\ln \delta$. If either this expansion has a non-zero term, or if $h$ is identically zero, then the rest of the arguments in this paper can be applied. Without more information, however, it cannot be ruled out that $h$ vanishes to infinite order at $\delta=0$, although $h$ is non-zero, and in this case the methods in this paper do not apply. Note that in the setting of Simon~\cite{Simon83} this is ruled out by using the analyticity of the area functional, but this does not apply in our singular setting. It is an interesting question whether this situation can really arise, and if it does, whether uniqueness of the tangent cone still holds. 
    \end{remark}

    \subsection{Refined asymptotics of the smoothing $H$}\label{subsec:s1}
    In this section we study the smooth minimal hypersurface $H\subset \mathbf{R}^8$ asymptotic to $C = C(S^3\times S^3)$. The main result is the following.
    \begin{prop}\label{prop:refinedf}
      Outside of a compact set $H$ is the graph of a function $rf(r)$ over $C$ satisfying
      \[ f(r) = r^{-3} + b r^{-4} + O(r^{-9}), \]
      where $b < 0$. 
    \end{prop}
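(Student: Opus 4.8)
The plan is to analyze the ODE satisfied by the graphing function of $H$ over $C$. Since $H$ is an $O(4)\times O(4)$-invariant minimal hypersurface asymptotic to the Simons cone, outside a compact set it can be written as the graph of a function $u = u(r)$ over $C$ in the normal direction, and the minimal surface equation reduces to a second-order nonlinear ODE for $u(r)$. Writing $u(r) = r f(r)$ (the natural normalization, since $r^{-2}$ is the leading Jacobi field and the cone is $7$-dimensional) the linearization at $u=0$ is the Jacobi operator $L_C$, whose indicial roots on radial $O(4)\times O(4)$-invariant functions are the degrees $\gamma_\pm$ of the two homogeneous radial Jacobi fields on $C$; for $C(S^3\times S^3)$ these are $-2$ and $-3$ (equivalently, the exponents appearing in Lemma~\ref{lem:zetaJacobi} restricted to the constant eigenfunction on $S^3\times S^3$, shifted appropriately). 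The Hardt-Simon leaf is the one decaying fastest consistent with lying on one side of the cone, so its leading term is $r^{-2}$, i.e. $f(r) = r^{-3} + (\text{lower order})$ after the normalization; the sign of the coefficient (normalized to $1$) reflects the choice of side. The remaining task is to compute the next term in the expansion and the size of the error.

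First I would set up the ODE precisely: parametrize $C$ by $r$ and the link $S^3\times S^3$, and derive the equation for the normal graph $u(r)$, keeping terms to the order needed. This gives a Fuchsian-type ODE at $r=\infty$ of the form $L_C u = Q(u, u', u'')$ where $Q$ collects the quadratic-and-higher nonlinearity; schematically $Q$ is a sum of terms like $r^{-2} u^2$, $u u''$, $(u')^2$, etc. Second, I would run a standard iteration/bootstrap: plug in $u = r^{-2} + v$ with $v = o(r^{-2})$, so that $L_C v = Q(r^{-2} + v, \ldots)$. The leading contribution of $Q$ evaluated at the leading term $r^{-2}$ is of size $r^{-6}$ (two factors of $r^{-2}$ against the $r^{-2}$ prefactor, or $r^{-2}\cdot r^{-4}$ from $u u''$, etc.), so inverting $L_C$ against a right-hand side $\sim r^{-6}$ produces a particular solution of size $r^{-6}$ — but one must also allow the homogeneous solution $r^{-3}$, which is not forbidden by anything, giving $u = r^{-2} + b' r^{-3} + O(r^{-6})$, hence $f(r) = r^{-3} + b' r^{-4} + O(r^{-7})$. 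I would then iterate once more to sharpen the error: the correction to the nonlinearity coming from the $r^{-3}$ and $r^{-6}$ terms produces right-hand sides of size $r^{-7}, r^{-8}$ and so on, and inverting $L_C$ (whose inverse on these powers gains two orders and introduces no new homogeneous solutions beyond the already-accounted $r^{-3}$) yields the stated error $O(r^{-9})$ once one checks that no resonance occurs at the intermediate exponents.

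The sign $b < 0$ is the one genuinely non-formal point, and I expect it to be the main obstacle. The exponent $r^{-4}$ in $f$ (i.e. $r^{-3}$ in $u$) is a homogeneous Jacobi field, so its coefficient $b$ is not determined by the local indicial analysis alone — it is a global quantity attached to the specific Hardt-Simon leaf, fixed by the normalization that $H$ has leading coefficient $+1$ in front of $r^{-2}$. To pin down its sign I would use a monotonicity/barrier property of the Hardt-Simon foliation: the leaves are ordered and the leaf $H$ lies strictly on one side of the cone, and one can compare $u(r) - r^{-2}$ with sub/supersolutions of the linearized equation built from $r^{-3}$ and a small multiple of $r^{-6}$, using the maximum principle on the region $r \geq R$ together with the known asymptotics of the Bombieri–De Giorgi–Giusti solution (\cite{BDG}) — in particular that $u$ approaches the cone from strictly inside and the convexity/concavity of the profile forces $b' < 0$. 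Alternatively, since the paper only needs the \emph{existence} of a definite-sign second term (see Remark~\ref{rem:S2S4}), one could argue by contradiction: if $b = 0$ one continues the expansion to the next homogeneous exponent and shows the first nonvanishing coefficient still has a sign determined by the same one-sidedness, so WLOG the first correction term is $b r^{-4}$ with $b<0$. I would present the barrier argument as the primary route and remark that the precise value of $b$ is not needed, only $b<0$.
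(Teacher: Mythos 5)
Your approach is genuinely different from the paper's, and there are two gaps worth flagging. For the expansion, you set up the normal-graph ODE and do indicial analysis, which is the right framework, but your error estimate does not actually come out to $O(r^{-9})$ as you run it. You take the nonlinearity $Q$ to be quadratic, so your iteration produces $u = r^{-2} + b'r^{-3} + cr^{-6} + \dots$, i.e.\ $f = r^{-3} + b'r^{-4} + cr^{-7} + \dots$, and the next iterations fill in $r^{-8}$, etc. That is $O(r^{-7})$, not $O(r^{-9})$, unless you can show $c$ and the subsequent coefficients vanish. The paper gets the clean $O(r^{-9})$ because of a symmetry you omit: the reflection $|x'|\leftrightarrow |x''|$ across the cone shows that the mean curvature of the graph of $rf(r)$ over $C$ is an \emph{odd} function of $f$, so the nonlinearity $Q(f, rf', r^2f'')$ is at least \emph{cubic}. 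With $Q$ cubic, $Q$ evaluated at $r^{-3}$ is $O(r^{-9})$ from the start and the iteration closes at the stated order. You need to add this observation (or prove the vanishing of the would-be $r^{-6}, r^{-7}, r^{-8}$ coefficients some other way) for the expansion to be correct.

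The sign $b<0$ is, as you say, the genuinely non-formal point, and your proposed barrier argument is too vague to carry it. Knowing that $H$ stays on one side of $C$ is the statement $u>0$; it does not, by itself, control the sign of $u - r^{-2}$, which is what $b$ measures. Building ``sub/supersolutions from $r^{-3}$ and a small multiple of $r^{-6}$'' for the equation satisfied by $u-r^{-2}$ is underspecified: the homogeneous solutions of the linearization in the $u$-variable are $r^{-2}$ and $r^{-3}$ (not $r^{-6}$), and you would need a boundary/initial condition at finite $r$ tying the asymptotics to global structure, which the maximum principle alone does not supply. Your fallback (if $b=0$, push to the next exponent and claim its sign is still forced by one-sidedness) has the same problem: one-sidedness does not pin down the sign of the first nonvanishing sub-leading coefficient. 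The paper resolves this by a concrete global argument: it passes to Davini's ODE representation of $H$ in $(z,t)$-coordinates, writes the first-order autonomous equation for $w(\xi)$ with $\xi = \tan(2t)$, checks that $g(\xi) = \tfrac{2}{3}\xi + \epsilon$ is a subsolution for all large $\xi$ and is below $w$ at a starting point (using $w = \tfrac{7}{8}\xi + O(\xi^2)$ near $\xi=0$), and then translates the resulting strict lower bound $w(\xi) > \tfrac{2}{3}\xi + \epsilon$ into the sign of the $\xi^{2/3}$-coefficient in the asymptotics of $w$, which equals $-\tfrac{2^{2/3}}{9}b$. This kind of explicit subsolution comparison, or an equivalent explicit computation, is what you would need to make the sign rigorous.
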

    \begin{proof}
      We will use the approach of Davini~\cite{Dav05} to describe the surface $H$, which leads to a significantly simpler analysis than the original approach of Bombieri-De Giorgi-Giusti~\cite{BDG}. Let us write $(x', x'') \in \mathbf{R}^4\times \mathbf{R}^4$, and $u=|x'|, v=|x''|$. Then the cone $C$ is given by the equation $u=v$. According to \cite{Dav05}, the surface $H$ (on the side of $C$ where $u > v$) can be described by the parametric equations
      \[ u &= e^{z(t)} \cos t  \\
        v &= e^{z(t)}\sin t, \]
      where $t\in [0, \pi/4)$, where $w(t) = \dot{z} (t)$ satisfies the ODE
      \[ \dot{w} = (1 + w^2) \left[ 7 - \frac{6 \cos (2t)}{\sin (2t)} w \right], \]
      with the initial condition $w(0)=0$. For this  see \cite[Equation (13)]{Dav05} and note that in terms of the notation there we have $k=h=4$, $d=0$ and $t_0 = \pi / 4$. 
      Let us change variables to $\xi = \tan (2t) \in [0,\infty)$. The function $w(\xi)$ satisfies the equation
      \[ \label{eq:wxieq} \xi \frac{dw}{d\xi} = \frac{1+w^2}{2(1+\xi^2)}(7\xi - 6w). \]
      Near $\xi=0$ we can expand $w$ as a power series in $\xi$, by writing the equation in the form
      \[ \xi \frac{dw}{d\xi} = -3w + \frac{7}{2}\xi + G(\xi, w), \]
      where $G$ is an analytic function at least quadratic in $\xi, w$. According to Horn~\cite{Horn96} there is a unique analytic solution $w$ near $\xi=0$, whose power series can be obtained by formally substituting the series
      \[w(\xi) = a_1\xi + a_2\xi^ 2+ \ldots \]
      into the equation. We find that $a_1 = 7/8$. 

      Our goal is to analyze the behavior of $w$ as $\xi\to \infty$, using a subsolution as in \cite{Dav05}. We claim that for sufficiently small $\epsilon > 0$ the function $g(\xi) = \frac{2}{3}\xi + \epsilon$ is a subsolution of Equation~\eqref{eq:wxieq} for $\xi > \epsilon^{1/2}$. To see this let us denote by $R(g)$ the right hand side of the equation, so
      \[ R(g) = \frac{1+g^2}{2(1+\xi^2)} (7\xi - 6g) = \frac{\frac{4}{3}\xi^3 + \frac{4}{3}\epsilon \xi^2 - 5\epsilon^2\xi + 3\xi - 6\epsilon^3 - 6\epsilon}{2 + 2\xi^2}. \]
      The left hand side of the equation is $\xi g'(\xi) = \frac{2}{3} \xi$, and we have
      \[ E(\xi) = (R(g) - 2/3\xi) (2 + 2\xi^2) = \frac{4}{3}\epsilon \xi^2 - 5\epsilon^2\xi + \frac{5}{3} \xi - 6\epsilon^3 - 6\epsilon. \]
      The function $g$ is a subsolution where $\xi g'(\xi) \leq R(g)$, i.e. where $E(\xi) \geq 0$. We have $E'(\xi) = \frac{8}{3}\epsilon \xi + \frac{5}{3} - 5\epsilon^2 \geq 0$ for sufficiently small $\epsilon$. At the same time
      \[ E(\epsilon^{1/2}) = \frac{5}{3}\epsilon^{1/2} + O(\epsilon) > 0\]
      for sufficiently small $\epsilon$. So $g$ is a subsolution for $\xi > \epsilon^{1/2}$ if $\epsilon$ is sufficiently small.

      Since our solution $w$ satisfies $w(\xi) = \frac{7}{8}\xi + O(\xi^2)$ for small $\xi$, and $\frac{7}{8} > \frac{2}{3}$, it follows that for sufficiently small $\epsilon$ we have $w(\epsilon^{1/2}) > g(\epsilon^{1/2})$, and so by the subsolution property of $g$, for this choice of $\epsilon$ we have $w(\xi) > g(\xi) = \frac{2}{3} \xi + \epsilon$ for all large $\xi$.

      It remains to relate this asymptotic behavior of $w(\xi)$ to the expansion of the function $f(r)$. On the cone $C$ the graph of $rf(r)$ is minimal if
      \[  r^2f''(r) + 8rf'(r) + 12 f(r) + Q(f, rf', r^2 f'') = 0, \]
      were $Q$ collects the quadratic and higher order terms. Because of the symmetry between the regions $u > v$ and $u < v$ on the two sides of the cone $C$, the mean curvature of the graph of $f$ is an odd function of $f$, and so $Q$ is at least cubic. The solutions of the linearized operator are $f = r^{-3}$ and $f=r^{-4}$. Since $C$ is a strictly minimizing cone (see Hardt-Simon~\cite{HS85}), we know that $H$ is asymptotically the graph of $rf(r)$ over $C$ with $f(r) = r^{-3} + O(r^{-4})$ as $r\to\infty$. Since the nonlinear terms are cubic, it follows that we have $f(r) = r^{-3} + br^{-4} + O(r^{-9})$ for a constant $b\in \mathbf{R}$, and it remains to determine $b$.

      Let us first see that we cannot have $b=0$.  For this we suppose that
      \[ f(r) = r^{-3} + O(r^{-9}), \]
      and compute the expansion of $w(\xi)$ for large $\xi$ under this assumption. The unit normal vector of $C$ at the point $(x',x'')$, pointing into the region where $u > v$, is
      \[ \mathbf{n} = r^{-1} (x', -x''), \]
      where $r = (|x'|^2 +|x''|^2)^{1/2}$, so $|x'| = |x''| = r / \sqrt{2}$. It follows that the graph of $rf(r)$ is given by the set of points of the form $(x', x'') + f(r) (x', -x'')$ for $(x', x'') \in C$. At these points we have
      \[ u &= e^{z(t)}\cos t = (1 + f(r)) |x'| = r(1 + f(r)) / \sqrt{2}, \\
        v &= e^{z(t)}\sin t = (1-f(r)) |x''| = r(1-f(r)) / \sqrt{2}. \]
      It follows that
      \[ z(t) &= \frac{1}{2} \ln ( r^2 (1 + f(r)^2) ) \\
        &= \ln r + \frac{1}{2}\ln (1 + f(r)^2) \\
        &= \ln r + O(r^{-6}), \]
      and $\tan t = \frac{1-f(r)}{1+f(r)}$. 
      We have $\xi = \tan (2t) = \frac{ 2\tan t}{1 - \tan^2 t}$, from which we get
      \[ \xi = \frac{1-f^2}{2f} = \frac{1}{2} r^3 + O(r^{-3}), \]
      and so
      \[ r &= 2^{1/3} \xi^{1/3} + O( \xi^{-5/3}), \\
         z &= \frac{1}{3}\ln \xi + \frac{1}{3} \ln 2 + O(\xi^{-2}). \]
      Finally we have
      \[ w &= \frac{dz}{dt} = \frac{dz}{d\xi} \frac{d\xi}{dt} = \frac{dz}{d\xi} 2(1 + \xi^2) \\
        &= (\frac{1}{3} \xi^{-1} + O(\xi^{-3}) )(2 + 2\xi^2) \\
        &= \frac{2}{3} \xi + O(\xi^{-1}). \]
      This contradicts our earlier result that for some $\epsilon > 0$ we have $w(\xi) > \frac{2}{3}\xi + \epsilon$ for all sufficiently large $\xi$.

      It follows that we have
      \[ f(r) = r^{-3} + br^{-4} + O(r^{-9}) \]
      for some $b\not =0$. To determine the sign of $b$ we can proceed as above, and compute the asymptotics of $w(\xi)$ under this assumption. As above we have $z(t) = \ln r + O(r^{-6})$ and
      \[ \xi = \frac{1}{2}r^3 - \frac{b}{2} r^2 + O(r^{-3}). \]
      It follows that
      \[ r = 2^{1/3} \xi^{1/3} + \frac{b}{3} + O(\xi^{-1/3}), \]
      and so
      \[ z = \frac{1}{3}\ln \xi + \frac{1}{3}\ln 2 + \frac{b}{3\cdot 2^{1/3}}\xi^{-1/3} + O(\xi^{-2/3}). \]
      As above this implies that
      \[ w = \frac{dz}{d\xi} (2+2\xi^2) = \frac{2}{3}\xi - \frac{2^{2/3}}{9} b \xi^{2/3} + O(\xi^{1/3}). \]
      Since we know that $w(\xi) > \frac{2}{3}\xi + \epsilon$ for sufficiently large $\xi$, we must have $b < 0$ as claimed. 
    \end{proof}

    \subsection{The approximate solutions}\label{subsec:s2}
    In this section we build approximately minimal smoothings $\tilde{\Sigma}_\delta$ of $\Sigma_0$. In the next section we show that these $\tilde{\Sigma}_\delta$ can be perturbed to the $\Sigma_\delta$ that we are trying to find.

    By symmetry we can assume that $\delta > 0$, and to simplify the notation we will let $\epsilon = \delta^{1/3}$. We construct $\tilde{\Sigma}_\delta$ by gluing the scaled down surfaces $\pm \epsilon H$ near the two singular points of $\Sigma_0$ to the graph of $\epsilon^3 \phi$. Using the symmetry that maps $y \mapsto -y$ and interchanges $x', x''$, it will be enough to focus on one of the singular points.

    The surface $\Sigma_0\subset \mathbf{S}^8$ is given by the set
    \[ \{ (x', x'', y)\,:\, |x'| = |x''|, \quad |x'|^2 + |x''|^2 + y^2 = 1\} \subset S^8 \subset \mathbf{R}^8\times \mathbf{R}. \]
    We will work near the singular point where $y=1$ and $x', x'' = 0$, and we use the unit normal that points into the region where $|x'| > |x''|$. Near this singular point we use the chart on $S^8$ given by 
    \[ \label{eq:Fchart} F : (z', z'') \mapsto (z', z'', \sqrt{1 - |z'|^2 - |z''|^2}), \]
    so that $F^*\Sigma_0$ is the cone given by $|z'| = |z''|$. At the same time the pullback of the spherical metric is
    \[ F^*g_{S^8} = g_{Euc} + O(r^2), \]
    for small $r$, where $r^2 = |z'|^2 + |z''|^2$. We let $r_\epsilon = \epsilon^\alpha$, where $\alpha < 1$ will be chosen very close to 1. We define the surface $\tilde{\Sigma}_\delta$ to be the graph of $\epsilon^3\phi$ over $\Sigma_0$ on the region where $r > 2r_\epsilon$, and to be the surface $\epsilon H$ in the region $r < r_\epsilon$ using our chart to identify a neighborhood of the singular point with a ball in $\mathbf{R}^8$. In the gluing region $r\in (r_\epsilon, 2r_\epsilon)$ we use a cutoff function to interpolate between the two pieces (recall that the construction will be symmetric across the equator $y=0$, so that near the other singular point we will glue in $-\epsilon H$).

    To understand how to interpolate between the two pieces on the gluing region, we need to view $\epsilon H$ as a graph over $\Sigma_0$ in the region $r\in (r_\epsilon, 2r_\epsilon)$. It will be convenient to scale this up by $r_\epsilon^{-1}$, so we are considering the surface $r_\epsilon^{-1}\epsilon H$ in the region $\tilde{r}\in (1,2)$ in terms of $\tilde{r} = r_\epsilon^{-1}r$. In $\mathbf{R}^8$ the surface $H$ is the graph of $r^{-2} + br^{-3} + O(r^{-9})$ over the cone $C$ outside of a compact set, and so $r_\epsilon^{-1}\epsilon H$ is the graph of
    \[ r_\epsilon^{-3}\epsilon^3 \tilde{r}^{-2} + b r_\epsilon^{-4} \epsilon^4 \tilde{r}^{-3} + O(r_\epsilon^{-9}\epsilon^9) \]
    over $C$. We are considering graphs in terms of the Euclidean normal vector to $C$, whereas we would like to use the normal vector with respect to the metric $r_\epsilon^{-2} F^*g_{S^8}$, which satisfies  $r_\epsilon^{-2} F^*g_{S^8}= g_{Euc} + O(r_\epsilon^2)$ on the region $\tilde{r} \in (1,2)$. It follows that with respect to the spherical normal vector $r_\epsilon^{-1}\epsilon H$ is the graph of
    \[ \label{eq:piece1} r_\epsilon^{-3}\epsilon^3 \tilde{r}^{-2} + b r_\epsilon^{-4}\epsilon^4 \tilde{r}^{-3} + O(r_\epsilon^{-1} \epsilon^3 + r_\epsilon^{-9}\epsilon^9). \]
    We are gluing this to the graph of $\epsilon^3 \phi$  (scaled up by a factor of $r_\epsilon^{-1}$), where
    \[ \label{eq:phi10}
      \phi &= \frac{y^3}{r^2} - y = \frac{(1-r^2)^{3/2}}{r^2} - (1-r^2)^{1/2} \\
      &= r^{-2} + O(1). \]
    Scaling up by $r_\epsilon^{-1}$ we find that the other piece of our surface is the graph of
    \[ \label{eq:piece2}  r_\epsilon^{-3}\epsilon^3 \tilde{r}^{-2} + O(r_\epsilon^{-1} \epsilon^3) \]
    over $\Sigma_0$ on the region $\tilde{r}\in (1,2)$. 
    Let $\chi$ denote a cutoff function such that $\chi(s)=1$ for $s\leq 1$ and $\chi(s)=0$ for $s \geq 2$. Using $\chi$ to interpolate between the expressions \eqref{eq:piece1} and \eqref{eq:piece2} we define $r_\epsilon^{-1}\tilde{\Sigma}_\delta$ on the region $\tilde{r}\in (1,2)$ to be the graph of a function of the form
    \[ \label{eq:gluedpiece}
      r_\epsilon^{-3}\epsilon^3 \tilde{r}^{-2} + \chi(\tilde{r}) b r_\epsilon^{-4} \epsilon^4 \tilde{r}^{-3} + O(r_\epsilon^{-1}\epsilon^3 + r_\epsilon^{-9} \epsilon^9)
    \]
    over $\Sigma_0$. 
    We next estimate the mean curvature of the surface $\tilde{\Sigma}_\delta$ in a suitable weighted space. For a function $u$ on $\tilde{\Sigma}_\delta$ we define the weighted norm $\Vert u\Vert_{C^k_\tau}$ to be the smallest constant $c$ such that
    \[ r^{-\tau} |u| + r^{1-\tau} |\nabla u| + \ldots + r^{k-\tau} |\nabla^k u| \leq c. \]
    See \eqref{eq:wH1} below for a definition of corresponding weighted H\"older spaces. In terms of the weighted spaces we have the following estimate.
    \begin{prop}\label{prop:mcest1}
      For a small $\kappa > 0$, if $\alpha$ is chosen sufficiently close to 1, the mean curvature $m(\tilde{\Sigma}_\delta)$ satisfies
      \[ \label{eq:mest2} \Vert m(\tilde{\Sigma}_\delta) \Vert_{C^1_{-4}} \leq \epsilon^{3+\kappa} \]
      for sufficiently small $\epsilon$. 
    \end{prop}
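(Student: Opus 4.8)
The plan is to estimate $m(\tilde{\Sigma}_\delta)$ separately on the three pieces out of which $\tilde{\Sigma}_\delta$ is assembled: the inner region $\{r<r_\epsilon\}$, where $\tilde{\Sigma}_\delta=\epsilon H$; the outer region $\{r>2r_\epsilon\}$, where $\tilde{\Sigma}_\delta$ is the normal graph of $\epsilon^3\phi$ over $\Sigma_0$; and the gluing annulus $\{r_\epsilon<r<2r_\epsilon\}$. After rescaling any one of these pieces to unit scale its geometry and the coefficients of the mean curvature operator are uniformly controlled (up to $O(r^2)$ curvature terms from $S^8$), so by interior Schauder estimates a covariant derivative of $m$ costs exactly one power of $r$; hence $r^5|\nabla m(\tilde{\Sigma}_\delta)|\lesssim\sup r^4|m(\tilde{\Sigma}_\delta)|$ on each piece, and it is enough to prove the pointwise bound $r^4|m(\tilde{\Sigma}_\delta)|\lesssim\epsilon^{3+\kappa}$, with \eqref{eq:mest2} following after a slight decrease of $\kappa$ to absorb the constant for small $\epsilon$. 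I would then verify this pointwise bound on each piece and choose $\alpha<1$, close to $1$, so that the three exponents that appear all beat $3+\kappa$.

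On the inner region $\epsilon H$ is minimal for $g_{Euc}$, so its mean curvature for $F^*g_{S^8}=g_{Euc}+O(r^2)$ is the difference of the two mean curvatures, controlled by the $1$-jet of the perturbation $F^*g_{S^8}-g_{Euc}$ (of size $O(r^2)$, gradient $O(r)$) together with $|A_{\epsilon H}|$; since $H$ is asymptotic to $C$ and its waist sits at scale $\sim\epsilon\le r$, one has $|A_{\epsilon H}|\sim r^{-1}$ on $\{r<r_\epsilon\}$, so $|m(\tilde{\Sigma}_\delta)|\lesssim r$ and $r^4|m(\tilde{\Sigma}_\delta)|\lesssim r_\epsilon^5=\epsilon^{5\alpha}$, which beats $\epsilon^{3+\kappa}$ once $\alpha>(3+\kappa)/5$. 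On the outer region, $\phi$ is a Jacobi field on $\Sigma_0$ by Lemma~\ref{lem:zetaJacobi}, so the linearization of the mean curvature at the graph of $w=\epsilon^3\phi$ annihilates $w$ and $m(\tilde{\Sigma}_\delta)=Q(w)$ is the purely nonlinear remainder; where $\phi$ is bounded, away from the two singular points, $|Q(\epsilon^3\phi)|=O(\epsilon^6)$, and near a singular point I would rescale by $r^{-1}$, so that $\Sigma_0$ becomes the cone $C$ with a metric flat up to $O(r^2)$ and unit regularity scale while $w$ rescales to the graph function $\sim\epsilon^3 r^{-3}$ by \eqref{eq:Fchart} and \eqref{eq:phi10}; since $Q$ is quadratic with bounded coefficients this gives $|m(\tilde{\Sigma}_\delta)|\lesssim\epsilon^6 r^{-7}$, so $r^4|m(\tilde{\Sigma}_\delta)|\lesssim\epsilon^6 r^{-3}\lesssim\epsilon^{6-3\alpha}$ on $\{r>2r_\epsilon\}$, which beats $\epsilon^{3+\kappa}$ once $\alpha\le 1-\kappa/3$.

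The gluing annulus is the crux, and the place where Proposition~\ref{prop:refinedf} is used. Rescaling by $r_\epsilon^{-1}$, the surface is the graph over the cone $C$ (carrying $g_{Euc}+O(r_\epsilon^2)$) of the function in \eqref{eq:gluedpiece}, that is $r_\epsilon^{-3}\epsilon^3\tilde{r}^{-2}+\chi(\tilde{r})\,b\,r_\epsilon^{-4}\epsilon^4\tilde{r}^{-3}$ plus an error $O(r_\epsilon^{-1}\epsilon^3+r_\epsilon^{-9}\epsilon^9)$, controlled in $C^2$. Both $\tilde{r}^{-2}$ and $\tilde{r}^{-3}$ are homogeneous Jacobi fields on $C$ — this is exactly why the expansion of $H$ has to be pushed to second order in Proposition~\ref{prop:refinedf} — so $L_C$ kills the leading term, while $L_C(\chi\,\tilde{r}^{-3})=[L_C,\chi]\tilde{r}^{-3}$ is first order in the derivatives of $\chi$ and supported where $\tilde{r}\in(1,2)$, hence the $\chi\tilde{r}^{-3}$-term contributes only $O(r_\epsilon^{-4}\epsilon^4)$ to the rescaled mean curvature. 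The remaining contributions — $L_C$ of the error, the $O(r_\epsilon^2\cdot r_\epsilon^{-3}\epsilon^3)$ correction from the non-flat metric, and the quadratic terms $O((r_\epsilon^{-3}\epsilon^3)^2)$ — are all dominated by $O(r_\epsilon^{-4}\epsilon^4)$. Undoing the rescaling, $|m(\tilde{\Sigma}_\delta)|\lesssim r_\epsilon^{-1}\cdot r_\epsilon^{-4}\epsilon^4=\epsilon^{4-5\alpha}$, so $r^4|m(\tilde{\Sigma}_\delta)|\lesssim r_\epsilon^4\epsilon^{4-5\alpha}=\epsilon^{4-\alpha}$ on the gluing annulus, which beats $\epsilon^{3+\kappa}$ once $\alpha\le 1-\kappa$.

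Combining the three conditions, $(3+\kappa)/5<\alpha\le 1-\kappa$ is a nonempty range for small $\kappa$ — one may take $\alpha=1-\kappa$, or slightly smaller to gain a strict exponent — and then $r^4|m(\tilde{\Sigma}_\delta)|\le C\epsilon^{3+\kappa}$, whence \eqref{eq:mest2} for $\epsilon$ small. I expect the gluing annulus to be the main difficulty: one has to check that the only non-negligible effect of the interpolation is the $\chi\tilde{r}^{-3}$ mismatch between the $H$-side and the $\phi$-side of the glued surface, which is what both forces the use of the second-order expansion of $H$ and produces the upper bound $\alpha\le 1-\kappa$; the lower bound on $\alpha$, preventing it from being too small, comes from the $O(r^2)$ curvature of $S^8$ seen by $\epsilon H$ in the inner region, and the two are compatible precisely because $\kappa$ is small.
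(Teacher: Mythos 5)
Your proof is essentially the same as the paper's: both decompose $\tilde{\Sigma}_\delta$ into the inner ($\epsilon H$), gluing, and outer (graph of $\epsilon^3\phi$) regions, rescale each dyadic annulus $r\in(R,2R)$ to unit scale, and use that the leading graph term is a Jacobi field so that the linear contribution to the mean curvature vanishes, leaving quadratic, metric-correction, and cutoff errors of the sizes you compute, with the same resulting constraint on $\alpha$. Two small corrections worth making. First, the passage from the pointwise bound $r^4|m|\lesssim\epsilon^{3+\kappa}$ to the $C^1_{-4}$ bound is not a consequence of Schauder estimates, since $m$ is not itself the solution of an elliptic equation; the correct reason is that after rescaling by $R^{-1}$ the ambient metric, the model surface, and the graph function are all $C^\infty$-controlled with uniform bounds, so $m$ and its covariant derivatives are bounded directly (one power of $R$ per derivative), and both the $C^0$ and $C^1$ estimates come out of the same rescaled computation. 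Second, your claim that the gluing step is where Proposition~\ref{prop:refinedf} is used, and that $\tilde{r}^{-3}$ being a Jacobi field is what makes the estimate work, is a misattribution: on the gluing annulus $\chi(\tilde{r})\tilde{r}^{-3}$ and its derivatives are $O(1)$ anyway, so $L_C(\chi\tilde{r}^{-3})$ is $O(1)$ with or without the Jacobi-field property, and the estimate \eqref{eq:mest2} would hold just as well knowing only $H=r^{-2}+O(r^{-3})$; the refined expansion of Proposition~\ref{prop:refinedf}, and the fact that $L_C\tilde r^{-3}=0$, are needed only later to extract the precise leading coefficient of $h(\delta)$ in Proposition~\ref{prop:inttildeest}.
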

    \begin{proof}
      We will work in regions $r\in (R, 2R)$ for various $R$, scaling up by a factor of $R^{-1}$. The required estimate is equivalent to saying that the scaled up surface has mean curvature of order $R\cdot \epsilon^{3+\kappa} R^{-4}$. We examine three cases separately: 
      \begin{itemize}
      \item {\bf $R > 2r_\epsilon$}. On this region $\tilde{\Sigma}_\delta$ is the graph of $\epsilon^3 \phi$. Scaled up by $R^{-1}$, in terms of the rescaled variable $\tilde{r} = R^{-1} r$ it is the graph of $R^{-1} \epsilon^3\phi( R\tilde{r})$, which is of order $O(R^{-3}\epsilon^3)$ if $\tilde{r}\in (1,2)$.  Note that $\phi$ is a Jacobi field on $\Sigma_0$ and $R^{-1}\Sigma_0$ has bounded geometry in our region, so the mean curvature of the graph is at least quadratic, i.e. $O(R^{-6}\epsilon^6)$. We need
        \[ R^{-6} \epsilon^6 \leq R^{-5} \epsilon^{3+\kappa}, \]
        i.e. $R \geq \epsilon^{3-\kappa}$. Since $R > 2r_\epsilon = 2\epsilon^\alpha$ for $\alpha$ close to 1, this estimate holds. 
      \item {\bf $r_\epsilon < R < 2r_\epsilon$}.
        This is the gluing region, and after scaling by $r_\epsilon^{-1}$ (which is essentially equivalent to scaling by $R^{-1}$), the surface $r_\epsilon^{-1}\tilde{\Sigma}_\delta$ is the graph of
        \[ r_\epsilon^{-3}\epsilon^3 \tilde{r}^{-2} + O( r_\epsilon^{-1} \epsilon^3 + r_\epsilon^{-4} \epsilon^4) \]
        according to \eqref{eq:gluedpiece}, where we absorbed the $\tilde{r}^{-3}$ term into the error. The function $\tilde{r}^{-2}$ is a Jacobi field with respect to the Euclidean metric, which after our scaling differs from the spherical metric by $O(r_\epsilon^2)$. It follows that the contribution of the first term to the mean curvature is $O(r_\epsilon^{-1}\epsilon^3 + r_\epsilon^{-6}\epsilon^6)$. The contribution of the remaining terms is $O( r_\epsilon^{-1} \epsilon^3 + r_\epsilon^{-4} \epsilon^4)$, and so in sum the mean curvature is $O(r_\epsilon^{-1} \epsilon^3 + r_\epsilon^{-4} \epsilon^4)$. For the estimate \eqref{eq:mest2} we need
        \[ r_\epsilon^{-1} \epsilon^3 + r_\epsilon^{-4} \epsilon^4 \leq r_\epsilon^{-3} \epsilon^{3+\kappa}. \]
        This follows if $r_\epsilon^2 \leq \epsilon^\kappa$ and $\epsilon^{1-\kappa} \leq r_\epsilon$. These hold for small $\kappa > 0$ when $\alpha$ is sufficiently close to 1. 
      \item {\bf $R < r_\epsilon/2$}. Here the surface is $\epsilon H$. Scaled up by $R^{-1}$ we are considering $R^{-1}\epsilon H$ in the region $\tilde{r} \in (1,2)$ in terms of $\tilde{r} = R^{-1} r$. These surfaces have uniformly bounded geometry, and zero mean curvature with respect to the Euclidean metric. The spherical metric (scaled up) differs by $O(R^2)$ from the Euclidean metric, so with respect to $R^{-2}F^* g_{S^8}$ the mean curvature of $R^{-1}\epsilon H$ is $O(R^2)$. To satisfy \eqref{eq:mest2} we need $R^2 \leq R^{-3}\epsilon^{3+\kappa}$, i.e. $r_\epsilon^5 \leq \epsilon^{3+\kappa}$. This follows if $\kappa < 2$ and $\alpha$ is sufficiently close to 1. 
      \end{itemize}
     \end{proof}

     \subsection{Inverting the linearized operator}\label{subsec:s3}
     We will construct the desired surface $\Sigma_\delta$ as the graph of a function $u$ over $\tilde{\Sigma}_\delta$, for $u$ in a suitable weighted space. We can define the weighted $C^{k,\alpha}_\tau$ norms for functions on $\tilde{\Sigma}_\delta$ as follows. Let us denote by $g$ the induced metric on $\tilde{\Sigma}_\delta$, and for any $R > 0$ let $A_R$ denote the region where $r\in (R, 2R)$. We define the weighted norm of $u$ as follows:
     \[ \label{eq:wH1} \Vert u\Vert_{C^{k,\alpha}_\tau} = \sup_R R^{-\tau}\Vert u\Vert_{C^{k,\alpha}(\tilde{\Sigma}_\delta\cap A_R, R^{-2}g)}. \]
     We take the supremum over all $R$ such that the level set $r = 3R/2$ intersects $\tilde{\Sigma}_\delta$. Note that for such $R$ the surface $\tilde{\Sigma}_\delta \cap A_R$, equipped with the scaled up metric $R^{-2} g$ has uniformly bounded geometry and this definition of the norm is uniformly equivalent to the one we used in Proposition~\ref{prop:mcest1}.

 We can consider the graph of $u$ over $\tilde{\Sigma}_\delta$ if $\Vert u\Vert_{C^{2,\alpha}_1}$ is sufficiently small (independently of $\delta$), but we will also need to work with weights other than $1$. Since $\tilde{\Sigma}_\delta$ is given by $\pm \epsilon H$ near the singular points, we have $r > c_1 \epsilon$ on $\tilde{\Sigma}_\delta$ for a uniform $c_1 > 0$. This allows us to compare the norms for different weights: for $\tau < \tau'$ we have
 \[\label{eq:normcomp1}
   \Vert u\Vert_{C^{k,\alpha}_\tau} \lesssim \Vert u\Vert_{C^{k,\alpha}_{\tau'}} \lesssim \epsilon^{\tau - \tau'} \Vert u\Vert_{C^{k,\alpha}_\tau}. \]

     The Jacobi operator on $\tilde{\Sigma}_\delta$ defines a bounded linear operator
     \[ L_\delta : C^{2,\alpha}_\tau \to C^{0,\alpha}_{\tau-2}, \]
     with bound independent of $\delta$. We only work with functions invariant under $G = O(4)\times O(4)$, and write $C^{k,\alpha, G}$ for the corresponding spaces. In addition for technical reasons we fix a small $r_0 > 0$, and denote by $C^{k,\alpha, G, 0}$ the $G$-invariant functions with also vanish along the set $\{ r = r_0, y > 0\}$. The main result in this section is the following.
     \begin{prop}\label{prop:invertLdelta}
       Let $\tilde{L}_\delta$ denote the linear operator
       \[ \tilde{L}_\delta : C^{2,\alpha, G, 0}_\tau \times \mathbf{R} \to C^{0,\alpha, G}_{\tau-2}, \]
       defined by
       \[ \label{eq:Ltildedef} \tilde{L}_\delta(u, \lambda) = L_\delta(u) + \lambda \zeta. \]
       Then for $\tau\in (-3,-2)$ the operator $\tilde{L}_\delta$ is invertible for sufficiently small $\delta$, with inverse bounded independently of $\delta$. We view $\zeta$ as a function on $\tilde{\Sigma}_\delta$ using that $\tilde{\Sigma}_\delta$ is a graph over $\Sigma_0$ on the support of $\zeta$. 
     \end{prop}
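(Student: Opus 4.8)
The plan is to prove invertibility of $\tilde L_\delta$ by a parametrix/gluing construction that patches together the known mapping properties of the Jacobi operator on the two model geometries: the cone $V_0$ (or rather its link $\Sigma_0$) away from the singular points, where $L_{\Sigma_0}$ acts on weighted spaces with weight $\tau\in(-3,-2)$, and the scaled Hardt-Simon surface $\pm\epsilon H$ near the singular points, where the relevant operator is $L_H$ on $\mathbf{R}^8$ with an appropriate weight at infinity. First I would record the two ingredients. On $\Sigma_0$ minus small balls around the two poles, the operator $L_{\Sigma_0}$ is uniformly elliptic on the compact part, and the point is the behavior near each pole: there $L_{\Sigma_0}$ is modeled on $L_C$ on a punctured ball in $\mathbf{R}^8$, whose indicial roots are the degrees of homogeneous Jacobi fields on $C$, namely $\dots,-3,-2,0,1,\dots$. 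Since $\tau\in(-3,-2)$ avoids an indicial root and, crucially, the boundary condition imposed by $C^{k,\alpha,G,0}$ (vanishing on $\{r=r_0,\,y>0\}$) together with the $G$-invariance kills the would-be cokernel coming from the $r^{-2}$ Jacobi field and its partner — this is exactly why the extra parameter $\lambda\zeta$ is inserted: $\zeta$ is designed (Definition~\ref{defn:zeta}) so that the one-dimensional obstruction along $\phi$ is cancelled. On the $\epsilon H$ side, after rescaling, $H$ is asymptotic to $C$ and $L_H$ is an isomorphism between the corresponding weighted spaces on $\mathbf{R}^8$ for weights in the same gap $(-3,-2)$ (this is standard for the Hardt-Simon surface, using that $C$ is strictly stable and strictly minimizing, and that the only small Jacobi fields are the translations/rotations/scaling ones which are excluded by $G$-invariance and the vanishing condition).

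Next I would assemble the global inverse by the usual iteration. Write $\tilde\Sigma_\delta = (\text{graph of }\epsilon^3\phi \text{ over } \Sigma_0 \text{ on } r>2r_\epsilon) \cup (\text{gluing annulus } r_\epsilon<r<2r_\epsilon) \cup (\pm\epsilon H \text{ on } r<r_\epsilon)$, choose cutoff functions subordinate to this decomposition, and define an approximate inverse $P_\delta$ of $\tilde L_\delta$ by: on the outer region use the inverse of $\tilde L_0 = L_{\Sigma_0}+\lambda\zeta$ on $C^{2,\alpha,G,0}_\tau\times\mathbf{R}$ (recording the output value of $\lambda$), on the inner regions use the rescaled inverse of $L_{\epsilon H}$, and cut off and add. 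Then $\tilde L_\delta P_\delta = \mathrm{Id} + E_\delta$ where $E_\delta$ is supported in the gluing annuli $r\sim r_\epsilon$ and comes entirely from the commutator of $\tilde L_\delta$ with the cutoffs applied to the far-from-solution pieces. On the gluing region $r\sim r_\epsilon = \epsilon^\alpha$, rescaling by $r_\epsilon^{-1}$ makes the geometry uniformly bounded, the metric is Euclidean up to $O(r_\epsilon^2)$, and the mismatch between the two model solutions has already been estimated to be lower order in Section~\ref{subsec:s2}; the same estimates show $\|E_\delta\|_{C^{0,\alpha}_{\tau-2}\to C^{0,\alpha}_{\tau-2}} \lesssim \epsilon^{\kappa'} \to 0$ as $\delta\to 0$, for some $\kappa'>0$ depending on how close $\alpha$ is to $1$. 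Hence $\mathrm{Id}+E_\delta$ is invertible by Neumann series for $\delta$ small, and $\tilde L_\delta^{-1} = P_\delta(\mathrm{Id}+E_\delta)^{-1}$ is bounded independently of $\delta$.

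Two points deserve care, and one of them is the main obstacle. The minor point is surjectivity versus injectivity: since the model operators are isomorphisms in these weighted spaces (not merely Fredholm with trivial cokernel up to the $\lambda\zeta$ correction), the parametrix argument directly gives a two-sided bounded inverse; but one should double-check that the extra real parameter $\lambda$ is accounted for consistently — the outer inverse produces both a function and a value of $\lambda$, while the inner inverse produces only a function (on $\epsilon H$ there is no obstruction in this weight range), so the global $\lambda$ is read off from the outer piece and the bookkeeping of the target space $C^{0,\alpha,G}_{\tau-2}$ (no $\mathbf{R}$ factor) is exactly what makes this consistent. The genuine obstacle is establishing that $\tilde L_0 = L_{\Sigma_0} + \lambda\zeta : C^{2,\alpha,G,0}_\tau\times\mathbf{R}\to C^{0,\alpha,G}_{\tau-2}$ is an isomorphism, i.e. the mapping theory for the Jacobi operator on the \emph{singular} link $\Sigma_0$ with weight $\tau\in(-3,-2)$. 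This requires: (a) a Fredholm/regularity statement for $L_{\Sigma_0}$ in these weighted Hölder spaces on the space with two conical singularities modeled on $C$ — a standard but not entirely trivial conical-singularity elliptic package, relying on the fact that near each pole $\Sigma_0$ looks like the cone $C$ over $S^3\times S^3$ and $\tau$ is not an indicial root; (b) identification of the kernel and cokernel: by Lemma~\ref{lem:zetaJacobi} the only $G$-invariant Jacobi field with $r^{2.1}u$ locally bounded is (a multiple of) $\phi$, and the vanishing condition on $\{r=r_0,\,y>0\}$ removes it from the kernel, while self-adjointness plus the normalization $\int_{\Sigma_0}\zeta\phi=\int\phi^2$ shows $\zeta$ spans a complement to the range of $L_{\Sigma_0}$ — so adding $\lambda\zeta$ makes $\tilde L_0$ onto and the vanishing condition makes it one-to-one. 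Assembling (a) and (b) cleanly — in particular verifying the weighted elliptic estimates up to the singular points and that no other indicial roots enter the interval $(-3,-2)$ after the $G$-reduction — is where the real work of the proof lies; everything else is the routine gluing perturbation sketched above.
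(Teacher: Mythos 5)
Your parametrix/gluing argument is correct in outline, but it takes a genuinely different route from the paper's. The paper proves the proposition by a compactness-and-contradiction argument: assume the uniform estimate $\Vert u\Vert_{C^{2,\alpha}_\tau} + |\lambda| \leq C\Vert L_\delta u + \lambda\zeta\Vert_{C^{0,\alpha}_{\tau-2}}$ fails along a sequence $\delta_i\to 0$, normalize, and extract limits at three scales: on $\Sigma_0$, where Lemma~\ref{lem:zetaJacobi} together with $G$-invariance and the vanishing condition on $\{r=r_0,\, y>0\}$ forces both the limit $u$ and $\lambda$ to vanish; on the cone $C$, where there are no Jacobi fields with growth rate in $(-3,-2)$; and on $H$, where a strong maximum principle against the positive scaling field $\Phi$ rules out a nonzero $O(r^\tau)$-decaying limit. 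The trade-off is instructive. The paper's route only needs to identify the \emph{kernels} of the three model operators, so it avoids invoking the conical-singularity Fredholm package on $\Sigma_0$ and the surjectivity of $L_H$ on weighted spaces at this point (that machinery does appear later, in Steps 1 and 2 of the proof of Proposition~\ref{prop:Ldinv}); on the other hand, the contradiction argument as written produces only the a priori injectivity bound and leaves surjectivity to a standard but unstated index argument. Your approach has to shoulder more up front --- you need $L_{\Sigma_0}+\lambda\zeta$ and $L_H$ to be genuine isomorphisms in the weighted spaces, which is exactly the obstacle you flag --- but in return the Neumann series hands you the two-sided inverse with a uniform bound in one stroke. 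Both proofs bottom out on the same facts, and the bookkeeping points you raise (the $\lambda$-parameter lives only on the outer model; $G$-invariance leaves only the constant eigenfunction on $S^3\times S^3$ and hence only $\phi$ among the relevant Jacobi fields; the condition at $r=r_0$ removes $\phi$ from the kernel) are precisely the ones the paper relies on.
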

     \begin{proof}
       The proof is by a standard argument by contradiction. Our goal is to prove that we have the estimate
       \[ \Vert u\Vert_{C^{2,\alpha}_\tau} + |\lambda| \leq C \Vert L_\delta u + \lambda\zeta \Vert_{C^{0,\alpha}_{\tau-2}} \]
       for all $u\in C^{2,\alpha, G, 0}$, with $C$ independent of $\delta$, for sufficiently small $\delta$. Suppose that this estimate does not hold, so that for a sequence $\delta_i\to 0$ we have corresponding functions $u_i$ and $\lambda_i\in \mathbf{R}$ such that
       \[ \label{eq:ui1} \Vert u_i\Vert_{C^{2,\alpha}_\tau} + |\lambda_i| = 1, \]
       but
       \[ \label{eq:ui2} \Vert L_\delta u_i + \lambda_i\zeta \Vert_{C^{0,\alpha}_{\tau-2}} \to 0. \]
       The uniform $C^{2,\alpha}_\tau$ bounds imply that up to choosing a subsequence we can extract a limit $u_i \to u$ in $C^{2,\alpha/2}_{loc}$, where $u$ is defined on $\Sigma_0$. We can also assume $\lambda_i\to \lambda$, and we find that $L_{\Sigma_0} u + \lambda\zeta = 0$ on $\Sigma_0$. From Lemma~\ref{lem:zetaJacobi}, using that $u$ is $G$-invariant, and that $u$ vanishes on the set $\{r=r_0, y > 0\}$, we must have $u = 0, \lambda=0$. From \eqref{eq:ui2} it then follows that
       \[ \label{eq:ui3}  \Vert L_\delta u_i \Vert_{C^{0,\alpha}_{\tau-2}} \to 0. \]

       By the definition of the weighted spaces, using that the surfaces $A_R\cap \tilde{\Sigma}_\delta$ scaled up by $R^{-1}$ have uniformly bounded geometry, we have the weighted Schauder estimates
       \[ \Vert u_i\Vert_{C^{2,\alpha}_\tau} \leq C( \Vert L_\delta u_i\Vert_{C^{0,\alpha}_{\tau-2}} + \Vert u_i\Vert_{C^0_\tau}), \]
       so in order to contradict \eqref{eq:ui1} it is enough to show $\Vert u_i\Vert_{C^0_\tau}\to 0$. Note that $\Vert u_i \Vert_{C^0_\tau}$ is uniformly equivalent to $\sup r^{-\tau} |u_i|$. Arguing by contradiction suppose that we have $\sup r^{-\tau} |u_i| > c_2 > 0$ for a constant $c_2$ independent of $i$, and let $x_i\in \tilde{\Sigma}_{\delta_i}$ be the points where this supremum is realized. We already know that $r(x_i)\to 0$, since the $u_i$ converge to zero on $\Sigma_0$.

       There are two possibilities: either $r(x_i) \delta_i^{-1/3} \to \infty$, or $r(x_i)\delta_i^{-1/3}$ remains bounded. In the first case we can extract a non-zero limit $u$ on the cone $C$ satisfying $L_C u =0$, and $|u| \leq r^\tau$. Since $\tau\in (-3,-2)$ and there are no non-zero Jacobi fields on $C$ with growth rate in this range, this is a contradiction. In the second case we can extract a non-zero limit $u$ on the surface $H$, with $L_H u = 0$ and $|u| \leq r^\tau$. Since $H$ is strictly stable, the Jacobi field $v$ on $H$ generated by homothetic scalings is asymptotically $r^{-2}$. Replacing $u$ with $-u$ if necessary, we can assume that $u$ is positive somewhere. Since $v$ is positive and $u$ decays faster than $v$ at infinity, we can find a constant $c > 0$ such that $cu \leq v$, but $cu(q) = v(q)$ for some $q\in H$. By the strong maximum principle $cu=v$ which contradicts that $u$ decays faster than $v$.
     \end{proof}

     \subsection{The nonlinear problem}\label{subsec:s4}
     In this section we construct the surface $\Sigma_\delta$ as a graph of $u$ over $\tilde{\Sigma}_\delta$, for $u$ sufficiently small in $C^{2,\alpha}_\tau$. The main result it the following.
     \begin{prop}\label{prop:Sigma1}
       There is a small $\kappa > 0$ such that if $\tau\in (-3,-2)$ is chosen sufficiently close to $-2$ then we have the following. 
       For sufficiently small $\delta = \epsilon^3$ there is a unique $G$-invariant function $u$ on $\tilde{\Sigma}_\delta$ such that $u=0$ on the set $\{r=r_0, y > 0\}$ for a fixed small $r_0 > 0$, 
       \[ \Vert u \Vert_{C^{2,\alpha}_\tau} \leq \epsilon^{3+\kappa}, \]
       and the surface $\Sigma_\delta$ given by the graph of $u$ over $\tilde{\Sigma}_\delta$ has mean curvature $m(\Sigma_\delta) = h(\delta)\zeta$. We view $\zeta$ as a function on $\Sigma_\delta$ using that $\Sigma_\delta$ is a graph over $\tilde{\Sigma}_\delta$. 
     \end{prop}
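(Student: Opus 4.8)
The plan is to realize $\Sigma_\delta$ as the normal graph of a small function $u$ over the approximate solution $\tilde\Sigma_\delta$ of Section~\ref{subsec:s2}, and to solve the equation $m(\Sigma_\delta)=h(\delta)\zeta$ by a contraction mapping argument in the weighted space $C^{2,\alpha,G,0}_\tau$, treating $h(\delta)$ as an auxiliary scalar. Writing out the mean curvature of a normal graph,
\[ m(\mathrm{graph}_{\tilde\Sigma_\delta}u)=m(\tilde\Sigma_\delta)+L_\delta u+Q_\delta(u), \]
where $Q_\delta$ collects the terms that are at least quadratic in $(u,\nabla u,\nabla^2 u)$ (the expansion being valid once $\|u\|_{C^{2,\alpha}_1}$ is small), the equation $m(\Sigma_\delta)=h(\delta)\zeta$ becomes
\[ L_\delta u+\lambda\zeta=-m(\tilde\Sigma_\delta)-Q_\delta(u),\qquad \lambda:=-h(\delta). \]
By Proposition~\ref{prop:invertLdelta} the operator $\tilde L_\delta(u,\lambda)=L_\delta u+\lambda\zeta$ is invertible with inverse bounded independently of $\delta$, so this is equivalent to the fixed point equation $u=\Phi(u)$, where $\Phi(u)$ denotes the $u$-component of $\tilde L_\delta^{-1}(-m(\tilde\Sigma_\delta)-Q_\delta(u))$ and $\lambda$ is read off from the other component. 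I would run this on the ball $\mathcal B=\{u\in C^{2,\alpha,G,0}_\tau:\|u\|_{C^{2,\alpha}_\tau}\le \epsilon^{3+\kappa}\}$, whose definition already builds in the $G$-invariance and the vanishing of $u$ on $\{r=r_0,\ y>0\}$.

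To check that $\Phi$ maps $\mathcal B$ into itself and is a contraction there, I would estimate the two inputs in $C^{0,\alpha,G}_{\tau-2}$. For $m(\tilde\Sigma_\delta)$, Proposition~\ref{prop:mcest1}, applied with a slightly larger exponent $3+\kappa_1$ for some $\kappa_1>\kappa$ (permitted for $\kappa$ small by taking $\alpha$ close to $1$), gives $\|m(\tilde\Sigma_\delta)\|_{C^1_{-4}}\le\epsilon^{3+\kappa_1}$; since $\tau-2<-4$, the embedding $C^1_{-4}\hookrightarrow C^{0,\alpha}_{-4}$ together with the weight comparison \eqref{eq:normcomp1} yields $\|m(\tilde\Sigma_\delta)\|_{C^{0,\alpha}_{\tau-2}}\lesssim\epsilon^{3+\kappa_1}$. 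For the nonlinear term, the quadratic structure of $Q_\delta$ (each summand a product of at least two factors from $u,\nabla u,\nabla^2 u$, with coefficients controlled once $\|u\|_{C^{2,\alpha}_1}$ is small, using $|A_{\tilde\Sigma_\delta}|\lesssim r^{-1}$ and the bounded ambient curvature) gives $\|Q_\delta(u)-Q_\delta(v)\|_{C^{0,\alpha}_{2\tau-2}}\lesssim (\|u\|_{C^{2,\alpha}_\tau}+\|v\|_{C^{2,\alpha}_\tau})\|u-v\|_{C^{2,\alpha}_\tau}$, and \eqref{eq:normcomp1} converts this to a bound in $C^{0,\alpha}_{\tau-2}$ at the cost of a factor $\epsilon^{\tau}$. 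On $\mathcal B$ this reads $\|Q_\delta(u)-Q_\delta(v)\|_{C^{0,\alpha}_{\tau-2}}\lesssim\epsilon^{\tau+3+\kappa}\|u-v\|_{C^{2,\alpha}_\tau}$ and $\|Q_\delta(u)\|_{C^{0,\alpha}_{\tau-2}}\lesssim\epsilon^{\tau+6+2\kappa}$; since $\tau>-3$ the exponent $\tau+3+\kappa$ is positive, as is $(\tau+6+2\kappa)-(3+\kappa)=\tau+3+\kappa$. Combining these with the uniform bound on $\tilde L_\delta^{-1}$ and with $\kappa_1>\kappa$, for $\epsilon$ small we get $\Phi(\mathcal B)\subseteq\mathcal B$ and $\Phi$ a contraction, hence a unique fixed point $u\in\mathcal B$. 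Unwinding the definitions, $\Sigma_\delta:=\mathrm{graph}_{\tilde\Sigma_\delta}u$ then has $m(\Sigma_\delta)=-\lambda\zeta$, and we set $h(\delta):=-\lambda$; the uniqueness of $u$ in the stated class follows since any such $u$ with $m(\mathrm{graph}\,u)=h\zeta$ for some $h$ is forced to be a fixed point of $\Phi$.

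The step that genuinely constrains the setup — and the main obstacle here — is ensuring that the fixed point lies in the ``graph regime'', i.e. that $\|u\|_{C^{2,\alpha}_1}$ is small uniformly in $\delta$, which is exactly what is needed both for the validity of the expansion of $m(\mathrm{graph}_{\tilde\Sigma_\delta}u)$ and for the control of the coefficients in $Q_\delta$. Passing from weight $\tau$ to weight $1$ via \eqref{eq:normcomp1} costs a factor $\epsilon^{\tau-1}$, so $\|u\|_{C^{2,\alpha}_1}\lesssim\epsilon^{\tau-1}\epsilon^{3+\kappa}=\epsilon^{\tau+2+\kappa}$, which is small only when $\tau+2+\kappa>0$; this is precisely why $\tau$ must be taken sufficiently close to $-2$, namely $\tau>-2-\kappa$. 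Once such a $\tau\in(-3,-2)$ is fixed, all the relevant exponents $\tau+2+\kappa$, $\tau+3+\kappa$, and $\kappa_1-\kappa$ are positive and the argument closes for small $\epsilon$. Finally, $\|u\|_{C^{2,\alpha}_1}$ small makes $\Sigma_\delta$ embedded and a genuine smoothing of $\Sigma_0$ (the singularities of $\Sigma_0$ having been replaced by scaled copies of $H$ in $\tilde\Sigma_\delta$), and parts (ii)--(iii) of Proposition~\ref{prop:Sdelta} are then read off from the explicit description of $\tilde\Sigma_\delta$ near $y=1$ together with the bound $\|u\|_{C^{2,\alpha}_\tau}\le\epsilon^{3+\kappa}$ controlling the correction and its derivatives.
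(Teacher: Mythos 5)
Your proposal is essentially the same proof as the paper's: realize $\Sigma_\delta$ as a normal graph over $\tilde\Sigma_\delta$, reduce $m(\Sigma_\delta)=h(\delta)\zeta$ to the fixed point problem for $\tilde L_\delta^{-1}(-m(\tilde\Sigma_\delta)-Q_\delta(\cdot))$ using Proposition~\ref{prop:invertLdelta}, and close the contraction via Propositions~\ref{prop:mcest1} and~\ref{prop:Qprop1}, with the same binding constraint $\tau>-2-\kappa$ coming from the need to keep $\|u\|_{C^{2,\alpha}_1}$ small. The only slip is an off-by-one in the weight count: with both factors in $C^{2,\alpha}_\tau$ the quadratic difference lands in $C^{0,\alpha}_{2\tau-3}$ (not $C^{0,\alpha}_{2\tau-2}$), so converting to $C^{0,\alpha}_{\tau-2}$ costs $\epsilon^{\tau-1}$ and the Lipschitz constant for $Q_\delta$ is $\epsilon^{\tau+2+\kappa}$ as in the paper, rather than $\epsilon^{\tau+3+\kappa}$; this is harmless because the condition $\tau+2+\kappa>0$ is one you already impose.
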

     Note that the only purpose of introducing the condition that $u$ vanishes on $\{r=r_0, y > 0\}$ is to make the solution unique, and $r_0$ just needs to satisfy that $\phi(r_0)\not=0$. This leads to the linearized operator in Proposition~\ref{prop:invertLdelta} being invertible, since by Lemma~\ref{lem:zetaJacobi} the only $G$-invariant elements in the kernel of the linearized operator on $\Sigma_0$ are multiples of $\phi$.
     
     For a function $u\in C^{2,\alpha}_\tau$ on $\tilde{\Sigma}_\delta$ let us denote by $m(u)$ the mean curvature of the graph of $u$. We have
     \[ m(u) = m(\tilde{\Sigma}_\delta) + L_\delta(u) + Q_\delta(u), \]
     where $Q_\delta$ collects the higher order terms in the mean curvature operator. We can analyze the behavior of this operator in weighted spaces using that the surface $A_R \cap \tilde{\Sigma}_\delta$ scaled up by $R^{-1}$ has uniformly bounded geometry for all $R, \delta$ (recall that $A_R$ is the region $r\in (R, 2R)$). The following can be shown by rescaling, and using that $Q$ only has quadratic and higher order terms. 
     \begin{prop}\label{prop:Qprop1}
       Let $\tau\in \mathbf{R}$. There is a small $c_3 > 0$ such that if $\Vert u_i\Vert_{C^{2,\alpha}_1} \leq c_3$ for $i=1,2$, then we have
       \[ \Vert Q_\delta(u_1) - Q_\delta(u_2) \Vert_{C^{0,\alpha}_{\tau-2}} \leq C (\Vert u_1\Vert_{C^{2,\alpha}_1} + \Vert u_2\Vert_{C^{2,\alpha}_1}) \Vert u_1-u_2\Vert_{C^{2,\alpha}_\tau}, \]
       for $C$ independent of $\delta$. 
     \end{prop}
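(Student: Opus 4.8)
The plan is to prove Proposition~\ref{prop:Qprop1} by a rescaling argument that reduces the weighted estimate to a fixed-scale estimate on pieces of $\tilde\Sigma_\delta$ with uniformly bounded geometry, and then to exploit that $Q_\delta$ (the nonlinearity in the mean curvature operator) is at least quadratic in the $2$-jet of $u$. First I would recall the structure of the mean curvature operator for a graph over a hypersurface $S$: if $u$ is a function on $S$ and $m(u)$ denotes the mean curvature of $\mathrm{graph}(u)$, then $m(u) = m(S) + L_S u + Q_S(u)$ where, in local coordinates in which the geometry of $S$ is controlled on a unit ball, $Q_S(u)$ is a smooth function of $(u,\nabla u,\nabla^2 u)$ vanishing to second order at $(0,0,0)$, i.e. schematically $Q_S(u) = P(u,\nabla u)\cdot \nabla^2 u + R(u,\nabla u)$ with $P$ vanishing to first order and $R$ to second order; moreover the coefficients of $P, R$ and all their derivatives are bounded in terms of the bounded geometry of $S$ on the unit ball, uniformly. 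This is the only structural input needed, and it is standard.

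Next I would set up the rescaling. Fix $R > 0$ with $r \approx R$ on the annular piece $A_R$ (the region $r\in(R,2R)$), and scale up the metric by $R^{-1}$. By the discussion around \eqref{eq:wH1}, the surface $\tilde\Sigma_\delta \cap A_R$ with the scaled metric $R^{-2}g$ has uniformly bounded geometry, independent of $R$ and $\delta$. Under this scaling, the weighted norm $\Vert\cdot\Vert_{C^{k,\alpha}_\tau}$ restricted to $A_R$ becomes $R^{-\tau}$ times an unweighted $C^{k,\alpha}$ norm in the scaled metric, while $Q_\delta$ transforms by the natural weight: $m$ scales like $R^{-1}$, so the scaled-up surface's $Q$ applied to the scaled function $\tilde u = u$ produces $R\cdot Q_\delta(u)$ in the scaled metric, and hence $\Vert Q_\delta(u)\Vert_{C^{0,\alpha}_{\tau-2}}$ over $A_R$ corresponds to $R^{-(\tau-2)}\cdot R^{-1}\cdot(\text{scaled }C^{0,\alpha}\text{ norm of scaled }Q) = R^{1-\tau}\cdot(\cdots)$, which matches the weight $R^{-\tau}\cdot R$ carried by two derivatives of $u$. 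Concretely: on the unit-scale model, the quadratic estimate $\Vert Q(\tilde u_1) - Q(\tilde u_2)\Vert_{C^{0,\alpha}} \le C(\Vert \tilde u_1\Vert_{C^{2,\alpha}} + \Vert \tilde u_2\Vert_{C^{2,\alpha}})\Vert \tilde u_1 - \tilde u_2\Vert_{C^{2,\alpha}}$ holds whenever $\Vert \tilde u_i\Vert_{C^{2,\alpha}} \le c_3$ for a small $c_3$ depending only on the bounded geometry constants; this follows by writing $Q(\tilde u_1) - Q(\tilde u_2) = \int_0^1 \frac{d}{ds}Q(s\tilde u_1 + (1-s)\tilde u_2)\,ds$, using the $C^1$-dependence of $Q$ on its arguments together with $DQ(0) = 0$, and the algebra of the $C^{0,\alpha}$ and $C^{2,\alpha}$ norms (products, compositions with smooth functions). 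The hypothesis $\Vert u_i\Vert_{C^{2,\alpha}_1}\le c_3$ is exactly what guarantees, after scaling by $R^{-1}$, that the scaled functions satisfy $\Vert \tilde u_i\Vert_{C^{2,\alpha}}\le c_3$ on every $A_R$ with a constant independent of $R$, because the weight-$1$ norm is precisely the scale-invariant one. Finally I would take the supremum over all admissible $R$: since the estimate holds on each $A_R$ with a uniform constant, and the weighted norms are defined as suprema over $R$ (with the finitely-overlapping cover $\{A_R\}$, or a dyadic subcover), it passes to the global weighted norms, giving the claim with $C$ independent of $\delta$.

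The step I expect to require the most care — though it is routine rather than deep — is the bookkeeping of how the weights interact under rescaling, in particular checking that the exponent $\tau - 2$ on the target and $\tau$ on the source (with the extra factor of $R$ from two derivatives, matching the $R^{-1}$ scaling of mean curvature) are consistent for \emph{every} $\tau\in\mathbf{R}$, so that no constraint on $\tau$ is needed. The only genuine geometric input — uniform bounded geometry of $\tilde\Sigma_\delta\cap A_R$ after scaling by $R^{-1}$, uniformly in $\delta$ — has already been established in the setup of the weighted spaces (it is used identically in Propositions~\ref{prop:mcest1} and \ref{prop:invertLdelta}), so I would simply invoke it. One subtlety worth a sentence: near the singular points $\tilde\Sigma_\delta$ is $\pm\epsilon H$, so the smallest relevant scale is $R\sim\epsilon$, and on that innermost region $H$ (scaled) has bounded geometry as well, so the argument applies uniformly down to the smallest $R$; there is no accumulation of constants. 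Putting these together yields the stated quadratic estimate.
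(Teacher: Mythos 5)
Your proof is correct and takes the same approach the paper indicates in the single sentence preceding the proposition (``can be shown by rescaling, and using that $Q$ only has quadratic and higher order terms''); you have simply written out the details. One slip worth flagging: you write $\tilde u = u$ for the rescaled function, but the correct amplitude-rescaled quantity is $\tilde u = R^{-1}u(R\,\cdot)$ (consistent with the graph scaling $\lambda S_f = $ graph of $\lambda f(\lambda^{-1}\cdot)$ over $\lambda S$, which gives $Q_{R^{-1}S}(R^{-1}f(R\cdot)) = R\,Q_S(f)(R\cdot)$), and this is exactly what makes the weight-$1$ norm the scale-invariant one, as you yourself say a few lines later; since your subsequent bookkeeping is done with the correct normalization, this inconsistency in notation does not affect the validity of the argument.
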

     
     Our goal is to find $(u, \lambda)$ for sufficiently small $\delta$ satisfying
     $m(u) + \lambda \zeta = 0$, i.e.
     \[ L_\delta (u) + \lambda \zeta = -m(\tilde{\Sigma}_\delta) - Q_\delta(u). \]
     We work with $G$-invariant functions $u$, so we can use the inverse given by Proposition~\ref{prop:invertLdelta} to write the equation in the equivalent form
     \[ (u, \lambda) = \tilde{L}_\delta^{-1} (-m(\tilde{\Sigma}_\delta) - Q_\delta(u)). \]
     In other words we are trying to find a fixed point of the operator
     \[ \mathcal{N} : C^{2,\alpha, G, 0}_\tau \times \mathbf{R} \to
       C^{2,\alpha, G, 0}_\tau \times \mathbf{R} \]
     defined by 
     \[ \mathcal{N}(u, \lambda) = -\tilde{L}_\delta^{-1}(m(\tilde{\Sigma}_\delta) + Q_\delta(u)). \]
     The existence and uniqueness follows from the following.
     \begin{prop}
       Define the set
       \[ E = \{ (u, \lambda) \,:\, \Vert u\Vert_{C^{2,\alpha}_\tau} + |\lambda| \leq \epsilon^{3+\kappa'} \} \subset C^{2,\alpha, G, 0}_\tau \times \mathbf{R}. \]
       There is a $\kappa' > 0$ such that for $\tau\in (-3,-2)$ sufficiently close to $-2$, and $\delta = \epsilon^3$ sufficiently small, the map $\mathcal{N}$ is a contraction on $E$, and so has a fixed point. 
     \end{prop}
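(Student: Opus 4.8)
The plan is to verify the standard contraction-mapping hypotheses for $\mathcal{N}$ on the complete metric space $E$, using the uniform bounds established in Propositions~\ref{prop:mcest1}, \ref{prop:invertLdelta}, and \ref{prop:Qprop1}. The key point is that all the operator bounds are independent of $\delta$, so the smallness of $\delta$ only enters through the estimate $\Vert m(\tilde{\Sigma}_\delta)\Vert_{C^1_{-4}}\leq \epsilon^{3+\kappa}$ on the mean curvature of the approximate solution, which drives everything.

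First I would check that $\mathcal{N}$ maps $E$ to itself. For $(u,\lambda)\in E$ we have, using boundedness of $\tilde{L}_\delta^{-1}$ (from Proposition~\ref{prop:invertLdelta}),
\[
\Vert \mathcal{N}(u,\lambda)\Vert_{C^{2,\alpha}_\tau\times\mathbf{R}} \leq C\big( \Vert m(\tilde{\Sigma}_\delta)\Vert_{C^{0,\alpha}_{\tau-2}} + \Vert Q_\delta(u)\Vert_{C^{0,\alpha}_{\tau-2}} \big).
\]
Since $\tau-2 < -4$ is close to $-4$ when $\tau$ is close to $-2$, and $r > c_1\epsilon$ on $\tilde{\Sigma}_\delta$, the weight comparison \eqref{eq:normcomp1} gives $\Vert m(\tilde{\Sigma}_\delta)\Vert_{C^{0,\alpha}_{\tau-2}}\lesssim \epsilon^{(\tau-2)-(-4)}\Vert m(\tilde{\Sigma}_\delta)\Vert_{C^{0,\alpha}_{-4}}\lesssim \epsilon^{\tau+2}\epsilon^{3+\kappa} = \epsilon^{3+\kappa-|\tau+2|}$, which is bounded by $\tfrac12\epsilon^{3+\kappa'}$ provided $\tau$ is chosen close enough to $-2$ and $\kappa'<\kappa$. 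For the quadratic term, Proposition~\ref{prop:Qprop1} with $u_2=0$ and the comparison $\Vert u\Vert_{C^{2,\alpha}_1}\lesssim\epsilon^{1-\tau}\Vert u\Vert_{C^{2,\alpha}_\tau}\lesssim \epsilon^{1-\tau}\epsilon^{3+\kappa'}$ gives $\Vert Q_\delta(u)\Vert_{C^{0,\alpha}_{\tau-2}}\lesssim \epsilon^{1-\tau}\epsilon^{3+\kappa'}\cdot\epsilon^{3+\kappa'}$, which is far smaller than $\epsilon^{3+\kappa'}$ for small $\epsilon$; one must also check that $\Vert u\Vert_{C^{2,\alpha}_1}\leq c_3$ so that Proposition~\ref{prop:Qprop1} applies, which again follows since $\epsilon^{1-\tau}\epsilon^{3+\kappa'}\to 0$. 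Combining, $\mathcal{N}(E)\subset E$.

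Next I would verify the contraction property. For $(u_1,\lambda_1),(u_2,\lambda_2)\in E$,
\[
\Vert \mathcal{N}(u_1,\lambda_1) - \mathcal{N}(u_2,\lambda_2)\Vert_{C^{2,\alpha}_\tau\times\mathbf{R}} \leq C\Vert Q_\delta(u_1) - Q_\delta(u_2)\Vert_{C^{0,\alpha}_{\tau-2}},
\]
and Proposition~\ref{prop:Qprop1} bounds the right side by $C(\Vert u_1\Vert_{C^{2,\alpha}_1}+\Vert u_2\Vert_{C^{2,\alpha}_1})\Vert u_1-u_2\Vert_{C^{2,\alpha}_\tau} \lesssim \epsilon^{1-\tau}\epsilon^{3+\kappa'}\Vert u_1-u_2\Vert_{C^{2,\alpha}_\tau}$, whose prefactor is $\ll 1$ for small $\epsilon$. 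Hence $\mathcal{N}$ is a contraction on $E$, and the Banach fixed point theorem yields a unique fixed point $(u,\lambda)\in E$. By construction $m(u) + \lambda\zeta = 0$, i.e. $m(\Sigma_\delta) = -\lambda\zeta$; setting $h(\delta) = -\lambda$ gives the surface $\Sigma_\delta$ with $m(\Sigma_\delta) = h(\delta)\zeta$, and the uniqueness within $E$ combined with the weighted Schauder estimate upgrades $u$ to the stated bound $\Vert u\Vert_{C^{2,\alpha}_\tau}\leq\epsilon^{3+\kappa}$ (taking $\kappa$ slightly larger than $\kappa'$, or simply $\kappa=\kappa'$).

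The main obstacle is bookkeeping the interplay of the three small exponents: $\kappa$ from the mean curvature estimate, the gap $|\tau+2|$ which costs a power of $\epsilon$ when shifting weights from $-4$ to $\tau-2$, and the target exponent $\kappa'$. One must choose $\tau$ close enough to $-2$ that $|\tau+2| < \kappa$, so that $\epsilon^{3+\kappa-|\tau+2|}$ still beats $\epsilon^{3+\kappa'}$ for a suitable $\kappa' \in (0, \kappa - |\tau+2|)$; there is no circularity since $\kappa$ is fixed first (from Proposition~\ref{prop:mcest1}), then $\tau$, then $\kappa'$. Everything else is a routine rescaling argument, already packaged into the cited propositions.
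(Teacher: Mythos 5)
Your overall plan is the same as the paper's: estimate $\mathcal{N}(0,0)$ via the mean curvature bound, bound the quadratic term using Proposition~\ref{prop:Qprop1} and the weight comparison, check contraction, and invoke Banach. However, there is a concrete sign error in your use of the weight comparison \eqref{eq:normcomp1} that reverses the logic of which term forces $\tau$ close to $-2$, and makes one of your intermediate inequalities false.

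The rule \eqref{eq:normcomp1} (for $\tau<\tau'$) is that passing to the \emph{more} singular weight costs nothing, $\Vert u\Vert_{C^{k,\alpha}_\tau}\lesssim\Vert u\Vert_{C^{k,\alpha}_{\tau'}}$, while passing to the \emph{less} singular weight loses a factor, $\Vert u\Vert_{C^{k,\alpha}_{\tau'}}\lesssim\epsilon^{\tau-\tau'}\Vert u\Vert_{C^{k,\alpha}_\tau}$ with $\epsilon^{\tau-\tau'}>1$. You apply the opposite in both places. In the $m(\tilde\Sigma_\delta)$ estimate you move from weight $-4$ to the more singular weight $\tau-2<-4$, which is free; the factor $\epsilon^{\tau+2}$ you insert does not belong there, and it is precisely because this step costs nothing that the paper gets $\Vert m(\tilde\Sigma_\delta)\Vert_{C^{0,\alpha}_{\tau-2}}\leq C\epsilon^{3+\kappa}$ with no $\tau$-loss. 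Conversely, in the $Q_\delta$ estimate you pass from $\tau$ to the less singular weight $1$, which must cost $\epsilon^{\tau-1}>1$, not gain $\epsilon^{1-\tau}<1$: the correct bound is $\Vert u\Vert_{C^{2,\alpha}_1}\leq C\epsilon^{\tau-1}\Vert u\Vert_{C^{2,\alpha}_\tau}\leq C\epsilon^{\tau+2+\kappa'}$. The inequality $\Vert u\Vert_{C^{2,\alpha}_1}\lesssim\epsilon^{1-\tau}\Vert u\Vert_{C^{2,\alpha}_\tau}$ that you use is simply false (a function concentrated near $r\sim\epsilon$ has a much larger $C^{2,\alpha}_1$-norm than $C^{2,\alpha}_\tau$-norm, not a smaller one). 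As a result, the step ``$\Vert u\Vert_{C^{2,\alpha}_1}\leq c_3$ follows since $\epsilon^{1-\tau}\epsilon^{3+\kappa'}\to 0$'' does not hold as reasoned; the true quantity $\epsilon^{\tau+2+\kappa'}$ only tends to zero when $\tau+2+\kappa'>0$, i.e.\ when $\tau$ is within $\kappa'$ of $-2$. This is precisely the constraint that appears in the paper's proof, where it is needed both to make $\mathcal{N}$ map $E$ into itself and, crucially, to make the contraction factor $C\epsilon^{\tau+2+\kappa'}$ small. So the constraint on $\tau$ is driven by the quadratic/contraction estimates, not by the mean-curvature term as your bookkeeping paragraph asserts. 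The fix is mechanical, but as written your intermediate claims are not valid.
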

     \begin{proof}
       Using the uniform bound for $\tilde{L}_\delta^{-1}$, the estimate \eqref{eq:mest2} and \eqref{eq:normcomp1}, we have
       \[ \Vert \mathcal{N}(0,0)\Vert &\leq C \Vert m(\tilde{\Sigma}_\delta)\Vert_{C^{0,\alpha}_{\tau-2}} \\
         &\leq C \Vert m(\tilde{\Sigma}_\delta)\Vert_{C^{0,\alpha}_{-4}} \\
         &\leq C \epsilon^{3 + \kappa}. \]

       If $(u,\lambda) \in E$, then we have
       \[ \Vert u\Vert_{C^{2,\alpha}_1} \leq C\epsilon^{\tau-1} \Vert u\Vert_{C^{2,\alpha}_\tau} \leq C\epsilon^{\tau+2+\kappa'}. \]
       Given $\kappa' > 0$, as long as $\tau$ is sufficiently close to $-2$, we have that $\Vert u\Vert_{C^{2,\alpha}_1} \leq c_3$ for the $c_3$ in Proposition~\ref{prop:Qprop1}, for sufficiently small $\epsilon$. It follows that
       \[ \Vert Q_\delta(u)\Vert_{C^{0,\alpha}_{\tau-2}} \leq C\epsilon^{3+\kappa'} \epsilon^{2+\tau + \kappa'}, \]
       and so using the bound on $\tilde{L}_\delta^{-1}$ we have
      \[ \Vert \mathcal{N}(u, \lambda)\Vert \leq C(\epsilon^{3+\kappa} + \epsilon^{3+\kappa'+2 + \tau + \kappa'}) \leq \epsilon^{3 + \kappa'} \]
      for sufficiently small $\epsilon$, as long as we choose $\kappa' < \kappa$, and $\tau$ is sufficiently close to $-2$.  It follows that $\mathcal{N}$ maps $E$ to itself.

      To see that $\mathcal{N}$ is a contraction, we have that for $(u_i, \lambda_i)\in E$
      \[ \Vert \mathcal{N}(u_1, \lambda_1) - \mathcal{N}(u_2,\lambda_2) \Vert &\leq C \Vert Q_\delta(u_1) - Q_\delta(u_2)\Vert_{C^{0,\alpha}_{\tau-2}} \\
        &\leq C \epsilon^{\tau+2+\kappa'} \Vert u_1 - u_2\Vert_{C^{2,\alpha}_\tau} \\
        &\leq C \epsilon^{\tau + 2+\kappa'} \Vert (u_1-u_2, \lambda_1-\lambda_2)\Vert \\
        &\leq \frac{1}{2} \Vert (u_1-u_2, \lambda_1-\lambda_2)\Vert \]
      for sufficiently small $\epsilon$, if $\kappa' > 0$ and $\tau$ is sufficiently close to $-2$. 
  \end{proof}

  \begin{proof}[Proof of parts (ii) and (iii) of Proposition~\ref{prop:Sdelta}]
    The previous Proposition says that for sufficiently small $\delta$ the graph of $u$ over $\tilde{\Sigma}_\delta$ satisfies $m(u) =h(\delta) \zeta$, for a suitable function $h(\delta)$. In addition there is a $\kappa' > 0$ such that $u$ satisfies the estimate $\Vert u\Vert_{C^{2,\alpha}_\tau} \leq \epsilon^{3+\kappa'}$, given $\tau\in (-3,-2)$ and $\epsilon = \delta^{1/3}$ sufficiently small. If we choose $\tau$ sufficiently close to $-2$, then this implies $|u| \leq C|\delta| r^{-2+\kappa}$ for some $\kappa > 0$. In particular it is enough to show that the claim (ii) holds for the cone $\tilde{V}_\delta$ over the surface $\tilde{\Sigma}_\delta$. 

    Recall that to construct $\tilde{\Sigma}_\delta$, we used the chart $F$ in \eqref{eq:Fchart} to identify a neighborhood of the singular point (the one with $y=1$) in $\Sigma_0$ with a neighborhood of the origin in $\mathbf{R}^8$. Under this identification on the set $r < r_\epsilon$ the surface $\tilde{\Sigma}_\delta$ is $\epsilon H$, while on $\{r > r_\epsilon\}$ it is the graph of a function $f_1$ over $C$ satisfying $f_1 = \epsilon^3 r^{-2} + O(\epsilon^3)$ on $r > 2r_\epsilon$ and $f_1 = \epsilon^3 r^{-2} + O(\epsilon^4 r^{-3})$ on $r\in (r_\epsilon, 2r_\epsilon)$. Since $\epsilon H$ itself is the graph of $\epsilon^3r^{-2} + O(\epsilon^4 r^{-3})$ over $C$ on the set $r > r_\epsilon$, it follows that $\tilde{\Sigma}_\delta$ is the graph of $f_2$ over $\epsilon H$ with $|f_2| < C(\epsilon^3 + \epsilon^4 r^{-3})$ on the set $r > r_\epsilon$. Since $r_\epsilon = \epsilon^\alpha$ with $\alpha < 1$, it follows that $|f_2| < C\epsilon^3 r^{-2+\kappa}$ for some $\kappa > 0$.

    Consider the cone $\tilde{V}_\delta$ over $\tilde{\Sigma}_\delta$, where we view $\tilde{\Sigma}_\delta$ as a subset of the unit sphere in $\mathbf{R}^8\times \mathbf{R}$. Then $\tilde{V}_\delta \cap\{y=1\}$ consists of the set of points $\lambda(p) p$ where $p$ runs over $\tilde{\Sigma}_\delta$, and the scaling factors $\lambda(p)$ are given by
    \[ \lambda(p) = \frac{1}{\sqrt{1 - r(p)^2}} = 1 + \frac{1}{2} r(p)^2 + O( r(p)^4). \]
    In particular we have $r(\lambda(p) p ) \sim r(p)$, as long as $r(p)$ is small. 

    We saw above that $p\in \tilde{\Sigma}_\delta$ sits in the graph of a function $f_2$ over $\epsilon H$ in our chart, with $|f_2| < C\epsilon^3 r(p)^{-2+\kappa}$. If $q\in \epsilon H$ in our chart, then $\lambda(q) q \in \lambda(q) \epsilon H$, and from Lemma~\ref{lem:Phidefn1} below we know that $\lambda(q)\epsilon H$ is the graph of an $O(\epsilon^3 (\lambda(q)-1) r^{-2})$ function over $\epsilon H$. Since $\lambda(q) - 1 \sim r(q)^2$, it follows that for $p\in \tilde{\Sigma}_\delta$, the corresponding point $\lambda(p) p\in \tilde{V}_\delta$ is in the graph of a function $f_3$ over $\epsilon H$ with $|f_3| < C(\epsilon^3 + \epsilon^3 r(p)^{-2 + \kappa})$, i.e. $|f_3| < C |\delta| r^{-2+\kappa}$ as claimed in property (ii). 

From the construction of $\tilde{\Sigma}_\delta$, and the proof of Proposition~\ref{prop:mcest1}, it is clear that the regularity scale of $\tilde{\Sigma}_\delta$ at each point is comparable to $r$. From Proposition~\ref{prop:Sigma1} it follows that the same holds for $\Sigma_\delta$ once $\delta$ is sufficiently small. This shows claim (iii) in Proposition~\ref{prop:Sdelta}. 
  \end{proof}

     \subsection{Computation of $h(\delta)$}\label{subsec:s5}
     So far we have constructed a surface $\Sigma_\delta$ satisfying $m(\Sigma_\delta) = h(\delta) \zeta$. In this section we show that $h(\delta) = c_4b \delta^{4/3} + O(\delta^{4/3+\kappa''})$ for some $\kappa'' > 0$, where $c_4 > 0$ and $b <0$ is the coefficient of $r^{-3}$ in the asymptotics of $H$ as a graph over $C$ from Proposition~\ref{prop:refinedf}. The first step is to compute the integral of $m(\tilde{\Sigma}_\delta)\phi$ over $\tilde{\Sigma}_\delta$. Here we are considering $\phi$ as the function $y^3r^{-2} - y$ on $S^8$ restricted to $\tilde{\Sigma}_\delta$. We have the following.

     \begin{prop}\label{prop:inttildeest}
       We have the estimate
       \[ \label{eq:inttildeest} \int_{\tilde{\Sigma}_\delta} m(\tilde{\Sigma}_\delta) \phi\, dA = c_5 b \delta^{4/3} + O(\delta^{4/3 + \kappa''}) \]
         for some $c_5, \kappa'' > 0$. 
       \end{prop}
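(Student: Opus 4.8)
The strategy is to decompose the integral $\int_{\tilde\Sigma_\delta} m(\tilde\Sigma_\delta)\phi\, dA$ according to the three regions used in the construction of $\tilde\Sigma_\delta$: the ``outer'' region $r>2r_\epsilon$ where $\tilde\Sigma_\delta$ is the graph of $\epsilon^3\phi$; the gluing region $r_\epsilon<r<2r_\epsilon$; and the ``inner'' region $r<r_\epsilon$ where $\tilde\Sigma_\delta=\epsilon H$ (near each singular point, with $\pm$). In each region I would estimate the contribution using the mean curvature bounds from the proof of Proposition~\ref{prop:mcest1}, together with the explicit growth of $\phi\sim r^{-2}$ near the singularities from \eqref{eq:phi10}. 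The key point to extract is that the leading-order contribution comes entirely from the inner region, and is governed by the second term $br^{-3}$ in the refined asymptotics of $H$ from Proposition~\ref{prop:refinedf}, with all other contributions being lower order.

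The main computation is the inner region. There, in the rescaled picture, $\epsilon^{-1}\cdot(\epsilon H)=H$ and the mean curvature of $\epsilon H$ with respect to the spherical metric is, as computed in Proposition~\ref{prop:mcest1}, the error term coming from the deviation $F^*g_{S^8}=g_{Euc}+O(r^2)$ of the spherical metric from the Euclidean one (since $H$ is Euclidean-minimal). I would compute this mean curvature to leading order: linearizing the mean curvature operator of the graph of $\epsilon^3(r^{-2}+br^{-3})$ over $C$ in the $O(r^2)$-perturbed metric, the dominant term picking up a nonzero pairing against $\phi\sim r^{-2}$ should be of size $\epsilon^4\cdot(\text{something})\cdot r^{-3}\cdot r^2$ (the $\epsilon^4$ and $r^{-3}$ from the second term of $H$, the $r^2$ from the metric error). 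Integrating $m(\epsilon H)\cdot\phi$ over $\epsilon H\cap\{r<r_\epsilon\}$, the volume element contributes $\sim\epsilon^7$ after rescaling (dimension $7$ cross-section, but really $8$-dimensional $H$; careful bookkeeping of the power of $\epsilon$ is needed here), and combining with the $\epsilon^3$ normalization in $\phi=y^3r^{-2}-y$ versus its rescaled version, one lands on $\int\sim b\,\epsilon^4\sim b\,\delta^{4/3}$. The precise constant $c_5>0$ comes from an explicit (convergent) integral over $H$ of a fixed expression built from the second fundamental form of $H$, the curvature error of the sphere, and $r^{-2}$; convergence at infinity must be checked using the decay rates, and the contribution near $r\sim r_\epsilon$ must be shown to be absorbed into the error.

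The remaining regions I expect to be genuinely lower order. In the outer region $r>2r_\epsilon$, the surface is the graph of $\epsilon^3\phi$ with $\phi$ a Jacobi field on $\Sigma_0$, so $m(\tilde\Sigma_\delta)$ is quadratic, of size $O(r^{-6}\epsilon^6)$ (from Proposition~\ref{prop:mcest1}), and $\int_{r>2r_\epsilon} r^{-6}\epsilon^6\cdot r^{-2}\,dA$ is dominated by $r\sim r_\epsilon=\epsilon^\alpha$, giving $O(\epsilon^{6-2\alpha})$ (one must track volume-element powers here too), which is $o(\delta^{4/3})=o(\epsilon^4)$ when $\alpha$ is close to $1$. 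The gluing region has width $\sim r_\epsilon$ and there $m=O(r_\epsilon^{-1}\epsilon^3+r_\epsilon^{-4}\epsilon^4)$ while $\phi\sim r_\epsilon^{-2}$, so its contribution is $O(r_\epsilon^{\text{(volume)}}(r_\epsilon^{-3}\epsilon^3+r_\epsilon^{-6}\epsilon^4))$, again $o(\epsilon^4)$ for $\alpha$ near $1$; here one must be slightly careful that the $\tilde r^{-3}$ term absorbed into the error in \eqref{eq:gluedpiece} does not itself pair nontrivially with $\phi$ — but since that term enters through the cutoff $\chi$, any such pairing is again controlled by the same power counting.

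The main obstacle is the inner-region computation: one must identify precisely which piece of $m(\epsilon H)$ (computed with respect to the pulled-back spherical metric) survives the pairing with $\phi\sim r^{-2}$ to give a \emph{nonzero} leading coefficient, confirm it is the one proportional to $b$, verify the relevant integral over $H$ converges, and get the sign/nonvanishing of $c_5$ right. This requires knowing the metric expansion $F^*g_{S^8}=g_{Euc}+O(r^2)$ to its actual leading term (not just the order), and a somewhat delicate integration by parts on $H$ using $L_H(r^{-2}\text{-type Jacobi field})=0$ to reduce the pairing to a boundary term at $r\sim r_\epsilon$, whose leading part is exactly the $b\,\delta^{4/3}$ claimed. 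I would organize this as: (1) set up the graph over $C$ in the perturbed metric; (2) write $m(\epsilon H)=$ linear term in the metric perturbation $+$ higher order; (3) pair with $\phi$, integrate by parts on $H$ using the Jacobi equation; (4) evaluate the resulting boundary/bulk terms to leading order in $\epsilon$, checking convergence; (5) collect the error contributions from the other two regions by the power counting above.
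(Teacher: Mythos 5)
Your decomposition into three regions is correct, but your identification of which region produces the leading contribution is wrong, and this is the heart of the argument. The paper shows that the inner region $r < r_\epsilon$, where $\tilde\Sigma_\delta = \epsilon H$, contributes only $O(r_\epsilon^6) = O(\epsilon^{6\alpha})$ to the integral: the mean curvature there comes from $F^*g_{S^8} = g_{Euc} + O(r^2)$ and is $O(r)$ in the original (unrescaled) frame, so $\int |m|\,|\phi|\,dA \lesssim \int_\epsilon^{r_\epsilon} r\cdot r^{-2}\cdot r^6\,dr \sim r_\epsilon^6 \ll \epsilon^4$ for $\alpha$ close to $1$. There is no hidden cancellation or boundary term that rescues an $\epsilon^4$-sized contribution from this region --- it is simply too small. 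Your heuristic ``volume $\sim \epsilon^7$ times $m\phi \sim \epsilon^{-3}$ gives $\epsilon^4$'' does not survive the power counting once the metric perturbation's actual size $O(r^2)$ in the original scale is tracked carefully.

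The leading contribution in fact comes from the \emph{gluing} region $r_\epsilon < r < 2r_\epsilon$, which you discard as ``again $o(\epsilon^4)$'' --- this is where your power counting goes off. In the $r_\epsilon^{-1}$-rescaled picture $\tilde\Sigma_\delta$ is the graph of $r_\epsilon^{-3}\epsilon^3\tilde r^{-2} + \chi(\tilde r)\,b\,r_\epsilon^{-4}\epsilon^4\tilde r^{-3} + \text{errors}$ over $C$, and since $\tilde r^{-2}$ and $\tilde r^{-3}$ are both Jacobi for $L_{g_0}$ one has $L_{g_0}\!\left(\chi(\tilde r)\tilde r^{-3}\right) = \chi''(\tilde r)\tilde r^{-3}$. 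Pairing $b\,r_\epsilon^{-4}\epsilon^4\chi''(\tilde r)\tilde r^{-3}$ with $\phi\sim\tilde r^{-2}$ against $dA\sim \tilde r^6\,d\tilde r$ gives, after an integration by parts in one variable, exactly $c_5 b\,\epsilon^4$ with $c_5$ the volume of $S^3\times S^3$ --- an $O(1)$ number, not a power of $r_\epsilon$. (Scaling back from the $r_\epsilon^{-1}$-picture, the relevant piece of $m$ is $r_\epsilon^{-5}\epsilon^4$, so $m\phi\cdot\text{vol} \sim r_\epsilon^{-5}\epsilon^4\cdot r_\epsilon^{-2}\cdot r_\epsilon^7 = \epsilon^4$; your estimate appears to have mixed rescaled and unrescaled frames, producing an extra factor of $r_\epsilon$.) The $b$ you correctly anticipated enters precisely through the refined asymptotics of Proposition~\ref{prop:refinedf}, but via the cutoff $\chi$ in the gluing region rather than via the ambient metric error on $\epsilon H$. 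The remaining tasks (bounding errors from the $O(r_\epsilon^{-1}\epsilon^3 + r_\epsilon^{-9}\epsilon^9)$ terms, from replacing $\phi$ by $r^{-2}$, and from the area-form discrepancy) are bookkeeping, as you outline, but the mechanism is localized at the cutoff, not in the interior.
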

       \begin{proof}
         We first show that the leading order contribution in the integral comes from the gluing region where $r\in (r_\epsilon, 2r_\epsilon)$. To see this let us consider a region of the form $r\in (R, 2R)$, where $R \geq 2r_\epsilon$. As in the proof of Proposition~\ref{prop:mcest1} the mean curvature on this region is of order $R^{-7}\epsilon^6$ (we have to scale down the result in that proof by a factor of $R$). At the same time on this region $\phi = O(R^{-2})$, while the volume is $O(R^7)$. The contribution of this region to the integral is therefore $O(R^{-2} \epsilon^6)$. We can sum this up for $R= 2r_\epsilon, 4r_\epsilon, \ldots$, and find that the total contribution is $O(r_\epsilon^{-2} \epsilon^6)$. Since $r_\epsilon = \epsilon^\alpha$ with $\alpha$ close to 1, we have $r_\epsilon^{-2}\epsilon^6 \ll \epsilon^4 = \delta^{4/3}$ as $\epsilon \to 0$.

         Similarly, on the region $r\in (R, 2R)$ for $R < r_\epsilon / 2$ the mean curvature is $O(R)$, so the contribution to the integral is $O(R^6)$. We sum this up for $R= r_\epsilon/2, r_\epsilon/4, \ldots$ to get a total contribution of $O(r_\epsilon^6)$, and we have $r_\epsilon^6 \ll \epsilon^4$.

         It remains to study the integral on the region where $r\in (r_\epsilon, 2r_\epsilon)$. Here we need a more careful analysis of the mean curvature than what we used in the proof of Proposition~\ref{prop:mcest1}. Recall that, according to Equation~\eqref{eq:gluedpiece}, after scaling up by $r_\epsilon^{-1}$,  $\tilde{\Sigma}_\delta$ is the graph of
         \[ v = r_\epsilon^{-3}\epsilon^3 \tilde{r}^{-2} + \chi(\tilde{r}) b r_\epsilon^{-4} \epsilon^4 \tilde{r}^{-3} + O(r_\epsilon^{-1}\epsilon^3 + r_\epsilon^{-9}\epsilon^9) \]
         over the cone $C$ on our region, in terms of $\tilde{r} = r_\epsilon^{-1}r$. The cutoff function $\chi$ satisfies $\chi(s) = 1$ for $s \leq 1$ and $\chi(s)=0$ for $s \geq 2$. If we write $g_\epsilon$ for the metric induced on $C$ by the scaled spherical metric $r_\epsilon^{-2} F^*g_{S^8}$, then we have $g_\epsilon = g_0 + O(r_\epsilon^2)$, where $g_0$ is the Euclidean metric restricted to $C$. The mean curvature of the graph of $v$ over $C$ is
         \[ \label{eq:mest10} m &= L_{g_\epsilon}(v) + O(r_\epsilon^{-6} \epsilon^6) \\
           &= L_{g_0}(v) + O( r_\epsilon^{-1}\epsilon^3+  r_\epsilon^{-6} \epsilon^6) \\
           &= b r_\epsilon^{-4}\epsilon^4 L_{g_0} (\chi(\tilde{r}) \tilde{r}^{-3}) +
           O( r_\epsilon^{-1}\epsilon^3+  r_\epsilon^{-6} \epsilon^6), \]
         since $L_{g_0} \tilde{r}^{-2}=0$. Note that for a function of $\tilde{r}$ we have
         \[ L_{g_0}(w) = w'' + \frac{6}{\tilde{r}} w' + \frac{6}{\tilde{r}^2}, \]
         and we also have $L_{g_0}(\tilde{r}^{-3}) = 0$. It follows that
         \[ L_{g_0} (\chi(\tilde{r}) \tilde{r}^{-3}) 
           &= \chi''(\tilde{r}) \tilde{r}^{-3}. \]
         The main contribution to the integral in \eqref{eq:inttildeest} comes from the integral of $\chi''(\tilde{r})\tilde{r}^{-3}\cdot \tilde{r}^{-2}$ over the annulus $\tilde{r}\in (1,2)$ in $C$, using the Euclidean metric:
         \[ \label{eq:intC1} \int_{C \cap \{ 1 < \tilde{r} < 2 \}} \chi''(\tilde{r}) \tilde{r}^{-5} \, dA &= c_5  \int_1^2 \chi''(r) r^{-5}\, r^6\, dr \\
           &= - c_5 \int_1^2 \chi'(r)\,dr = -c_5 (\chi(2)-\chi(1)) = c_5, \]
         where $c_5 > 0$ is the volume of the link of $C$. After scaling, this gives the leading term in \eqref{eq:inttildeest}.

         It remains to account for all the errors relating \eqref{eq:inttildeest} to \eqref{eq:intC1}. Note first that in \eqref{eq:inttildeest}, on the region $r\in (r_\epsilon, 2r_\epsilon)$ we have $m(\tilde{\Sigma}_\delta) = O(r_\epsilon^{-2}\epsilon^3 + r_\epsilon^{-5}\epsilon^4)$ from the proof of Proposition~\ref{prop:mcest1} (we must scale the result there down by $r_\epsilon$); the volume of the region is $O(r_\epsilon^7)$; and $\phi = O(r_\epsilon^{-2})$.  We can now consider the errors from different sources.
         \begin{itemize}
         \item The error in $m$ given in \eqref{eq:mest10}. After scaling down this is $O(r_\epsilon^{-2}\epsilon^3 + r_\epsilon^{-7}\epsilon^6)$. After integrating, this leads to an error of order
           \[ (r_\epsilon^{-2}\epsilon^3 + r_\epsilon^{-7}\epsilon^6) r_\epsilon^{-2} r_\epsilon^7 = r_\epsilon^3\epsilon^3 + r_\epsilon^{-2}\epsilon^6.  \]
         \item The error in replacing $\phi$ by $r_\epsilon^{-2}$. Note that by \eqref{eq:phi10} we have $\phi = r^{-2} + O(1)$, and in addition $\nabla\phi = O(r^{-3})$. Also $\tilde{\Sigma}_\delta$ is the graph of an $O(r_\epsilon^{-2}\epsilon^3)$ function over $C$ if $r\sim r_\epsilon$. It follows that the error in \eqref{eq:mest10} resulting from replacing $\phi$ with $r^{-2}$ is of order
           \[ (1 + r_\epsilon^{-3} r_\epsilon^{-2}\epsilon^3) (r_\epsilon^{-2}\epsilon^3 + r_\epsilon^{-5}\epsilon^4) r_\epsilon^7 = O(\epsilon^6 + r_\epsilon^{-3}\epsilon^7). \]
         \item The error in using the area form of $C$ instead of that of $\tilde{\Sigma}_\delta$. We again use that $\tilde{\Sigma}_\delta$ is the graph of an $O(r_\epsilon^{-2}\epsilon^3)$ function over $C$. Scaling up by a factor of $r_\epsilon^{-1}$ so that the surfaces have bounded geometry we see that the area forms are related by $\frac{dA_{\tilde{\Sigma}_\delta}}{dA_C} = 1 + O(r_\epsilon^{-3}\epsilon^3)$ on our annulus (in fact since $C$ is minimal we have a better estimate but we do not need it). Thus, using the area form of $C$ in the integral \eqref{eq:mest10} leads to an error of order
           \[ (r_\epsilon^{-2}\epsilon^3 + r_\epsilon^{-5}\epsilon^4)r_\epsilon^{-2} (r_\epsilon^{-3}\epsilon^3) r_\epsilon^7 = \epsilon^6 + r_\epsilon^{-3}\epsilon^7. \]
           \end{itemize}
       If $r_\epsilon = \epsilon^\alpha$ with $\alpha < 1$ sufficiently close to 1, then all of the errors are of lower order than $\epsilon^4$, i.e. they are $O(\delta^{4/3 + \kappa''})$ for some $\kappa'' > 0$ as required. 
       \end{proof}

       Next we compute the integral of $m(\Sigma_\delta)\phi$. We perform this integral on $\tilde{\Sigma}_\delta$, viewing  $m(\Sigma_\delta)$ as a function on $\tilde{\Sigma}_\delta$, using that $\Sigma_\delta$ is a graph over $\tilde{\Sigma}_\delta$. 
       \begin{prop}\label{prop:intest2}
         We have
         \[ \int_{\tilde{\Sigma}_\delta} m(\Sigma_\delta) 
           \phi\, dA = c_5 b \delta^{4/3} + O(\delta^{4/3+\kappa''}),\]
         for the same $c_5, b$ as in Proposition~\ref{prop:inttildeest}. 
       \end{prop}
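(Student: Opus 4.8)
The plan is to reduce the claim to Proposition~\ref{prop:inttildeest}. Recall from Proposition~\ref{prop:Sigma1} that $\Sigma_\delta$ is the graph over $\tilde{\Sigma}_\delta$ of a function $u$ with $\Vert u\Vert_{C^{2,\alpha}_\tau}\le\epsilon^{3+\kappa'}$, where $\tau\in(-3,-2)$ is close to $-2$ and $\epsilon=\delta^{1/3}$. Writing $m(u)=m(\tilde{\Sigma}_\delta)+L_\delta(u)+Q_\delta(u)$ for the mean curvature of the graph of $u$, and identifying $m(\Sigma_\delta)$ with $m(u)$ as a function on $\tilde{\Sigma}_\delta$ via the graph, we obtain
\[ \int_{\tilde{\Sigma}_\delta} m(\Sigma_\delta)\,\phi\, dA &= \int_{\tilde{\Sigma}_\delta} m(\tilde{\Sigma}_\delta)\,\phi\, dA \\ &\quad + \int_{\tilde{\Sigma}_\delta} L_\delta(u)\,\phi\, dA + \int_{\tilde{\Sigma}_\delta} Q_\delta(u)\,\phi\, dA. \]
By Proposition~\ref{prop:inttildeest} the first integral on the right equals $c_5 b\delta^{4/3}+O(\delta^{4/3+\kappa''})$, so it suffices to show that the two remaining integrals are $O(\delta^{4/3+\kappa''})$; we are free to shrink $\kappa''$ and to take $\alpha$ closer to $1$ and $\tau$ closer to $-2$.

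For the $L_\delta(u)$ term I would integrate by parts. Since $\tilde{\Sigma}_\delta$ is a closed manifold and $L_\delta$ is self-adjoint (it is the second variation of area, i.e.\ the Laplacian plus a potential),
\[ \int_{\tilde{\Sigma}_\delta} L_\delta(u)\,\phi\, dA = \int_{\tilde{\Sigma}_\delta} u\, L_\delta(\phi)\, dA, \]
so the task becomes to bound $L_\delta\phi=L_{\tilde{\Sigma}_\delta}\phi$. On the region $r>2r_\epsilon$, where $\tilde{\Sigma}_\delta$ is the graph of $\epsilon^3\phi$ over $\Sigma_0$, we have $L_{\Sigma_0}\phi=0$, and the standard expansion of the Jacobi operator under a graphical perturbation of size $\epsilon^3\phi\in C^{2,\alpha}_{-2}$ gives $|L_{\tilde{\Sigma}_\delta}\phi|\lesssim\epsilon^3 r^{-7}$, with analogous estimates in the gluing region as in the proof of Proposition~\ref{prop:mcest1}. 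On the region $r\lesssim r_\epsilon$, where $\tilde{\Sigma}_\delta=\pm\epsilon H$, we write $\phi=r^{-2}+\psi$ with $\psi$ bounded and smooth in the rescaled sense; since $r^{-2}$ is a Jacobi field on $C$ and $\epsilon H$ is $C^{2,\alpha}$-close to $C$ at each scale $r\gtrsim\epsilon$, one gets $|L_{\epsilon H}(r^{-2})|\lesssim\epsilon^3 r^{-7}$, while crudely $|L_{\epsilon H}(\psi)|\lesssim r^{-2}$, so $|L_\delta\phi|\lesssim r^{-2}$ there. Combining these with $|u|\le\epsilon^{3+\kappa'}r^\tau$ and the fact that $\tilde{\Sigma}_\delta\cap\{r\sim R\}$ has volume $\lesssim R^7$, and summing over dyadic scales $R\in[c_1\epsilon,1]$, the region $r>2r_\epsilon$ contributes $\lesssim\epsilon^{6+\kappa'+\alpha\tau}$ (the sum being dominated by the innermost scale $R\sim r_\epsilon$) and the region $r\lesssim r_\epsilon$ contributes at most $\epsilon^{6+\kappa'}$; both are $O(\delta^{4/3+\kappa''})$ once $\alpha$ is close to $1$ and $\tau$ is close to $-2$, since then $6+\kappa'+\alpha\tau>4$.

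The $Q_\delta(u)$ term is the main point. The crude estimate $\Vert Q_\delta(u)\Vert_{C^{0,\alpha}_{\tau-2}}\lesssim\Vert u\Vert_{C^{2,\alpha}_1}\Vert u\Vert_{C^{2,\alpha}_\tau}\lesssim\epsilon^{\tau+5+2\kappa'}$ from Proposition~\ref{prop:Qprop1}, combined with $|\phi|\lesssim r^{-2}$, only yields $O(\epsilon^{\tau+5+2\kappa'})$ for the integral, which for $\tau$ near $-2$ is of order $\epsilon^3$ rather than $\epsilon^4=\delta^{4/3}$. Instead I would exploit that $Q_\delta$ collects only quadratic and higher order terms: the rescaling argument behind Proposition~\ref{prop:Qprop1} in fact gives $\Vert Q_\delta(u)\Vert_{C^{0,\alpha}_{2\tau-3}}\le C\Vert u\Vert_{C^{2,\alpha}_\tau}^2$ whenever $\Vert u\Vert_{C^{2,\alpha}_1}$ is small, which holds by \eqref{eq:normcomp1} as $\tau$ is close to $-2$; hence $|Q_\delta(u)|\le C\epsilon^{2(3+\kappa')}r^{2\tau-3}$ on all of $\tilde{\Sigma}_\delta$. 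Multiplying by $|\phi|\lesssim r^{-2}$ and by the volume $\lesssim R^7$ of each dyadic annulus and summing over $R\in[c_1\epsilon,1]$, the series $\sum_R\epsilon^{6+2\kappa'}R^{2\tau+2}$ is dominated by the innermost scale $R\sim\epsilon$ (since $2\tau+2<0$), giving the bound $O(\epsilon^{8+2\tau+2\kappa'})$; choosing $\tau$ close enough to $-2$ that $\tau>-2-\kappa'$, this is $O(\epsilon^{4+s})=O(\delta^{4/3+\kappa''})$ for a suitable $\kappa''>0$. The main obstacle is precisely this refined, genuinely quadratic estimate on $Q_\delta(u)$, in place of the product-of-two-norms bound, together with the bookkeeping needed to verify that after the dominant contributions from the innermost scale $r\sim\epsilon$ and the gluing scale $r\sim r_\epsilon$ the exponent of $\delta$ stays strictly above $4/3$.
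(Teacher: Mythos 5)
Your approach is genuinely different from the paper's and is a clean way to organize the argument: the paper also exploits $L_{\Sigma_0}\phi=0$, but it does so by splitting $u=u_1+u_2$ with a cutoff, pulling $u_2$ back to $\Sigma_0$, and estimating the three comparison errors ($L_0$ vs.\ $L_\delta$, $\phi$ vs.\ $\underline\phi_\delta$, $dA_0$ vs.\ $dA_\delta$) rather than integrating by parts on $\tilde\Sigma_\delta$ directly and bounding $L_\delta\phi$. Your treatment of the $Q_\delta(u)$ term (scale-by-scale quadratic bound $|Q_\delta(u)|\lesssim r^{2\tau-3}\epsilon^{6+2\kappa'}$) matches the paper's.

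There is, however, an error in your bound on $L_\delta\phi$ in the inner region $r\lesssim r_\epsilon$: you conclude $|L_\delta\phi|\lesssim r^{-2}$, but on the whole range $c\epsilon\le r\lesssim r_\epsilon$ (with $\alpha$ close to $1$, hence $r_\epsilon<\epsilon^{3/5}$) the term $\epsilon^3 r^{-7}$ \emph{dominates} $r^{-2}$, not the other way around. Indeed at $r\sim\epsilon$ the surface $\epsilon H$ is a genuine $O(1)$ perturbation of nothing conical; rescaling shows
\[
(L_{\epsilon H}\phi)(\epsilon\tilde r)=\epsilon^{-2}\bigl(L_H\,\phi(\epsilon\cdot)\bigr)(\tilde r)\sim\epsilon^{-2}\cdot\epsilon^{-2}\bigl(L_H\tilde r^{-2}\bigr)(\tilde r)\sim\epsilon^{-4},
\]
since $r^{-2}$ is a Jacobi field of the cone $C$ but not of the smooth surface $H$ (the scaling Jacobi field on $H$ is $\Phi=r^{-2}+O(r^{-3})$, not $r^{-2}$ itself). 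So the honest bound is $|L_\delta\phi|\lesssim\epsilon^3 r^{-7}$ on $r\lesssim r_\epsilon$, and your claimed contribution $\epsilon^{6+\kappa'}$ from this region should instead be $\epsilon^{6+\kappa'}\int_{\epsilon}^{r_\epsilon}r^{\tau-1}\,dr\sim\epsilon^{6+\kappa'+\tau}\approx\epsilon^{4+\kappa'}$. Since $\epsilon^{4+\kappa'}=O(\delta^{4/3+\kappa''})$ this is still good enough, so your argument survives with the corrected estimate, but as written the intermediate claim is wrong and the margin you actually have is much tighter than stated.
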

       \begin{proof}
         We know that $\Sigma_\delta$ is the graph of a function $u$ over $\tilde{\Sigma}_\delta$, where $\Vert u\Vert_{C^{2,\alpha}_\tau} \leq \epsilon^{3+\kappa}$ for sufficiently small $\epsilon$. Here $\kappa > 0$, and we can take $\tau < -2$ as close to $-2$ as we like (if $\tau$ is chosen closer to $-2$, $\epsilon$ will need to also be smaller for the estimate to hold). We have
         \[ m(\Sigma_\delta) = m(\tilde{\Sigma}_\delta) + L_\delta (u) + Q_\delta(u), \]
         and so in view of \eqref{eq:inttildeest} our goal is to show that
         \[ \int_{\tilde{\Sigma}_\delta} (L_\delta u + Q_\delta (u)) \phi\, dA = O(\epsilon^{4+\kappa''}) \]
         for some $\kappa'' > 0$. The main point is that on $\Sigma_0$ the image of $L_0$ is $L^2$-orthogonal to $\phi$ since $L_0\phi =0$, and we can show that all the remaining error terms are $O(\epsilon^{4+\kappa''})$.

         Let us first consider the integral of $Q_\delta(u) \phi$. In the annular region $A_R$ where $r\in (R, 2R)$ we have
         \[ \Vert u\Vert_{C^{2,\alpha}_1} \leq C R^{\tau-1} \Vert u\Vert_{C^{2,\alpha}_\tau} \leq C R^{\tau-1}\epsilon^{3+\kappa}. \]
         Applying Proposition~\ref{prop:Qprop1} in this annular region we have
         \[ \Vert Q_\delta(u)\Vert_{C^{2,\alpha}_{-1}} \leq C R^{2\tau-2}\epsilon^{6+2\kappa}, \]
         i.e. on all of $\tilde{\Sigma}_\delta$ we have the estimate
         \[ |Q_\delta(u)| \leq C r^{2\tau-2} \epsilon^{6+2\kappa} r^{-1}. \]
         Using also that $\phi = O(r^{-2})$ it follows that
         \[ \left| \int_{\tilde{\Sigma}_\delta} Q_\delta(u) \phi\, dA\right| \leq C \int_\epsilon^1
         r^{2\tau-3}\epsilon^{6+2\kappa} r^{-2} r^6\,dr \leq C \epsilon^{2\tau + 8 + 2\kappa}.\]
       If $\tau$ is sufficiently close to $-2$, then this is $O(\epsilon^{4+\kappa})$.

       To deal with the $L_\delta(u)$ term let us write $u=u_1+u_2$ where $u_1 = \chi(r_\epsilon^{-1} r) u$ for our cutoff function $\chi$. So $u_1$ is supported on the region $r < 2r_\epsilon$, while $u_2$ is supported on $r > r_\epsilon$. Note that $\Vert u_i\Vert_{C^{2,\alpha}_\tau} \leq C\epsilon^{3+\kappa}$.

       We have $|L_\delta u_1| \leq C \epsilon^{3+\kappa}r^{\tau-2}$, so we can estimate
       \[ \left| \int_{\tilde{\Sigma}_\delta} (L_\delta u_1)\phi\,dA\right| \leq C\int_\epsilon^{r_\epsilon} \epsilon^{3+\kappa}r^{\tau-2}r^{-2}r^6\,dr \leq C \epsilon^{3+\kappa} r_\epsilon^{\tau + 3}. \]
       If $\tau$ is sufficiently close to $-2$ and $r_\epsilon = \epsilon^\alpha$ for $\alpha$ sufficiently close to 1, then this error is $O(\epsilon^{4+\kappa''})$ for some $\kappa'' > 0$.

       It remains to consider the $L_\delta u_2$ term. The function $u_2$ is supported on the region where $r > r_\epsilon$, and here $\tilde{\Sigma}_\delta$ is the graph of a function $f$ over $\Sigma_0$ satisfying the estimates $|f|\lesssim \epsilon^3 r^{-2}$ and $|\nabla f| \lesssim \epsilon^3 r^{-3}$. We use this to identify $\Sigma_0$ with $\tilde{\Sigma}_\delta$ on this region. Let us write $dA_0, dA_\delta$ for the corresponding volume forms on $\Sigma_0$, and $\underline\phi_\delta$ for the function $\phi$ pulled back to $\Sigma_0$ under this identification. Note that
       \[ \label{eq:intL10} \int_{\Sigma_0} L_0(u_2) \phi\, dA_0 = 0, \]
       since $L_0\phi = 0$. We need to compare this integral to
       \[ \label{eq:intL11} \int_{\Sigma_0}  L_\delta(u_2) \underline\phi_\delta\, dA_\delta.  \]
       Note that we have $|L_0 u_2| \leq C \epsilon^{3+\kappa} r^{\tau-2}$, $|\phi| \leq Cr^{-2}$ and $dA_0 \leq Cr^6\,dr$ for $O(4)\times O(4)$ invariant functions. We have the following estimates for the difference between \eqref{eq:intL10} and \eqref{eq:intL11}:
       \begin{itemize}
       \item Comparing $L_0$ to $L_\delta$ we have $|(L_0 - L_\delta) u_2| \leq C \epsilon^3r^{-3} \epsilon^{3+\kappa} r^{\tau-2}$. This leads to a difference of order
         \[ \int_{r_\epsilon}^1 \epsilon^3r^{-3} \epsilon^{3+\kappa} r^{\tau-2} r^{-2} r^6\,dr =O( \epsilon^{6+\kappa} r_\epsilon^\tau)\]
         between the two integrals.
       \item Using that $\nabla\phi = O(r^{-3})$, we find that on $\tilde{\Sigma}_\delta$ we have $|\phi - \underline\phi_\delta| \leq C\epsilon^3 r^{-2} r^{-3}$. This leads to an error of
         \[ \int_{r_\epsilon}^1 \epsilon^{3+\kappa} r^{\tau-2} \epsilon^3 r^{-5} r^6\,dr = O(\epsilon^{6+\kappa} r_\epsilon^\tau) \]
         between the integrals.
       \item The area forms satisfy $\frac{dA_\delta}{dA_0} = 1 + O(\epsilon^3 r^{-3})$ (we are not using that  $\Sigma_0$ is minimal). This leads to the error
         \[ \int_{r_\epsilon}^1 \epsilon^{3+\kappa} r^{\tau-2} r^{-2} \epsilon^3 r^{-3} r^6\,dr = O(\epsilon^{6+\kappa} r_\epsilon^\tau). \]
       \end{itemize}
       As long as $r_\epsilon = \epsilon^\alpha$ with $\alpha$ sufficiently close to 1 and $\tau$ is chosen close to $-2$, this error is less than $\epsilon^{4 + \kappa''}$ for some $\kappa'' > 0$, as required. 
       \end{proof}

       We can now compute $h(\delta)$, where $m(\Sigma_\delta) = h(\delta)\zeta$.
       \begin{cor}\label{cor:hexpand}
         The function $h(\delta)$ satisfies
         \[ h(\delta) = c_6 b \delta^{4/3} + O(\delta^{4/3 + \kappa''}) \]
         for some $c_6, \kappa'' > 0$, where $b < 0$ is the $r^{-3}$ coefficient of the asymptotics of $H$. 
       \end{cor}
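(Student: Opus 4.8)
The plan is to extract $h(\delta)$ from the integral computation in Proposition~\ref{prop:intest2} by pairing the mean curvature equation $m(\Sigma_\delta)=h(\delta)\zeta$ against the Jacobi field $\phi$. Integrating this identity over $\tilde\Sigma_\delta$ (where both sides are viewed as functions on $\tilde\Sigma_\delta$ via the graph identifications) gives
\[
h(\delta)\int_{\tilde\Sigma_\delta}\zeta\,\phi\,dA \;=\; \int_{\tilde\Sigma_\delta} m(\Sigma_\delta)\,\phi\,dA \;=\; c_5\,b\,\delta^{4/3}+O(\delta^{4/3+\kappa''}),
\]
by Proposition~\ref{prop:intest2}. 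So the whole task reduces to showing that $\int_{\tilde\Sigma_\delta}\zeta\phi\,dA$ is a positive constant plus a higher-order error in $\delta$.

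The key step is therefore the estimate $\int_{\tilde\Sigma_\delta}\zeta\phi\,dA = \int_{\Sigma_0}\zeta\phi\,dA + O(\delta^{\kappa'''}) = \int_{\Sigma_0}\phi^2\,dA + O(\delta^{\kappa'''})$, where the last equality is just the normalization in Definition~\ref{defn:zeta}. To see the first equality: $\zeta$ is supported in a fixed neighborhood of $\{y=0\}$ on $\Sigma_0$, which is a region bounded away from the two singular points, hence bounded away from $r=0$; on the support of $\zeta$ the surface $\tilde\Sigma_\delta$ is the graph of a function $f$ over $\Sigma_0$ with $|f|\lesssim \epsilon^3$ and $|\nabla f|\lesssim\epsilon^3$ (from the construction of $\tilde\Sigma_\delta$ as the graph of $\epsilon^3\phi$ away from the gluing regions), and $\phi,\zeta$ are smooth and bounded there. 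Comparing the integrals — accounting for the difference between $\phi$ and its pullback, between $\zeta$ on $\Sigma_0$ and on $\tilde\Sigma_\delta$, and between the area forms $dA_\delta$ and $dA_0$, each of which costs a factor $O(\epsilon^3)=O(\delta)$ on this fixed compact region — yields $\int_{\tilde\Sigma_\delta}\zeta\phi\,dA = \int_{\Sigma_0}\phi^2\,dA + O(\delta)$. Since $\int_{\Sigma_0}\phi^2\,dA>0$ is a fixed positive number, for small $\delta$ we may divide, obtaining
\[
h(\delta) = \frac{c_5\,b\,\delta^{4/3}+O(\delta^{4/3+\kappa''})}{\int_{\Sigma_0}\phi^2\,dA + O(\delta)} = c_6\,b\,\delta^{4/3}+O(\delta^{4/3+\kappa''})
\]
with $c_6 = c_5/\int_{\Sigma_0}\phi^2\,dA > 0$ (after shrinking $\kappa''$ if necessary to absorb the $O(\delta)$ in the denominator, which only contributes an error of relative size $O(\delta)$, hence $O(\delta^{4/3+1})$ in absolute terms).

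I do not expect a serious obstacle here: the genuinely delicate work — the refined asymptotics of $H$ in Proposition~\ref{prop:refinedf}, the careful mean-curvature bookkeeping in Propositions~\ref{prop:inttildeest} and~\ref{prop:intest2} — has already been done. The only point requiring a little care is confirming that the normalization of $\zeta$ in Definition~\ref{defn:zeta} is taken on $\Sigma_0$ (not on $\tilde\Sigma_\delta$), so that the $\delta$-dependence of the denominator must indeed be controlled as above; once that is noted, the corollary follows by division. One should also simply relabel the various $\kappa'', \kappa'''$ to a single positive exponent at the end.
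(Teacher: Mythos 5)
Your proposal is correct and follows essentially the same route as the paper's proof: substitute $m(\Sigma_\delta)=h(\delta)\zeta$ into the integral from Proposition~\ref{prop:intest2}, observe that $\zeta$ is supported away from the singular points so the comparison between $\tilde\Sigma_\delta$ and $\Sigma_0$ on that region costs only $O(\delta)$, invoke the normalization $\int_{\Sigma_0}\zeta\phi=\int_{\Sigma_0}\phi^2>0$ from Definition~\ref{defn:zeta}, and divide. The paper phrases the pullback on $\Sigma_0$ rather than on $\tilde\Sigma_\delta$, but this is a cosmetic difference.
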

       \begin{proof}
         Note that in constructing the surface $\Sigma_\delta$ we treated $m(\Sigma_\delta) = h(\delta)\zeta$ as an equation on $\tilde{\Sigma}_\delta$. The mean curvature $m(\Sigma_\delta)$ is pulled back using that $\Sigma_\delta$ is a graph over $\tilde{\Sigma}_\delta$, while $\zeta$ is viewed as a function on $\tilde{\Sigma}_\delta$ using that near the support of $\zeta$ the surface $\tilde{\Sigma}_\delta$ is itself a graph over $\Sigma_0$. By definition $\zeta$ satisfies
         \[ \int_{\Sigma_0} \zeta\phi\, dA_0 = \int_{\Sigma_0} \phi^2\, dA_0. \]
         Pulling back $m(\Sigma_\delta)$ to $\Sigma_0$, and denoting the pullback of $\phi$ by $\underline{\phi}_\delta$ and pulled back area form by $dA_\delta$, by the previous Proposition we have
         \[ \int_{\Sigma_0} h(\delta) \zeta \underline{\phi}_\delta\, dA_\delta = c_5b \delta^{4/3} + O(\delta^{4/3 + \kappa''}). \]
         We have $|\phi - \underline{\phi}_\delta| = O(\delta)$ and $\frac{dA_\delta}{dA_0} = 1 + O(\delta)$. This implies
         \[ h(\delta) \left(\int_{\Sigma_0} \phi^2\, dA + O(\delta)\right) =  c_5b \delta^{4/3} + O(\delta^{4/3 + \kappa''}), \]
         from which the claim follows. 
       \end{proof}

       \subsection{The functions $\phi_\delta$ and $\xi_\delta$} \label{subsec:s6}
       In this section we construct functions $\phi_\delta$ and $\xi_\delta$ on the surfaces $\Sigma_\delta$ that we will need later. The $\phi_\delta$ are the functions generating the family $\Sigma_\delta$, while $\xi_\delta$ satisfy the equation $L_{\Sigma_\delta}\xi_\delta = \zeta - c_\delta\phi_\delta$ for suitable constants $c_\delta$ with the additional constraint that $\langle \phi_\delta, \xi_\delta\rangle_{L^2} = 0$ on $\Sigma_\delta$. Since $m(\Sigma_\delta) = h(\delta) \zeta$, we expect that $\phi_\delta$ satisfies $L_{\Sigma_\delta} \phi_\delta = h'(\delta)\zeta$. In addition recall that both $\tilde{\Sigma}_\delta$ and $\Sigma_\delta$ are the graph of $\delta\phi$ on the set $\{r = r_0, y > 0\}$ over $\Sigma_0$. Using this we see that $\phi_\delta = \phi$ along $\{r=r_0, y > 0\}$. These properties uniquely characterize the function $\phi_\delta$ on $\Sigma_\delta$: 

       \begin{prop}\label{prop:phideltadefn}
         For sufficiently small $\delta$ there is a unique function $\phi_\delta$ on $\Sigma_\delta$ with uniformly bounded $C^{2,\alpha}_{-2}$ norm, satisfying $L_{\Sigma_\delta} \phi_\delta = h'(\delta) \zeta$ and $\phi_\delta = \phi$ along $\{r=r_0, y > 0\}$. In addition we have
         \begin{itemize}
         \item  There is an $r_1 > 0$ and $C > 0$ such that on the set where $r < r_1$ and $y > 0$ we have $C^{-1} r^{-2} < \phi_\delta < C r^{-2}$.
         \item On the support of $\zeta$ we have $\phi_\delta = \phi + O(\delta^\kappa)$ for $\kappa > 0$. 
         \end{itemize}
       \end{prop}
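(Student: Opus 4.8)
The plan is to obtain $\phi_\delta$ as the infinitesimal variation of the family $\Sigma_\delta$ in the parameter $\delta$. The first step is to check that, for $\delta$ in a small interval around any fixed small $\delta_0 > 0$, the surfaces $\Sigma_\delta$ depend in a $C^1$ fashion on $\delta$. This follows from the construction of Sections~\ref{subsec:s2}--\ref{subsec:s4}: after identifying the $\tilde{\Sigma}_\delta$ for nearby $\delta$ by normal graphs, the data $\tilde{\Sigma}_\delta$, $m(\tilde{\Sigma}_\delta)$, $Q_\delta$, and the inverse $\tilde{L}_\delta^{-1}$ from Proposition~\ref{prop:invertLdelta} all depend smoothly on $\delta$ in the relevant weighted spaces --- the only ingredient that is not smooth at $\delta = 0$ is $\epsilon = \delta^{1/3}$, which causes no trouble away from the origin --- and hence the fixed point $(u_\delta,\lambda_\delta)$ of the contraction $\mathcal{N}$ depends $C^1$ on $\delta$ by the usual parametrized contraction mapping argument. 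In particular $\delta\mapsto\Sigma_\delta$ is a $C^1$ family and $\lambda_\delta = -h(\delta)$ is $C^1$, which also supplies the differentiability of $h$ needed for the expansion of $h'(\delta)$ in Proposition~\ref{prop:Sdelta}. Then, writing $\Sigma_\delta$ as a normal graph over $\Sigma_{\delta_0}$ of a function $w_\delta$ with $w_{\delta_0}=0$, I would define $\phi_{\delta_0} := \partial_\delta w_\delta|_{\delta=\delta_0}$. Differentiating $m(\Sigma_\delta) = h(\delta)\zeta$ at $\delta_0$, and using that the derivative of the quadratic term vanishes because $w_{\delta_0}=0$, gives $L_{\Sigma_{\delta_0}}\phi_{\delta_0} = h'(\delta_0)\zeta$; here one arranges the identification of the support of $\zeta$ with the corresponding region of $\Sigma_0$ to be $\delta$-independent (possible since $\zeta$ is supported away from the singularities), so the potential extra term $h(\delta_0)\,\partial_\delta\zeta$, which in any case would only be of lower order $O(\delta_0^{4/3})$, does not appear. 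The field $\phi_\delta$ is $G$-invariant because the construction is, and $\phi_\delta = \phi$ on $\{r=r_0,y>0\}$ because there $\Sigma_\delta$ is by construction the normal graph of $\delta\phi$ over $\Sigma_0$.

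For the uniform $C^{2,\alpha}_{-2}$ bound I would either differentiate the estimates of Section~\ref{subsec:s4} --- in the region where $\Sigma_\delta$ is an $O(\delta)r^{-2}$-graph over $\Sigma_0$ one has $w_\delta = O(|\delta-\delta_0|)\,r^{-2}$, and near the poles a comparable bound relative to the scaled-down geometry of $\epsilon H$ --- or run a standalone weighted Schauder estimate for $L_{\Sigma_\delta}\phi_\delta = h'(\delta)\zeta$ with the normalization on $\{r=r_0,y>0\}$. For uniqueness, the difference $w$ of two admissible solutions is a $G$-invariant Jacobi field on $\Sigma_\delta$ of bounded $C^{2,\alpha}_{-2}$ norm vanishing on $\{r=r_0,y>0\}$; I would show that for small $\delta$ the space of such Jacobi fields (without the boundary condition) is one-dimensional, spanned by a $\Phi_\delta$ with $\Phi_\delta\approx\phi$ away from the poles and $\Phi_\delta\approx\pm\epsilon^{-3}v_{\pm\epsilon H}$ near them, where $v_{\epsilon H}$ is the scaling Jacobi field of $\epsilon H$. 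This is a perturbation statement relying on Lemma~\ref{lem:zetaJacobi} on $\Sigma_0$, the strict stability of $H$ (so $\epsilon H$ contributes only the scaling field, which glues at leading order to $\phi$ since $\phi\approx r^{-2}$ and $\epsilon^{-3}v_{\epsilon H}\approx r^{-2}$ in the gluing region), and a compactness-contradiction argument as in the proof of Proposition~\ref{prop:invertLdelta}. Since $\Phi_\delta(r_0)\approx\phi(r_0)\neq 0$, the boundary condition forces $w = c\Phi_\delta$ with $c=0$, so $w=0$.

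It remains to prove the two displayed estimates. The upper bound $\phi_\delta < Cr^{-2}$ on $\{r<r_1,y>0\}$ is immediate from the $C^{2,\alpha}_{-2}$ bound. For the lower bound, shrink $r_1$ so that $\{r<r_1\}$ misses the support of $\zeta$; then $\phi_\delta$ is a Jacobi field there. Near the pole $\phi_\delta$ is $C^2$-close to a positive multiple of $v_{\epsilon H}$ --- the sign fixed by $\phi_\delta(r_0) = \phi(r_0) > 0$ --- and $\epsilon^{-3}v_{\epsilon H}\geq cr^{-2}$ throughout $\epsilon H$, since $v_{\epsilon H}>0$ is bounded below on compact parts and is asymptotic to $\epsilon^3 r^{-2}$; in the transition region $\phi_\delta\approx\phi = y^3r^{-2}-y\geq\tfrac14 r^{-2}$ because $y$ is close to $1$. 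A Harnack inequality applied on the dyadic annuli $\{r\sim 2^{-k}\}$, rescaled to unit size, then propagates a uniform lower bound $\phi_\delta\geq cr^{-2}$ across the whole region. Finally, $\phi_\delta = \phi + O(\delta^\kappa)$ on the support of $\zeta$ follows from a quantitative form of the convergence $\phi_\delta\to\phi$: the difference $g_\delta = \phi_\delta - \phi$ satisfies $L_{\Sigma_\delta}g_\delta = h'(\delta)\zeta - L_{\Sigma_\delta}\phi = O(\delta^{1/3})$ near the support of $\zeta$ (using $L_{\Sigma_0}\phi=0$ and that $\Sigma_\delta$ is an $O(\delta)$-graph over $\Sigma_0$ there), is $O(\delta)$ on $\{r=r_0,y>0\}$, and is globally bounded by $Cr^{-2}$; elliptic estimates together with the one-dimensionality of the kernel above then yield $|g_\delta|\leq C\delta^\kappa$ on the support of $\zeta$.

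I expect the main obstacle to be the weighted analysis at the borderline weight $-2$ --- both the uniform $C^{2,\alpha}_{-2}$ bound and the one-dimensionality of the kernel --- since at this weight the cone $C$ carries the Jacobi field $r^{-2}$ and $H$ carries its scaling Jacobi field, so the relevant operator is not invertible and one must exploit that these borderline solutions glue consistently into the single admissible field $\Phi_\delta$; Proposition~\ref{prop:invertLdelta} is stated only for weights in $(-3,-2)$ and does not apply directly. A secondary technical point is keeping the identification of $\zeta$ on the various $\Sigma_\delta$ $\delta$-independent near $\{y=0\}$, so that the variational equation comes out exactly as $L_{\Sigma_\delta}\phi_\delta = h'(\delta)\zeta$.
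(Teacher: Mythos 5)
Your high-level outline is a genuine alternative to the paper's route, and you have correctly identified where the difficulties lie, but the gaps you flag are real gaps, and filling them would force you back through essentially the construction the paper uses. The paper builds $\phi_\delta$ first and verifies afterwards that it generates the family: it glues the explicit model $\tilde{\phi}_\delta$ (equal to $\phi$ on $\{r > 2r_\epsilon\}$ and the scaling Jacobi field of $\epsilon H$ near the poles, so manifestly in $C^{2,\alpha}_{-2}$ with the right sign and size), applies the augmented inverse $\tilde L_\delta^{-1}$ from Proposition~\ref{prop:invertLdelta} at a weight $\tau \in (-3,-2)$ to produce a correction $v_\delta$ with $\|v_\delta\|_{C^{2,\alpha}_\tau} \lesssim \epsilon^{\kappa'}$, and then sets $\phi_\delta = \tilde{\phi}_\delta - v_\delta$. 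Because $C^{2,\alpha}_\tau \subset C^{2,\alpha}_{-2}$ with small norm, the uniform $C^{2,\alpha}_{-2}$ bound, the two-sided comparison with $r^{-2}$, and the closeness $\phi_\delta = \phi + O(\delta^\kappa)$ near the support of $\zeta$ all come out of the explicit model plus the smallness of the correction, without any argument at the borderline weight. The generating-field property $\lambda_\delta = h'(\delta)$ is then verified a posteriori via the implicit function theorem on $\Sigma_\delta$, with the quadratic estimate $\|u_a - a\phi_\delta\|_{C^{2,\alpha}_\tau} \lesssim a^{1+\kappa}$ (note: the degree $-2$ leading term cancels, which is why this lives in weight $\tau < -2$).

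The two places where your plan stalls are precisely the ones you suspect. First, the ``$C^1$ dependence of the fixed point on $\delta$'' is not a routine parametrized contraction statement: the identification of $\tilde{\Sigma}_\delta$ with $\tilde{\Sigma}_{\delta_0}$ near the poles is a normal graph of size $O(|\delta-\delta_0| r^{-2})$ — borderline weight $-2$ — while the contraction of Section~\ref{subsec:s4} lives at $\tau < -2$. Differentiating the fixed point through this identification produces terms that do not stay in the contraction space, so ``the data depend smoothly on $\delta$ in the relevant weighted spaces'' is not a free consequence of the construction. Second, a ``standalone weighted Schauder estimate at weight $-2$'' cannot give a uniform a priori bound, because at that weight the operator has a one-dimensional kernel. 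Your sketch of one-dimensionality — glue the scaling field on $\pm\epsilon H$ to $\phi$ outside and argue by compactness as in Proposition~\ref{prop:invertLdelta} — is the right picture, but making it rigorous is exactly the content of building $\tilde{\phi}_\delta$ and estimating $L_{\tilde{\Sigma}_\delta}\tilde{\phi}_\delta$ in $C^{0,\alpha}_{-4}$; once you have done that, the paper's shortcut (correct $\tilde{\phi}_\delta$ at weight $\tau < -2$) is available and strictly simpler than a Harnack propagation, which moreover needs the positivity you are trying to prove as input. So the verdict: correct roadmap, correct identification of the obstruction, but the proposal as written does not get past the borderline-weight issue, and the paper's ``build the kernel element explicitly, correct in a sub-borderline space'' device is the step you are missing.
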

       \begin{proof}
         On the surface $H$ there is a unique positive Jacobi field $\Phi = r^{-2} + O(r^{-3})$, arising from homothetic scalings. As a first approximation we construct $\phi_\delta$ by gluing together $\epsilon^3\phi$ and the function $\epsilon \Phi(\epsilon^{-1}\cdot )$ on $\epsilon H$ (recall that $\delta = \epsilon^3$). More precisely 
         consider the function $\tilde{\phi}_\delta$ on $\tilde{\Sigma}_\delta$ constructed as follows: on the region where $r  > 2r_\epsilon$ we let $\tilde{\phi}_\delta = \delta\phi$; on the region $r < r_\epsilon$ in our chart $\tilde{\Sigma}_\delta$ is the surface $\epsilon H$, and we let $\tilde{\phi}_\delta(r) = \epsilon \Phi(\epsilon^{-1} r)$ here. On the gluing region $r\in (r_\epsilon, 2r_\epsilon)$ the surface $\tilde{\Sigma}_\delta$ is a graph over $\Sigma_0$, or over $C$ in our chart, and using this we define
         \[ \tilde{\phi}_\delta(r) = (1-\chi(r_\epsilon^{-1}r)) \epsilon^3 \phi(r) + \chi(r_\epsilon^{-1}r) \epsilon \Phi(\epsilon^{-1} r) \]
         in terms of our cutoff function $\chi$. Using the rescaled variable $\tilde{r} = r_\epsilon^{-1} r$ we have
         \[ r_\epsilon^{2} \tilde{\phi}_\delta(r_\epsilon \tilde{r}) = \epsilon^3 \tilde{r}^{-2} + O(\epsilon^{3+\kappa}). \]
         It follows, using a similar analysis in different annular regions to that in Proposition~\ref{prop:mcest1}, that $\Vert L_{\tilde{\Sigma}_\delta} \tilde{\phi}_\delta\Vert_{C^{2,\alpha}_{-4}} \leq C \epsilon^{3+\kappa}$, and so if $\tau$ is sufficiently close to $-2$, we have $\Vert L_{\tilde{\Sigma}_\delta} \tilde{\phi}_\delta\Vert_{C^{2,\alpha}_{\tau-2}} \leq C \epsilon^{3+\kappa'}$ for some $\kappa' > 0$.

         Recall that $\Sigma_\delta$ is the graph of a function $u$ over $\tilde{\Sigma}_\delta$ with $\Vert u\Vert_{C^{2,\alpha}_\tau} \leq \epsilon^{3+\kappa}$, and so $\Vert u\Vert_{C^{2,\alpha}_1} \leq \epsilon^{\kappa'}$ if $\tau$ is close to $-2$. This allows us to compare the operators $L_{\tilde{\Sigma}_\delta}$ to $L_{\Sigma_\delta}$ as maps $ C^{2,\alpha}_\tau \to C^{0,\alpha}_{\tau-2}$: 
         \[ \label{eq:compareL} \Vert L_{\Sigma_\delta} - L_{\tilde{\Sigma}_\delta} \Vert_{C^{2,\alpha}_\tau \to C^{0,\alpha}_{\tau-2}}\leq C \epsilon^{\kappa'}. \]
         It follows that $\Vert L_{\Sigma_\delta} \tilde{\phi}_\delta\Vert_{C^{0,\alpha}_{\tau-2}} \leq C\epsilon^{\kappa'}$. Using \eqref{eq:compareL} and the invertibility result Proposition~\ref{prop:invertLdelta} it follows that the corresponding ``augmented'' operator is also invertible on $\Sigma_\delta$, i.e. we can find functions $v_\delta$ and $\lambda_\delta\in\mathbf{R}$ such that
         \[ L_{\Sigma_\delta}(v_\delta) + \lambda_\delta \zeta = L_{\Sigma_\delta}\tilde{\phi}_\delta, \]
         the function $v_\delta$ vanishes along $\{r=r_0, y >0\}$ and 
         \[ \label{eq:vdeltaest} \Vert v_\delta\Vert_{C^{2,\alpha}_\tau} + |\lambda_\delta| \leq C \epsilon^{\kappa'}. \]
         We can finally define $\phi_\delta = \tilde{\phi}_\delta - v_\delta$.
         The estimates $C^{-1} r^{-2} < \phi_\delta < C r^{-2}$, and $\phi_\delta = \phi + O(\delta^\kappa)$ follow from the construction of $\tilde{\phi}_\delta$, together with the bound $\Vert v_\delta\Vert_{C^{2,\alpha}_{-2}} \leq C\epsilon^{\kappa''}$ for some $\kappa''$. This in turn follows from \eqref{eq:vdeltaest} if $\tau$ is sufficiently close to $-2$.
         
         By construction we have $L_{\Sigma_\delta} \phi_\delta = \lambda_\delta\zeta$, and we need to show that $\lambda_\delta = h'(\delta)$. To see this, fix a small $\delta > 0$. For sufficiently small $a\not= 0$ the surface $\Sigma_{\delta +a}$ is the graph of a function $u_a$ over $\Sigma_\delta$. Using the invertibility of the operator $\tilde{L}_\delta$ on $\Sigma_\delta$ defined in the same way as \eqref{eq:Ltildedef} on $\tilde{\Sigma}_\delta$, we can find $u_a\in C^{2,\alpha}_\tau$ for sufficiently small $a$ using the implicit function theorem. We find that there is some $\kappa > 0$ such that for any $\tau < -2$ sufficiently close to $-2$ we have $\Vert u_a - a\phi_\delta \Vert_{C^{2,\alpha}_\tau}\leq a^{1+\kappa}$ once $\delta$ is sufficiently small (as $\tau$ approaches $-2$ we will need to take smaller $\delta$, since the norm of the inverse of the linearized operator may blow up). In particular this shows that $\phi_\delta$ generates the family of surfaces $\Sigma_\delta$, which also implies that $L_{\Sigma_\delta} \phi_\delta = h'(\delta)\zeta$ as required. 
       \end{proof}

       An immediate consequence is the behavior of the areas of the surfaces $\Sigma_\delta$. Namely, for small $\delta > 0$ we have
       \[ \frac{d}{d\delta} \mathrm{Area}(\Sigma_\delta) = -\int_{\Sigma_\delta} m(\Sigma_\delta)\phi_\delta\, dA = -h(\delta) \int_{\Sigma_\delta} \zeta \phi_\delta\,dA = -c_5 b\delta^{4/3} + O(\delta^{4/3+\kappa}). \]
       In particular since $c_5 b < 0$, the surfaces $\Sigma_\delta$ have strictly larger area than $\Sigma_0$, for sufficiently small $\delta$. We record the following consequence that we will use:
       \[ \label{eq:ASbound} \mathrm{Area}(\Sigma_\delta) \leq \mathrm{Area}(\Sigma_0) + C |\delta| |h(\delta)|, \]
       for a constant $C > 0$. 

       We next construct the functions $\xi_\delta$.
       \begin{prop}\label{prop:xidefn}
         For sufficiently small $\delta$ there are functions $\xi_\delta$ on $\Sigma_\delta$ satisfying
         \begin{itemize}
         \item $\Vert \xi_\delta\Vert_{C^{2,\alpha}_\tau} \leq C_\tau$ for any $\tau\in (-3,-2)$, with $C_\tau$ depending on $\tau$ but not on $\delta$.
         \item $L_{\Sigma_\delta} \xi_\delta = \zeta - c_\delta\phi_\delta$, with $c_\delta = 1 + O(\delta^\kappa)$ for some $\kappa > 0$.
         \item $\langle \xi_\delta, \phi_\delta \rangle_{L^2(\Sigma_\delta)} = 0$.
           \end{itemize}
         \end{prop}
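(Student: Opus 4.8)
The plan is to obtain $\xi_\delta$ and $c_\delta$ simultaneously as the image of $\zeta$ under the inverse of a uniformly invertible augmented operator on $\Sigma_\delta$, in which the role played by $\zeta$ in Proposition~\ref{prop:invertLdelta} is now played by $\phi_\delta$, and the role of the condition on $\{r=r_0,y>0\}$ is played by the normalization $\langle\,\cdot\,,\phi_\delta\rangle_{L^2(\Sigma_\delta)}=0$. The first point is that since $\tau<-2$ and $|\phi_\delta|\le Cr^{-2}$ by Proposition~\ref{prop:phideltadefn}, the function $\phi_\delta$ lies in $C^{0,\alpha,G}_{\tau-2}$ with norm bounded independently of $\delta$; hence, for a fixed $\tau\in(-3,-2)$, the map $\mathcal{L}_\delta(\xi,c)=L_{\Sigma_\delta}\xi+c\,\phi_\delta$ is a bounded operator from $\{(\xi,c)\in C^{2,\alpha,G}_\tau\times\mathbf{R}\,:\,\langle\xi,\phi_\delta\rangle_{L^2(\Sigma_\delta)}=0\}$ to $C^{0,\alpha,G}_{\tau-2}$, with bound uniform in $\delta$. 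I claim $\mathcal{L}_\delta$ is invertible with inverse bounded independently of small $\delta$. Granting this, one sets $(\xi_\delta,c_\delta):=\mathcal{L}_\delta^{-1}(\zeta)$, which immediately gives $L_{\Sigma_\delta}\xi_\delta=\zeta-c_\delta\phi_\delta$, the orthogonality $\langle\xi_\delta,\phi_\delta\rangle_{L^2}=0$, and $\Vert\xi_\delta\Vert_{C^{2,\alpha}_\tau}+|c_\delta|\le C_\tau$ (with the constant depending on $\tau$, since the operator norm of $\mathcal{L}_\delta^{-1}$ may degenerate as $\tau\to-2$ or $\tau\to-3$).

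The invertibility of $\mathcal{L}_\delta$ is proved by the same contradiction/blow-up scheme as Proposition~\ref{prop:invertLdelta}. If it failed there would be $\delta_i\to0$ and $(\xi_i,c_i)$ with $\langle\xi_i,\phi_{\delta_i}\rangle_{L^2}=0$, $\Vert\xi_i\Vert_{C^{2,\alpha}_\tau}+|c_i|=1$, but $\Vert L_{\Sigma_{\delta_i}}\xi_i+c_i\phi_{\delta_i}\Vert_{C^{0,\alpha}_{\tau-2}}\to0$. Passing to a subsequence, $\xi_i\to\xi$ in $C^{2,\alpha/2}_{loc}$ on $\Sigma_0$ away from its two singular points, $c_i\to c$, and $\phi_{\delta_i}\to\phi$ locally (using the properties of $\phi_\delta$ and $h'(0)=0$, and that a $G$-invariant Jacobi field on $\Sigma_0$ with $r^2(\,\cdot\,)\in L^\infty$ is a multiple of $\phi$ by Lemma~\ref{lem:zetaJacobi}, pinned down by $\phi_\delta=\phi$ on $\{r=r_0,y>0\}$). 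In the limit $L_{\Sigma_0}\xi+c\phi=0$ with $\langle\xi,\phi\rangle_{L^2(\Sigma_0)}=0$, the $L^2$ pairings converging by dominated convergence since $\tau>-3$. Pairing with $\phi$ and integrating by parts — the boundary terms at the singular points vanish because $\xi=O(r^\tau)$ with $\tau>-3$ and $\phi=O(r^{-2})$ — gives $c\,\Vert\phi\Vert^2_{L^2(\Sigma_0)}=0$, so $c=0$; then $\xi$ is a $G$-invariant Jacobi field on $\Sigma_0$ with $r^2\xi\in L^\infty$, hence a multiple of $\phi$ by Lemma~\ref{lem:zetaJacobi}, and the normalization forces $\xi=0$. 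A further rescaling at the points where $\sup r^{-\tau}|\xi_i|$ is attained — exactly as in Proposition~\ref{prop:invertLdelta}, separating $r\,\delta_i^{-1/3}\to\infty$ (limit a Jacobi field on $C$ with growth in $(-3,-2)$, impossible) from $r\,\delta_i^{-1/3}$ bounded (limit a Jacobi field on $H$ decaying faster than the scaling field, ruled out by strict stability and the maximum principle) — shows $\Vert\xi_i\Vert_{C^0_\tau}\to0$; with $c_i\to0$ and the weighted Schauder estimate this contradicts $\Vert\xi_i\Vert_{C^{2,\alpha}_\tau}+|c_i|=1$.

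It remains to identify $c_\delta$. Since $\Sigma_\delta$ is smooth and closed, we pair $L_{\Sigma_\delta}\xi_\delta=\zeta-c_\delta\phi_\delta$ with $\phi_\delta$ in $L^2(\Sigma_\delta)$; using self-adjointness of $L_{\Sigma_\delta}$ with $L_{\Sigma_\delta}\phi_\delta=h'(\delta)\zeta$ on the left and $\langle\xi_\delta,\phi_\delta\rangle=0$ on the right we obtain
\[ c_\delta\int_{\Sigma_\delta}\phi_\delta^2\,dA = \int_{\Sigma_\delta}\zeta\phi_\delta\,dA - h'(\delta)\int_{\Sigma_\delta}\xi_\delta\zeta\,dA. \]
Now $\zeta$ is supported on a fixed compact set away from the regions where $\Sigma_\delta$ is modeled on $\pm\epsilon H$, and there $\phi_\delta=\phi+O(\delta^\kappa)$ with $\Sigma_\delta$ an $O(\delta)$-graph over $\Sigma_0$, so $\int_{\Sigma_\delta}\zeta\phi_\delta\,dA=\int_{\Sigma_0}\zeta\phi\,dA+O(\delta^\kappa)=\int_{\Sigma_0}\phi^2\,dA+O(\delta^\kappa)$ by Definition~\ref{defn:zeta}; the bound $\Vert\xi_\delta\Vert_{C^{2,\alpha}_\tau}\le C_\tau$ and $h'(\delta)=O(\delta^{1/3})$ give $h'(\delta)\int_{\Sigma_\delta}\xi_\delta\zeta\,dA=O(\delta^{1/3})$; and $\int_{\Sigma_\delta}\phi_\delta^2\,dA=\int_{\Sigma_0}\phi^2\,dA+O(\delta^\kappa)$, the gluing regions $\{r\lesssim r_\epsilon\}$ contributing only $O(r_\epsilon^3)$ since $|\phi_\delta|\le Cr^{-2}$, and the bulk being controlled by the estimates of Proposition~\ref{prop:phideltadefn}. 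Since $\int_{\Sigma_0}\phi^2\,dA>0$, dividing yields $c_\delta=1+O(\delta^\kappa)$ after shrinking $\kappa$.

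The step I expect to be the main obstacle is the uniform invertibility of $\mathcal{L}_\delta$: as in Proposition~\ref{prop:invertLdelta} one must preclude concentration of $\xi_i$ in the degenerating gluing regions, where the linearization has a $\delta_i$-dependent family of model operators interpolating between those on $C$ and on $H$, with the extra bookkeeping of the scalar $c_i$ and the orthogonality constraint; the essential structural input is that $\phi$ spans both the kernel and, by self-adjointness, the cokernel of $L_{\Sigma_0}$ among admissible $G$-invariant functions and that $\Vert\phi\Vert_{L^2(\Sigma_0)}\neq0$, so adjoining $\mathbf{R}\phi_\delta$ and imposing $\langle\,\cdot\,,\phi_\delta\rangle=0$ gives an invertible augmented operator. (Alternatively one could build $\xi_\delta$ out of $\tilde{L}_\delta^{-1}\zeta$, $\tilde{L}_\delta^{-1}\phi_\delta$ and $\phi_\delta$ and solve a $2\times2$ linear system for $c_\delta$ and one further scalar, whose determinant tends to $\Vert\phi\Vert_{L^2(\Sigma_0)}^2$ as $\delta\to0$, again giving $c_\delta\to1$.)
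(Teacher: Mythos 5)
Your proof is correct but takes a somewhat different route than the paper. The paper avoids establishing a new uniform invertibility result: it reuses the operator $\tilde L_\delta(u,\lambda)=L_{\Sigma_\delta}u+\lambda\zeta$ from Proposition~\ref{prop:invertLdelta} to produce $u_\delta$ with $L_{\Sigma_\delta}u_\delta+\lambda_\delta\zeta=\phi_\delta$ and $u_\delta=0$ on $\{r=r_0,y>0\}$, then sets $\tilde u_\delta=u_\delta+a_\delta\phi_\delta$ with $a_\delta$ chosen to enforce $\langle\tilde u_\delta,\phi_\delta\rangle_{L^2}=0$, uses $L_{\Sigma_\delta}\phi_\delta=h'(\delta)\zeta$ to rewrite the equation for $\tilde u_\delta$, pairs with $\phi_\delta$ and integrates by parts to find $\lambda_\delta=1+O(\delta^\kappa)$, and finally rescales: $\xi_\delta=(-\lambda_\delta+a_\delta h'(\delta))^{-1}\tilde u_\delta$. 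Your version instead sets up a new augmented operator $\mathcal L_\delta(\xi,c)=L_{\Sigma_\delta}\xi+c\,\phi_\delta$, with the $L^2$-orthogonality constraint in the domain replacing the pointwise vanishing on $\{r=r_0,y>0\}$, and re-runs the contradiction/blow-up scheme from Proposition~\ref{prop:invertLdelta} to get uniform invertibility. This is conceptually cleaner — you augment directly by the (co)kernel element $\phi_\delta$ and get all three properties for free from the single inversion — but it duplicates the delicate blow-up analysis (limits on $\Sigma_0$, on $C$, and on $H$) that the paper only carries out once. Your closing parenthetical remark (``build $\xi_\delta$ out of $\tilde L_\delta^{-1}\zeta$, $\tilde L_\delta^{-1}\phi_\delta$ and $\phi_\delta$, solve a $2\times2$ system'') is in fact essentially the paper's route. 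The identification of $c_\delta=1+O(\delta^\kappa)$ by pairing with $\phi_\delta$, using $L_{\Sigma_\delta}\phi_\delta=h'(\delta)\zeta$, $\phi_\delta=\phi+O(\delta^\kappa)$ on $\mathrm{supp}\,\zeta$, and $\int_{\Sigma_0}\zeta\phi=\int_{\Sigma_0}\phi^2$, is the same in both arguments.
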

         \begin{proof}
           The functions $\phi_\delta$ are uniformly bounded in $C^{2,\alpha}_\tau$, therefore also in $C^{2,\alpha}_{\tau-2}$ by \eqref{eq:normcomp1}. Let us define $u_\delta$ to be the unique solution of
           \[ L_{\Sigma_\delta} u_\delta + \lambda_\delta\zeta = \phi_\delta, \]
           with $u_\delta$ vanishing along $\{r=r_0, y > 0\}$, for some constant $\lambda_\delta$. The $u_\delta$ are uniformly bounded in $C^{2,\alpha}_\tau$. It follows that we can find constants $a_\delta$, bounded independently of $\delta$ such that
           \[ \tilde{u}_\delta = u_\delta + a_\delta\phi_\delta \]
           is $L^2$ orthogonal to $\phi_\delta$. The $\tilde{u}_\delta$ are also uniformly bounded in $C^{2,\alpha}_\tau$, and satisfy
           \[ L_{\Sigma_\delta} \tilde{u}_\delta + \lambda_\delta \zeta = a_\delta h'(\delta)\zeta + \phi_\delta. \]
           Multiplying by $\phi_\delta$, and then integrating by parts we get
           \[ \int_{\Sigma_\delta} h'(\delta) \zeta \tilde{u}_\delta + \lambda_\delta \phi_\delta\zeta\, dA= \int_{\Sigma_\delta} a_\delta h'(\delta)\zeta \phi_\delta + \phi_\delta^2\,dA. \]
           Since $h'(\delta) = O(\delta^{\kappa})$, it follows from this that $\lambda_\delta = 1 + O(\delta^\kappa)$.
           We now define $\xi_\delta = (-\lambda_\delta + a_\delta h'(\delta))^{-1} \tilde{u}_\delta$. 
         \end{proof}

  \section{The comparison surfaces $T_\delta$}\label{sec:Tdelta}
  Let $V_\delta = C(\Sigma_\delta) \subset \mathbf{R}^8\times\mathbf{R}$ denote the cone over the surface $\Sigma_\delta$
  provided by Proposition~\ref{prop:Sdelta}.   The cones $V_\delta$ have mean curvature $m(V_\delta) = h(\delta)\zeta \rho^{-2}$, where we extend $\zeta$ from $\Sigma_\delta$ to $V_\delta$ as a homogeneous degree one function. Since for small $\delta\not=0$ we have $h(\delta)\not=0$, these cones are not minimal unless $\delta=0$. In this section we construct small minimal perturbations of them on annuli of the form $|\ln \rho| < |\delta|^{-\kappa}$ for sufficiently small $\kappa > 0$. Here $\rho = ( |x|^2 + |y|^2)^{1/2}$ is the distance from the origin in $\mathbf{R}^8\times\mathbf{R}$. 

  On the linearized level we are trying to find a function $u$ over $V_\delta$ so that $L_{V_\delta} u = -h(\delta)\zeta \rho^{-2}$ since then the graph of $u$ over $V_\delta$ will be minimal to leading order. In the limit $\delta\to 0$ this is roughly equivalent to solving the equation $L_{V_0} u = \phi \rho^{-2}$. Since $\phi$ is in the cokernel of the Jacobi operator $L_{\Sigma_0}$ on the link, this equation has no homogeneous degree one solution $u$, but we do have
  \[ L_{V_0} (c \phi \ln \rho) = \phi \rho^{-2} \]
for a suitable constant $c$. This suggests that we can try to find minimal perturbations  $T_\delta$ of $V_\delta$ given to leading order by the graph of the function $u = -c h(\delta) \phi_\delta\ln\rho$. This is only well defined as long as $|h(\delta) \ln \rho|$ does not get too large, which leads to us considering annuli where $|\ln \rho| < |\delta|^{-\kappa}$. This construction is closely related to that of Adams-Simon~\cite{AS88}, where minimal surfaces were constructed with logarithmic decay to their tangent cones. With additional work we expect that one could construct a minimal surface with an isolated singularity at the origin, having tangent cone $V_0 = C\times \mathbf{R}$ there, such that the surface converges at a logarithmic rate to this tangent cone. Such a surface would be modeled on different $T_\delta$ with $\delta\to 0$ as $\rho\to 0$. For our purposes here, however, the simpler construction of the $T_\delta$ defined on annuli suffices.

We construct $T_\delta$ as the graph of a function $u$ over $V_\delta$, where $u$ is in a suitable doubly weighted space $C^{2,\alpha}_{\gamma, \tau}$. The weight $\tau$ accounts for the singular ray of $V_0$, just like in the weighted spaces $C^{2,\alpha}_\tau$ on $\Sigma_\delta$ used in the previous section. The new weight $\gamma$ is related to the cone structure of $V_\delta$. 
  \begin{definition}\label{defn:weighted}
    Let $\gamma,\tau\in\mathbf{R}$ and let $f$ be a function over a subset $U\subset V_\delta$, locally in $C^{k,\alpha}$. We define the weighted norm $\Vert f\Vert_{C^{k,\alpha}_{\gamma,\tau}}$ as follows. For $Q, R > 0$ let us define the annular region $A_{Q,R}$ to be the set where $Q/2 < \rho < 2Q$ and $R/2 < r < 2R$. We then define
    \[ \Vert f \Vert_{C^{k,\alpha}_{\gamma,\tau}(U)} = \sup_{Q, R > 0} Q^{\tau-\gamma} R^{-\tau} \Vert f\Vert_{C^{k,\alpha}_{R^{-2}g}(A_{Q,R}\cap U)}.\]
    Here $g$ denotes the metric on $V_\delta$ and on the right hand side we are measuring the $C^{k,\alpha}$-norm with respect to the rescaled metric $R^{-2}g$.
  \end{definition}
  Note that by construction, the rescaled metric $R^{-2}g$ has bounded geometry on the annulus $A_{R,Q}$. To see this, note that the regularity scale of $\Sigma_\delta$ at each point (viewed as sitting in the unit sphere of $\mathbf{R}^9$) is uniformly equivalent to $r$, and so by scaling the regularity scale of $V_\delta$ is proportional to $\rho (r/\rho) = r$, since this is a degree one homogeneous function equal to $r$ on the unit sphere. As in the previous section, our construction will be invariant under the symmetry group $G:=O(4)\times O(4)$ and we denote the corresponding function spaces by $C^{k,\alpha,G}_{\gamma, \tau}$. We will have $\tau\in (-3,-2)$ as before. 

  We have the following basic comparison result between norms with different weights.
  \begin{lemma}\label{prop:normcomp1}
    There is a constant $C > 0$ such that given $f$ in $C^{2,\alpha}_{loc}$ we have
    \[ \Vert f\Vert_{C^{2,\alpha}_{1,1}} \leq C |\delta|^{(\tau-1)/3}\Vert f\Vert_{C^{2,\alpha}_{1,\tau}}. \]
    \end{lemma}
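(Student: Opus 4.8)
The plan is to prove the inequality one annulus at a time, reducing everything to a lower bound for $r$ in terms of $\rho$ on $V_\delta$. Both sides of the claimed inequality are suprema of the \emph{same} local quantity $\Vert f\Vert_{C^{2,\alpha}_{R^{-2}g}(A_{Q,R}\cap V_\delta)}$ over the same family of rescaled annuli $A_{Q,R}$, weighted only by different powers of $Q$ and $R$. From the identity $R^{-1}=(Q/R)^{1-\tau}Q^{\tau-1}R^{-\tau}$ we get, for every $Q,R>0$,
\[ R^{-1}\,\Vert f\Vert_{C^{2,\alpha}_{R^{-2}g}(A_{Q,R}\cap V_\delta)}\le\left(\frac{Q}{R}\right)^{1-\tau}\Vert f\Vert_{C^{2,\alpha}_{1,\tau}}, \]
since $Q^{\tau-1}R^{-\tau}\Vert f\Vert_{C^{2,\alpha}_{R^{-2}g}(A_{Q,R}\cap V_\delta)}$ is one of the terms in the supremum defining $\Vert f\Vert_{C^{2,\alpha}_{1,\tau}}$. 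No bound on $f$ beyond its definition enters; the entire content is to control $(Q/R)^{1-\tau}$ uniformly over the annuli that actually meet $V_\delta$, using $1-\tau>0$ as $\tau<1$.

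The geometric input I would invoke is that on $V_\delta\setminus\{0\}$ one has $r\ge c\,|\delta|^{1/3}\rho$ for a $c>0$ independent of $\delta$. This follows from what is already recorded in Section~\ref{subsec:s3}: the approximate solution $\tilde\Sigma_\delta$ satisfies $r>c_1\epsilon$ with $\epsilon=|\delta|^{1/3}$ (it equals $\pm\epsilon H$ near the two former singular points of $\Sigma_0$, and $r$ is even larger on the rest), and $\Sigma_\delta$ is the graph of a function of small $C^{2,\alpha}_\tau$ norm over $\tilde\Sigma_\delta$, so $r>c_1\epsilon/2$ on $\Sigma_\delta$. Since $V_\delta$ is the cone over $\Sigma_\delta\subset S^8$, a point $p\in V_\delta$ at distance $\rho$ from the origin is $\rho q$ with $q\in\Sigma_\delta$, whence $r(p)=\rho\,r(q)\ge (c_1/2)|\delta|^{1/3}\rho$. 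As $r\sim R$ and $\rho\sim Q$ on $A_{Q,R}$, the condition $A_{Q,R}\cap V_\delta\neq\emptyset$ therefore forces $Q/R\le C|\delta|^{-1/3}$ for a uniform $C$.

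Combining the two observations, for every $Q,R$ with $A_{Q,R}\cap V_\delta\neq\emptyset$,
\[ R^{-1}\,\Vert f\Vert_{C^{2,\alpha}_{R^{-2}g}(A_{Q,R}\cap V_\delta)}\le\bigl(C|\delta|^{-1/3}\bigr)^{1-\tau}\Vert f\Vert_{C^{2,\alpha}_{1,\tau}}\le C'\,|\delta|^{(\tau-1)/3}\Vert f\Vert_{C^{2,\alpha}_{1,\tau}}, \]
where $C'$ absorbs $C^{1-\tau}$, which is bounded since $\tau$ lies in the fixed interval $(-3,-2)$, and I used $|\delta|^{-(1-\tau)/3}=|\delta|^{(\tau-1)/3}$. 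Annuli disjoint from $V_\delta$ contribute nothing to the supremum defining $\Vert f\Vert_{C^{2,\alpha}_{1,1}}$, so taking the supremum over all $Q,R$ finishes the proof. I do not expect any genuine obstacle here: the only things to get right are the exact algebraic identity for $R^{-1}$ — so that both weighted norms carry the identical quantity $\Vert f\Vert_{C^{2,\alpha}_{R^{-2}g}(A_{Q,R}\cap V_\delta)}$ and no H\"older-seminorm bookkeeping is needed — and the sign $1-\tau>0$, which is precisely why the constant degenerates as $\delta\to0$: it encodes that on $V_\delta$ there is no room with $r\ll|\delta|^{1/3}\rho$ for $f$ measured in $C^{2,\alpha}_{1,\tau}$ to be concentrated.
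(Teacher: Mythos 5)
Your proof is correct and takes essentially the same route as the paper's one-line argument: both rest on the observation $r/\rho \gtrsim |\delta|^{1/3}$ on $V_\delta$ (which in the paper is quoted from Proposition~\ref{prop:Sdelta}(ii), while you rederive it from the construction of $\tilde\Sigma_\delta$ in Section~\ref{subsec:s2}), combined with the algebraic identity $R^{-1}=(Q/R)^{1-\tau}Q^{\tau-1}R^{-\tau}$ applied annulus by annulus. The only point to keep explicit is that passing from $\tilde\Sigma_\delta$ to $\Sigma_\delta$ preserves the lower bound on $r$ because the perturbation $u$ is small in $C^{2,\alpha}_1$ (hence $|u|\ll r$ pointwise), which holds once $\tau$ is close enough to $-2$; with that noted, the argument is complete.
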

    \begin{proof}
      This follows directly from the definition of the norms using that on $V_\delta$ we have $r / \rho > C^{-1} |\delta|^{1/3}$. To see this recall that near the singular points of $\Sigma_0$ the surface $\Sigma_\delta$ is given as an $O(|\delta| r^{-2+\kappa})$ graph  over $\pm \delta^{1/3}H$. 
    \end{proof}

  The following result, analogous to Proposition~\ref{prop:invertLdelta}, is the main ingredient in constructing the minimal perturbations $T_\delta$ of $V_\delta$. Note that in contrast with the linearized operator on $\Sigma_\delta$, here we do not compensate for the function $\phi$ in the cokernel of $L_{\Sigma_0}$ using the function $\zeta$. The price of this is that we only invert the operator on an annular region, and the norm of the inverse blows up as the size of the annulus goes to infinity. 
  \begin{prop}\label{prop:Ldinv}
    Suppose that $\tau\in (-3,-2)$ is sufficiently close to $-2$, and $\kappa > 0$ is sufficiently small. Let  $U_\delta \subset V_\delta$ denote the subset where $|\ln \rho| \leq |\delta|^{-\kappa}$. There is a $C > 0$ such that for all $\delta$ sufficiently small the Jacobi operator 
    \[ \label{eq:LVdeltaUdelta}
      L_{V_\delta} : C^{2,\alpha,G}_{1, \tau}(U_\delta) \to C^{0,\alpha,G}_{-1,\tau-2}(U_\delta) \]
    on the cone $V_\delta$ has a right inverse $P_\delta$ with norm bounded by $C|\delta|^{-\kappa}$.
  \end{prop}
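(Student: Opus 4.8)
\textbf{Overall strategy.} The plan is to prove the \emph{a priori} estimate
\[
  \Vert u\Vert_{C^{2,\alpha}_{1,\tau}(U_\delta)} \leq C|\delta|^{-\kappa}\,\Vert L_{V_\delta}u\Vert_{C^{0,\alpha}_{-1,\tau-2}(U_\delta)}
\]
for $G$-invariant $u$ (satisfying an appropriate boundary condition on $\partial U_\delta$, or else working with a Dirichlet realization), and then deduce the existence of a right inverse $P_\delta$ with this bound by a standard functional-analytic argument (the operator, being a product-type elliptic operator on an annular region with bounded-geometry rescalings, has closed range, and the estimate plus self-adjointness of $L_{V_\delta}$ rules out a cokernel). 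The two weights are handled somewhat independently: the $\tau$-weight controls the behaviour near the singular ray $r=0$ and is dealt with exactly as in Proposition~\ref{prop:invertLdelta} via a blow-up/compactness argument at scale $r\sim\delta^{1/3}$; the $\gamma=1$ weight together with the factor $|\delta|^{-\kappa}$ captures the fact that $\phi$ lies in the cokernel of $L_{\Sigma_0}$, so that solving $L_{V_0}u=\phi\rho^{-2}$ requires the logarithmically growing solution $c\phi\ln\rho$, whose size on $U_\delta$ is $\sim|\delta|^{-\kappa}$.

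\textbf{Key steps.} First I would set up the Dirichlet problem for $L_{V_\delta}$ on $U_\delta$ in the $G$-invariant weighted spaces and record the weighted Schauder estimate, which reduces everything to the $C^0_{1,\tau}$ bound since the rescaled metrics $R^{-2}g$ have uniformly bounded geometry on each $A_{Q,R}$ (using that the regularity scale of $V_\delta$ is $\sim r$, as noted after Definition~\ref{defn:weighted}). Second, I would establish the estimate near the singular ray: arguing by contradiction with a sequence $u_i$, $\delta_i\to0$ normalized in $C^0_{1,\tau}$ with $L_{V_{\delta_i}}u_i\to0$, and rescaling about points where $r^{-\tau}\rho^{\tau-1}|u_i|$ is nearly maximal, the limit is a Jacobi field on $C\times\mathbf{R}$ (if $r(x_i)\delta_i^{-1/3}\to\infty$) or on the Hardt--Simon surface $H\times\mathbf{R}$ (if it stays bounded), decaying like $r^{\tau}$ with $\tau\in(-3,-2)$; in the first case there is no such Jacobi field on $C$, and in the second the strong maximum principle against the scaling Jacobi field $\Phi\sim r^{-2}$ on $H$ gives a contradiction — exactly as in Proposition~\ref{prop:invertLdelta}. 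This shows that away from a fixed small neighborhood of $r=0$ we may treat $V_\delta$ as a perturbation of the \emph{smooth} cone $C\times\mathbf{R}$ (with singularity cut out), so the remaining difficulty is purely the cone/cokernel issue. Third, for the $\rho$-direction I would perform separation of variables on the link $\Sigma_0$ (or $\Sigma_\delta$): expand $u=\sum_j u_j(\rho)\psi_j$ where $\psi_j$ are eigenfunctions of $-L_{\Sigma_0}$ restricted to $G$-invariant functions, so that each $u_j$ solves an Euler-type ODE $\rho^2 u_j''+\rho u_j'+(\text{something})u_j = \rho^2 f_j$. For the indices with indicial roots bounded away from $1$ one gets the desired bound with a $\delta$-independent constant by the usual ODE estimate on the interval $|\ln\rho|\le|\delta|^{-\kappa}$ with vanishing endpoint data. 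The single bad mode is $\psi=\phi$ (degree one, in the cokernel), where the ODE is resonant: the homogeneous solutions are $\rho$ and $\rho\ln\rho$, and a particular solution for right-hand side of size $1$ in $C^0_{-1,\tau-2}$ grows like $\rho(\ln\rho)^2$, i.e.\ has $C^0_{1,\tau}$-norm on $U_\delta$ of order $|\delta|^{-2\kappa}$ a priori, but matching both endpoint conditions on the symmetric interval actually gives size $\sim|\delta|^{-\kappa}$ — this is where the factor $|\delta|^{-\kappa}$ in the inverse bound comes from. Finally I would glue the near-singular-ray estimate and the away-from-ray separation-of-variables estimate with a cutoff (absorbing the commutator terms, which are lower order since they are supported where $r\sim\delta^{1/3}$ and pick up extra powers of $\delta$), and pass from the estimate to a right inverse.

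\textbf{Main obstacle.} The delicate point is the bookkeeping for the resonant $\phi$-mode: one has to show that the \emph{particular} solution can be chosen with $C^0_{1,\tau}$-norm only $O(|\delta|^{-\kappa})$ rather than $O(|\delta|^{-2\kappa})$, i.e.\ that the logarithmic growth is genuinely linear and not quadratic in $\ln\rho$ once the right-hand side is merely bounded (not resonant itself); this uses that $f\in C^{0,\alpha}_{-1,\tau-2}$ has no built-in resonance, so its $\phi$-component $f_\phi(\rho)$ is $O(\rho^{-2})$ and the Duhamel integral $\int \rho^{-1}(\ln\rho-\ln s)\,s\,f_\phi(s)\,\frac{ds}{s}$ is $O(\rho\ln\rho)$ on $U_\delta$, giving the claimed $|\delta|^{-\kappa}$. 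Secondarily, one must make sure the compactness argument near $r=0$ and the separation of variables away from $r=0$ are compatible — i.e.\ that the cokernel element $\phi$ on $\Sigma_0$ is the \emph{only} obstruction among $G$-invariant modes with indicial root near $1$, which is exactly the content of Lemma~\ref{lem:zetaJacobi} (the degree-one $G$-invariant Jacobi fields on $C\times\mathbf{R}$ blowing up no faster than $r^{-2.1}$ are spanned by $\phi$ once rotations are excluded, and here $G$-invariance excludes all rotational modes).
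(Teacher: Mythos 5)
Your overall strategy — proving an a priori estimate $\Vert u\Vert_{C^{2,\alpha}_{1,\tau}} \leq C|\delta|^{-\kappa}\Vert L_{V_\delta}u\Vert_{C^{0,\alpha}_{-1,\tau-2}}$ via a blow-up argument near $r=0$ (exactly as in Proposition~\ref{prop:invertLdelta}) plus separation of variables in $\rho$ away from the singular ray, and then deducing existence of an inverse — is a genuinely different route from the paper. The paper follows \cite{Sz17} and directly assembles an approximate right inverse: Step~1 establishes invertibility of $L_{V_0}$ on the $\phi$-orthogonal subspaces $C^{\cdot,G,\perp}_{\gamma,\tau}$; Step~2 establishes invertibility of $L_{H\times\mathbf{R}}$ on $C^{\cdot}_\tau$; Step~3 glues these to get a parametrix $P$ on the orthogonal part; and Step~4 handles the $\phi_\delta$-component by writing down the ODE $\rho^{-9}(\rho^9 f')' = u_0\rho^{-1}$ and solving it explicitly, with $|f|\lesssim|\ln\rho|\Vert u_0\Vert$ giving the $|\delta|^{-\kappa}$. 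Finally the right inverse is $P_\delta = P(L_{V_\delta}P)^{-1}$. Your route and the paper's extract the same two inputs — the Fredholm theory on $H\times\mathbf{R}$ / $V_0$ and the log growth on the resonant mode — but the paper's direct construction of a right inverse is more robust, since the parametrix works for every $\delta$ with a uniform estimate, whereas an a priori estimate for a Dirichlet realization on the shrinking/enlarging annulus $U_\delta$ must also rule out low modes crossing zero as $\delta\to0$, a point your proposal does not address.

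There is also a genuine mathematical error in your treatment of the resonant $\phi$-mode. You assert that the indicial equation for this mode has a double root at degree one, with homogeneous solutions $\rho$ and $\rho\ln\rho$, and that the particular solution a priori grows like $\rho(\ln\rho)^2$ before being improved by matching endpoint data. This is not what happens. The link eigenfunction corresponding to the degree-one Jacobi field $\phi$ on $V_0$ has eigenvalue $7$ (for $-L_{\Sigma_0}$), and the indicial roots solve $\mu(\mu+6)=7$, giving the \emph{simple} roots $\mu = 1$ and $\mu = -7$. The homogeneous solutions of the Euler ODE for $F = \rho f$ are therefore $\rho$ and $\rho^{-7}$, not $\rho$ and $\rho\ln\rho$. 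The logarithm enters because the right-hand side, of size $\rho$ after multiplying by $\rho^2$, is itself resonant with the simple root $\mu=1$; the particular solution is then $\sim\rho\ln\rho$ (a \emph{single} $\ln\rho$), and no amount of adding multiples of $\rho$ or $\rho^{-7}$ can improve or worsen that. This is exactly what the paper's explicit integral formula $f(\rho)=\int_1^\rho t^{-9}\int_1^t u_0(s)\,s^8\,ds\,dt$ delivers: $|u_0|\lesssim\rho^{-1}$ gives $|f|\lesssim|\ln\rho|$ directly. Your "matching both endpoint conditions on the symmetric interval" step is therefore aimed at a non-existent $(\ln\rho)^2$ term; the correct mechanism is simpler (simple resonance) and does not need the endpoint matching for this purpose. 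Since this is precisely where the factor $|\delta|^{-\kappa}$ is born, the error should be repaired before the estimate can be considered proved.

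A secondary point: you propose to separate variables with respect to eigenfunctions of $-L_{\Sigma_0}$ (or $-L_{\Sigma_\delta}$), but $\Sigma_0$ is singular, so its spectral theory in $C^{2,\alpha}_\tau$ is delicate, and on $\Sigma_\delta$ the eigenfunctions and eigenvalues move with $\delta$ while the geometry degenerates. The paper sidesteps this by never expanding in the full eigenbasis: it only peels off the one-dimensional $\phi_\delta$-component in each cross-section (the decomposition $\tilde u = u + u_0$), treats $u_0$ by the explicit ODE, and treats the orthogonal complement wholesale via the invertibility of $L_{V_0}$ and $L_{H\times\mathbf{R}}$ from Steps~1 and 2. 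Your proposal would need to make the uniformity of the separation of variables in $\delta$ precise, and in particular to control the commutators coming from the $\delta$-dependence of the eigenbasis.
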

  \begin{proof}
    This result can be proven similarly to \cite[Proposition 22]{Sz17}. We break the argument into several steps. 
    \bigskip
    
    \noindent {\em Step 1.} We first need a result analogous to \cite[Proposition 23]{Sz17}, on invertibility of $L_{V_0}$ between suitable weighted spaces. The main difference is that we want to work with a weight which does not avoid the indicial roots, and because of this we need to work orthogonal to the corresponding Jacobi field. More precisely, let us denote by $C^{k,\alpha, G, \perp}_{\gamma, \tau}(V_\delta) \subset C^{k,\alpha}_{\gamma, \tau}(V_\delta)$ the subspace of $G$-invariant functions that are $L^2$-orthogonal to $\phi$ on every level set of $\rho$ (i.e. all the cross sections of the cone). We then claim that
    \[ \label{eq:LV0inv} L_{V_0} : C^{2,\alpha, G, \perp}_{1, \tau}(V_0) \to C^{0,\alpha, G, \perp}_{-1, \tau-2}(V_0) \]
    is invertible for $\tau\in (-3,-2)$. This can be shown exactly as \cite[Proposition 24]{Sz17}. The main ingredient is that, analogously to \cite[Proposition 13]{Sz17}, the map
    \[ L_{\Sigma_0} : C^{2,\alpha, G, \perp}_{\tau}(\Sigma_0) \to C^{0,\alpha, G, \perp}_{\tau-2}(\Sigma_0) \]
    is invertible for $\tau\in (-3,-2)$. As the notation suggests, we are restricting ourselves to $G$-invariant functions on $\Sigma_0$ that are $L^2$-orthogonal to $\phi$. In turn the invertibility of this map follows from standard Fredholm theory in weighted spaces on manifolds with conical singularities using that $L_{\Sigma_0}$ is self-adjoint and the only $G$-invariant elements in the kernel are multiples of $\phi$ (see Lockhart-McOwen~\cite{LM85} or Marshall~\cite{Marsh02}). The rest of the proof of the invertibility of \eqref{eq:LV0inv} is exactly as in \cite{Sz17}. 

    \bigskip
    \noindent{\em Step 2.}
    We need a result analogous to \cite[Proposition 21]{Sz17}, namely that the operator
    \[ \label{eq:LHR} L_{H \times \mathbf{R}}: C^{2,\alpha}_\tau(H\times \mathbf{R}) \to C^{0,\alpha}_{\tau-2}(H\times \mathbf{R}) \]
    is invertible for $\tau\in (-3,-2)$, with the weighted spaces defined analogously to \eqref{eq:wH1}. The key ingredient for this is the following, analogous to \cite[Proposition 18]{Sz17}: for $\tau\in (-3,-2)$, $\lambda \geq 0$, and $u$ having compact support on $H$, we can find $f$ such that
    \[ L_H f - \lambda f = u, \]
    and $r^{-\tau}|f| \leq C \Vert u\Vert_{C^0_{\tau-2}}$ for a constant $C$ depending on $\tau$. Note first that the existence of the positive Jacobi field $\Phi$ on $H$ with $\Phi= r^{-2} + O(r^{-3})$ as $r\to\infty$ implies that $L_H$ has trivial kernel in $C^{2,\alpha}_\tau$ for $\tau\in (-3,-2)$. The Fredholm theory in weighted spaces then implies that $L_H : C^{2,\alpha}_\tau \to C^{0,\alpha}_{\tau-2}$ is invertible, which settles the $\lambda=0$ case of our claim. When $\lambda > 0$ then we can argue as in the proof of \cite[Proposition 18]{Sz17}.

      \bigskip
      \noindent {\em Step 3.}
      We can now prove the invertibility of \eqref{eq:LVdeltaUdelta} by constructing an approximate inverse for sufficiently small $\delta$, similarly to the proof of \cite[Proposition 22]{Sz17}. As a preliminary step, given a function $\tilde{u}\in C^{0,\alpha, G}_{-1,\tau-2}(U_\delta)$ with $\Vert\tilde{u}\Vert \leq 1$, we write
      \[ \label{eq:utilde1} \tilde{u} = u + u_0, \]
      where $u\in C^{0,\alpha, G, \perp}_{-1,\tau-2}(V_\delta)$ and $u_0 = u_0(\rho) \phi_\delta \rho^{-1}$ for a function $u_0 \in C^{0,\alpha}_{-1}(0,\infty)$. Here we are extending $\phi_\delta$ to $V_\delta$ as a degree one homogeneous function and $C^{0,\alpha, G, \perp}_{-1,\tau-2}$ denotes those functions that are orthogonal to $\phi_\delta$ on each cross section of $V_\delta$. We can construct the approximate inverse on the function $u$ following \cite[Proposition 22]{Sz17} closely, and then we will deal with the piece $u_0$. 

     To construct the approximate inverse on $u$ we first write $u=u_1 + u_2$, where $u_2 = \chi(\Lambda^{-1} r\rho^{-1} |\delta|^{-1/3})$. Here $\chi$ is the cutoff function we used before ($\chi(s) = 1$ for $s < 1$, and $\chi(s)=0$ for $s > 2$), and $\Lambda$ is a large constant to be chosen. Then $u_1$ is supported on the set where $r/\rho > \Lambda |\delta|^{1/3}$. By Proposition~\ref{prop:Sdelta} the cone $V_\delta$ is the graph of a function $f_0$ over $\delta^{1/3}H$ on the slice $y=1$ with $|f_0| < C|\delta| r^{-2+\kappa}$. In turn on the region $r > \Lambda |\delta|^{1/3}$, the surface $\delta^{1/3} H$ itself is an $O(|\delta| r^{-2})$-graph over $C$. In sum on the slice $y=1$ on the region $r > \Lambda |\delta|^{1/3}$, $V_\delta$ is the graph of a function $r f_1$ over $C$, where $|f_1| < C\Lambda^{-1}$. It follows that if $\Lambda$ is sufficiently large and $\delta$ is small, we can approximate $V_\delta$ using $V_0$ on the support of $u_1$. Using this we think of $u_1$ as being defined on $V_0$. 

      Similarly $u_2$ is supported on the region $r/\rho < \Lambda |\delta|^{1/3}$. Fix a large $B > 0$ and let $y_0\in \mathbf{R}$. Consider the interval $I$ of radius $2B|y_0| |\delta|^{1/3}$ around $y_0$, and rescale the region $\{ r < 2\Lambda |y_0 \delta^{1/3}|, y\in I\}$ in $V_\delta$ by a factor of $|y_0 \delta^{1/3}|^{-1}$. By the construction this rescaled surface can be seen as a perturbation of the region $\{|r'| < 2\Lambda, |y'| < 2B\}$ in the product $H\times \mathbf{R}$, if for given $\Lambda, B$ we choose $\delta$ sufficiently small. We further decompose $u_2 = \sum u_{2,j}$ into pieces supported in regions of this type. 

      Arguing as in the proof of \cite[Proposition 22]{Sz17} we now want to use the invertibility of \eqref{eq:LV0inv} and \eqref{eq:LHR} to invert the operator on the pieces $u_1, u_{2,j}$, and then reassemble them to produce an approximate inverse for $L_{V_\delta}$. The only new feature is that to use \eqref{eq:LV0inv} we first need to write $u_1 = u_{1,0} + u_{1,\perp}$, where $u_{1,\perp} \in C^{0,\alpha, G,\perp}_{-1, \tau-2}$ and $u_{1,0} = u_{1,0}(\rho)\phi \rho^{-1}$ for a function $u_{1,0} \in C^{0,\alpha}_{-1}(0,\infty)$. Since we have ensured that on $V_\delta$ the function $u$ is orthogonal in each cross section to $\phi_\delta$, it is not hard to see that we have
      \[  \Vert u_{1,0}\Vert_{C^{0,\alpha}_{\tau-2, -1}} \leq |\delta|^{\kappa'} \]
      for some $\kappa' > 0$ for sufficiently small $\delta$. This piece can be discarded when building the approximate inverse since it does not affect the estimate \eqref{eq:Puest1} below. Applying the inverses of \eqref{eq:LV0inv} and \eqref{eq:LHR} to the remaining pieces, and then reassembling them using further cutoff functions as in \cite[Proposition 22]{Sz17} we end up with a function $Pu$ on the annular region $U_\delta$, satisfying
      \[ \label{eq:Puest1} \Vert Pu\Vert_{C^{2,\alpha}_{1,\tau}} &\leq C, \\
      \Vert L_{V_\delta}Pu - u\Vert_{C^{0,\alpha}_{-1,\tau-2}} &\leq C|\delta|^{\kappa'}\]
for some $\kappa' > 0$. 

\bigskip
\noindent{\em Step 4.} It remains to deal with the piece $u_0 = u_0(\rho)\phi_\delta \rho^{-1}$ in the decomposition \eqref{eq:utilde1}. Note that for any $f(\rho)$ we have
\[ L_{V_\delta} (\rho f \phi_\delta \rho^{-1}) &= ( (\rho f)'' + 7\rho^{-1}(\rho f)' - 7\rho^{-1} f) \phi_\delta \rho^{-1} + \rho^{-1} f L_{\Sigma_\delta}(\phi_\delta \rho^{-1}) \\
  &= \rho^{-8} (\rho^9 f')' \phi_\delta + \rho^{-1} f h'(\delta) \zeta\rho^{-1}, \]
noting that $\phi_\delta \rho^{-1}$ is homogeneous with degree zero, and $L_{\Sigma_\delta} \phi_\delta = h'(\delta) \zeta$. As with $\phi_\delta$, we are extending $\zeta$ to $V_\delta$ to have degree one. 
Given $u_0(\rho)\in C^{0,\alpha}_{-1}(0,\infty)$, we can find $f \in C^{2,\alpha}$ satisfying
\[ \rho^{-9}(\rho^9 f')' = u_0 \rho^{-1} \]
by the formula
\[ f(\rho) =  \int_1^\rho t^{-9} \int_1^t u_0(s) s^8\,ds. \]
From this we see that $|f(\rho)| \leq C |\ln \rho| \Vert u_0\Vert_{C^{0,\alpha}_{-1}}$. It follows that $Pu_0 = \rho f \phi_\delta\rho^{-1}$ satisfies
  \[ \label{eq:Pu0} \Vert Pu_0 \Vert_{C^{2,\alpha}_{1,\tau}(U_\delta)} &\leq C|\delta|^{-\kappa} \Vert u_0\Vert_{C^{0,\alpha}_{-1,\tau-2}(U_\delta)}, \\
    \Vert L_{V_\delta} Pu_0- u_0 \Vert_{C^{0,\alpha}_{-1,\tau-2}(U_\delta)} &\leq C |\delta|^{-\kappa} |h'(\delta)|. \]
  Recall that $|h'(\delta)| \leq C|\delta|^{\kappa_1}$ for some $\kappa_1 > 0$, and so we choose $\kappa = \min\{\kappa_1/2, \kappa'\}$ for the $\kappa'$ in \eqref{eq:Puest1}. 

  For any $\tilde{u}\in C^{0,\alpha}_{-1,\tau-2}(U_\delta)$ we can now define $P\tilde{u} = Pu_0 + Pu$ in terms of the decomposition \eqref{eq:utilde1}, and the estimates \eqref{eq:Puest1} and \eqref{eq:Pu0} imply
  \[ \Vert P\tilde{u} \Vert_{C^{2,\alpha}_{1,\tau}(U_\delta)} &\leq C|\delta|^{-\kappa} \Vert \tilde{u} \Vert_{C^{0,\alpha}_{-1,\tau-2}(U_\delta)}, \\
    \Vert L_{V_\delta} P\tilde{u} - \tilde{u} \Vert_{C^{0,\alpha}_{-1,\tau-2}(U_\delta)} &\leq C |\delta|^{-\kappa/2}  \Vert \tilde{u} \Vert_{C^{0,\alpha}_{-1,\tau-2}(U_\delta)}. \]
   Once $\delta$ is sufficiently small, the second estimate implies that $(L_{V_\delta}P)$ is invertible with uniformly bounded inverse, and then $P_\delta = P(L_{V_\delta}P)^{-1}$ is a right inverse  of $L_{V_\delta}$ with norm bounded by $C|\delta|^{-\kappa}$. 
  \end{proof}

 Given the invertibility of the linearized operator, the construction of $T_\delta$ as a graph over $V_\delta$ is very similar to what we did in Section~\ref{sec:Sigmadelta}. 
  For a function $f$ over $V_\delta$ let $m_{V_\delta}(f)$  denote the mean curvature of the graph of $f$ and define the nonlinear operator $\mathcal{Q}_{V_\delta}$ by
  \[ m_{V_\delta}(f) = m_{V_\delta}(0) + L_{V_\delta}(f) + \mathcal{Q}_{V_\delta}(f). \]
  This satisfies the following estimate, just like in Proposition~\ref{prop:Qprop1}. 
  \begin{prop} \label{prop:Qest}
    Suppose that $f_1,f_2$ are in $C^{2,\alpha}_{loc}$ on an open subset $U$ of $V_\delta$. There is a $C > 0$ independent of $\delta, U$ such that if $\Vert f_i\Vert_{C^{2,\alpha}_{1,1}} \leq C^{-1}$,  then
    \[ \Vert \mathcal{Q}_{V_\delta}(f_1) - \mathcal{Q}_{V_\delta}(f_2)\Vert_{C^{0,\alpha}_{-1,\tau-2}} \leq C \Big( \Vert f_1\Vert_{C^{2,\alpha}_{1,1}}+ \Vert f_2\Vert_{C^{2,\alpha}_{1,1}}\Big) \Vert f_1 - f_2\Vert_{C^{2,\alpha}_{1,\tau}}. \]
    In particular since $\mathcal{Q}_{V_\delta}(0)=0$ we have
    $\Vert \mathcal{Q}_{V_\delta}(f_1)\Vert_{C^{0,\alpha}_{-1,\tau-2}} \leq C\Vert f_1\Vert_{C^{2,\alpha}_{1,1}} \Vert f_1 \Vert_{C^{2,\alpha}_{1,\tau}}$.
  \end{prop}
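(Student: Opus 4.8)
The plan is to reduce the estimate, by a standard rescaling argument, to the elementary fact that the quadratic remainder of the graph–mean–curvature operator over a surface of bounded geometry obeys such a bound, and then to check that the exponents in the two weighted norms of Definition~\ref{defn:weighted} combine exactly as needed. This parallels the proof of Proposition~\ref{prop:Qprop1} on $\tilde\Sigma_\delta$, with the additional cone weight $\gamma$ in the $\rho$–variable simply coming along for the ride. The analytic content lies entirely in the fixed–geometry estimate; everything else is bookkeeping of scaling weights.

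First I would recall, from Proposition~\ref{prop:Sdelta}(iii) and the discussion following Definition~\ref{defn:weighted}, that the regularity scale of $V_\delta$ at a point $p$ is comparable to $r(p)$, uniformly in $\delta$. Hence for each $p$ the ball $B_{r(p)/2}(p)\subset V_\delta$, rescaled by $r(p)^{-1}$, is a surface of uniformly bounded geometry, on which $f\mapsto m_{V_\delta}(f)$ is a smooth second–order operator of the $2$–jet of $f$ as long as that jet stays bounded. Unwinding Definition~\ref{defn:weighted}, the hypothesis $\Vert f_i\Vert_{C^{2,\alpha}_{1,1}}\le C^{-1}$ says exactly that $r^{-1}|f_i|+|\nabla f_i|+r|\nabla^2 f_i|+\dots\le C'C^{-1}$ pointwise, i.e.\ that the rescaled functions $\tilde f_i:=r(p)^{-1}f_i(r(p)\,\cdot)$ have $C^{2,\alpha}$ norm $\le C''C^{-1}$ on the unit ball — uniformly small if $C$ is large. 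On such a fixed–geometry ball, $\mathcal{Q}:=m(f)-m(0)-Lf$ is by construction the second–order Taylor remainder of the graph–mean–curvature map, so it is at least quadratic in the $2$–jet with coefficients depending smoothly on $(f,\nabla f)$ while these are bounded, and one has the standard estimate
\[ \Vert \mathcal{Q}(\tilde f_1)-\mathcal{Q}(\tilde f_2)\Vert_{C^{0,\alpha}(B_1)} \le C\big(\Vert \tilde f_1\Vert_{C^{2,\alpha}(B_1)}+\Vert \tilde f_2\Vert_{C^{2,\alpha}(B_1)}\big)\Vert \tilde f_1-\tilde f_2\Vert_{C^{2,\alpha}(B_1)}, \]
proved by writing $\mathcal{Q}(\tilde f_1)-\mathcal{Q}(\tilde f_2)=\int_0^1 D\mathcal{Q}|_{\tilde f_2+s(\tilde f_1-\tilde f_2)}(\tilde f_1-\tilde f_2)\,ds$ and using that $D\mathcal{Q}|_f=Dm|_f-L$ vanishes at $f=0$ and is Lipschitz in $f$ in the $C^{2,\alpha}\to C^{0,\alpha}$ operator norm for small $f$. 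Since the rescaled model surfaces have bounded geometry uniformly in $\delta$ (again by Proposition~\ref{prop:Sdelta}(iii)), the constant here is independent of $\delta$, which is the source of the uniformity claimed in the statement.

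Finally I would scale back. Under rescaling by $r(p)^{-1}$ a graph over $V_\delta$ becomes a graph over $r(p)^{-1}V_\delta$ and mean curvature, hence $\mathcal{Q}$, acquires a factor $r(p)$, while $f$ becomes $\tilde f=r(p)^{-1}f(r(p)\,\cdot)$. Unwinding Definition~\ref{defn:weighted} gives the pointwise identifications $\Vert \tilde f_i\Vert_{C^{2,\alpha}(B_1)}\lesssim \Vert f_i\Vert_{C^{2,\alpha}_{1,1}}$, then $\Vert \widetilde{f_1-f_2}\Vert_{C^{2,\alpha}(B_1)}\lesssim \Vert f_1-f_2\Vert_{C^{2,\alpha}_{1,\tau}}\,\rho(p)^{1-\tau}r(p)^{\tau-1}$, and $|\mathcal{Q}_{V_\delta}(f_1)-\mathcal{Q}_{V_\delta}(f_2)|(p)=r(p)^{-1}|\mathcal{Q}(\tilde f_1)-\mathcal{Q}(\tilde f_2)|$ at the rescaled point. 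Multiplying these through, the powers of $r(p)$ and $\rho(p)$ collapse to exactly $\rho(p)^{1-\tau}r(p)^{\tau-2}$, which is precisely the pointwise weight defining the $C^{0,\alpha}_{-1,\tau-2}$ norm — and the Hölder seminorm piece scales identically. Taking the supremum over $p$ (equivalently over the annuli $A_{Q,R}$) yields the asserted inequality, and specializing $f_2=0$ together with $\mathcal{Q}_{V_\delta}(0)=0$ gives the last sentence. The only place requiring genuine care is this final matching of weight exponents, but, as in Proposition~\ref{prop:Qprop1}, it is forced by scaling homogeneity and presents no real obstacle.
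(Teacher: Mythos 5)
Your proposal is correct and matches the paper's approach, which is only sketched: Proposition~\ref{prop:Qest} is asserted "just like in Proposition~\ref{prop:Qprop1}", itself justified only by "can be shown by rescaling, and using that $Q$ only has quadratic and higher order terms." You carry out the rescaling (by $r(p)^{-1}$ to a uniformly bounded-geometry model, using Proposition~\ref{prop:Sdelta}(iii)), invoke the standard quadratic Taylor remainder estimate there, and scale back, and your bookkeeping of the $(\gamma,\tau)$ weight exponents is exactly right.
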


  The main existence result is the following. 
\begin{prop}\label{prop:Tdelta1}
  Suppose that $\tau\in (-3,-2)$ is sufficiently close to $-2$, and $\kappa > 0$ is sufficiently small. Let $U_\delta$ denote the annular region $|\ln\rho| < |\delta|^{-\kappa}$. There is an $\epsilon > 0$ with the following property. For sufficiently small $\delta$ there is a function $f$ on $V_\delta$ in the annulus $U_\delta$ such that the surface $T_\delta$ defined as the graph of
  \[ -h(\delta)\Big[ 7^{-1} \phi_\delta \ln\rho + \xi_\delta\Big] + f \]
  over $V_\delta$ is minimal, and $f$ satisfies the estimate $\Vert f \Vert_{C^{2,\alpha}_{1,\tau}} \leq |h(\delta)| |\delta|^\epsilon$. The function $\xi_\delta$ used here was defined in Proposition~\ref{prop:xidefn}, and just like $\phi_\delta$, we extend it as a degree one function to $V_\delta$.

  In particular for $|\ln \Lambda| < |\delta|^{-\kappa}$ the surface $\Lambda T_\delta \cap \{y=1\}$ is the graph of a function $f_1$ over $\delta^{1/3}H$, with $|f_1| < C|\delta| r^{-2+\kappa}$. 
\end{prop}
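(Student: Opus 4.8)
The plan is to obtain $T_\delta$ by a contraction mapping argument in the doubly weighted space $C^{2,\alpha,G}_{1,\tau}(U_\delta)$, exactly as in the construction of $\Sigma_\delta$ in Section~\ref{sec:Sigmadelta} but now using the right inverse $P_\delta$ from Proposition~\ref{prop:Ldinv} in place of a genuine inverse. Write $u_0 := -h(\delta)\big[7^{-1}\phi_\delta\ln\rho + \xi_\delta\big]$ for the explicit first approximation in the statement, and look for $T_\delta$ as the graph over $V_\delta$ of $u_0+f$ with $f\in C^{2,\alpha,G}_{1,\tau}(U_\delta)$ small. Since $m_{V_\delta}(u_0+f) = m_{V_\delta}(0) + L_{V_\delta}(u_0+f) + \mathcal{Q}_{V_\delta}(u_0+f)$ and $m_{V_\delta}(0) = h(\delta)\zeta\rho^{-2}$, minimality of $T_\delta$ is equivalent to $L_{V_\delta}f = -E_\delta - \mathcal{Q}_{V_\delta}(u_0+f)$, where $E_\delta := m_{V_\delta}(0) + L_{V_\delta}u_0$.

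The first step is to show that $u_0$ cancels $m_{V_\delta}(0)$ to high order, i.e. that $E_\delta$ is small. Here one uses the formula for $L_{V_\delta}$ acting on functions of the form $g(\rho)\phi_\delta\rho^{-1}$ derived in Step 4 of the proof of Proposition~\ref{prop:Ldinv}, together with $L_{\Sigma_\delta}\phi_\delta = h'(\delta)\zeta$ (Proposition~\ref{prop:phideltadefn}) and $L_{\Sigma_\delta}\xi_\delta = \zeta - c_\delta\phi_\delta$ with $c_\delta = 1+O(|\delta|^\kappa)$ (Proposition~\ref{prop:xidefn}), noting that $\zeta = c_\delta\phi_\delta + (\zeta-c_\delta\phi_\delta)$ splits $\zeta$ into its $\phi_\delta$-parallel and $\phi_\delta$-perpendicular parts on each cross-section. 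The coefficient of $\phi_\delta\ln\rho$ in $u_0$ is arranged so that the $\phi_\delta$-parallel part of $L_{V_\delta}u_0$ cancels the $\phi_\delta$-parallel part of $m_{V_\delta}(0)$, while $-h(\delta)\xi_\delta$ accounts for the perpendicular part. What remains is $E_\delta = (\mathrm{const})\, h(\delta)h'(\delta)(\ln\rho)\zeta\rho^{-2}$; since $|h'(\delta)|\le C|\delta|^{1/3}$ and $|\ln\rho| < |\delta|^{-\kappa}$ on $U_\delta$, this yields $\Vert E_\delta\Vert_{C^{0,\alpha}_{-1,\tau-2}(U_\delta)} \le C|h(\delta)|\,|\delta|^{1/3-\kappa}$. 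This is exactly the place where restricting to the annulus $|\ln\rho| < |\delta|^{-\kappa}$ is essential.

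Applying $P_\delta$, a solution is a fixed point of $\mathcal{S}(f) := -P_\delta\big(E_\delta + \mathcal{Q}_{V_\delta}(u_0+f)\big)$ on the ball $\mathcal{B} = \{\,f:\Vert f\Vert_{C^{2,\alpha}_{1,\tau}}\le |h(\delta)|\,|\delta|^\epsilon\,\}$. The ingredients are: (a) $\Vert u_0\Vert_{C^{2,\alpha}_{1,\tau}}\le C|h(\delta)|\,|\delta|^{-\kappa}$, using the uniform bounds $\Vert\xi_\delta\Vert_{C^{2,\alpha}_{-2}}\le C$ and $C^{-1}r^{-2}<\phi_\delta<Cr^{-2}$ on the link (so their degree-one extensions lie in $C^{2,\alpha}_{1,\tau}$ with bounded norm, since $r\le\rho$ and $\tau<-2$) together with $|\ln\rho|<|\delta|^{-\kappa}$; (b) for $\tau$ close to $-2$ and $\kappa$ small, Lemma~\ref{prop:normcomp1} and $|h(\delta)|\sim|\delta|^{4/3}$ give $\Vert u_0+f\Vert_{C^{2,\alpha}_{1,1}}\le C|\delta|^{(\tau+3)/3-\kappa}\le C^{-1}$, so Proposition~\ref{prop:Qest} applies and $\Vert\mathcal{Q}_{V_\delta}(u_0+f)\Vert_{C^{0,\alpha}_{-1,\tau-2}}\le C|h(\delta)|\,|\delta|^{(\tau+3)/3-2\kappa}$; (c) combining these with $\Vert P_\delta\Vert\le C|\delta|^{-\kappa}$, one gets $\Vert\mathcal{S}(f)\Vert_{C^{2,\alpha}_{1,\tau}}\le C|h(\delta)|\big(|\delta|^{1/3-2\kappa} + |\delta|^{(\tau+3)/3-3\kappa}\big)$ and a matching Lipschitz bound $\Vert\mathcal{S}(f_1)-\mathcal{S}(f_2)\Vert\le C|\delta|^{(\tau+3)/3-2\kappa}\Vert f_1-f_2\Vert$. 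Hence, choosing $\tau$ close to $-2$, then $\kappa>0$ small, then $\epsilon>0$ smaller still, and finally $\delta$ small, $\mathcal{S}$ is a contraction of $\mathcal{B}$ into itself; its fixed point $f$ is smooth by elliptic regularity and yields the minimal surface $T_\delta$. I expect this bookkeeping to be the main obstacle: $P_\delta$ loses a factor $|\delta|^{-\kappa}$, the approximate solution $u_0$ is only small because $|h(\delta)|\sim|\delta|^{4/3}$, and controlling $\mathcal{Q}_{V_\delta}$ costs another negative power of $\delta$ through $C^{2,\alpha}_{1,\tau}\hookrightarrow C^{2,\alpha}_{1,1}$, so threading a workable choice of $\tau,\kappa,\epsilon$ through these competing powers is the crux.

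Finally, for the ``in particular'' claim, let $|\ln\Lambda|<|\delta|^{-\kappa}$. A point of $\Lambda T_\delta\cap\{y=1\}$ comes from a point of $T_\delta$ with $\rho\sim\Lambda^{-1}$, and since $V_\delta$ is a cone, $\Lambda T_\delta\cap\{y=1\}$ is a graph over $V_\delta\cap\{y=1\}$ of some $g$. Using the degree-one homogeneity of $\phi_\delta$ and $\xi_\delta$, the estimate $|\ln(\Lambda^{-1}\rho)|\le|\ln\Lambda|+C<2|\delta|^{-\kappa}$, the bounds on $\phi_\delta,\xi_\delta$ near $\{y=1\}$, and the weighted estimate $\Vert f\Vert_{C^{2,\alpha}_{1,\tau}}\le|h(\delta)|\,|\delta|^\epsilon$ (which gives $|f(w)|\le |h(\delta)||\delta|^\epsilon\rho(w)^{1-\tau}r(w)^\tau$), one obtains $|g|\le C|h(\delta)|\,|\delta|^{-\kappa}(r^{-2}+r^\tau)$ on $V_\delta\cap\{y=1,\,r<1\}$. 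Composing with Proposition~\ref{prop:Sdelta}(ii), which writes $V_\delta\cap\{y=1,\,r<1\}$ as a graph over $\delta^{1/3}H$ with error $<C|\delta|r^{-2+\kappa}$, and using $|h(\delta)|\sim|\delta|^{4/3}$ together with $\tau>-3+4\kappa$ and $\kappa$ small (so that $|\delta|^{4/3-\kappa}(r^{-2}+r^\tau)\le C|\delta|r^{-2+\kappa}$ even for $r\sim|\delta|^{1/3}$), gives $|f_1|<C|\delta|r^{-2+\kappa}$ as claimed.
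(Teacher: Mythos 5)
Your proposal is correct and takes essentially the same route as the paper: write $T_\delta$ as the graph of $f_0 + f$ over $V_\delta$ with $f_0 = -h(\delta)\big[7^{-1}\phi_\delta\ln\rho + \xi_\delta\big]$, verify via the Step-4 formula of Proposition~\ref{prop:Ldinv} together with $L_{\Sigma_\delta}\phi_\delta = h'(\delta)\zeta$ and $L_{\Sigma_\delta}\xi_\delta = \zeta - c_\delta\phi_\delta$ that the residual $m_{V_\delta}(0) + L_{V_\delta}f_0$ is of size $O(|h(\delta)h'(\delta)\ln\rho|)$ on $U_\delta$, and then close a contraction for $f$ in $C^{2,\alpha,G}_{1,\tau}(U_\delta)$ using the right inverse $P_\delta$ (costing $|\delta|^{-\kappa}$), Lemma~\ref{prop:normcomp1}, and Proposition~\ref{prop:Qest}, exactly as the paper does, with the same bookkeeping of powers of $\delta$ and the same deduction of the graph estimate over $\delta^{1/3}H$ via Proposition~\ref{prop:Sdelta}(ii).
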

\begin{proof}
  Let us write
  \[ f_0 = -h(\delta)\Big[7^{-1} \phi_\delta \ln \rho + \xi_\delta \Big]. \]
  Our goal is to find an $f$ over $U_\delta$ satisfying the required estimate and the equation
  \[ m_{V_\delta}(f_0 + f) = 0. \]
  We have
  \[ m_{V_\delta}(f_0+f) = m_{V_\delta}(0) + L_{V_\delta}(f_0+f) + \mathcal{Q}_{V_\delta}(f_0 + f), \]
  so using the right inverse from Proposition~\ref{prop:Ldinv}  it is enough to solve the fixed point problem $\mathcal{N}(f) = f$, where
  \[ \mathcal{N}(f) = - P_\delta \big( m_{V_\delta}(0) + L_{V_\delta}(f_0) + \mathcal{Q}_{V_\delta}(f_0+f) \big).   \]

By construction we have $m_{V_\delta}(0) = h(\delta)\zeta \rho^{-2}$ (recall that we extend $\zeta$ as a degree one function to $V_\delta$), and
\[ L_{V_\delta} f_0 = -h(\delta) \zeta \rho^{-2} + h(\delta) h'(\delta) \zeta \rho^{-2} \ln \rho. \]
It follows that
\[ \mathcal{N}(f) = P_\delta\big( -h(\delta) h'(\delta) \zeta \rho^{-2}\ln \rho - \mathcal{Q}_{V_\delta}(f_0+f)\big). \]
Let us define the set
\[ E = \{ f\in C^{2,\alpha}_{1,\tau}(U_\delta)\,:\, \Vert f\Vert_{C^{2,\alpha}_{1,\tau}}\leq |h(\delta)| |\delta|^\epsilon\}, \]
for $\epsilon > 0$ to be chosen. We will show that if $\epsilon$ is chosen small, then $\mathcal{N}$ is a contraction on $E$ for sufficiently small $\delta$.

Let $f\in E$. On the annulus $U_\delta$ we have
\[ \Vert f_0\Vert_{C^{2,\alpha}_{1,\tau}} \leq C |h(\delta)| |\delta|^{-\kappa}, \]
so
\[ \Vert  f_0 + f\Vert_{C^{2,\alpha}_{1,\tau}} \leq C |h(\delta)| |\delta|^{-\kappa} \]
for a larger $C$. 
By Lemma~\ref{prop:normcomp1} we have
\[ \Vert f_0\Vert_{C^{2,\alpha}_{1,1}} \leq C |h(\delta)| |\delta|^{(\tau-1)/3 -\kappa}. \]
By the properties of $h$ there is an $\epsilon_1  >0$ such that $|h(\delta)| < |\delta|^{1+\epsilon_1}$ and $|h'(\delta)| < |\delta|^{\epsilon_1}$ for sufficiently small $\delta$. It then follows from the estimate for the inverse $P_\delta$ in Proposition~\ref{prop:Ldinv} and the estimate in Proposition~\ref{prop:Qest} that
\[ \Vert\mathcal{N}(f)\Vert_{C^{2,\alpha}_{1,\tau}} &\leq C |\delta|^{-\kappa}( |h(\delta)| |\delta|^{\epsilon_1} |\delta|^{-\kappa} + |h(\delta)|^2|\delta|^{(\tau-1)/3 - 2\kappa}) \\
&\leq |h(\delta)| |\delta|^{\epsilon} \]
for sufficiently small $\delta$ if $\epsilon, \kappa$ are sufficiently small and $\tau$ is sufficiently close to $-2$. Therefore $\mathcal{N}$ maps $E$ into $E$. 

To see that $\mathcal{N}$ is a contraction, note that for $f_1,f_2\in E$ we have, as above, that 
\[ \Vert f_0 + f_i\Vert_{C^{2,\alpha}_{1,1}} \leq C |h(\delta)| |\delta|^{(\tau-1)/3 -\kappa}, \]
and so using Proposition~\ref{prop:Qest} we have
\[ \Vert \mathcal{N}(f_1) - \mathcal{N}(f_2) \Vert_{C^{2,\alpha}_{1,\tau}} &\leq C |\delta|^{-\kappa} |h(\delta)| |\delta|^{(\tau-1)/3 -\kappa} \Vert f_1 - f_2\Vert_{C^{2,\alpha}_{1,\tau}} \\
  &\leq \frac{1}{2}\Vert f_1 - f_2\Vert_{C^{2,\alpha}_{1,\tau}} \]
for small $\delta$, if $\kappa$ is sufficiently small, and $\tau$ is sufficiently close to $-2$.

By choosing $\tau$ sufficiently close to $-2$, and $\kappa > 0$ sufficiently small, we find that in the annulus $B_2\setminus B_{1/2}$ the surface $\Lambda T_\delta$ is the graph of a function $F$ over $V_\delta$ satisfying $|F| \leq C|\delta| r^{-2+\kappa}$, if $|\ln \Lambda| < |\delta|^{-\kappa}$. Using property (ii) in Proposition~\ref{prop:Sdelta}, this implies the final claim in the proposition. 
\end{proof}

\begin{remark}\label{rem:Wcone}
Let $W_\delta$ be the graph of $-h(\delta)\xi_\delta$ over $V_\delta = C(\Sigma_\delta)$. Proposition~\ref{prop:Tdelta1} then implies that for $|\ln \Lambda| < |\delta|^{-\kappa}$, $\Lambda T_\delta$ is the graph of a function $F'$ over $W_\delta$ where $\Vert F'\Vert_{C^{2,\alpha}_{1,-2}(B_1 \setminus B_{1/2})}\leq C(1 + |\ln \Lambda|) |h(\delta)|$ on the annulus $B_1\setminus B_{1/2}$. This estimate (applied for bounded $|\ln\Lambda|$) will be used in Section~\ref{sec:mainargument}, in particular in Lemma~\ref{lem:triangle}. Note that if we view $T_\delta$ as the graph of $F$ over $V_\delta$, instead of over $W_\delta$, then from Proposition~\ref{prop:xidefn} the bound we get is $\Vert F\Vert_{C^{2,\alpha}_{1,\tau}(B_1\setminus B_{1/2})}\leq C|h(\delta)|$ for any $\tau < -2$ (with $C$ depending on $\tau$), which in turn would lead to $\Vert F\Vert_{C^{2,\alpha}_{1,-2}}\leq C |h(\delta)| |\delta|^{-\epsilon}$ for any $\epsilon > 0$, with $C$ depending on $\epsilon$. 
\end{remark}

We will need an estimate for the integral appearing in the monotonicity formula over $T_\delta$. For this we have the following.
\begin{prop}\label{prop:Tm}
  For sufficiently small $\delta$ we have
  \[ \label{eq:m1} \int_{T_\delta \cap(B_{1/2}\setminus B_{1/4})} \frac{|z^{\perp}|^2}{|z|^{10}}  > C^{-1} |h(\delta)|^2, \]
for a uniform $C > 0$, where $z^\perp$ denotes the component of the position vector $z\in T_\delta$ normal to $T_\delta$. 
\end{prop}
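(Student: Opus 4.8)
The key point is that the monotonicity density $|z^\perp|^2/|z|^{10}$ measures the deviation of $T_\delta$ from being a cone, and $T_\delta$ is genuinely non-conical precisely because of the logarithmic term $-h(\delta)7^{-1}\phi_\delta\ln\rho$. The plan is to estimate $z^\perp$ pointwise on the annulus $B_{1/2}\setminus B_{1/4}$ from below by an amount comparable to $|h(\delta)|$, and then integrate. First I would recall that if $S$ is the graph of a function $v$ over a cone $V$ with unit normal $\nu$, then along $S$ the position vector satisfies $z = p + v(p)\nu(p)$ for $p\in V$, and since $V$ is a cone the position vector $p$ is tangent to $V$; hence the normal component $z^\perp$ of $z$ along $S$ differs from $v\,(\text{something of size }1) $ by terms involving $\nabla v$ (the tilt of $S$ relative to $V$) and the curvature of $V$. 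More precisely, working on the slice $\{y=1\}$ or in the region $r\sim\rho$ where $V_\delta$ has bounded geometry after rescaling, one gets $z^\perp = v - \langle \nabla v, \text{(tangential part of }p)\rangle/(\cdots) + O(\text{l.o.t.})$, so that $z^\perp$ is controlled from below by the part of $v$ that is \emph{not} linear/homogeneous-degree-one, i.e. the genuinely non-conical part.

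The non-conical part of the defining function of $T_\delta$ is $-h(\delta)7^{-1}\phi_\delta\ln\rho$ up to the error $f$ with $\Vert f\Vert_{C^{2,\alpha}_{1,\tau}}\leq |h(\delta)||\delta|^\epsilon$, together with $-h(\delta)\xi_\delta$, which is homogeneous degree one and hence conical (it contributes nothing new to $z^\perp$ beyond a rotation, or can be absorbed). So up to errors the defining function over $V_\delta$ on $B_{1/2}\setminus B_{1/4}$ is $-h(\delta)7^{-1}\phi_\delta\ln\rho + O(|h(\delta)||\delta|^\epsilon r^{\tau})$. I would next compute $z^\perp$ for a graph of the form $v=a(\rho)\psi$ with $\psi$ homogeneous degree one and $a(\rho)=\ln\rho$: since $v$ is \emph{not} homogeneous, $z^\perp$ does not vanish; one finds that, restricted to a fixed region like $\{r\sim\rho\sim 1/3,\ y>0\}$ where $\phi_\delta \sim r^{-2}$ is bounded below (by Proposition~\ref{prop:phideltadefn}), the quantity $|z^\perp|$ is bounded below by $c|h(\delta)|$ minus the contribution of $f$ and of the lower-order pieces in the expansion of $z^\perp$ in terms of $v$ and $\nabla v$. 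Since $f$ contributes at most $C|h(\delta)||\delta|^\epsilon$ pointwise there (using $\tau$ close to $-2$ and $r\sim 1$), and since on this region $V_\delta$ has uniformly bounded geometry so the graph formula's error terms are genuinely lower order, for $\delta$ small we retain $|z^\perp|\geq c|h(\delta)|$ on a fixed-size subregion $K\subset (B_{1/2}\setminus B_{1/4})\cap T_\delta$ of positive $7$-dimensional measure bounded below independently of $\delta$. On $K$ we also have $|z|\sim 1$, so $|z^\perp|^2/|z|^{10}\geq c'|h(\delta)|^2$, and integrating over $K$ gives the claim with a uniform constant.

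There is one point to be careful about: one must make sure the lower bound on $|z^\perp|$ is not destroyed by choosing the wrong subregion — e.g. at points where the $\rho$-derivative of $\phi_\delta\ln\rho$ happens to be tangent to $T_\delta$. But the expression $\partial_\rho(\phi_\delta\ln\rho)=\rho^{-1}\phi_\delta\ln\rho + (\ln\rho)\partial_\rho\phi_\delta$; on $B_{1/2}\setminus B_{1/4}$ the factor $\ln\rho$ is bounded and bounded away from zero, while $\phi_\delta\rho^{-1}$ (the degree-zero normalization of $\phi_\delta$) is a fixed function on the link, nonzero on an open set. The radial vector field $\rho\,\partial_\rho$ restricted to the graph is, up to the small tilt, the position vector $z$; so $z^\perp$ picks up exactly $-h(\delta)7^{-1}\ln\rho\cdot(\text{degree-zero part of }\phi_\delta)$ up to the already-accounted errors, and this is bounded below on a fixed open region. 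So the potential obstacle — a cancellation making $z^\perp$ small everywhere on the annulus — does not occur. \textbf{The main obstacle} is therefore bookkeeping: carefully writing out the graph formula $z^\perp = v\,/\sqrt{1+|\nabla v|^2}\;-\;(\text{correction from curvature of }V_\delta)$ on the region $r\sim\rho$ and verifying that the correction terms and the error $f$ are all $o(|h(\delta)|)$ as $\delta\to 0$, uniformly, so that the leading logarithmic term survives. Everything else is a routine lower bound on an integral over a fixed-size region.
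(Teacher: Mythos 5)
Your strategy is essentially the paper's: write $T_\delta$ as the graph of $F = -h(\delta)[7^{-1}\phi_\delta\ln\rho + \xi_\delta] + f$ over the cone $V_\delta$, observe that $z^\perp$ picks up the non-conical part $F - \rho\partial_\rho F$ (which annihilates the degree-one piece $\xi_\delta$), and conclude that the logarithmic term forces $|z^\perp| \gtrsim |h(\delta)|$ on a set of fixed positive measure. However, there are two computational slips that happen not to break the conclusion but do indicate confusion. First, your product-rule computation is off: $\partial_\rho(\phi_\delta\ln\rho) = (\partial_\rho\phi_\delta)\ln\rho + \phi_\delta\rho^{-1}$ (not $\rho^{-1}\phi_\delta\ln\rho + \ldots$), and since $\phi_\delta$ is homogeneous of degree one, $(\phi_\delta\ln\rho) - \rho\partial_\rho(\phi_\delta\ln\rho) = -\phi_\delta$. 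So $z^\perp \approx h(\delta)7^{-1}\phi_\delta + (\text{errors from } f)$; the $\ln\rho$ factor cancels entirely, contrary to your claimed expression $-h(\delta)7^{-1}\ln\rho\cdot(\text{degree-zero part of }\phi_\delta)$, which is also dimensionally inconsistent (degree zero rather than degree one). Second, the region $\{r\sim\rho\sim 1/3,\ y>0\}$ is precisely where $\phi_\delta$ \emph{vanishes} (there $y\approx 0$, and $\phi=y^3r^{-2}-y\approx 0$); the estimate $\phi_\delta\sim r^{-2}$ from Proposition~\ref{prop:phideltadefn} is a near-singular-ray bound valid only for $r<r_1$, not for $r\sim 1/3$. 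Either choose a region where $\phi_\delta$ is actually bounded away from zero (e.g.\ $y$ closer to $\rho$), or simply integrate $\phi_\delta^2$ over the whole annulus as the paper does. With these corrections the argument goes through.
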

\begin{proof}
  We use that $T_\delta$ is the graph of  $F$ over the cone $V_\delta$, given by
  \[ F = -h(\delta)\Big[ 7^{-1}\phi_\delta \ln\rho + \xi_\delta\Big] + f, \]
  and $\Vert f\Vert_{C^{2,\alpha}_{1, \tau}} \leq |h(\delta)| |\delta|^\epsilon$. In particular, choosing $\tau$ sufficiently close to $-2$, we have 
  \[ |r^{-1} F| + |\nabla F| &< C |h(\delta)| r^{-3.1}, \\
    |r^{-1} f| + |\nabla f| &< C |h(\delta)| |\delta|^\epsilon r^{-3.1}. \]
  Let us write $n_V, n_T$ for the normal vectors to $V_\delta, T_\delta$, thinking of both as vector fields along $V_\delta$. We can apply Lemma~\ref{lem:grapharea} after scaling, to find that
  \[ n_T = n_V - \nabla F + O( |h(\delta)|^2 r^{-6.2} ), \]
  while the positions vectors satisfy $z_T = z_V + F n_V$. It follows that
  \[ z_T\cdot n_T &= (z_V + F n_V)\cdot (n_V - \nabla F + O( |h(\delta)|^2 r^{-6.2})) \\
    &= - z_V \cdot \nabla F + F + O(|h(\delta)|^2 r^{-6.2}). \]
  Since $V_\delta$ is a cone, we have $F - z_V \cdot \nabla F = F - \rho \partial_\rho F$, which vanishes on the degree one part of $F$. In particular we have
  \[ F - \rho\partial_{\rho} F &= h(\delta) 7^{-1}\phi_\delta + f - \rho\cdot \nabla f \\
    &= h(\delta) 7^{-1}\phi_\delta  + O( |h(\delta)| |\delta|^\epsilon r^{-3.1}). \]
  Since $|h(\delta)| \leq C |\delta|^{4/3}$, and on $V_\delta$ we have $r > C^{-1} |\delta|^{1/3}$, we have
  \[ \label{eq:zTperp} z_T\cdot n_T = h(\delta) 7^{-1} \phi_\delta + O( |h(\delta)| |\delta|^\epsilon r^{-3.2}). \]
  It follows that
  \[ |z_T^\perp|^2 > C^{-1} |h(\delta)|^2 \phi_\delta^2 - C |h(\delta)|^2 |\delta|^\epsilon r^{-5.2}, \]
  and by integrating we get \eqref{eq:m1} for sufficiently small $\delta$.

\end{proof}

We used the following lemma whose proof is by direct calculation. 
\begin{lemma}\label{lem:grapharea}
  Let $S$ be a hypersurface with second fundamental form $A_S$ satisfying $|A_S|\leq 1$. There are $\epsilon, C > 0$ with the following properties. Suppose that $f$ is a function on $S$ satisfying $|f|, |\nabla f| < \epsilon$, and let $S_f$ denote the graph of $f$ over $S$. The normal vectors $n, n_f$, and area forms $dA, dA_f$ of $S, S_f$ satisfies the following, under the natural identification of $S_f$ with $S$:
  \[  n_f &= n - \nabla f + E_1, \\
    \frac{dA_f}{dA} &= 1 - f m(S) + E_2, \]
  where $|E_1|, |E_2| \leq C\epsilon^2$, and $m(S)$ is the mean curvature of $S$.
\end{lemma}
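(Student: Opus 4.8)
The plan is to treat this as a direct frame computation, using in an essential way that $|f|,|\nabla f|<\epsilon$ are small while the geometry of $S$ is uniformly controlled by $|A_S|\le 1$, so that every error term is a product of at least two factors of size $O(\epsilon)$ and hence $O(\epsilon^2)$. Since the statement is pointwise, I would work near a fixed point of $S$, parametrize $S_f$ by $\Phi(p)=p+f(p)n(p)$, and fix a local orthonormal frame $e_1,\dots,e_k$ for $S$. Writing $W$ for the Weingarten map, normalized so that $\nabla_{e_i}n=-We_i$, $|W|=|A_S|$, and $m(S)=\operatorname{tr}W$, the starting point is the identity
\[ d\Phi(e_i)=\big(e_i-f\,We_i\big)+(e_if)\,n=:v_i+(e_if)\,n . \]
All of the lemma will fall out of this formula together with the bookkeeping above.

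First I would compute the unit normal $n_f$. Looking for it in the form $\tilde n=n-\sum_j c_j e_j$, the equations $\tilde n\cdot d\Phi(e_i)=0$ become the linear system $c_i=(e_if)+f\sum_j\langle We_i,e_j\rangle c_j$, whose matrix $I-fW^{\mathsf T}$ is invertible because $|fW|<\epsilon$, so a single Neumann iteration gives $c_i=(e_if)+O(\epsilon^2)$. Hence $\tilde n=n-\nabla f+O(\epsilon^2)$, and since $|\tilde n|^2=1+|\nabla f|^2+O(\epsilon^2)=1+O(\epsilon^2)$, renormalizing yields $n_f=n-\nabla f+E_1$ with $|E_1|\le C\epsilon^2$. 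It does not matter whether $\nabla f$ here is the gradient on $S$ or on $S_f$: the two induced metrics differ by $O(\epsilon)$, so the discrepancy is absorbed into $E_1$.

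For the area form I would read off the induced metric from the same formula: $(g_f)_{ij}=\langle v_i,v_j\rangle+(e_if)(e_jf)=\delta_{ij}-2f\langle We_i,e_j\rangle+O(\epsilon^2)$, i.e. in the frame $g_f=I-2f\,A_S+O(\epsilon^2)$ as a matrix, where $(A_S)_{ij}=\langle We_i,e_j\rangle$. Taking determinants, $\det g_f=1-2f\operatorname{tr}W+O(\epsilon^2)=1-2f\,m(S)+O(\epsilon^2)$, and therefore $\tfrac{dA_f}{dA}=\sqrt{\det g_f}=1-f\,m(S)+E_2$ with $|E_2|\le C\epsilon^2$.

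I do not expect a real obstacle: once the frame computation is set up everything is routine. The only points that need care are (a) fixing the sign and orientation conventions for $n$, $W$ and $m(S)$ consistently---the graph being taken in the $+n$ direction and $m(S)$ being the trace of the shape operator for that normal---so that the signs in the statement come out as written, and (b) confirming that each discarded term genuinely carries two powers of $\epsilon$, which is immediate from $|f|,|\nabla f|<\epsilon$ and $|A_S|\le 1$.
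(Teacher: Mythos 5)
Your proof is correct and fills in exactly the ``direct calculation'' that the paper leaves to the reader (the paper states the lemma without proof). The frame computation via $d\Phi(e_i)=e_i-fWe_i+(e_if)\,n$, the Neumann-series inversion of $I-fW^{\mathsf T}$ for the normal, and the determinant expansion for the area form are the standard route, and your sign bookkeeping with $\nabla_{e_i}n=-We_i$, $m(S)=\operatorname{tr}W$ is internally consistent with both formulas as stated.
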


\section{Non-concentration}\label{sec:nonconc}
The goal of this section is to prove the key non-concentration result, Proposition~\ref{prop:nonconcentration} below, and as a consequence to prove a three annulus lemma, Proposition~\ref{prop:D3annulus}.
We will need to consider minimal surfaces $M$ very close to one of the comparison surfaces $T_\delta$ on different annuli. Instead of varying the annulus, it is convenient to consider scalings $\Lambda T_\delta$ inside a fixed annulus, say $B_1\setminus B_{1/2} \subset \mathbf{R}^8\times \mathbf{R}$. Note that as long as $|\ln \Lambda| < |\delta|^{-\kappa}$, for a slightly smaller $\kappa$ than that in Proposition~\ref{prop:Tdelta1}, the surface $\Lambda T_\delta$ is defined in any fixed annulus for sufficiently small $\delta$.

To define the distance, we first note the following.
\begin{lemma}\label{lem:Phidefn1}
  There are $c_0, C > 0$ with the following properties:
  \begin{itemize}
    \item If $|\lambda| < c_0$, then $(1 + \lambda)H$ is the graph of a function $\Phi_\lambda$ over $H$ satisfying $C^{-1} r^{-2} < \lambda^{-1}\Phi_\lambda< C r^{-2}$ and we have
  \[ \Vert \Phi_\lambda -  \lambda\Phi\Vert_{C^{2,\alpha}_{-2}} \leq C\lambda^2, \]
  where $\Phi$ is the corresponding Jacobi field.
  \item 
    For any $a\in \mathbf{R}$ and $\lambda > 0$, the surface $(a + \lambda)^{1/3}H$ is on the positive side of the graph of $c_0 \min\{  \lambda  r^{-2}, r\}$ over $a^{1/3} H$. 
    \end{itemize}
\end{lemma}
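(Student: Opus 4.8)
The plan is to treat the two parts separately, both built on the Hardt--Simon foliation \cite{HS85}: a neighborhood of $C$ is foliated by the dilated surfaces $t^{1/3}H$ ($t>0$), by $-|t|^{1/3}H$ ($t<0$), and by $C$ itself ($t=0$), and these leaves are pairwise disjoint. Every statement in the lemma is covariant under the dilation $z\mapsto\mu z$, which sends $H$ to $\mu H$, the Jacobi field $\Phi$ to $\mu\,\Phi(\mu^{-1}\cdot)$, the leaf $a^{1/3}H$ to $(\mu^3 a)^{1/3}H$, and rescales the two graph functions correspondingly; I will use this freedom throughout. I also record the asymptotics $t^{1/3}H=\mathrm{graph}_C\big(t\,r^{-2}+b\,t^{4/3}r^{-3}+O(r^{-4})\big)$ at infinity, obtained from Proposition~\ref{prop:refinedf} by scaling.

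For the first part, since $H$ has regularity scale comparable to $r$, its normal exponential map is a diffeomorphism onto a tubular neighborhood of width $\sim r$, whereas the dilation $(1+\lambda)H$ displaces each point of $H$ in the normal direction by $O(|\lambda|\,r^{-2})\ll r$. Hence for $|\lambda|<c_0$ the surface $(1+\lambda)H$ lies in that tubular neighborhood and is the graph of a function $\Phi_\lambda$ over $H$, depending smoothly on $\lambda$ with $\Phi_0=0$ and $\partial_\lambda\Phi_\lambda|_{\lambda=0}=\Phi$, the Jacobi field of homothetic scalings, which is positive and satisfies $\Phi\asymp r^{-2}$ by the strict stability of $H$ and the asymptotics of $H$. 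To promote the Taylor expansion $\Phi_\lambda=\lambda\Phi+O(\lambda^2)$ to the weighted bound $\Vert\Phi_\lambda-\lambda\Phi\Vert_{C^{2,\alpha}_{-2}}\le C\lambda^2$ I would rescale the annular region $r\in(R,2R)$ of $H$ by $R^{-1}$: there $R^{-1}H$ has bounded geometry and is an $O(R^{-3})$ graph over $C$, while $R^{-1}(1+\lambda)H$ is an $O(|\lambda|R^{-3})$ graph over $R^{-1}H$, and the smooth $\lambda$-dependence of the graph function in this fixed-geometry picture is uniform in $R$; scaling back gives the estimate. Finally $C^{-1}r^{-2}<\lambda^{-1}\Phi_\lambda<Cr^{-2}$ follows from $\lambda^{-1}\Phi_\lambda=\Phi+O(\lambda r^{-2})$ together with $\Phi\asymp r^{-2}$ and $\Phi>0$, for $|\lambda|$ small.

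For the second part I would first use dilation covariance to reduce to $a\in\{-1,0,1\}$, and for $a=0$ further reduce to $\lambda=1$. The case $a=0,\lambda=1$ asks that $H$ lie strictly on the positive side of the graph of $c_0\min\{r^{-2},r\}$ over $C$; this is clear at infinity since $H=\mathrm{graph}_C(r^{-2}+\dots)$ and $c_0<1$, and on compact sets since $H$ is a smooth bowl sitting at a definite positive distance above $C$ while $c_0\min\{r^{-2},r\}\le c_0$. For $a=1$ (the case $a=-1$ is identical after a reflection, and the case $a<0<a+\lambda$ is obtained by adding the two one-sided estimates across $C$), I split on $\lambda$: for $\lambda$ small, writing $(1+\lambda)^{1/3}=1+\lambda'$ with $\lambda'\asymp\lambda/3$, the first part gives that $(1+\lambda)^{1/3}H$ is the graph of $\Phi_{\lambda'}\ge c_0\lambda r^{-2}\ge c_0\min\{\lambda r^{-2},r\}$ over $H$; for $\lambda$ bounded below, $H$ and $(1+\lambda)^{1/3}H$ are distinct, hence disjoint, leaves, and a compactness argument on the core together with the asymptotic expansion above bounds their normal separation below by $c_0\min\{\lambda r^{-2},r\}$, with uniformity as $\lambda\to\infty$ obtained by rescaling by $(1+\lambda)^{-1/3}$ to return near the case $a=0,\lambda=1$.

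The step I expect to be the main obstacle is the uniformity in the second part: stitching the near-infinity behavior of the separation (of order $\lambda r^{-2}$) to the compact-core behavior (of order the scale $r$) with a single constant $c_0$ that survives all of $\lambda\to0$, $\lambda\to\infty$ and $a\to0$. The dilation reductions remove most of these limits, but the borderline regime in which $a$ is small compared with $\lambda$ --- so that the leaf $a^{1/3}H$ has a tiny core well inside that of $(a+\lambda)^{1/3}H$ --- and the passage of the family through the cone $C$ still require a careful, if elementary, comparison based on disjointness of the leaves and the maximum principle.
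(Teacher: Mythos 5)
Your treatment of the first bullet is fine and matches the paper's. For the second bullet your ingredients are the right ones, but the gap you flag at the end — uniformity when $|a|\ll\lambda$, and the passage of $(a+\lambda)^{1/3}H$ across $C$ — is genuine, and the normalization to $a\in\{-1,0,1\}$ does not close it. For $a=1$ and large $\lambda$ you are forced to rescale a second time, which lands you only \emph{near} $a=0$ rather than at it, so you would need the conclusion for $a$ in a neighborhood of $0$ with a uniformity you have not established; and the case $a<0<a+\lambda$ cannot simply be dispatched by ``adding the two one-sided estimates across $C$,'' since that requires composing graphs over distinct leaves and controlling how the constant degrades as $a^{1/3}H$ approaches $C$.

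The paper resolves this with a single three-way split on the ratio $\lambda/|a|$, which builds the uniformity in from the start and makes the signs of $a$ and $a+\lambda$ irrelevant. If $\lambda\le\epsilon|a|$, scale by $|a|^{-1/3}$ and apply the first bullet (to $\pm H$) with homothety parameter $\le\epsilon$; here $a$ and $a+\lambda$ share a sign. If $\lambda\ge\epsilon^{-1}|a|$, scale by $\lambda^{-1/3}$: the base surface $(\lambda^{-1}a)^{1/3}H$ is now a perturbation of $C$ of size at most $\epsilon$ on \emph{either} side, and the conclusion is read off directly from the asymptotics $H=\mathrm{graph}_C(r^{-2}+O(r^{-3}))$ with a built-in margin, so the crossing of $C$ costs nothing and no separate ``$a$ near $0$'' statement is needed. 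In the remaining range $\epsilon|a|\le\lambda\le\epsilon^{-1}|a|$, apply the first case with $\lambda$ replaced by $\epsilon|a|$, invoke nestedness of the Hardt--Simon leaves, and then absorb a factor $\epsilon^{2}$ into $c_0$ using $\lambda\le\epsilon^{-1}|a|$. This dispenses with the compactness argument and the graph-composition step entirely.
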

\begin{proof}
  The estimates for $\Phi_\lambda$ follow from the fact that $H$ lies on the positive side of $C$ and is the graph of $r^{-2} + O(r^{-3})$ outside of a compact set. To see the second claim we can consider three cases separately.
  \begin{itemize}
  \item If $\lambda \leq \epsilon |a|$ for a sufficiently small $\epsilon  > 0$, then after scaling by $|a|^{-1/3}$ the claim is equivalent to saying that $(|a|^{-1}a + |a|^{-1}\lambda)^{1/3} H$ is on the positive side of the graph of $c_0 \min\{ |a|^{-1}\lambda r^{-2}, r\}$ over $(|a|^{-1} a)^{1/3} H$. If $\epsilon$ is sufficiently small, this follows from the previous claim, replacing $c_0$ by a smaller constant if necessary.
    \item If $\lambda \geq \epsilon^{-1} |a|$ for sufficiently small $\epsilon$, then we rescale by $\lambda^{-1/3}$. The claim is equivalent to asking for $(\lambda^{-1}a + 1)^{1/3}H$ to be on the positive side of the graph of $c_0 \min\{ r^{-2}, r\}$ over $(\lambda^{-1}a)^{1/3} H$. If $\epsilon$ is sufficiently small, then this holds (reducing the value of $c_0$ if necessary), using that $H$ is the graph of $r^{-2} + O(r^{-3})$ over $C$ outside of a compact set.
    \item For the $\epsilon > 0$ obtained in the previous two cases, suppose that $\epsilon |a| \leq \lambda \leq \epsilon^{-1}|a|$. We have that $(a+\lambda)^{1/3}H$ lies on the positive side of $(a + \epsilon |a|)^{1/3}H$, and applying the first case (setting $\lambda = \epsilon |a|$) we have that $(a + \epsilon |a|)^{1/3}H$ lies on the positive side of the graph of $c_0\min\{ \epsilon |a| r^{-2}, r\}$ over $a^{1/3}H$. Since we have
$\lambda \leq \epsilon^{-1} |a|$,
it then follows that $(a+\lambda)^{1/3}H$ lies on the positive side of the graph of  $\epsilon^2 c_0\min\{ \lambda  r^{-2}, r\}$. Replacing $c_0$ by $\epsilon^2c_0$ we are done. 
  \end{itemize} 
\end{proof}
It also follows by rescaling that $\delta^{1/3}(1 +\lambda) H$ is the graph of $f = \delta^{1/3}\Phi_{\lambda}(\delta^{-1/3}\cdot)$ over $\delta^{1/3}H$, and so $C^{-1}|\delta| r^{-2} < \lambda^{-1}f < C|\delta| r^{-2}$.

We will define the distance $D_{\Lambda T_\delta}(M; U)$ of the surface $M$ from $\Lambda T_\delta$  (and similarly from $V_\delta, W_\delta$) over a set $U\subset B_1\setminus B_{1/2}$. Recall from Proposition~\ref{prop:Tdelta1} that $\Lambda T_\delta$ is the graph of a function $f$ over $(y^3\delta)^{1/3}H$, with $|f| \leq C |\delta| r^{-2+\kappa}$. We first define neighborhoods of such surfaces in each $y$-slice, depending on an additional small parameter $\beta > 0$.  At several points below we will reduce the value of $\beta$, and its final value will be fixed in Proposition~\ref{prop:nonconcentration}. 
\begin{definition}\label{defn:nhood}
  Let $\beta > 0$.   Suppose that $S$ is a hypersurface in the unit ball of $\mathbf{R}^8$ given as that graph of a function $f$ over $\delta^{1/3}H$ with $|f|\leq C |\delta| r^{-2+\kappa}$.  Given $d > 0$, the $d$-neighborhood $N_{\beta, d}(S)$ is defined as follows.
  \begin{itemize}
  \item[(a)] If $d \geq \beta|\delta|$ then $N_{\beta, d}(S)$ is the region bounded between the surfaces $\pm (\beta^{-2} d)^{1/3}H$. 
  \item[(b)] If $d < \beta |\delta|$, then $N_{\beta, d}(S)$ is the region bounded between the graphs of
    \[ \pm \min\{ (\beta |\delta| + d) r^{-2}, d r^{-2.1}\} \]
    over $S$. 
 \end{itemize}
\end{definition}
This definition is motivated by the dichotomy between surfaces that are very close to $S$, which we view as graphs over $S$, and surfaces that are relatively far from $S$, at which scale we do not distinguish between $S$ and the cone $C$. The reason for the somewhat awkward expression in (b) is that when we construct barrier surfaces with negative mean curvature as graphs over $\Lambda T_\delta$ in Proposition~\ref{prop:linearbarrier}, then it helps to allow slightly faster blowup as $r\to 0$ than $r^{-2}$ (see Step 1 in the proof). 
Note that when viewing $S$ as a graph over $\delta^{1/3}H$ the value of $\delta$ is not uniquely defined, but in our applications $S$ will always be $V_\delta, W_\delta$ or $\Lambda T_\delta$, which in the $y$-slice we view as graphs over $\delta^{1/3}y H$. We have the following. 

\begin{lemma}\label{lem:Ndprop}
If $\beta$ is sufficiently small, then in the setting of the previous definition we have $N_{\beta, d_1}(S) \subset N_{\beta, d_2}(S)$ whenever $d_1 < d_2$, as long as $\delta$ is sufficiently small. In addition $\cap_{d > 0} N_{\beta, d}(S) = S$. 
\end{lemma}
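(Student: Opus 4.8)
The plan is to express every surface in sight as a graph over the single fixed leaf $\delta^{1/3}H$, using that $S$ is the graph over $\delta^{1/3}H$ of a function $f$ with $|f|\le C|\delta|r^{-2+\kappa}$ and that $r\le 1$ on $B_1$: in case (b) of Definition~\ref{defn:nhood}, once $d\le\beta|\delta|$ the upper bounding surface of $N_{\beta,d}(S)$ is, as a graph over $\delta^{1/3}H$, given by a function bounded by $C'|\delta|r^{-2}$ for a fixed constant $C'$ (depending only on $C$), and symmetrically from below. For the monotonicity claim I would split into three cases according to where $d_1<d_2$ sit relative to the threshold $\beta|\delta|$.

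If $d_2\le\beta|\delta|$, both neighborhoods are of type (b) and the inclusion is immediate, since $N_{\beta,d}(S)$ is then the region between the graphs over $S$ of $\pm\min\{(\beta|\delta|+d)r^{-2},\,d\,r^{-2.1}\}$, and for each fixed $r$ the map $t\mapsto\min\{(\beta|\delta|+t)r^{-2},\,t\,r^{-2.1}\}$ is nondecreasing. If $d_1\ge\beta|\delta|$, both are of type (a) and the inclusion is the statement that the leaves of the Hardt--Simon foliation \cite{HS85} are nested: for $0<t_1<t_2$ the region between $\pm t_1H$ is contained in the region between $\pm t_2H$. The remaining case $d_1<\beta|\delta|\le d_2$ is the main one. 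Here $N_{\beta,d_1}(S)$ is of type (b) and $N_{\beta,d_2}(S)$ is of type (a). I would write $\beta^{-2}d_2=\delta+\lambda$ with $\lambda=\beta^{-2}d_2-\delta\ge(\beta^{-1}-1)|\delta|>0$ (using $d_2\ge\beta|\delta|$ and $\beta<1$), apply the second bullet of Lemma~\ref{lem:Phidefn1} to conclude that $(\beta^{-2}d_2)^{1/3}H$ lies on the positive side of the graph of $c_0\min\{\lambda r^{-2},r\}$ over $\delta^{1/3}H$, and then verify the elementary pointwise inequality $c_0\min\{(\beta^{-1}-1)|\delta|r^{-2},r\}\ge C'|\delta|r^{-2}$ on $0<r\le 1$: below the crossover $r_*=((\beta^{-1}-1)|\delta|)^{1/3}$ this reduces to $c_0(\beta^{-1}-1)\ge C'$, and for $r\ge r_*$ to $c_0r^3\ge C'|\delta|$, which follows from the same inequality since $r^3\ge(\beta^{-1}-1)|\delta|$ there. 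Both hold for $\beta$ small, the left sides blowing up as $\beta\to0$ while $C'$ is fixed. Hence the upper bounding surface of $N_{\beta,d_1}(S)$ lies on the negative side of $(\beta^{-2}d_2)^{1/3}H$; the $\pm$ symmetry of the construction gives the mirror statement for the lower one, so $N_{\beta,d_1}(S)\subset N_{\beta,d_2}(S)$.

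For the identity $\cap_{d>0}N_{\beta,d}(S)=S$, I would first check that $S\subset N_{\beta,d}(S)$ for every $d>0$. In case (b) the bounding functions are $\pm$ a strictly positive function, so this is clear; in case (a) one argues as above with Lemma~\ref{lem:Phidefn1}, using that $S$ is within $C|\delta|r^{-2+\kappa}$ of $\delta^{1/3}H$ while $\delta^{1/3}H$ is strictly on the positive side of $C$, to see that $S$ lies strictly between $\pm(\beta^{-2}d)^{1/3}H$ once $\delta$ is small. By the monotonicity just established, $\cap_{d>0}N_{\beta,d}(S)=\cap_{0<d<\beta|\delta|}N_{\beta,d}(S)$, and each term is the region between the graphs over $S$ of $\pm\min\{(\beta|\delta|+d)r^{-2},\,d\,r^{-2.1}\}$. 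Since $r\ge c\delta^{1/3}$ on $S$, this region lies in the $\big(c^{-2.1}d\,\delta^{-0.7}\big)$-neighborhood of $S$ in the graph coordinate over $\delta^{1/3}H$, and this width tends to $0$ as $d\to0^+$; hence the intersection is contained in $S$, and equality follows.

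The step I expect to be the main obstacle is the mixed case $d_1<\beta|\delta|\le d_2$, where the two regimes of Definition~\ref{defn:nhood} meet and one must pass from the ``graph over $S$'' description of the thin neighborhood to the ``trapped between Hardt--Simon leaves'' description of the fat one. This is exactly what the second bullet of Lemma~\ref{lem:Phidefn1} provides, together with the observation that taking $\beta$ small forces the leaf $(\beta^{-2}d_2)^{1/3}H$ to lie many times farther from $\delta^{1/3}H$ than the thin neighborhood, which is what absorbs the fixed constant $C'$ coming from $|f|\le C|\delta|r^{-2+\kappa}$.
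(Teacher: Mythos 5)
Your proposal is correct and essentially mirrors the paper's argument: both reduce the monotonicity to the crossover at $d=\beta|\delta|$ (the within-regime inclusions being immediate from monotonicity of $\min\{(\beta|\delta|+t)r^{-2},tr^{-2.1}\}$ and the nesting of Hardt--Simon leaves), and both handle the crossover via the second bullet of Lemma~\ref{lem:Phidefn1} together with the same pointwise inequality, which holds because taking $\beta$ small makes $c_0(\beta^{-1}-1)$ dominate the fixed constant $C'=C+2\beta$. Your treatment of $\cap_{d>0}N_{\beta,d}(S)=S$ is slightly more explicit than the paper's (which leaves the ``$\subseteq S$'' direction implicit), but the underlying reason — the type-(b) width $dr^{-2.1}$ vanishes as $d\to0$ because $r$ is bounded below by $c|\delta|^{1/3}$ on $S$ — is the same.
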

\begin{proof}
  To see that $N_{\beta, d_1}(S) \subset N_{\beta, d_2}(S)$, it is enough to show that the region between the graphs of $\pm \min\{ 2\beta|\delta| r^{-2}, \beta|\delta| r^{-2.1}\}$ over $S$ is contained between the surfaces $\pm (\beta^{-2} \beta |\delta|)^{1/3}H$.

  To see this, note that by Lemma~\ref{lem:Phidefn1} the region between $\pm (\beta^{-1}|\delta|)^{1/3}H$ contains the region between the graphs of
  \[ \pm c_0 \min\{ (\beta^{-1}|\delta| - \delta) r^{-2}, r\} \]
  over $\delta^{1/3}H$. On the other hand $S$ is the graph of an $O(|\delta| r^{-2+\kappa})$ function over $\delta^{1/3}H$, and so the region between the graphs of $\pm \min\{ 2\beta|\delta| r^{-2}, \beta|\delta| r^{-2.1}\}$ over $S$ is contained between the graphs of
  \[ \pm (2\beta |\delta| r^{-2} + C |\delta| r^{-2+\kappa}) \]
  over $\delta^{1/3} H$. We therefore just need to show that if $\beta, \delta$ are sufficiently small, then on the unit ball, over $\delta^{1/3}H$, we have
  \[ c_0 \min\{ \frac{1}{2} \beta^{-1}|\delta| r^{-2}, r\} \geq 2\beta |\delta| r^{-2} + C|\delta| r^{-2+\kappa} . \]
  This follows, since on $\delta^{1/3} H$, in the unit ball, we have $C^{-1}|\delta|^{1/3} < r  < 1$ for a uniform $C$. In particular we have $S \subset N_{\beta, d}(S)$ for all $d > 0$, and it follows from this that $\cap_{d > 0} N_{\beta, d}(S) = S$. 
\end{proof}

We now define neighborhoods of $\Lambda T_\delta, V_\delta, W_\delta$, and the distance $D$. Note that the rescalings $\Lambda T_\delta$ are defined in the annulus $B_1\setminus B_{1/2}$ for $|\ln\Lambda| < |\delta|^{-\kappa}$ once $\delta$ is small enough. 
\begin{definition}\label{defn:D}
  We define $N_{\beta, d}(\Lambda T_\delta)$ to be the union of the neighborhoods $N_{\beta, d}(\Lambda T_\delta \cap \{y=y_0, r < 1\})$ for $\frac{1}{2} < y_0 < 1$.
  The neighborhoods of $V_\delta, W_\delta$ are defined in the same way.
  In order to define the distance over a set $U\subset B_1\setminus B_{1/2}$,  let $S' = V_\delta, W_\delta$ or $S'=\Lambda T_\delta$. We define the distance $D_{\beta, S'}(S; U)$ to be the infimum of all $d$ for which $S\cap U\subset N_{\beta, d}(S')$.
\end{definition}

  In this definition we used the annulus $B_1\setminus B_{1/2}$ for convenience, but the definition clearly extends to larger annuli, we just need to take $\delta$ smaller to ensure that $\Lambda T_\delta$ is still defined. From now on we will omit the $\beta$ from the notation, but note that the distance depends on the choice of $\beta$. The final choice will be determined in Proposition~\ref{prop:nonconcentration} below.

We will frequently use the following, analogous to \cite[Lemma 1.4]{Simon94}. Recall that our minimal surfaces $M$ are in a given multiplicity one class $\mathcal{M}$ as in \cite{Simon94}. 
\begin{lemma}\label{lem:smalldgraph}
  There is a $C = C(\beta) > 0$ such that for any $c > 0$ there are $d_0, \delta_0 > 0$ with the following property. Suppose that on the set $U\subset B_1\setminus B_{1/2}$ we have $D_{V_\delta}(M;U) < d_0$ for some $|\delta| < \delta_0$, and $M\in \mathcal{M}$. Then on the subset $U\cap \{r > c\}$ the surface $M$ can be written as the graph of $u$ over $V_\delta$, with $\sup_{\{r > c\}} |r^{2.1}u| \leq CD_{V_\delta}(M; U)$. The same result also holds for $\Lambda T_{\delta}$ with $|\Lambda| < |\delta|^{-\kappa}$. 
\end{lemma}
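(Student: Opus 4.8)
The plan is to prove this by a standard compactness/contradiction argument combined with the Allard-type regularity theory that is available for the multiplicity one class $\mathcal{M}$. I treat the case of $V_\delta$ first; the case of $\Lambda T_\delta$ is identical since by Proposition~\ref{prop:Tdelta1} the surface $\Lambda T_\delta \cap \{y=y_0\}$ is, for $|\ln\Lambda| < |\delta|^{-\kappa}$, an $O(|\delta| r^{-2+\kappa})$-graph over $(y_0^3\delta)^{1/3}H$, so the same $N_{\beta,d}$-neighborhoods and the same arguments apply verbatim. The key structural input is that if $D_{V_\delta}(M;U)=d$ is small, then by Definition~\ref{defn:D} and Definition~\ref{defn:nhood} the surface $M$ is trapped between two explicit barrier surfaces (either the leaves $\pm(\beta^{-2}d)^{1/3}H$ in case (a), or graphs of $\pm\min\{(\beta|\delta|+d)r^{-2}, dr^{-2.1}\}$ over $V_\delta$ in case (b)), and in particular it lies in a thin region around $V_\delta$ over $\{r>c\}$.

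The main steps are as follows. First, fix $c>0$. On the region $\{r>c\}\cap U$ the cone $V_\delta$ has uniformly bounded geometry (with bounds depending on $c$ but not on $\delta$, by Proposition~\ref{prop:Sdelta}(iii) after scaling). Second, I argue by contradiction: suppose there is a sequence $M_j\in\mathcal{M}$, $\delta_j\to 0$ (or $\delta_j$ bounded away from $0$—both subcases must be handled, passing to a subsequence so $\delta_j\to\delta_\infty\in[0,\delta_0]$), with $D_{V_{\delta_j}}(M_j;U)=d_j\to 0$, but such that the conclusion fails—either $M_j$ cannot be written as a graph over $V_{\delta_j}$ on $\{r>c\}$, or it can but with $\sup_{\{r>c\}}|r^{2.1}u_j| > C d_j$ for every $C$. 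Third, since $D_{V_{\delta_j}}(M_j;U)\to 0$, the trapping between the barrier surfaces forces $M_j\to V_{\delta_\infty}$ (in the sense of varifolds, and in Hausdorff distance on $\{r>c/2\}$); by Allard's regularity theorem, valid for the multiplicity one class $\mathcal{M}$, $M_j$ converges smoothly to $V_{\delta_\infty}$ on $\{r>c\}$, hence for large $j$ is a graph of a small function $u_j$ over $V_{\delta_j}$ there. This already contradicts the first failure mode. For the second, rescale: the trapping gives $|u_j|\le C(\beta)(\beta|\delta_j|+d_j)r^{-2}$ in case (b), or $|u_j|\le C(\beta) d_j r^{-2}$ (using that $(\beta^{-2}d)^{1/3}H$ is an $O((\beta^{-2}d)r^{-2})$-graph over $C$, hence over $V_\delta$, by Lemma~\ref{lem:Phidefn1}) in case (a); and in case (b), once $d_j \ge \beta|\delta_j|$ we are in case (a) anyway, while for $d_j<\beta|\delta_j|$ one has $d_j r^{-2.1}$ as the tighter bound on the set where $r$ is not too small, giving $|r^{2.1}u_j|\le C d_j$ directly, and on the set where $(\beta|\delta_j|+d_j)r^{-2}$ is the tighter bound one uses $r>c$ so $|r^{2.1}u_j|\le C c^{0.1}(\beta|\delta_j|+d_j) \le C' d_j$ after noting $|\delta_j| \lesssim d_j/\beta$ fails in general—so here I instead absorb the $\beta|\delta_j|$ term by taking $\delta_0$ small relative to $c$, or more cleanly, reduce to the two genuinely distinct cases (a) and the ``$d_j r^{-2.1}$'' branch of (b). The uniform $C^0$ bound $|u_j|\le C d_j r^{-2}$ (possibly with $r^{-2.1}$), together with interior Schauder estimates for the minimal surface equation on the scaled-up balls $B_{r/4}(z)\subset V_{\delta_j}$ where the geometry is bounded, upgrades to $\sup_{\{r>c\}}|r^{2.1}u_j|\le C\, d_j$, contradicting the second failure mode.

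The step I expect to be the main obstacle is the bookkeeping in case (b) of Definition~\ref{defn:nhood}, specifically reconciling the two competing bounds $(\beta|\delta|+d)r^{-2}$ and $d r^{-2.1}$ and extracting a clean estimate in terms of $D_{V_\delta}(M;U)=d$ alone, uniformly in $\delta$; this is where the precise relationship $r>C^{-1}|\delta|^{1/3}$ on $V_\delta$ (from Proposition~\ref{prop:Sdelta}) and the restriction to $\{r>c\}$ must be used carefully so that the $\beta|\delta|$ contribution is genuinely lower order. Everything else—varifold convergence, Allard regularity for $\mathcal{M}$, and the elliptic bootstrap—is routine given the uniformly bounded geometry of $V_\delta$ and $\Lambda T_\delta$ away from the singular ray established in the previous sections, so I would keep those parts brief and cite \cite[Lemma 1.4]{Simon94} for the analogous argument.
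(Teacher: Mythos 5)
Your approach is essentially the same as the paper's: a compactness/contradiction argument combined with Allard regularity to get the graph property, followed by a case analysis on Definition~\ref{defn:nhood} to read off the $C^0$ bound, and the proposal would work. Two remarks that would simplify it: the step you flagged as the main obstacle---reconciling the two branches of the $\min$ in case (b)---is actually vacuous, since the $\min$ automatically gives $|u| \le d\, r^{-2.1}$ pointwise with no sub-case analysis, no workaround with $\delta_0$ small relative to $c$, and no use of $|\delta|\lesssim d/\beta$ (which fails in case (b) since there $d < \beta|\delta|$); the comparison $|\delta| \le d/\beta$ is only needed in case (a), where it absorbs the $O(|\delta| r^{-2})$ offset of $V_\delta$ from $C\times\mathbf{R}$ when you switch from viewing $M$ as trapped between $\pm(\beta^{-2}d)^{1/3}H$ to viewing it as a graph over $V_\delta$. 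Also, the final Schauder/elliptic bootstrap step is not needed: the conclusion is purely a $C^0$ statement and follows directly from the trapping once Allard has given you that $M$ is a graph over $V_\delta$ on $\{r>c\}$.
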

\begin{proof}
  Suppose that given $c > 0$ we have a sequence $M_i$, and $d_i, \delta_i \to 0$ such that $D_{V_{\delta_i}}(M_i;U) < d_i$. We show that the required conclusion holds for sufficiently large $i$. It follows that on $U$ the $M_i$ converge to $C\times \mathbf{R}$ in the sense of currents, and in particular by Allard's regularity theorem~\cite{All72} we can write $M_i$ as the graph of a function $v_i$ over $C\times\mathbf{R}$ on the set $U\cap \{r > c/2\}$ for large $i$. For sufficiently small $\delta_i$ the $M_i$ will then also be the graph of a function $u_i$ over $V_{\delta_i}$ on the region $U\cap \{r > c\}$.

  If $d_i \geq \beta|\delta_i|$ in terms of Definition~\ref{defn:nhood}, then $M_i$ lies between the surfaces $\pm (\beta^{-2}d_i)^{1/3}H$. Given $c > 0$, this means that on the region $\{r > c\}$, $M_i$ lies between the graphs of $\pm 2\beta^{-2} d_i r^{-2}$ over $C\times \mathbf{R}$, once $i$ is sufficiently large. At the same time, $V_{\delta_i}$ lies between the graphs of $\pm C|\delta_i| r^{-2}$ over $C\times \mathbf{R}$ on the same region, for a uniform $C$. It follows that
  \[ |u_i| \leq C |\delta_i| r^{-2} + 2\beta^{-2} d_i r^{-2} \leq (C \beta^{-1} + 2\beta^{-2}) d_i r^{-2}. \]
  If on the other hand $d_i \leq \beta|\delta_i|$, then by definition we must have $|u_i| \leq d_i r^{-2.1}$, and so the result follows. 
\end{proof}

The following is the key non-concentration result.
\begin{prop}\label{prop:nonconcentration}
 Suppose that $\beta > 0$ is sufficiently small.  There is a constant $C = C(\beta)$, such that given any $\gamma > 0$, if $r_0 < r_0(\beta, \gamma)$, $\delta < \delta(\beta, \gamma, r_0)$, $d < d(\beta, \gamma, r_0)$ then we have the following.

  Suppose that $M\in\mathcal{M}$ is a minimal surface with
  $D_{\Lambda T_\delta}(M; B_1\setminus B_{1/2}) = d$, where $|\ln\Lambda| < |\delta|^{-\kappa}$, that on the region $\{r > r_0\}$ we can write as the graph of a function $u$ over $\Lambda T_\delta$. Then we have
 \[ \label{eq:nonconc}
   D_{\Lambda T_\delta}(M; B_{4/5}\setminus B_{3/5}) \leq C \sup_{(B_1\setminus B_{1/2})\cap \{r > r_0\}} |r^2 u| + C \gamma d. \]
The same conclusion also holds if $M$ is only assumed to be minimal in the region $\{r < r_1\}$ for some $r_1 > r_0$, and with $V_\delta$ replacing $\Lambda T_\delta$. 
\end{prop}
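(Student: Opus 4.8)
The plan is a comparison argument with mean-convex barriers, localised near the singular ray $\{r=0,\ 1/2<y<1\}$ of $V_0$; the ray with $y<0$ is handled by the symmetry $y\mapsto -y$, and where $r$ is bounded below $M$ is already a graph over $\Lambda T_\delta$, so nothing is needed there. Write $\epsilon_0=\sup_{(B_1\setminus B_{1/2})\cap\{r>r_0\}}|r^2u|$. From $|u|\le\epsilon_0 r^{-2}\le\epsilon_0 r^{-2.1}$ (valid since $r<1$), Definition~\ref{defn:nhood}, and Lemma~\ref{lem:Ndprop} one gets $M\cap\{r>r_0\}\subset N_{\beta,\epsilon_0}(\Lambda T_\delta)\subset N_{\beta,d'}(\Lambda T_\delta)$ whenever $d'\ge\epsilon_0$; so with $d'=C(\epsilon_0+\gamma d)$ it is enough to prove the inclusion $M\cap\{r\le r_0\}\cap(B_{4/5}\setminus B_{3/5})\subset N_{\beta,d'}(\Lambda T_\delta)$. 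We may also assume $\epsilon_0\le d$, since otherwise $d<d'$ and $M\subset N_{\beta,d}\subset N_{\beta,d'}$ already, by Lemma~\ref{lem:Ndprop}.

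The heart of the matter is the construction of one-sided barriers $\Gamma^\pm$ over $\Lambda T_\delta$: smooth hypersurfaces whose mean curvature points strictly towards $\Lambda T_\delta$, such that on $(\partial B_1\cup\partial B_{1/2})\cap\{r<r_0\}$ they lie strictly outside $N_{\beta,d}(\Lambda T_\delta)$, on $\{r=r_0\}$ they strictly enclose the graph of $u$ — which, since $|u|\le\epsilon_0 r_0^{-2}$, only asks them to dominate a quantity of size $\epsilon_0 r_0^{-2}$ — and on $B_{4/5}\setminus B_{3/5}$ they lie inside $N_{\beta,d'}(\Lambda T_\delta)$. In the regime $d<\beta|\delta|$, for $r$ away from its minimum on $\Lambda T_\delta$ the surface $\Gamma^+$ is the graph over $\Lambda T_\delta$ of $\bigl(d\,A(y)+\epsilon_0 r_0^{1/10}\bigr)r^{-2.1}$, where $A$ is a fixed smooth profile, equal to $1$ near $y=1/2$ and $y=1$, equal to $\gamma$ on $[3/5,4/5]$, everywhere $\ge\gamma$, and with $\|A''\|_\infty$ bounded independently of $\gamma$. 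Here one uses that $r^{-2.1}$ is a strict supersolution of $L_{V_0}$ near the ray — the $G$-invariant indicial roots of $L_C$ there are $-2$ and $-3$, and $L_{V_0}(r^\nu)<0$ for $\nu\in(-3,-2)$ — and that this negative term dominates the contributions of $A''$ and of the nonlinear part of the mean curvature operator once $r\le r_0$ with $r_0=r_0(\beta,\gamma)$ and $d=d(\beta,\gamma,r_0)$ small; note $\Lambda T_\delta$ is minimal and, near the $y>0$ ray, agrees with the (there also minimal, since $\zeta$ is supported near $y=0$) cone $V_\delta$. For $r$ near its minimum $\sim|\delta|^{1/3}$, where $N_{\beta,d}$ is governed by the $(\beta|\delta|+d)r^{-2}$ branch, this graph is blended with a nearby leaf $(\delta\pm a)^{1/3}yH$ of the Hardt-Simon foliation (Lemma~\ref{lem:Phidefn1}), which is minimal and hence automatically a barrier; preserving mean-convexity across the blend is exactly where the faster blowup $r^{-2.1}$ is used. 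In the regime $d\ge\beta|\delta|$ the barrier is instead built from the leaves $\pm(\beta^{-2}d)^{1/3}H$ and $\pm(\beta^{-2}d')^{1/3}H$, interpolated to be mean-convex, with the interpolation again admissible for $r\le r_0(\beta,\gamma)$. In both regimes the smallness of $\gamma$ is bought by taking $r_0$ small: the $r^{-2.1}$ blowup can absorb a profile that collapses to $\gamma$ in the middle only once $r$ is as small as $r_0(\gamma)$. This is the content of Proposition~\ref{prop:linearbarrier}.

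Given the barriers, the proposition follows by a sweep-out. Let $\Omega=\{r<r_0\}\cap(B_1\setminus B_{1/2})\cap\{y>0\}$; since $\rho\approx y$ for small $r$, $\partial\Omega$ reduces up to an $O(r_0^2)$ error to $\{r=r_0\}\cup\bigl((\partial B_1\cup\partial B_{1/2})\cap\{r<r_0\}\bigr)$. For $t\ge1$ let $\Gamma^+_t$ be the graph of $t$ times the defining function of $\Gamma^+$; for $d,\delta$ small each $\Gamma^+_t$ with $1\le t\le 2/\gamma$ is still strictly mean-convex towards $\Lambda T_\delta$, and for $t=2/\gamma$ it lies strictly outside $N_{\beta,d}(\Lambda T_\delta)\supset M$ on all of $\Omega$, hence is disjoint from $M$. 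Decreasing $t$ to the first contact value $t_0\in[1,2/\gamma]$: contact on $\partial\Omega$ is excluded by the boundary properties above ($M$ is strictly inside $\Gamma^+\le\Gamma^+_{t_0}$ there), and interior contact is excluded by the strong maximum principle, since $M$ is minimal and $\Gamma^+_{t_0}$ is strictly mean-convex towards the side containing $M$. Hence $t_0=1$ and $M$ lies on the $\Lambda T_\delta$-side of $\Gamma^+$ throughout $\Omega$; symmetrically for $\Gamma^-$. As $\Gamma^\pm\subset N_{\beta,d'}(\Lambda T_\delta)$ on $B_{4/5}\setminus B_{3/5}$, this yields the required inclusion. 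The variant with $V_\delta$ replacing $\Lambda T_\delta$ and $M$ minimal only on $\{r<r_1\}$ is identical: the whole argument takes place in $\Omega\subset\{r<r_0\}\subset\{r<r_1\}$, and $V_\delta$ is minimal near the relevant ray.

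The main obstacle is the barrier construction itself: producing a surface that is strictly mean-convex all the way down to the singular ray — where $\Lambda T_\delta$ has curvature $\sim r^{-1}$ and the quadratic terms in the mean curvature operator are delicate — while simultaneously enclosing $N_{\beta,d}$ on the outer boundary, enclosing the small graph $u$ on $\{r=r_0\}$, and collapsing to $N_{\beta,C\gamma d}$ on the middle annulus. This forces a careful choice of the interdependent parameters $\beta,\gamma,r_0,\delta,d$, a separate treatment of the two regimes of Definition~\ref{defn:nhood}, and repeated passage between graphs over $\Lambda T_\delta$, over $\delta^{1/3}H$, and over the cone $C\times\mathbf{R}$ (via Lemma~\ref{lem:Phidefn1} and Propositions~\ref{prop:Sdelta}, \ref{prop:Tdelta1}).
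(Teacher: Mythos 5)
Your high-level strategy — one-sided mean-convex barriers built on the scale of the Hardt–Simon foliation, swept in by a continuity/maximum-principle argument — is the same as the paper's, and many of the structural observations (reduction to a strip near the ray, the role of $r^{-2.1}$ as a strict supersolution for $L_{C\times\mathbf{R}}$, the boundary/interior split of first contact) are sound. But the construction of the barrier itself, which is exactly where the difficulty lies, has a genuine gap.

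The central error is the claim that the surface with cross-section $(\delta\pm a)^{1/3}\,y\,H$ in the slice $\{y\}\times\mathbf{R}^8$ is minimal and ``hence automatically a barrier.'' It is not minimal. Writing it as a graph $v(r,y)\approx(\delta\pm a)y^3 r^{-2}$ over $C\times\mathbf{R}$ at scales $r\gg(\delta\pm a)^{1/3}|y|$, the Jacobi operator gives $L_{C\times\mathbf{R}}v=v_{yy}+L_C v=6(\delta\pm a)\,y\,r^{-2}$, since $r^{-2}$ is in the kernel of $L_C$; so its mean curvature is $\sim(\delta\pm a)y\,r^{-2}$, of definite sign and not small. (The actual minimal object with this asymptotic, $V_\delta=C(\Sigma_\delta)$, differs from $\delta^{1/3}yH$ by exactly the $O(|\delta|r^{-2+\kappa})$ correction of Proposition~\ref{prop:Sdelta}(ii) that cancels this term.) Consequently the blend of your graph barrier $dA(y)r^{-2.1}$ with such a ``leaf'' near $r\sim|\delta|^{1/3}$ is not mean-convex: the spurious curvature $\sim|\delta|\,y\,r^{-2}$ must be absorbed by the supersolution gain $\sim d\,r^{-4.1}$, and near the blend scale $r\sim|\delta|^{1/3}$ this requires $|\delta|^{1/3}\lesssim d\,|\delta|^{-4.1/3}$, i.e.\ a lower bound on $d$ in terms of $|\delta|$. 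Since $d$ is allowed to be arbitrarily small relative to $\delta$, the blend cannot be made mean-convex uniformly, and the claim that ``the faster blowup $r^{-2.1}$'' fixes this is not substantiated. The same difficulty is hidden in the parenthetical ``interpolated to be mean-convex'' in the regime $d\ge\beta|\delta|$: producing a $y$-dependent family of HS leaves that is mean-convex is precisely the content of Proposition~\ref{prop:barrier1}, and it requires both the very specific blow-up profile $f(y)=(\delta/\epsilon)^{1/9}y^{1/3}+(y-1/2)^{-1}+(1-y)^{-1}$ (whose $(f^3)''$ has a favorable sign) and the explicit corrections $-C_f\epsilon|f|^3F_0-\epsilon F_1$; a generic interpolation does not work. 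A secondary error: $\Lambda T_\delta$ does \emph{not} coincide with $V_\delta$ near the $y>0$ ray; it is the graph of $-h(\delta)[7^{-1}\phi_\delta\ln\rho+\xi_\delta]+f$ over $V_\delta$, and $\phi_\delta\sim r^{-2}$ there, so the identification is only approximate (this is handled in Step~2 of Proposition~\ref{prop:linearbarrier}, but your proof treats it as an identity).

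The paper circumvents the blending problem with a two-stage argument that your single-sweep plan conflates. First, the coarse $X_\epsilon$ barriers (Proposition~\ref{prop:barrier1}) — which never invoke $r^{-2.1}$ — bring $M$ into a neighborhood of size $\sim\gamma\overline d$, where $\overline d=\max\{\beta|\delta|,\,d+\gamma^{-1}D(r_0)\}$. If $\overline d=d+\gamma^{-1}D(r_0)$ this already gives the estimate. If instead $\overline d=\beta|\delta|$, the coarse estimate forces $M$ to be a graph over $\Lambda T_\delta$ with $|u|\le\tfrac12\beta|\delta|r^{-2}$; only then does one bring in the graphical barriers $\tilde G_\epsilon=\min\{\beta|\delta|r^{-2},\epsilon G\}$ of Proposition~\ref{prop:linearbarrier}. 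The crucial point — absent from your proposal — is that since $M$ already lies strictly inside $\tfrac12\beta|\delta|r^{-2}$, any first contact with $\tilde G_\epsilon$ must occur where $\tilde G_\epsilon=\epsilon G<\beta|\delta|r^{-2}$, i.e.\ on the branch where Proposition~\ref{prop:linearbarrier} guarantees negative mean curvature. No blending across the HS-leaf regime is ever needed, which is what makes the argument close.
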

The constant $\beta$ will be fixed after this result, and
the crucial point is that the constant $C$ does not depend on $\gamma$. While we stated the result for specific annuli $A_1 = B_1\setminus B_{1/2}$ and $A_2 = B_{4/5}\setminus B_{3/5}$, it is clear that the result can be applied to any annuli $A_2 \subset A_1$, with appropriate changes in the constants. 

To prove this result we first construct suitable barrier surfaces, in order to apply the maximum principle later. A preliminary step is the following.
\begin{lemma}\label{lem:Fadefn}
  For any $a > -2$ there is a constant $C_a > 0$ and a function $F_a$ on $H$ satisfying the following. Outside of a compact set $F_a = r^a$ and at the same time $L_H F_a > C_a^{-1} r^{a-2}$ on $H$. We extend $F_a$ to $\mathbf{R}^8\setminus \{0\}$ to be homogeneous with degree $a$. Then we have $L_{\lambda H} F_a > C_a^{-1} r^{a-2}$ and $|F_a| < C_a r^a$ on any rescaling of $H$, and $F_a = 1$ on a neighborhood of $C \cap \partial B_1(0)$. 
\end{lemma}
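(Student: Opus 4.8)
The plan is to build $F_a$ by patching together a function that equals $r^a$ near infinity on $H$ with a correction supported in a compact set, and to exploit the strict stability of $H$ to arrange the sign of $L_H F_a$. First I would recall that $H$ is strictly stable, so there is a positive Jacobi field $\Phi$ on $H$ with $\Phi = r^{-2} + O(r^{-3})$ at infinity, and moreover the bottom of the spectrum of $-L_H$ is strictly positive in the appropriate weighted sense; equivalently, for $\tau\in(-3,-2)$ the operator $L_H\colon C^{2,\alpha}_\tau\to C^{0,\alpha}_{\tau-2}$ is invertible (this is exactly what is used in Step 2 of the proof of Proposition~\ref{prop:Ldinv}). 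The idea is: the function $r^a$ is not a Jacobi field on $H$ (it is one only on the cone $C$), and the defect $L_H(r^a)$ is a lower-order term that we can estimate; we then solve away, or dominate, this defect.

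The key steps, in order. \emph{Step 1:} Fix a large radius $R_0$ and let $\chi$ be a cutoff equal to $1$ for $r>2R_0$ and $0$ for $r<R_0$. On $H$ outside a compact set, $H$ is the graph of $r^{-2}+br^{-3}+O(r^{-5})$ over $C$, so the induced metric on $H$ differs from the conical metric by terms of order $r^{-4}$, and hence $L_H(r^a) = L_C(r^a) + O(r^{a-2-2}) = \lambda_a r^{a-2} + O(r^{a-4})$, where $\lambda_a = a(a+5) - \lambda_{S^3\times S^3}$ or, more usefully, $\lambda_a = a(a+5)+12$ evaluated against the constant eigenfunction; the precise point is that $L_C(r^a)=c(a)\,r^{a-2}$ for an explicit $c(a)$ that is \emph{positive} for $a>-2$ (since $-2$ and $-3$ are the indicial roots and the quadratic $c(a)$ opens upward). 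So for $R_0$ large, $L_H(r^a) \geq \tfrac12 c(a) r^{a-2} > 0$ on $\{r>2R_0\}$. \emph{Step 2:} On the compact region $\{r\le 2R_0\}$ (a manifold with boundary, or we work on all of $H$ with the weighted theory), we need a function $g$ with $L_H g$ large and positive there. Here I use a multiple of the positive Jacobi field: set $F_a := \chi\, r^a + N\,\psi$ where $\psi$ is a fixed smooth positive function on $H$, equal to $1$ near $C\cap\partial B_1$, chosen so that $L_H\psi$ is bounded, and $N$ is a large constant. On $\{r<R_0\}$, $L_H(\chi r^a)=0$ and we want $L_H F_a = N L_H\psi$ to be $> C_a^{-1}r^{a-2}$; since on this compact region $r^{a-2}$ is bounded, this can be arranged if $L_H\psi>0$ there — which fails for a plain Jacobi field multiple, so instead I take $\psi$ to be the solution of $L_H\psi = w$ for a fixed positive compactly-supported bump $w$ with $w\geq r^{a-2}$ on $\{r\le 2R_0\}$ (such $\psi$ exists and decays like $r^{-2}$ by the invertibility in Step 2 of Prop.~\ref{prop:Ldinv}), and then multiply by a large $N$. \emph{Step 3:} On the transition region $\{R_0<r<2R_0\}$ the error from $\chi'$ and $\chi''$ is a fixed bounded quantity, so taking $N$ large enough makes $L_H F_a = \chi L_H(r^a) + (\text{commutator with }\chi) + N w > C_a^{-1}r^{a-2}$ there too, using that $w>0$ is bounded below on this compact annulus. \emph{Step 4:} Since $F_a = r^a$ outside a compact set (after possibly enlarging $R_0$ so $\chi\equiv1$ there and noting $N\psi = O(r^{-2})$ is \emph{not} zero — so actually I should instead define $F_a$ to equal $r^a$ exactly outside a compact set by absorbing: re-solve with $\psi$ compactly supported). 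Let me restructure: take $w\ge 0$ smooth, compactly supported in $\{r<2R_0\}$, with $w\geq 1$ on $\{r\le 2R_0\}$; this is not a Jacobi defect we can kill with compactly supported $\psi$, so instead \emph{define} $F_a$ directly as $r^a$ modified on a compact set, i.e. $F_a = r^a + v$ where $v$ is compactly supported, chosen so that the sign works. The cleanest route: set $F_a = r^a$ on $\{r>2R_0\}$; extend $F_a$ into $\{r\le 2R_0\}$ as any smooth positive function with $F_a\equiv 1$ near $C\cap\partial B_1$ and $F_a$ \emph{convex-like} in the sense that $L_H F_a>0$ there, which is possible because $\{r\le 2R_0\}\cap H$ has bounded geometry and we only need a strict differential inequality on a fixed compact set with prescribed boundary value — solve $L_H F_a = 1$ with the given boundary data by the maximum principle and then scale up. Finally extend $F_a$ to $\mathbf{R}^8\setminus\{0\}$ homogeneously of degree $a$; the inequality $L_{\lambda H}F_a > C_a^{-1}r^{a-2}$ for all rescalings $\lambda H$ follows by scaling invariance, since $F_a$ is degree-$a$ homogeneous and $L_{\lambda H}$ scales accordingly, while $|F_a|<C_a r^a$ and $F_a=1$ near $C\cap\partial B_1$ are immediate from the construction and homogeneity.

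The main obstacle I anticipate is \emph{getting the strict positivity of $L_H F_a$ simultaneously near infinity and on the compact core while keeping $F_a$ exactly equal to $r^a$ outside a compact set}: near infinity positivity is automatic from $c(a)>0$, but on the core we have no positive subsolution of this precise growth unless we use strict stability, and gluing the two regimes forces us to control the commutator terms $[\,L_H,\chi\,]$ against $r^{a-2}$. The resolution is that all of this happens on a \emph{fixed} compact annulus, so the commutator is a fixed bounded function and a large additive constant multiple of a solution of $L_H(\cdot)=1$ (guaranteed by Step 2 of the proof of Proposition~\ref{prop:Ldinv}, which applies on $H$ with $\lambda=0$) dominates it; strict stability of $H$, used to know $L_H$ is invertible with a positive Green's function, is what makes this possible and is exactly where the hypothesis on $H$ enters.
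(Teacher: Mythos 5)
Your strategy has the right skeleton — use the cone computation $L_C r^a = (a+2)(a+3)r^{a-2} > 0$ for $a > -2$ to get positivity at infinity, use invertibility of $L_H$ (from strict stability) to handle the compact core, then glue — and this is the same skeleton as the paper. But the gluing step in your final formulation has a genuine gap. You correctly notice that any global correction $\psi$ with $L_H\psi = w$ for a fixed compactly supported $w$ decays like $r^{-2}$ rather than being compactly supported, so $F_a = \chi r^a + N\psi$ is \emph{not} equal to $r^a$ at infinity. Your attempted fix — set $F_a = r^a$ for $r > 2R_0$ and extend inward by solving the Dirichlet problem $L_H F_a = 1$ on $\{r < 2R_0\}\cap H$ with boundary data $(2R_0)^a$ — does not produce a $C^1$ function across $\{r = 2R_0\}$: the normal derivative generically jumps, so $L_H F_a$ acquires a singular measure on the interface whose sign you cannot control, and the pointwise inequality $L_H F_a > C_a^{-1}r^{a-2}$ fails to make sense there. (A side remark: "$a(a+5)+12$" should be "$a(a+5)+6$" so that the roots are $-2,-3$, as you intend.)

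The paper resolves the gluing differently, and more cleanly: it keeps the decaying correction $f_a$ (solving $L_H f_a = \eta_a$ for a compactly supported $\eta_a \geq 0$ that absorbs the negative part of $L_H r^a$; invertibility at weight $\tau = -5/2$ gives $f_a = O(r^{-5/2})$), and then cuts off $f_a$ \emph{at a large radius $R$}, defining $F_a = r^a + \chi_R(r)f_a$ with $\chi_R = 1$ for $r<R$ and $\chi_R = 0$ for $r>2R$. The crucial observation — which is the single missing idea in your proposal — is that since the cutoff hits only the rapidly decaying $f_a$ and not $r^a$, the commutator error on $\{R<r<2R\}$ is $[L_H,\chi_R]f_a = O(R^{-9/2})$, while $L_H r^a \geq \tfrac{c_a}{2}r^{a-2} \gtrsim R^{a-2}$ there; and $a-2 > -9/2$ precisely because $a > -2 > -5/2$. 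So choosing $R$ large makes the commutator harmless, and $F_a = r^a$ exactly on $\{r>2R\}$. Your cutoff instead hits $r^a$ near small $r$, where the commutator is of the same order as $r^{a-2}$ and must be beaten by adding $N\psi$ — which is exactly what destroys the compact-support-of-the-correction property you need.
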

\begin{proof}
  Note first that on the cone $C$ we have $L_C r^a = c_a r^{a-2}$ for some $c_a > 0$ if $a > -2$. Using that $H$ is the graph of an $O(r^{-2})$ function over $C$, we have $L_H r^a = c_a r^{a-2} + O(r^{a-3})$ as $r\to\infty$. In particular $L_H r^{a} > \frac{c_a}{2} r^{a-2} - \eta_a$ for a compactly supported function $\eta_a \geq 0$. The operator $L_H : C^{2,\alpha}_\tau \to C^{0,\alpha}_{\tau-2}$ is invertible for $\tau\in (-3,-2)$, so we can find $f_a = O(r^{-5/2})$ such that $L_H f_a = \eta_a$. Finally let $\chi_R$ be a cutoff function such that $\chi_R(s) = 0$ for $s > 2R$, and $\chi_R(s)=1$ for $s < R$. Define
  \[ F_a = r^{a} + \chi_R(r) f_a. \]
  For large $R$, on the support of $\nabla \chi_R$ we have
$L_H F_a > L_H r^{a} - c r^{-9/2}$ for $c$ depending on $f_a$, but not on $R$. Away from the support of $\nabla\chi_R$ we have $L_H F_a = L_H (r^{a} + f_a)$, if $R$ is so large that $L_H f_a = 0$ wherever $\chi_R \not=1$. It follows that $L_H F_a > \frac{c_a}{2} r^{a-2}$ if $R$ is chosen sufficiently large, and by construction $F_a = r^{a}$ outside of the support of $\chi_R$. 
\end{proof}

\begin{prop}\label{prop:barrier1}
  Let $f: (a,b)\to\mathbf{R}$ be a $C^2$ function. There is a large $C_f > 0$ and small $r_0, \epsilon_0 > 0$ depending on an upper bound for the $C^2$-norm of $f$, with the following property. For $0 < \epsilon < \epsilon_0$ define the surface $X$ to be the graph of $-C_f\epsilon |f(y)|^3 F_0 - \epsilon F_1$ over $\epsilon^{1/3}f(y)^3H$ in the $\{y\}\times \mathbf{R}^8$ slice, for $r < r_0$ and $y\in (a,b)$. Here $F_0, F_1$ are the functions in Lemma~\ref{lem:Fadefn}.

  The surface $X$ has negative mean curvature, except at points $\{y=y_0, r=0\}$ where $f(y_0)=0$. At these points the tangent cone of $X$ is the graph of $-\epsilon r$ over $\mathbf{R}\times C$. In particular, if a minimal hypersurface lies on the negative side of $X$, then it cannot touch $X$ at an interior point. 
\end{prop}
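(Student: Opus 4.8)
The plan is to verify the two assertions about the surface $X$ separately: first the sign of the mean curvature away from the bad points, then the structure of the tangent cone at the points where $f(y_0)=0$, and finally to note that together they imply the stated barrier property via the maximum principle.

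First, I would compute the mean curvature of $X$ slice by slice, working in a fixed $\{y\}\times\mathbf{R}^8$ slice where $f(y)\neq 0$. Writing $S_y = \epsilon^{1/3}f(y)^3 H$, this is a rescaling of $H$, so by Lemma~\ref{lem:Fadefn} we have $L_{S_y} F_0 > C_0^{-1} r^{-2}$ and $L_{S_y} F_1 > C_1^{-1} r^{-1}$ on $S_y$ (after accounting for the homogeneity degrees $0$ and $1$ and the scaling factor; one gets an extra factor of $(\epsilon^{1/3}|f(y)|^3)^{-2}$ for $F_0$ and $(\epsilon^{1/3}|f(y)|^3)^{-2}$ times an extra power for $F_1$, but in any case the sign and a lower bound of the form $c\, r^{a-2}$ persist, with constants depending only on the $C^2$-bound of $f$). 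Since $X$ is the graph of $g := -C_f\epsilon|f(y)|^3 F_0 - \epsilon F_1$ over $S_y$, the mean curvature of $X$ in that slice is $L_{S_y} g + \mathcal{Q}(g)$, where $\mathcal{Q}$ collects quadratic and higher terms and also the contributions coming from the $y$-derivatives of the slice-family (the variation of $f(y)$, which brings in $f'(y), f''(y)$, and hence the dependence of $C_f, r_0, \epsilon_0$ on the $C^2$-norm of $f$). The linear term $L_{S_y} g < -C_f\epsilon C_0^{-1} r^{-2} - \epsilon C_1^{-1} r^{-1} < -\tfrac{1}{2} C_f \epsilon C_0^{-1} r^{-2}$ is negative and of size at least $c\,\epsilon r^{-2}$; the nonlinear and cross-slice errors are $O(\epsilon^2 r^{-4})$ plus $O(\epsilon r^{-2}\cdot(\text{something bounded in }r))$ — crucially smaller than the main term once $r < r_0$ and $\epsilon < \epsilon_0$ are small, provided $C_f$ is chosen large enough to dominate the $f',f''$ contributions. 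This gives $m(X) < 0$ wherever $f(y)\neq 0$. The regions where $r$ is not small (up to $r_0$) are handled by the $F_0 = 1$ near $C\cap\partial B_1$ normalization and the uniform lower bound from Lemma~\ref{lem:Fadefn}.

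Second, at a point $p = (y_0, 0)$ with $f(y_0)=0$: near such a point $f(y)^3 = (f'(y_0))^3(y-y_0)^3 + o((y-y_0)^3)$, so $\epsilon^{1/3}f(y)^3 H$ degenerates to $C$ at $p$, and the term $-C_f\epsilon|f(y)|^3 F_0$ is of order $|y-y_0|^3 \to 0$ faster than linearly, hence does not contribute to the tangent cone at $p$. The term $-\epsilon F_1$ is homogeneous of degree one and equals $-\epsilon r$ near $C\cap\partial B_1$, so the tangent cone of $X$ at $p$ is exactly the graph of $-\epsilon r$ over $\mathbf{R}\times C$ (or over $C\times\mathbf{R}$, in the slice picture). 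I would make this precise by a blow-up argument: rescale $X$ around $p$ and check convergence to this cone, using the $C^{2,\alpha}_\tau$-type estimates on how $X$ sits relative to $C$.

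Finally, the barrier conclusion: suppose a minimal hypersurface $M'$ lies on the negative side of $X$ and touches it at an interior point $q$. If $q$ is a smooth point of $X$, then $m(X)(q) < 0$ while $M'$ is minimal and lies below, contradicting the strong maximum principle for hypersurfaces (the mean curvature comparison forces a strict inequality unless they coincide, but they cannot coincide on a neighborhood since $m(X)<0\neq 0 = m(M')$). If $q$ lies on the singular set $\{y=y_0, r=0\}$, then the tangent cone of $X$ at $q$ is the graph of $-\epsilon r$ over $C\times\mathbf{R}$, which lies strictly on the negative side of $C\times\mathbf{R}$ away from the vertex; since $M'$ is on the negative side of $X$ and is minimal, one again derives a contradiction — either by a tangent cone comparison (the tangent cone of $M'$ at $q$ would have to lie on the negative side of the graph of $-\epsilon r$, which is incompatible with $M'$ being a minimal surface passing through $q$ with a well-defined density), or by noting $q$ cannot be an interior touching point in the first place because $X$ ``opens downward'' there. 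I expect the main obstacle to be the careful bookkeeping in the first step: controlling the cross-slice error terms (coming from differentiating the family $S_y$ in $y$) and showing that choosing $C_f$ large, then $r_0, \epsilon_0$ small depending on the $C^2$-norm of $f$, genuinely makes the linear term dominate uniformly — this is where the somewhat delicate choice of allowing the $r^{-2.1}$ blow-up in Definition~\ref{defn:nhood}(b) (alluded to in the remark after that definition) pays off, giving room for $F_1$'s degree-one term to do its job without the $F_0$ term's $r^{-2}$ growth being overtaken.
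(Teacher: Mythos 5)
The proposal has the right broad outline---slice-wise mean curvature computation with $y$-derivative corrections, plus a blow-up argument at the zeros of $f$ and a maximum principle conclusion---and in that sense it tracks the paper's strategy. But it glosses over the central technical difficulty, which the paper handles by a case split that your write-up never makes.

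The paper works in rescaled annuli $r\in(R/2,2R)$, $|y|<R$, and distinguishes two regimes: $|f(0)|\geq C_0R$ and $|f(0)|\leq C_0R$ for a large constant $C_0$. In the first regime the family $\epsilon^{1/3}f(y)^3H$ can be treated as a small perturbation of a single scaled copy $E(0)\cdot H$, because $E(0)^{-1}E(y)-1=O(R|f(0)|^{-1})=O(C_0^{-1})$ stays small; the dominant negative contribution to the mean curvature is the $F_0$ term, of size $\sim C_f\epsilon|f(0)|^3 r^{-2}$, and the cross-slice error $\lesssim \epsilon|f(0)|^7 r^{-2}$ is absorbed by choosing $C_f$ large relative to $\|f\|_{C^0}^4$. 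In the second regime---which occurs precisely near the zeros of $f$ and so must be handled---the surfaces $\epsilon^{1/3}f(y)^3H$ are so close to the cone $C$ that the ``graph over $E(0)\cdot H$'' picture breaks down entirely, and one instead writes everything as a graph over $C\times\mathbf{R}$. There the $F_0$ contribution $\sim C_f\epsilon|f(0)|^3 r^{-2}$ has degenerated, and the $F_1$ term $\sim\epsilon r^{-1}$ takes over as the dominant negative contribution, with errors controlled by shrinking $R$ and $\epsilon$ after $C_f$ has been fixed. Your proposal treats the problem as if the first regime's analysis works uniformly (you write ``$L_{S_y}g<-\tfrac12 C_f\epsilon C_0^{-1}r^{-2}$'' and assert the errors are dominated), which silently assumes $|f(y)|$ is bounded below on the relevant scale; near the zeros this is false and the argument as written does not close. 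You do note, correctly, that $F_1$ gives the tangent cone at the zero set, but you do not notice that the $F_1$ term is equally essential in the nearby nonzero transition region.

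A secondary point: your closing remark attributes the usefulness of $F_1$ to the $r^{-2.1}$ weight in Definition~\ref{defn:nhood}(b). That weight is part of the definition of graphical neighborhoods and enters the \emph{graphical} barriers of Proposition~\ref{prop:linearbarrier}; it plays no role in the present coarse barriers $X$, which are never written as graphs of a fixed weight over $T_\delta$. The relevant mechanism here is the case split described above, not the $r^{-2.1}$ choice.
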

\begin{proof}
  By translation we can assume that $0\in (a,b)$ and we study $X$ in the slice $\{y=0\}$. Consider the region $-R < y < R$, $R/2 < r < 2R$ for small $R$ and scale it up by a factor of $R^{-1}$. The scaled up surface $R^{-1}X$ is the graph of $-R^{-1}C_f\epsilon |f(Ry)|^3 F_0 - \epsilon F_1$ over $R^{-1}\epsilon^{1/3} f(Ry)^3 H$, in the $y$-slice. Let us write
  \[ E &= R^{-1}\epsilon^{1/3} f(Ry)^3, \\
    B &= -R^{-1}C_f\epsilon |f(Ry)|^3F_0 - \epsilon F_1, \]
so that we are taking the graph of $B(r,y)$ over $E(y)\cdot H$. 
After the scaling we have $y\in (-1,1)$ and $r\in (1/2,2)$. We estimate the mean curvature at $y=0, r=1$, in two separate cases. Let $C_0 > 0$ be large, to be chosen, depending on the $C^2$ bound for $f$.
  \begin{itemize}
  \item Suppose that $|f(0)| \geq C_0R$. In this case we will view the surface $R^{-1}X$ as a graph over $E(0)\cdot H$. Note that if $C_0$ is sufficiently large, then on our region we have $E(0)^{-1} E(y) - 1 = O(R |f(0)|^{-1}) = O(C_0^{-1})$. Therefore using Lemma~\ref{lem:Phidefn1} we can view $E(y)\cdot H$ as the graph of the function 
    \[ A = E(0) \Phi_{E(0)^{-1}E(y) - 1 } (E(0)^{-1}\cdot\, ) \]
    over $E(0)\cdot H$, where $\Phi_\epsilon : H \to \mathbf{R}$ is defined so that $(1+\epsilon)H$ is the graph of $\Phi_\epsilon$ over $H$. We have $\Phi_\epsilon(x) = \epsilon\Phi (x) + O(\epsilon^2 |x|^{-2})$, where $\Phi$ is the corresponding Jacobi field on $H$. We thus have
    \[ A = E(0) \Big(E(0)^{-1}E(y) - 1\Big) \Phi( E(0)^{-1}\cdot\,) + O\Big( E(0)^3 (E(0)^{-1}E(y) - 1)^2 \Big). \]
    This means that $R^{-1}X$ in our region, in the $y$-slice, is the graph of $B$ over the graph of $A$ over the surface $E(0)\cdot H$. We have the following estimates for $A, B$:
    \[ A, \nabla^i A &\lesssim |E(0)|^3 R|f(0)|^{-1} = R^{-2} \epsilon |f(0)|^8 \\
      B, \nabla^i B &\lesssim R^{-1}C_f \epsilon |f(0)|^3+ \epsilon. \]
    Here we use the notation $a\lesssim b$ for $|a| \leq Cb$ with a constant $C$ that depends on the $C^2$ norm of $f$. 
    Note also that we must have $R^{-1}\epsilon^{1/3}|f(0)|^3 \lesssim 1$, otherwise there is no point with $y=0, r=1$ in the surface $R^{-1}X$. It follows that $A\lesssim \epsilon^{1/3}$, and $B \lesssim \epsilon^{2/3}C_f$, which we can make small by choosing $\epsilon_0$ small depending on $C_f$.
    
    According to Lemma~\ref{lem:gg} below, up to an error of order $AB$, we can view $R^{-1}X$ as the graph of $A+B$ over $E(0)\cdot H \times \mathbf{R}$. In particular we can estimate the mean curvature as
    \[  m = L_{E(0)\cdot H\times\mathbf{R}}(A+B) + O(A^2 + B^2). \]
    We work at $y=0$, so we have
    \[ L_{E(0)\cdot H \times \mathbf{R}} A &= E''(0) \Phi(E(0)^{-1}\cdot)  + O( E(0)^3 R^2 |f(0)|^{-2})\\
      &\lesssim E(0)^2 E''(0)  + E(0)^3 R^2 |f(0)|^{-2}\\
      &\lesssim R^{-1}\epsilon |f(0)|^7,\]
    using that
    \[ E(0) &\lesssim R^{-1}\epsilon^{1/3} |f(0)|^3 \\
        E''(0) &\lesssim R \epsilon^{1/3} |(f^3)''(0)| \lesssim R \epsilon^{1/3} |f(0)|. \]
      The useful negative terms will come from $LB$, in particular the terms $LF_0$ and $LF_1$. At $y=0$ we have
      \[ L_{E(0)\cdot H\times \mathbf{R}} B &= -R^{-1}C_f \epsilon |f(0)|^3 L_{E(0)\cdot H} F_0
        - R C_f \epsilon (|f|^3)''(0)F_0 - \epsilon L_{E(0)\cdot H} F_1 \\
        &\leq - R^{-1} C_f\epsilon |f(0)|^3 c_1 r^{-2} - \epsilon c_2 r^{-1} + O(R C_ f\epsilon |f(0)|), \\
        &\leq - \frac{1}{2} R^{-1} C_f \epsilon |f(0)|^3 c_1 - \frac{1}{2}\epsilon  c_2 + O(R C_ f\epsilon |f(0)|), \]
      where we used the estimates for $L_H F_a$ in Lemma~\ref{lem:Fadefn}.
      
      Adding the quadratic terms as well, we then have
      \[ \label{eq:mleq1} m \leq -\frac{1}{2} c_1 R^{-1}C_f \epsilon |f(0)|^3 - \frac{1}{2}c_2\epsilon + K,\]     where
    \[ K &\lesssim R^{-1}\epsilon |f(0)|^7 + RC_f \epsilon |f(0)| + R^{-4}\epsilon^2 |f(0)|^{16} + R^{-2} C_f^2 \epsilon^2 |f(0)|^6 + \epsilon^2\\
      &= R^{-1}C_f\epsilon |f(0)|^3 \Big( C_f^{-1} |f(0)|^4 + R^2|f(0)|^{-2} \\
      &\quad + R^{-3}C_f^{-1}\epsilon |f(0)|^{13} + R^{-1}C_f \epsilon |f(0)|^3\Big) + \epsilon^2\\
      &\lesssim R^{-1}C_f\epsilon |f(0)|^3 \Big( C_f^{-1} + C_0^{-2} + C_f \epsilon^{2/3}\Big) + \epsilon^2,\]
    using that $R |f(0)|^{-1} \leq C_0^{-1}$ and $R^{-1}\epsilon^{1/3}|f(0)|^3\lesssim 1$. 
    If we first choose $C_f$ sufficiently large, then for $\epsilon$ sufficiently small and $C_0$ sufficiently large we have $m < 0$ by \eqref{eq:mleq1}. At this point $C_f, C_0$ are fixed. 
  \item Suppose that $|f(0)| \leq C_0R$. Then we have $|E(0)|\lesssim R^2\epsilon^{1/3} \ll 1$ and the derivatives of $E$ are of order $R^2\epsilon^{1/3}$, so in our region $|E(y)| \lesssim R^2\epsilon^{1/3}$. We can view $E(y)\cdot H$ as the graph over $C\times\mathbf{R}$ of the function
    \[ A = E(y)\Psi(E(y)^{-1}\cdot), \]
    where $H$ is the graph of $\Psi$ over $C$ outside of a compact set. Since $\Psi(x) = r^{-2} + O(r^{-3})$, we have
    \[ A = E(y)^3 r^{-2} + O( E(y)^4) \lesssim \epsilon. \]
    The surface $X$ is the graph of
    \[ B= -R^{-1}C_f \epsilon |f(Ry)|^3F_0 - \epsilon F_1 \lesssim \epsilon \]
    over $E(y)\cdot H$ and so, similarly to the previous case, the mean curvature satisfies
    \[ m  = L_{C\times\mathbf{R}}(A+B) + O(A^2 + B^2). \]
    Using that $r^{-2}$ is a Jacobi field on $C$, at $y=0$ we have
    \[ L_{C\times\mathbf{R}}A = (E^3)''(0)r^{-2} + O(|E(0)|^4) \lesssim R^6\epsilon. \]
    We also have
    \[ L_{C\times \mathbf{R}} B = - R C_f \epsilon (|f|^3)''(0)F_0 - R^{-1}C_f \epsilon |f(0)|^3 L_{C} F_0 - \epsilon L_C F_1, \]
    and $L_C F_0 > c_1r^{-2}$, $L_C F_1 > c_1 r^{-1}$ for some $c_1 > 0$. Since  $r\in (1/2,2)$, we get
    \[ L_{C\times\mathbf{R}}B \leq -\frac{1}{2} c_1\epsilon + R\epsilon C_f C, \]
    for $C$ depending on the $C^2$ norm of $f$.
    We already fixed $C_0, C_f$, and $R$ is small, so $A^2 + B^2 \lesssim \epsilon^2$. It follows that 
    \[ m < -\frac{1}{2} c_1\epsilon + C(R^6\epsilon + R\epsilon + \epsilon^2), \]
    for $C$ depending on $C_0, C_f$. If $R, \epsilon$ are small, then we have $m<0$. 
  \end{itemize}
  This shows that the mean curvature of $X$ is negative at all points where $0 < r < r_0$ and it remains to deal with the points $(0,y_0)\in X$. This point can only be in $X$ if $f(y_0)=0$, and we assume $y_0=0$. Let $\lambda > 0$ be small and rescale $X$ by $\lambda^{-1}$. The new surface $\lambda^{-1}X$ is the graph of
  \[ -\lambda^{-1}C_f \epsilon |f(\lambda y)|^3 F_0 -\epsilon F_1 \]
  over $\lambda^{-1} \epsilon^{1/3} f(\lambda y)^3 H$ in the $y$-slice. In the limit $\lambda\to 0$ we get (using that $f$ is a $C^2$ function and $f(0)=0$) the graph of $-\epsilon r$ over $C\times \mathbf{R}$.

  Suppose that a minimal surface $M$ lies on the negative side of $X$, and touches $X$ at a point $x_0$. If $x_0$ is a smooth point of $X$, then this is a contradiction since $X$ has negative mean curvature. Note that in this case $M$ is necessarily smooth at $x_0$ since its tangent cone must be a hyperplane. If $r(x_0)=0$, then the tangent cone of $M$ at $x_0$ would lie of the negative side of the graph of $-\epsilon r$ over the minimal cone $C\times \mathbf{R}$, which is also a contradiction.
\end{proof}

We used the following lemma in the calculation above whose prove we omit. 
\begin{lemma}\label{lem:gg}
  Let $S$ be a surface in $\mathbf{R}^n$, and let $f,g$ be two (small) functions defined on $\mathbf{R}^n$. If the second fundamental form of $S$ and its derivatives are bounded, then the graph of $g$ over the graph of $f$ over $S$ can be written as the graph of $h$ over $S$, where
  \[ h = f + g + O(|f| | \nabla g| + |g| |\nabla f|^2 + |g| |\nabla f| |\nabla g|) .\]
  In particular if $|f|_{C^k} < a$ and $|g|_{C^k} < b$ are sufficiently small (depending on the geometry of $S$), then we have
  \[ |h - (f+g)|_{C^{k-1}} < Cab, \]
  for $C$ depending on the bounds for $S$.  
\end{lemma}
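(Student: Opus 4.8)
The plan is to compute directly, tracking the normal and tangential displacements of the iterated graph relative to $S$ and then correcting for the reparametrization of $S$ that this forces. Since the second fundamental form of $S$ and its derivatives are bounded, $S$ admits a tubular neighbourhood of uniform radius on which the nearest-point projection $\pi_S$ and signed distance $d_S$ are defined and have bounded derivatives (equivalently, after passing to a coordinate patch of controlled size one may take $S$ to be the graph over a hyperplane of a function with bounded $C^{k+1}$-norm); all constants below are allowed to depend on these bounds.

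Write $S_f=\{p+f(p)\nu_S(p):p\in S\}$, so that along $S_f$ one has $d_S\equiv f$ and $\pi_S\equiv\mathrm{id}$ in this parametrization. By Lemma~\ref{lem:grapharea} and its routine higher-derivative analogue, the unit normal of $S_f$ at $q=p+f(p)\nu_S(p)$ satisfies $\nu_{S_f}(q)=\nu_S(p)-\nabla^S f(p)+E$, where $E$ and its first $k-1$ derivatives are quadratically small and, crucially, $E$ vanishes identically when $f\equiv 0$. The graph of $g$ over $S_f$ is the image of $p\mapsto\tilde q(p):=q+g(q)\nu_{S_f}(q)$, and $h$ is characterised by $h(\psi(p))=d_S(\tilde q(p))$, where $\psi(p):=\pi_S(\tilde q(p))$ is the new base point on $S$. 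The key estimates are then: $\tilde q(p)-p=\big(f(p)+g(q)\big)\nu_S(p)-g(q)\nabla^S f(p)+(\text{quadratic})$, giving a tangential base-point displacement $\psi(p)-p=-g(q)\nabla^S f(p)+O(|g|\,|\nabla f|^2)$ of size $\lesssim|g|\,|\nabla f|$, together with $d_S(\tilde q(p))=f(p)+g(q)+O(|g|\,|\nabla f|^2)$, the $|\nabla f|^2$ terms coming from the curvature of $S$ and the normalisation of $\nu_{S_f}$; and $g(q)=g\big(p+f(p)\nu_S(p)\big)=g(p)+O(|f|\,|\nabla g|)$. Finally one inverts $\psi$ — whose differential is $I+O\big(|\nabla(g\nabla f)|\big)$, hence a bounded diffeomorphism once $a,b$ are small — and re-expands $f$ and $g$ about $\psi(p)$ instead of $p$; the base-point shift applied to $f$ contributes $O(|\nabla f|\cdot|g|\,|\nabla f|)=O(|g|\,|\nabla f|^2)$ and applied to $g$ contributes $O(|\nabla g|\cdot|g|\,|\nabla f|)=O(|g|\,|\nabla f|\,|\nabla g|)$, producing exactly the claimed formula for $h$.

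For the $C^{k-1}$ bound one differentiates the identity $h\circ\psi=d_S\circ\tilde q$ up to $k-1$ times, using the bounded-geometry control of $\pi_S,d_S,\nu_S$, the hypotheses $|f|_{C^k}<a$, $|g|_{C^k}<b$, and the $C^{k-1}$ control of $\psi^{-1}$; the single derivative loss is precisely the loss incurred in passing from $f,g$ to $\psi$ and inverting. The one point deserving care is the absence of the ``forbidden'' quadratic terms $|f|\,|\nabla f|$, $|f|^2$, $|g|^2$: this is forced by the elementary observation that $S_{f,0}=S_f$ and $S_{0,g}=\{p+g(p)\nu_S(p)\}$, so $h-(f+g)$ vanishes whenever $f\equiv 0$ or whenever $g\equiv 0$; since the correction is quadratic in $(a,b)$, every error term must carry at least one factor of $f$-type and one of $g$-type, which simultaneously pins down the displayed form and yields $|h-(f+g)|_{C^{k-1}}<Cab$. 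The main obstacle is thus bookkeeping — keeping the normal/tangential split and the reparametrization straight so that the interaction terms emerge in exactly the stated combinations — rather than any conceptual difficulty.
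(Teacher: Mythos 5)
The paper explicitly omits the proof of this lemma (``whose proof we omit''), so there is no proof in the text to compare against; your direct computation via nearest-point projection $\pi_S$ and signed distance $d_S$ is the natural argument and is correct in substance. Two places in the write-up nonetheless deserve more care, though neither is a fatal gap. First, the claimed displacement $\psi(p)-p=-g\nabla^S f+O(|g|\,|\nabla f|^2)$ is slightly overstated: the remainder $gE$ in $g\,\nu_{S_f}$ contains a term of size $|g|\,|f|\,|\nabla f|$ (from the second-fundamental-form correction $fA\nabla f$ to $\nu_{S_f}$), which is not dominated by $|g|\,|\nabla f|^2$ in general. What you actually use, and what is true, is only the bound $|\psi(p)-p|\lesssim |g|\,|\nabla f|$, so it is cleaner to state just that. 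Second, the estimate $d_S(\tilde q)=f(p)+g(q)+O(|g|\,|\nabla f|^2)$ rests on the exact cancellation of the linear tangential term in the Taylor expansion of $d_S$ based at $p+(f+ga)\nu_S(p)$: since $\nabla d_S$ equals $\nu_S(p)$ all along the normal ray from $p$, it is orthogonal to the tangential displacement $g\mathbf{b}$, so $d_S(\tilde q) = (f+ga) + O(|g\mathbf{b}|^2)$ with no first-order correction. Without invoking this, a naive expansion of $d_S$ around $p$ leaves an uncontrolled remainder $O(|\tilde q - p|^3)$ containing terms such as $|f|^3$, which are not of the stated form. Your phrase ``coming from the curvature of $S$ and the normalisation of $\nu_{S_f}$'' points in the right direction but glosses over this cancellation, which is the crucial step and should be made explicit.

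Finally, the closing symmetry observation (that $h-(f+g)$ vanishes when $f\equiv 0$ or $g\equiv 0$) is a useful consistency check but does not by itself pin down the precise error shape --- it rules out the pure $|f|^a$ and $|g|^a$ terms only under an additional smoothness/polynomiality assumption, and it says nothing about which mixed terms appear. The explicit estimates you already carried out do that work, and the symmetry argument should be offered only as corroboration, not as a substitute.
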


The following result shows that geometrically the surface constructed in Proposition~\ref{prop:barrier1} can be thought of as the surface given in each $y$-slice by $\epsilon^{1/3} f(y)^3 H$, at least in the region $r < r_0$ for sufficiently small $r_0$. 
\begin{prop}\label{prop:Xc}
  In the setting of  Proposition~\ref{prop:barrier1}, given any small $c > 0$, there is an $r_0' < r_0$ depending on the $C^2$-norm of $f$ and on $c$, such that on the region $r < r_0'$ the surface $X$ lies between the surfaces given by $(\epsilon f(y)^9 \pm c\epsilon)^{1/3} H$ in each $y$-slice.
\end{prop}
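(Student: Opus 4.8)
The plan is to compare the function defining $X$ as a graph over $\epsilon^{1/3}f(y)^3 H$ with the perturbation that turns $\epsilon^{1/3}f(y)^3 H$ into $(\epsilon f(y)^9 \pm c\epsilon)^{1/3}H$, and show that the latter dominates the former once $r < r_0'$. Recall from Proposition~\ref{prop:barrier1} that in the $y$-slice $X$ is the graph of
\[ g_X = -C_f\epsilon |f(y)|^3 F_0 - \epsilon F_1 \]
over $\epsilon^{1/3}f(y)^3 H$, where $F_0$ is degree-$0$ homogeneous (so bounded, with $F_0 = 1$ near $C\cap\partial B_1$) and $F_1$ is degree-$1$ homogeneous (so $F_1 = O(r)$). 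Hence on the region $r < r_0'$ we have the pointwise bound $|g_X| \le C_f\epsilon|f(y)|^3\, C_0' + \epsilon C_1' r_0'$ for uniform constants $C_0', C_1'$; in particular, writing $a = f(y)^9$, for each fixed small $\lambda>0$ we have $|g_X|$ bounded by a small multiple of $\epsilon$ plus a term controlled by $\epsilon|a|^{1/3}$ times $r$.

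First I would invoke the second bullet of Lemma~\ref{lem:Phidefn1}: for $a\ge 0$ (and by the obvious symmetry, also for $a \le 0$) the surface $(\epsilon a + c\epsilon)^{1/3}H$, rescaled, lies on the positive side of the graph of $c_0 \min\{ c\epsilon\, r^{-2}, r\}$ over $(\epsilon a)^{1/3}H = \epsilon^{1/3}f(y)^3 H$, where I have used $\epsilon(a+c) - \epsilon a = c\epsilon$; similarly $(\epsilon a - c\epsilon)^{1/3}H$ lies on the negative side of the graph of $-c_0\min\{ c\epsilon\, r^{-2}, r\}$. (Here I should also note that the surface $\epsilon^{1/3}f(y)^3 H$ may itself have to be written as a graph over $C\times\mathbf R$; but by Lemma~\ref{lem:Phidefn1} and its rescaled consequence this is harmless, since we only compare the two perturbations of the same base surface and can absorb the $O(\epsilon |f(y)|^3)$ discrepancies into the constants.) So it suffices to check that, on $\{r < r_0'\}$, we have the two-sided bound
\[ |g_X| \le c_0 \min\{ c\epsilon\, r^{-2}, r\}. \]
Since on $\epsilon^{1/3}f(y)^3 H$, intersected with the unit ball, one has $r > C^{-1}\epsilon^{1/3}|f(y)|$ (the analogue of the inequality $r > C^{-1}|\delta|^{1/3}$ used repeatedly in Section~\ref{sec:nonconc}, applied to the rescaled copy of $H$), the quantity $\epsilon |f(y)|^3$ is bounded by $C\epsilon\cdot(\epsilon^{-1/3} r)^3 = C r^3$ up to a constant depending on $C_f$ and $\sup|f|$, whence the first term of $g_X$ is $O(r^3)$, which is $\le c_0 r$ for $r_0'$ small; and the $\epsilon F_1$ term is $O(\epsilon r)$, which is $\le \tfrac12 c_0 r$ once $\epsilon < \epsilon_0$ is taken small. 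For the $r^{-2}$ side of the minimum I instead use that $\epsilon|f(y)|^3 F_0 \le \epsilon|f(y)|^3 C_0' = C_0'\cdot (\epsilon^{1/3}|f(y)|)^3 \le C_0' r^3 \le C_0' r_0'\cdot r^2$, which is $\le \tfrac12 c_0 c\epsilon r^{-2}$ provided $r < r_0'$ with $r_0'$ chosen small relative to $c$ (using again $r > C^{-1}\epsilon^{1/3}|f(y)|$ to compare scales); and $\epsilon F_1 = O(\epsilon r) = O(\epsilon r^3 \cdot r^{-2})\le O(\epsilon r_0'^3)\cdot r^{-2}$, also absorbed into $\tfrac12 c_0 c\epsilon r^{-2}$. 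Putting these together gives $|g_X| \le c_0\min\{c\epsilon r^{-2}, r\}$ on $\{r<r_0'\}$, so $X$ lies between $(\epsilon f(y)^9 \pm c\epsilon)^{1/3}H$ in each slice, as claimed.

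The main obstacle I anticipate is bookkeeping the scale comparison cleanly: the base surface $\epsilon^{1/3}f(y)^3 H$ degenerates as $f(y)\to 0$, so statements like ``$\epsilon|f(y)|^3 \lesssim r^3$'' must be justified uniformly in $y$ and $\epsilon$ via the lower bound $r \gtrsim \epsilon^{1/3}|f(y)|$ on that surface, and one must be careful that $r_0'$ depends only on $c$ and the $C^2$-norm of $f$ (through $C_f, C_0'$) and not on $\epsilon$ — which works because all the offending terms carry an extra positive power of $r$ (hence of $r_0'$) or of $\epsilon$ relative to the barrier width $c_0\min\{c\epsilon r^{-2}, r\}$. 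A minor additional point is handling the sign of $f(y)^9$: since the construction and Lemma~\ref{lem:Phidefn1} are stated for $(a+\lambda)^{1/3}H$ with $a\in\mathbf R$ and $\lambda>0$, both $\pm c\epsilon$ shifts are covered directly, and the $y\mapsto -y$ symmetry built into the whole construction takes care of the two singular points.
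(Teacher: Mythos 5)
The overall strategy here is right and follows the paper closely: invoke the second bullet of Lemma~\ref{lem:Phidefn1} with $a=\epsilon f(y)^9$, $\lambda=c\epsilon$ to reduce the claim to the pointwise bound
\[
\Big|\,C_f\epsilon |f(y)|^3 F_0 + \epsilon F_1\,\Big| \leq c_0\min\{c\epsilon r^{-2}, r\}
\quad\text{on } r<r_0'.
\]
But there is a genuine gap in your verification of the $c\epsilon r^{-2}$ side of this minimum.

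First, a scaling error: on the surface $\epsilon^{1/3}f(y)^3 H$ the waist is at $r$ of order the scaling factor, i.e.\ $r\gtrsim\epsilon^{1/3}|f(y)|^{3}$, not $r\gtrsim \epsilon^{1/3}|f(y)|$. The claim $\epsilon|f(y)|^3 \lesssim r^3$ that you derive from the incorrect lower bound does not hold uniformly (the implied constant degenerates as $f(y)\to 0$). More importantly, even granting $\epsilon|f(y)|^3 F_0\lesssim r^3$, the subsequent chain ``$r^3 \le r_0' r^2 \le \tfrac12 c_0 c\epsilon r^{-2}$'' unpacks to $r_0' r^4 \le \tfrac12 c_0 c\epsilon$, which for $r$ comparable to $r_0'$ forces $r_0'^5 \lesssim c\epsilon$. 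That makes $r_0'$ depend on $\epsilon$, which the proposition explicitly forbids ($r_0'$ must depend only on $c$ and the $C^2$-norm of $f$). The strategy of gaining positive powers of $r$ cannot beat an $\epsilon r^{-2}$ threshold without paying an $\epsilon$-dependent price.

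The paper's argument sidesteps this: to verify the $c\epsilon r^{-2}$ side, one should not try to extract powers of $r$ from the coefficient $\epsilon|f(y)|^3$ at all. Instead use only boundedness: $|f(y)|^3 F_0 \lesssim 1$, and since $r< r_0'$ we have $1\leq r_0'^2 r^{-2}$, hence $C_f\epsilon|f(y)|^3 F_0 \lesssim \epsilon\, r_0'^2 r^{-2}$, which is $\leq\tfrac12 c_0 c\epsilon r^{-2}$ once $r_0'^2 \lesssim c$ (with the constant depending only on $C_f$ and $\sup|f|$). The same bookkeeping handles $\epsilon F_1 = O(\epsilon r) = O(\epsilon r_0'^3\, r^{-2})$. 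The scaling bound $r\gtrsim\epsilon^{1/3}|f(y)|^3$ is used only for the other side of the minimum, to get $\epsilon|f(y)|^3 \lesssim \epsilon^{2/3}r\leq c_0 r/2$ for small $\epsilon$, and that part of your argument is fine once the exponent on $|f(y)|$ is corrected.
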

\begin{proof}
  By Lemma~\ref{lem:Phidefn1} the surface $(\epsilon f(y)^9 + c\epsilon)^{1/3} H$ lies on the positive side of the graph of
\[ c_0 \min\{ c\epsilon r^{-2}, r\} \]
over $\epsilon^{1/3} f(y)^3 H$. It remains to check that on the region $r < r_0'$
\[ \Big|\,C\epsilon |f(y)|^3 F_0 + \epsilon F_1\,\Big| \leq  c_0 \min\{ \epsilon c r^{-2}, r\} \]
if $r_0'$ is chosen small enough. 
This follows using that  
on the surface $\epsilon^{1/3} f(y)^3 H$ we have $r \gtrsim \epsilon^{1/3}|f(y)|^3$, and $F_0 \lesssim r_0'^2 r^{-2}$ if $r < r_0'$. The argument to see that $X$ lies on the positive side of $(\epsilon f(y)^9 - c\epsilon)^{1/3}H$ is similar, applying Lemma~\ref{lem:Phidefn1} to the opposite orientation.
\end{proof}

Choosing suitable functions $f$ we can use this to construct surfaces that are barriers at ``distance'' $\epsilon$ around the $\Lambda T_\delta$, at least when $\epsilon$ is not much smaller than $\delta$ (as long as we restrict to $r < r_0'$ for smaller $r_0'$, we can allow $\epsilon |\delta|^{-1}$ to be smaller). When $\epsilon$ is much smaller than $\delta$ then the construction of $X$ above is too coarse, and we will use graphs over $\Lambda T_\delta$ as barriers, based on the following.
\begin{prop}\label{prop:linearbarrier}
Suppose that $\beta > 0$ is sufficiently small.  Let $g: (a,b) \to (0,\infty)$ be a $C^2$ function, where $(a,b)\subset (1/2,1)$. There are $C > 0$ independent of $g$, and $r_0, \delta_0, \epsilon_0 > 0$ depending on the $C^2$-norms of $g, g^{-1}$, such that for $|\delta| < \delta_0$ and $|\log \Lambda| < |\delta|^{-\kappa}$ we have the following. There is a function $G$ on $\Lambda T_\delta$ in the region $\{r < r_0\}$, $y\in(a,b)$, satisfying
  \begin{enumerate}
    \item $C^{-1} g(y)r^{-2.1} < G < C g(y) r^{-2.1}$, 
    \item The mean curvature of the graph of $\epsilon G$ over $\Lambda T_\delta$ is negative for $\epsilon < \epsilon_0$, on the region defined by the conditions $\{r < r_0, y\in (a,b)\}$ and $\epsilon G < \beta|\delta|r^{-2}$. 
   \end{enumerate}
 \end{prop}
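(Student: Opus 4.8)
The plan is to construct $G$ as a graph-type barrier, building it on $\Lambda T_\delta$ out of the homogeneous functions $F_0$ (degree $0$, but used here with a slight twist to get $r^{-2.1}$-type blowup) and the decaying Jacobi field $\Phi$ on $H$, glued across the transition region $r\sim|\delta|^{1/3}$ where $\Lambda T_\delta$ passes from being (essentially) a graph over $C\times\mathbf{R}$ to being (essentially) $\delta^{1/3}H\times\mathbf{R}$. First I would recall from Proposition~\ref{prop:Tdelta1} that in each $y$-slice $\Lambda T_\delta$ is the graph of a function $f$ over $(y^3\delta)^{1/3}H$ with $|f|\le C|\delta|r^{-2+\kappa}$, so that on $\{r<r_0\}$ the geometry of $\Lambda T_\delta$ has regularity scale comparable to $r$, and the Jacobi operator $L_{\Lambda T_\delta}$ is close to $L_{\delta^{1/3}yH\times\mathbf{R}}$ in the region $r\lesssim|\delta|^{1/3}$ and close to $L_{C\times\mathbf{R}}$ in the region $r\gtrsim|\delta|^{1/3}$.

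The construction of $G$ itself: on the region $r > 2|\delta|^{1/3}$, take $G = g(y)\, r^{-2.1}$ (extended homogeneously of degree $-2.1$ away from $C$); here $L_{C\times\mathbf{R}}(r^{-2.1}) = c\, r^{-4.1}$ with $c>0$ strictly positive since $-2.1$ is strictly to the left of the indicial root $-2$, so $L_{\Lambda T_\delta}G$ is strictly positive of size $\sim g(y)r^{-4.1}$, up to errors of relative size $O(|\delta|r^{-3})\cdot g(y)r^{-2.1}$ plus a $g''(y)$ contribution of size $O(r^{-2.1})$ — both controlled because of the restriction $\epsilon G < \beta|\delta|r^{-2}$, i.e. $\epsilon g(y)r^{-0.1}<\beta|\delta|$, which forces $\epsilon$ small relative to $|\delta|$ on precisely this region. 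On the region $r < |\delta|^{1/3}$, rescale by $|\delta|^{-1/3}$ so that $\Lambda T_\delta$ becomes (a perturbation of) $yH\times\mathbf{R}$ in the relevant window, and set $G$ there to be (a rescaled copy of) $g(y)\,|\delta|^{-?}\Phi$ in the $H$-direction, plus a constant-in-$H$ piece to handle the $\mathbf{R}$-factor; since $\Phi$ is a positive Jacobi field on $H$ with $\Phi\sim r^{-2}$, the function $\Phi$ itself is $L_H$-harmonic, so I would instead use $F_0$ pulled back to $H$ (as in Lemma~\ref{lem:Fadefn}, which gives $L_H F_0 > C^{-1}r^{-2}$, strictly negative when used with a minus sign) — taking $G$ of the form (rescaled) $g(y)(F_0 + \text{something})$ so that $L_{\Lambda T_\delta}G$ is again strictly positive of the right size. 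In the gluing annulus $r\in(|\delta|^{1/3}, 2|\delta|^{1/3})$ interpolate with a cutoff in $\tilde r = |\delta|^{-1/3}r$; the cutoff error is $O(|\delta|^{-2/3}\cdot g(y)r^{-2.1}|_{\rm scaled}) = O(|\delta|^{\text{(something)}}g(y))$ in the rescaled picture, which is acceptable because it is of lower order than the good positive term $L_H F_0\sim r^{-2}\sim 1$ (scaled), provided $\delta$ is small — this is where smallness of $\delta_0$ is used. Property (1), $C^{-1}g(y)r^{-2.1}<G<Cg(y)r^{-2.1}$, holds by construction since on $r>2|\delta|^{1/3}$ we have $G=g(y)r^{-2.1}$ exactly, while on $r<|\delta|^{1/3}$ the rescaled $F_0$-type function is $\gtrsim r^{-2}$ in scaled variables, which translates to $\gtrsim g(y)|\delta|^{-?}r^{-2}$ — and here I should choose the normalization of the inner piece precisely so that it matches $g(y)r^{-2.1}$ at $r\sim|\delta|^{1/3}$, i.e. multiply the inner $F_0$-piece by $|\delta|^{-0.1/3}$, so that $G\sim g(y)|\delta|^{-0.7/30}r^{-2}\gtrsim g(y)r^{-2.1}$ on $r<|\delta|^{1/3}$ and the two bounds are consistent.

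Then I would check property (2) by the standard expansion $m(\text{graph of }\epsilon G) = m(\Lambda T_\delta) + \epsilon L_{\Lambda T_\delta}G + \mathcal{Q}(\epsilon G)$. The term $m(\Lambda T_\delta)=0$ since $\Lambda T_\delta$ is minimal. The linear term $\epsilon L_{\Lambda T_\delta}G$: if $G$ is built so that $L_{\Lambda T_\delta}G < 0$ strictly — i.e. I use $-F_0$ type pieces, equivalently I should flip sign and take $G$ so that $L G$ has a definite sign — wait, the conclusion wants the graph of $\epsilon G$ to have \emph{negative} mean curvature, so I need $\epsilon L_{\Lambda T_\delta}G + \mathcal{Q} < 0$; hence I want $L_{\Lambda T_\delta}G < -C^{-1}(\text{size})$, so $G$ should be built from $-F_0$ and $-r^{-2.1}$, i.e. really $G = -g(y)r^{-2.1}$ on the outer region etc., but then property (1) should read with the appropriate sign — I will simply take $G>0$ with $L_{\Lambda T_\delta}G < -C^{-1}g(y)r^{-4.1}$ by choosing the building blocks $F_0, r^{-2.1}$ which satisfy $L(\cdot)>0$ and noting the statement's sign convention for mean curvature, or equivalently flip. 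The quadratic term: $\mathcal{Q}(\epsilon G)$ in the region $r\sim R$, after scaling by $R^{-1}$, is bounded by $C(R^{-1}\epsilon G\cdot\text{scaled})^2\cdot R$-type, i.e. $\lesssim \epsilon^2 G^2 r$ unscaled, which we compare to $\epsilon|L_{\Lambda T_\delta}G|\sim \epsilon g(y)r^{-4.1}$; the ratio is $\sim \epsilon G^2 r^{5.1}/(g(y)) \sim \epsilon g(y)r^{0.9}$, and on the region $\epsilon G < \beta|\delta|r^{-2}$ we have $\epsilon g(y)r^{-2.1}\lesssim\beta|\delta|r^{-2}$ hence $\epsilon g(y)r^{0.9}\lesssim \beta|\delta|r^{0.9+0.1}\cdot r\lesssim\beta$ (using $r<1$), which is $<1/2$ if $\beta$ is small — this is exactly where smallness of $\beta$ enters, and is the reason the condition $\epsilon G<\beta|\delta|r^{-2}$ appears in the statement and why the neighborhoods in Definition~\ref{defn:nhood}(b) were set up with the $r^{-2.1}$ blowup. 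The main obstacle is the gluing estimate across $r\sim|\delta|^{1/3}$: one must verify that the cutoff-induced error $L(\chi\cdot(\text{stuff}))$ is genuinely dominated by the good negative term there, which requires matching the two building blocks to leading \emph{and} sub-leading order at the junction (this is why $F_0$ rather than the harmonic $\Phi$ is needed — $F_0$ carries a definite-sign Laplacian to absorb the cutoff error) and carefully tracking powers of $|\delta|$; I expect this, together with pinning down the exact exponent and normalization so that property (1) comes out clean, to be the bulk of the work, with everything else being routine rescaling and the maximum-principle-friendly expansion above.
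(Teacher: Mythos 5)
There is a genuine error at the heart of the proposal: you assert that $L_{C}(r^{-2.1}) = c\,r^{-4.1}$ with $c>0$ "since $-2.1$ is strictly to the left of the indicial root $-2$." This is backwards. On the Simons cone $C$ the radial Jacobi operator gives $L_C r^a = (a+2)(a+3)\,r^{a-2}$; the indicial roots are $-2$ and $-3$, and for $-3 < a < -2$ (in particular $a=-2.1$) the coefficient $(a+2)(a+3)$ is \emph{negative}. So $L_C r^{-2.1} < 0$, which is exactly what one wants for a positive supersolution. Because you start from the wrong sign, you are then forced into the confused mid-proof maneuver of "flipping," proposing $G=-g(y)r^{-2.1}$ and $-F_0$ — but a negative $G$ cannot satisfy property (1), and "equivalently flip" does not resolve this: the sign of $G$ and the sign of $L G$ both matter independently.

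This sign error also propagates into the choice of the inner building block. You want to glue in (a rescaled) $F_0$ near the core of $H$, but by Lemma~\ref{lem:Fadefn} one has $L_H F_0 > C^{-1}r^{-2} > 0$, which is the wrong sign for the barrier you need; and $F_0$ is degree~$0$, not $\sim r^{-2}$, so it cannot match the outer $r^{-2.1}$ profile in the way you describe (the normalization $|\delta|^{-0.1/3}$ and the resulting $|\delta|^{-0.7/30}$ are symptoms of this mismatch). Property~(1) would fail for the glued function. The paper's proof avoids all of this: it directly constructs a single $\delta$-independent function $F_{-2.1}$ on $H$ with $F_{-2.1}=r^{-2.1}$ at infinity and $L_H F_{-2.1} < -c_1 r^{-4.1}$ \emph{everywhere}, using the negative sign of $L_C r^{-2.1}$ at infinity and a cutoff as in Lemma~\ref{lem:Fadefn} (with the opposite sign). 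Positivity of $F_{-2.1}$ — which you do not address and is needed for property~(1) — then follows from a maximum-principle comparison with the positive Jacobi field $\Phi$ on $H$. One then sets $G = g(y)\tilde F_{-2.1}$ on $\Sigma_\delta$/$V_\delta$, compares $L_{V_\delta}$ to $L_{\Lambda T_\delta}$ using the smallness of $T_\delta$ as a graph over $V_\delta$, and controls the $\nabla g, \nabla^2 g$ and quadratic error terms exactly as you outline at the end, using the constraint $\epsilon G<\beta|\delta|r^{-2}$ and smallness of $\beta$. That last part of your write-up is in the right spirit; the gap is the sign of $L_C r^{-2.1}$ and the choice and normalization of the inner piece, which must have $L<0$, not $L>0$.
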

 \begin{proof}
   We break the construction up into several steps. \\ 

  \noindent{\em Step 1.}
  First we argue similarly to Lemma~\ref{lem:Fadefn} to construct a function $F_{-2.1} > 0$ on $H$ satisfying the conditions that $F_{-2.1} = r^{-2.1}$ outside of a ball and $L_H F_{-2.1} < -c_1 r^{-4.1}$ for a $c_1 > 0$. For this note that on the cone $C$ we have $L_C r^{-2.1} = -c_1' r^{-4.1}$ for some $c_1' >0$. It follows that we must have $F_{-2.1} > 0$ on all of $H$, since if $F_{-2.1}$ were negative at a point, then for a suitable $\lambda \geq 0$ we would have $F_{-2.1} \geq -\lambda\Phi$, with equality at some point ($\Phi$ being the positive Jacobi field on $H$). At the contact point we would have $L_H F_{-2.1} \geq 0$, which is a contradiction.

  Using $F_{-2.1}$  and the construction of $\Sigma_\delta$ we can define functions $\tilde{F}_{-2.1}$ on $\Sigma_\delta$ satisfying $C^{-1} r^{-2.1} < \tilde{F}_{-2.1} < C r^{-2.1}$ and $L_{\Sigma_\delta} \tilde{F}_{-2.1} < -c_1 r^{-4.1}$ on the region $\{r < r_0\}$ for sufficiently small $r_0$ (independent of $\delta$). We then extend $\tilde{F}_{-2.1}$ as a degree one function on the cone $V_\delta = C(\Sigma_\delta)$. This will satisfy the same estimates as $\tilde{F}_{-2.1}$ on $\Sigma_\delta$ on the region $y\in (1/2, 1)$. \\

  \noindent{\em Step 2.}
  The surface $\Lambda T_\delta$ is the graph of a function $F$ over $V_\delta$ where $\Vert F\Vert_{C^{2,\alpha}_{1,\tau}} \leq C |\delta|^{-\kappa} |h(\delta)|$. It follows from Lemma~\ref{prop:normcomp1} that if $\kappa$ is sufficiently small and $\tau$ is close to $-2$, then $\Vert f\Vert_{C^{2,\alpha}_{1,1}} \leq |\delta|^\epsilon$ for some $\epsilon > 0$ and sufficiently small $\delta$. This allows us to estimate the difference between the linear operators $L_{V_\delta}$ and $L_{\Lambda T_\delta}$ and we get
  \[ L_{\Lambda T_\delta} \tilde{F}_{-2,1} < -c_1 r^{-4.1} +  C |\delta|^\epsilon r^{-4.1} \leq -\frac{c_1}{2} r^{-4.1}\]
  on the region $\{ r < r_0\}$, after decreasing $r_0$ if necessary. \\

   \noindent{\em Step 3.}
   Let us now consider the function
   \[ G = g(y) \tilde{F}_{-2.1}, \]
   defined on $\Lambda T_\delta$. 
   Note that on $\Lambda T_\delta$ we have $|\nabla y| \leq 1$ and $|\nabla^2 y| \leq C r^{-1}$ for some $C > 0$ (for the latter we use that the regularity scale of $\Lambda T_\delta$ is comparable to $r$ at each point). We have
   \[ L_{\Lambda T_\delta} G &\leq - \frac{c_1}{2} g(y) r^{-4.1} + 2\nabla g\cdot \nabla \tilde{F}_{-2.1} + \Delta g \tilde{F}_{-2.1} \\
     &\leq - \frac{c_1}{2} g(y) r^{-4.1}  + C \Vert g\Vert_{C^2} r^{-3.1} \\
     &\leq -\frac{c_1}{4} g(y) r^{-4.1},
   \]
   if $\{r < r_0\}$ for $r_0$ depending on $\Vert g\Vert_{C^2}$ and a lower bound for $g$. 

   We also have the estimates
   \[ C^{-1} g r^{-2.1} \leq G &\leq C g r^{-2.1}, \\
     |\nabla G| &\leq C g r^{-3.1} + C \Vert g\Vert_{C^2} r^{-2.1} \leq 2C g r^{-3.1}, \\
     |\nabla^2 G| &\leq C g r^{-4.1} + C \Vert g\Vert_{C^2} r^{-3.1} \leq 2C g r^{-4.1}, \]
   if $r < r_0$ for small $r_0$ depending on $\Vert g\Vert_{C^2}$ and a lower bound for $g$. 
   Consider the graph of $\epsilon G$ over $\Lambda T_\delta$. Let us estimate the mean curvature at a point where $\epsilon G < \beta|\delta| r^{-2}$, and $r < r_0$. At such a point we have $\epsilon g r^{-2.1} < C|\beta|\delta r^{-2}$, and so it follows that
   \[ r^{-1} |\epsilon G| + |\nabla \epsilon G| + r|\nabla^2 \epsilon G| \leq C \epsilon gr^{-3.1} \leq C \beta|\delta| r^{-3} \leq C \beta, \]
   for a larger $C$, using that $r \geq c_2 |\delta|^{1/3}$ for a uniform $c_2 > 0$. We find that if $\beta$ is chosen sufficiently small, then the mean curvature of the graph of $\epsilon G$ satisfies
   \[ m( \epsilon G) &\leq L_{\Lambda T_\delta} (\epsilon G) + C \beta \epsilon g r^{-4.1} \\
     &\leq -\frac{c_1}{4} \epsilon g r^{-4.1} + C \beta \epsilon g r^{-4.1} \\
     &\leq - \frac{c_1}{8} \epsilon g r^{-4.1}, \]
   as required. 
 \end{proof}

 We can now give the proof of Proposition~\ref{prop:nonconcentration} using a barrier argument. When we are considering a surface $M$ at a distance from $T_\delta$ that is large compared to $\gamma\beta\delta$, then we can use the barrier surfaces constructed in Proposition~\ref{prop:barrier1}, since at such scales $T_\delta$ can be well approximated by the surface with cross sections $y\delta^{1/3} H$ on a region $\{r < r_0\}$ with $r_0$ depending on $\gamma, \beta$. At scales smaller than this we do not have such a precise picture of $T_\delta$, however if $M$ is closer to $T_\delta$ than this, then it is actually a graph, and we can instead use the graphical barriers constructed in Proposition~\ref{prop:linearbarrier}.
 
\begin{proof}[Proof of Proposition~\ref{prop:nonconcentration}]
  Let us fix $\gamma > 0$. We write $d = D_{\Lambda T_\delta}(M; B_1\setminus B_{1/2})$, and for any $r_0 > 0$ we let $D(r_0) = \sup_{r > r_0} |r^2 u|$, where $M$ is the graph of $u$ over $\Lambda T_\delta$ on the region $\{ r > r_0\}$. Note that for any given $r_0 > 0$, once $d, \delta$ are sufficiently small, $M$ is a graph on this region by Lemma~\ref{lem:smalldgraph}.
Define 
  \[ \overline{d} = \max\{ \beta |\delta|, d + \gamma^{-1}D(r_0)\}. \]
 Using the definition of the distance, if $\delta$ is sufficiently small (depending on $r_0$), we have
  \[ D_{\Lambda T_\delta}(M; (B_1\setminus B_{1/2})\cap \{r \geq r_0\}) \leq D(r_0). \]
  It follows that to prove the required estimate \eqref{eq:nonconc} we only need to estimate the distance on the region $\{r < r_0\}$. We can also restrict ourselves to $y\geq 0$, the corresponding estimates for $y\leq 0$ are completely analogous. 

  We will first use the barrier surfaces constructed in Proposition~\ref{prop:barrier1} to show that if $r_0$ is sufficiently small (depending on $\gamma$ and $\beta$), then 
  \[ D_{\Lambda T_\delta}(M; B_{9/10} \setminus B_{11/20}) \leq C' \gamma \overline{d} \]
  for a constant $C'$ depending on $\beta$, but not on $\gamma$.

  We apply Proposition~\ref{prop:barrier1} to the function
  \[ f(y) = (\delta/\epsilon)^{1/9}y^{1/3} + (y-1/2)^{-1} + (1-y)^{-1}. \]
  We will assume that $\epsilon > \gamma \beta|\delta|$, and that $y\in (1/2 + \gamma, 1-\gamma)$. Under these assumptions we have a $\gamma, \beta$-dependent bound for the $C^2$-norm of $f$, and so according to Proposition~\ref{prop:barrier1} there are $\epsilon_0, r_0 > 0$ depending on $\gamma, \beta$, such that we can build a hypersurface $X_\epsilon$ for $\epsilon \leq \epsilon_0$ with negative mean curvature, modeled on $\epsilon^{1/3}f(y)^3 H$, on the region $U$ defined by $r < r_0$ and $y\in (1/2 + \gamma, 1-\gamma)$. Given any $c > 0$, and replacing $r_0$ by a smaller constant, we can assume that, over $U$, $X_\epsilon$ lies between the surfaces $(\epsilon f(y)^9 \pm c\epsilon)^{1/3} H$ by Proposition~\ref{prop:Xc}. Note that 
  \[ \epsilon f(y)^9 = \Big[ \delta^{1/9} y^{1/3} + \epsilon^{1/9} (y-1/2)^{-1} + \epsilon^{1/9} (1-y)^{-1}\Big]^9. \]
  In particular there is a constant $C$, independent of $\gamma$, such that for sufficiently small $\delta, \epsilon$ the function $f$ satisfies the inequalities
  \[ \epsilon f(y)^9 &\geq \delta y^3 + C^{-1} \epsilon, \text{ for }y\in (1/2+\gamma, 1-\gamma) \\
    \epsilon f(y)^9 &\geq \delta y^3 + C^{-1} \epsilon \gamma^{-1}, \text{ for }y=1/2+\gamma \text{ or } y=1-\gamma, \\
    \epsilon f(y)^9 &\leq \delta y^3 + C \epsilon, \text{ for }y\in (11/20, 9/10). 
  \]
  Choosing a suitably small $c$, and then letting $r_0$ be small, we can then assume that on the region $U$ the surface $X_\epsilon$ lies on the positive side of $(\delta y^3 + C^{-1}\epsilon /2)^{1/3}H$, while in the slices $y=1/2+\gamma$ and $y=1-\gamma$ the surface $X_\epsilon$ lies on the positive side of $(\delta y^3 + C^{-1}\gamma^{-1}\epsilon /2)^{1/3}H$.

\bigskip
\noindent {\bf Claim:} There is a $C'> 0$ depending on $\beta$ (independent of $\gamma$) such that if $r_0$ is sufficiently small (depending on $\beta,\gamma$) we have the following. If $d, |\delta|$ are sufficiently small (depending on $\beta, \gamma, r_0$), then along the boundary of $U$ the minimal surface $M$ lies on the negative side of the surface $X_{C'(\gamma \overline{d} + D(r_0))}$, while on all of $U$ the surface $M$ lies on the negative side of $X_{\epsilon_0}$.

\bigskip
\noindent {\em Proof of Claim:} Since $\overline{d} \geq \beta|\delta|$ and $D_{\Lambda T_\delta}(M; B_1\setminus B_{1/2}) \leq d$, by definition we have that $M$ is on the negative side of $(\beta^{-2} \overline{d})^{1/3}H$. At the same time $X_{\epsilon_0}$ lies on the positive side of $(\delta y^3 + C^{-1}\epsilon_0 / 2)^{1/3}H$ over $U$, which for sufficiently small $\delta$ (depending on $\epsilon_0$, i.e. on $\gamma, \beta$)  is on the positive side of $(C^{-1}\epsilon_0 /4)^{1/3}H$. If $d, \delta$ is sufficiently small, depending on $\beta, \epsilon_0$, then  $\beta^{-2}\overline{d} \leq C^{-1}\epsilon_0/4$, and this implies that $M$ is on the negative side of $X_{\epsilon_0}$.

On the boundary pieces $y=1/2 + \gamma$ and $y=1 - \gamma$ of $U$ we have that $M$ is on the negative side of $(\beta^{-2} \overline{d})^{1/3} H$ as above. At the same time along these boundary pieces $X_{C'(\gamma\overline{d} + D(r_0))}$ is on the positive side of $(\delta y^3 + C^{-1}C'\overline{d}/2)^{1/3}H$. Since $\overline{d} \geq \beta |\delta|$, it follows that for sufficiently large $C'$ (depending on $\beta$), $M$ is on the negative side of $X_{C'(\gamma\overline{d} + D(r_0))}$.

On the boundary piece $\{r=r_0\}$ of $U$, we have that $M$ is the graph of a function $u$ over $\Lambda T_\delta$ with $|u|\leq D(r_0) r^{-2}$, and so $M$ is the graph of $v$ over $(y^3\delta)^{1/3}H$ with
\[ \label{eq:vbound10} |v| \leq D(r_0) r^{-2} + C |\delta| r^{-2+\kappa}, \text{ along }r=r_0.\]
We used here that $\Lambda T_\delta$ is an $O(|\delta| r^{-2+\kappa})$ graph over $(y^3\delta)^{1/3}H$ by Proposition~\ref{prop:Tdelta1}. At the same time $X_{C'(\gamma \overline{d} + D(r_0))}$ lies on the positive side of $(\delta y^3 + C^{-1}C' (\gamma \overline{d} + D(r_0))/2)^{1/3}H$, which by Lemma~\ref{lem:Phidefn1} is on the positive side of the graph of
\[ c_0 \min\{ \frac{1}{2} C^{-1}C' (\gamma \overline{d} + D(r_0)) r^{-2}, r\} \]
over $(y^3\delta)^{1/3}H$. Recall that we are only interested in the set where $\{r = r_0\}$. For fixed $C', r_0 > 0$, once $d, \delta$ are sufficiently small (depending on $\gamma$), the minimum is achieved by $\frac{1}{2} c_0 C^{-1}C' (\gamma \overline{d} + D(r_0)) r^{-2}$. Comparing this with \eqref{eq:vbound10}, we just need to pick $C'$ sufficiently large so that
\[ D(r_0) r_0^{-2} + C|\delta|r_0^{-2+\kappa}  < \frac{1}{2} c_0 C^{-1} C' (\gamma \overline{d} +  D(r_0)) r_0^{-2}. \]
Using that $\overline{d} \geq \beta |\delta|$, this inequality holds for sufficiently large $C'$ independent of  $\gamma$, once  $r_0$ is chosen sufficiently small (depending on $\gamma, \beta$). This completes the proof of the Claim.

\bigskip
Interpolating between $\epsilon = \epsilon_0$ and $\epsilon = C'(\gamma\overline{d} + D(r_0))$ we find that $M$ lies on the negative side of all the corresponding $X_\epsilon$, since otherwise there would be some value $\epsilon$, such that $M$ touches $X_\epsilon$ at an interior point, which contradicts Proposition~\ref{prop:barrier1}. We now use that on the region $y\in (11/20, 9/10)$, $\{r\leq r_0\}$ the surface $X_{C'(\gamma\overline{d} + D(r_0))}$, and so also $M$, lies on the negative side of
\[ (\delta y^3 + CC'(\gamma \overline{d} + D(r_0))/2)^{1/3}H. \]
We can repeat the same argument, reversing orientations, to find that on the same region $M$ is also on the positive side of
\[ (\delta y^3 - CC'(\gamma \overline{d} + D(r_0))/2)^{1/3}H. \]
Let us write $A = CC'(\gamma\overline{d} + D(r_0))/2$, so $M$ is between the surfaces $(\delta y^3 \pm A)^{1/3}H$. 
We claim that this implies that
\[ \label{eq:Dbound10} D_{\Lambda T_\delta}\Big(M, (B_{9/10}\setminus B_{11/20})\cap \{r < r_0\}\Big) < 4A, \]
if $\beta$ is sufficiently small (independent of $\gamma$) and $r_0$ is sufficiently small (depending on $\beta,\gamma$). To see this, by definition we need to look at two cases:
\begin{itemize}
\item If $4A \geq \beta|\delta|$, then \eqref{eq:Dbound10} follows if on the relevant region $M$ is between the surfaces $(\pm \beta^{-2} 4A)^{1/3}H$. This follows since in this case $|\delta y^3 \pm A| \leq \beta^{-2}4 A$ for sufficiently small $\beta$. 
\item If $4A < \beta|\delta|$, and $\beta$ is sufficiently small, then the fact that $M$ lies between $(\delta y^3 \pm A)^{1/3}H$ implies that $M$ is the graph of a function $u$ over $\Lambda T_\delta$ with
  \[ |u| \leq 2A r^{-2} + C|\delta| r^{-2+\kappa}. \]
  By definition, for the bound \eqref{eq:Dbound10} we need to ensure that
  \[ \label{eq:b11} 2Ar^{-2} + C|\delta| r^{-2+\kappa} < \min\{ (\beta |\delta| + 4A)r^{-2}, 4A r^{-2.1}\}. \]
  Note that $A \geq \gamma \overline{d} \geq \gamma \beta |\delta|$, so
  \[ \min\{(\beta |\delta| + 4A)r^{-2}, 4A r^{-2.1}\} \geq 2A r^{-2} + 2\gamma\beta |\delta| r^{-2}. \]
  On the region $\{r < r_0\}$ for sufficiently small $r_0$ (depending on $\gamma,\beta$) we then have \eqref{eq:b11} as required. 
\end{itemize}

If $\overline{d} = d + \gamma^{-1}D(r_0)$, then \eqref{eq:Dbound10} implies the required estimate \eqref{eq:nonconc}. It remains to deal with the case $\overline{d} > d + \gamma^{-1} D(r_0)$, i.e. $\overline{d} = \beta |\delta|$. It follows that $\overline{d} + \gamma^{-1}D(r_0) < 2\beta|\delta|$, and so the constant $A$ above satisfies
\[ A \leq CC' \gamma \beta|\delta|. \]
We still have that on the region $B_{9/10}\setminus B_{11/20}$, where $\{r < r_0\}$, $M$ is between the surfaces $(\delta y^3 \pm A)^{1/3}H$. Without loss of generality we can assume that $\gamma < (4CC'C_{5.1})^{-1}$, where $C_{5.1}$ is the constant appearing in Lemma~\ref{lem:Phidefn1}, since $C, C', C_{5.1}$ are independent of $\gamma$. It follows from this that $M$ is between $(\delta y^3 \pm \frac{1}{4}C_{5.1}^{-1}\beta |\delta|)^{1/3}H$ on the region $U'$ defined by $\{r < r_0\}\cap (B_{9/10}\setminus B_{11/20})$. If $\beta$ is sufficiently small, and we replace $r_0$ with a smaller constant (depending on $\beta$), then it follows that $M$ is the graph of a function $u$ over $\Lambda T_\delta$ on $U'$, with
\[ \label{eq:ubound12} |u| \leq \frac{1}{2} \beta |\delta| r^{-2}. \]
We now use the graphical barrier surfaces constructed in Proposition~\ref{prop:linearbarrier}, with the function $g$ given by
\[ g(y) = \left(y - \frac{11}{20}\right)^{-1} + \left( \frac{9}{10} - y\right)^{-1}. \]
We work on the interval $y\in (\frac{11}{20} + \gamma, \frac{9}{10} - \gamma)$, where we have $C^2$ bounds for $g, g^{-1}$ depending on $\gamma$. Further decreasing $r_0, \epsilon_0$ from before, we can use the graph of $\epsilon G$ as a barrier surface wherever $\epsilon G < \beta|\delta|r^{-2}$. Let us define
\[ \tilde{G}_\epsilon = \min\{ \beta |\delta| r^{-2}, \epsilon G\}. \]
For a constant $C_3$ (depending on the constant $C$ in Proposition~\ref{prop:linearbarrier} and a lower bound for $g$), the surface $M$ is on the negative side of $\tilde{G}_{C_3 \beta|\delta|}$ on $U'$. At the same time we claim that on the boundary of $U'$ the surface $M$ is on the negative side of $\tilde{G}_{C_3(D(r_0) + \gamma d)}$. To see this, we examine the two kinds of boundary components of $U'$:
\begin{itemize}
\item On the set $\{r=r_0\}$ we have that $M$ is the graph of $u$ over $\Lambda T_\delta$ with $|u| \leq D(r_0)r^{-2}$, so $M$ is on the negative side of the graph of  $\tilde{G}_{C_3 D(r_0)}$ for suitable $C_3$. 
\item On the boundary components where $y=\frac{11}{20} + \gamma$ or $y=\frac{9}{10} - \gamma$, we use that $D_{\Lambda T_\delta}(M; B_1\setminus B_{1/2}) = d$, and we have $d < \beta|\delta|$, so $M$ lies between the graphs of
  \[ \pm \min\{ (\beta |\delta| + d) r^{-2}, dr^{-2.1}\}. \]
  In particular $M$ lies on the negative side of the graph of $dr^{-2.1}$, i.e. $M$ lies on the negative side of the graph of $\tilde{G}_{C_3 \gamma d}$ for suitable $C_3$. Note that on these boundary components $g(y) > \gamma^{-1}$. 
\end{itemize}

Letting $\epsilon$ vary between $C_3\beta |\delta|$ and $C_3(D(r_0) + \gamma d)$ we find that $M$ is on the negative side of the graphs of the corresponding $\tilde{G}_\epsilon$, since otherwise for some value $\epsilon = \epsilon_1$ the graph of $\tilde{G}_{\epsilon_1}$ would touch $M$ from the positive side. By \eqref{eq:ubound12} this contact point must happen where $\tilde{G}_{\epsilon_1} < \beta|\delta|r^{-2}$, and by Proposition~\ref{prop:linearbarrier} at these points the graph of $\tilde{G}_{\epsilon_1}$ has negative mean curvature. This is a contradiction. 

It follows that $M$ lies on the negative side of the graph of $\tilde{G}_{C_3(D(r_0) + \gamma d)}$ over $\Lambda T_\delta$ on the region $U'$. Restricting to $y\in (3/5, 4/5)$ we have a uniform upper bound for $g$ (independent of $\gamma$), and so we find that on this region $M$ lies on the negative side of the graph of $C_4(D(r_0) + \gamma d) r^{-2.1}$ over $\Lambda T_\delta$. Because of \eqref{eq:ubound12} we have that $M$ is on the negative side of the graph of
\[ \min\{ \beta|\delta| r^{-2},  C_4(D(r_0) + \gamma d) r^{-2.1}\}, \]
which implies that
\[ D_{\Lambda T_\delta}(M; (B_{4/5}\setminus B_{3/5})\cap \{ r < r_0\}) \leq C_4(D(r_0) + \gamma d), \]
as required. 

It is clear from the arguments that we only require $M$ to be minimal in a neighborhood $\{r < r_1\}$, and in addition the result holds with $\Lambda T_\delta$ replaced by $V_\delta$. 
\end{proof}

From this point on the number $\beta > 0$ in the definition of the distance $D_{\Lambda T_\delta}$ is fixed so that \eqref{eq:nonconc} holds. Next we derive a three annulus lemma for the distance $D$ for minimal surfaces close to one of our minimal comparison surfaces $\Lambda T_\delta$, from the $L^2$ three annulus lemma on the cone $C\times\mathbf{R}$ given in Lemma~\ref{lem:L23annulus}. To ease the notation, unless otherwise specified, we will always measure the distance in the annulus $B_1\setminus B_{\rho_0}$, where $\rho_0$ is as in the $L^2$ three annulus lemma.  The number $L > 0$ given by the following result will remain fixed afterwards. 
\begin{prop}\label{prop:D3annulus}
  There is an $L > 0$ such that if $|\delta|, d$ are sufficiently small, then we have the following. Let $T=\Lambda T_\delta$, where $|\ln \Lambda| < |\delta|^{-\kappa}$. Suppose that $D_T(M) < d$ for $M\in \mathcal{M}$, and $\alpha\in (\alpha_1,\alpha_2)$ for the $\alpha_i$ appearing in Lemma~\ref{lem:L23annulus}. We have
  \begin{itemize}
    \item[(i)] If
  $D_{LT}(LM) \geq L^\alpha D_T(M)$, then $D_{L^2T}(L^2M) \geq L^\alpha D_{LT}(LM)$. 
\item[(ii)] If $D_{L^{-1}T}(L^{-1}M) \geq L^\alpha D_T(M)$, then $D_{L^{-2}T}(L^{-2}M) \geq L^\alpha D_{L^{-1}T}(L^{-1}M)$.
  \end{itemize}
\end{prop}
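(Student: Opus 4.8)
The plan is to reduce the statement to the $L^2$ three annulus lemma on the cone $C\times\mathbf{R}$ (Lemma~\ref{lem:L23annulus}), using the non-concentration result (Proposition~\ref{prop:nonconcentration}) to convert the $L^\infty$-type distance $D_T$ into an $L^2$ quantity to which Lemma~\ref{lem:L23annulus} applies. First I would set $L = \rho_0^{-1}$ for the $\rho_0$ of Lemma~\ref{lem:L23annulus}, so that scaling by $L$ moves between the successive annuli used there. Given $D_T(M) < d$ with $d, |\delta|$ small, Lemma~\ref{lem:smalldgraph} lets us write $M$ as a graph of a function $u$ over $T = \Lambda T_\delta$ on the region $\{r > c\}$ of each of the relevant annuli, with $\sup|r^{2.1} u| \leq C D_T(M)$ there; and since $T_\delta$ is a small perturbation of the cone $V_\delta$, which is in turn a small perturbation of $C\times\mathbf{R}$, the function $u$ satisfies a linearized (Jacobi) equation on $C\times\mathbf{R}$ up to errors that are quadratically small in $D_T(M)$ plus errors of size $O(|h(\delta)|\,|\delta|^{\text{power}})$ coming from the fact that $T_\delta$ is not itself minimal-as-a-cone. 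The key observation, established essentially in the course of proving Proposition~\ref{prop:nonconcentration} (see the remark after its statement about arbitrary annuli $A_2\subset A_1$), is that the comparison surface $T_\delta$ is modeled on the degree one Jacobi field $\phi$, and so \emph{relative} to $T_\delta$ the perturbation $u$ has no degree one component — this is exactly the hypothesis under which Lemma~\ref{lem:L23annulus} forces the dichotomy (i)/(ii).

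The main steps, in order, would be: (1) Fix $L = \rho_0^{-1}$ and translate the hypothesis $D_{LT}(LM)\geq L^\alpha D_T(M)$ (resp.\ its counterpart in (ii)) into a statement about the $L^2$-norms $\Vert u\Vert_i$ of the graphing function over $C\times\mathbf{R}$; here one uses Lemma~\ref{lem:smalldgraph} together with a reverse inequality bounding $D_T(M)$ above by a constant times the $L^2$-norm of $u$ on the annulus (away from the singular ray the distance and the weighted $L^2$-norm are comparable, and the non-concentration estimate \eqref{eq:nonconc} controls the contribution from $\{r < r_0\}$). (2) Apply Proposition~\ref{prop:nonconcentration} on the three relevant annuli to get, for a small $\gamma$ chosen at the end, that $D_T(M; B_{4/5}\setminus B_{3/5})$ on the middle annulus is dominated by $C\sup_{r > r_0}|r^2 u| + C\gamma d$; in particular the ``concentrated'' part of $M$ near the singular ray is negligible, so that $M$ is effectively a graph over $T$ of a function comparable to a genuine Jacobi field $v$ on $C\times\mathbf{R}$ with no degree one component. (3) Apply Lemma~\ref{lem:L23annulus} to $v$: since $v$ has no degree one component, one of conclusions (i) or (ii) of that lemma holds, and by the hypothesis of whichever case we are in (growth from annulus $0$ to annulus $1$, say), it must be the growth alternative, giving $\Vert v\Vert_2 \geq \rho_0^{-\alpha}\Vert v\Vert_1$, i.e.\ the $L^2$-norm grows at rate at least $L^\alpha$. (4) Translate back: using the comparability of $D_{L^2T}(L^2M)$ with $\Vert u\Vert_2$ (again via Lemma~\ref{lem:smalldgraph} and Proposition~\ref{prop:nonconcentration}) and absorbing the errors, conclude $D_{L^2 T}(L^2 M) \geq L^\alpha D_{LT}(LM)$. (5) Finally, choose $\gamma$ small enough (after $\beta$ is fixed as in Proposition~\ref{prop:nonconcentration}) and then $d, |\delta|$ small enough that all error terms — the quadratic errors $O(D_T(M)^2)$ from nonlinearity, the $O(|h(\delta)|)$ errors from $T_\delta$ not being conical, and the $C\gamma d$ term — are absorbed into the gap between $L^\alpha$ for $\alpha\in(\alpha_1,\alpha_2)$ and the endpoint exponents; the slack $\alpha_2 - \alpha$ and $\alpha - \alpha_1$ is exactly what makes this possible.

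The case (ii) is handled by the same argument with the roles of the inner and outer annuli interchanged, using part (ii) of Lemma~\ref{lem:L23annulus} in place of part (i); the only subtlety is that $\Lambda T_\delta$ must still be defined on the slightly larger annulus $B_1 \setminus B_{\rho_0^3}$ needed for three scales, which holds for $|\ln\Lambda| < |\delta|^{-\kappa}$ once $\delta$ is small, since $\ln(\rho_0^{-3}) = 3\ln L$ is a fixed constant.

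The step I expect to be the main obstacle is step (2)–(3), namely rigorously justifying that relative to $T_\delta$ the surface $M$ is modeled on a Jacobi field \emph{with no degree one component}, with errors small enough to feed into Lemma~\ref{lem:L23annulus}. The point is delicate because $T_\delta$ is itself a logarithmic perturbation of $V_\delta$ built out of $\phi$, so the ``degree one component of $M$ relative to $T_\delta$'' is precisely what has been subtracted off by construction — but making this quantitative requires carefully tracking how the non-concentration estimate \eqref{eq:nonconc} interacts with the $L^2$-projection onto the degree one Jacobi fields on the singular cone, and controlling the error $O(|h(\delta)|)$ arising because $m(T_\delta) = 0$ only holds exactly, while $m(V_\delta) = h(\delta)\zeta\rho^{-2} \neq 0$. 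The rest is bookkeeping: comparing weighted norms, absorbing quadratic terms, and choosing the small constants in the right order.
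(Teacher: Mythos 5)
Your proposal misplaces the role of the ``no degree one component'' hypothesis and, more seriously, fixes $L$ too small for the argument to close. On the first point: Proposition~\ref{prop:D3annulus} only uses the conditional implications (i) and (ii) of Lemma~\ref{lem:L23annulus}, which hold for \emph{every} Jacobi field with $r^2u\in L^\infty$ — the dichotomy, which is the only part requiring the absence of a degree one component, is not invoked here. The removal of the degree one piece is performed later, in the proof of Proposition~\ref{prop:EBdecay}, by adjusting the rotation and $\delta$ so that the best-fit comparison surface absorbs the degree one part. Inside Proposition~\ref{prop:D3annulus} the hypothesis $D_{LT}(LM)\geq L^\alpha D_T(M)$ with $\alpha>\alpha_1$ already places you in the hypothesis of Lemma~\ref{lem:L23annulus}(i); no dichotomy is needed, so the delicacy you flag in your last paragraph is not present in this proposition at all.

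The second issue is structural. You set $L=\rho_0^{-1}$ and try to argue directly, translating $D$-distances to $L^2$-norms and back and absorbing errors into the slack $\alpha_2-\alpha$. This does not work with a fixed $L$: the comparison constants between $D_T(M)$, the $L^\infty$ quantity $\sup_{r>r_0}|r^2u|$, and the weighted $L^2$-norms $\Vert u\Vert_i$ are uniform but not close to $1$, and once they accumulate (two graph extractions, one elliptic $L^2$-to-$L^\infty$ estimate, one non-concentration estimate) the resulting multiplicative loss cannot be dominated by $\rho_0^{\alpha_2-\alpha}$ for a fixed $\rho_0$. The paper instead argues by contradiction: assume the dichotomy fails along a sequence, normalize, extract a limiting Jacobi field $U$ on $C\times\mathbf{R}$, and — crucially — take $L=\rho_0^{-k}$ with $k$ \emph{large}. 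The cascade in Lemma~\ref{lem:L23annulus} run over $\sim k$ consecutive annuli, combined with the bounds $\Vert U\Vert_0\lesssim\rho_0^{k\alpha}$, $\Vert U\Vert_{2k}\lesssim\rho_0^{-k\alpha}$ coming from the assumed failure, forces $\Vert U\Vert_{k+\xi}\leq\epsilon$ for $k$ large in both possible cases (the exponent gain $k(\alpha_2-\alpha)$ resp.\ $k(\alpha-\alpha_1)$ grows with $k$). The $L^2$-to-$L^\infty$ estimate of Lemma~\ref{lem:L2Linfty} then makes $|r^2U|$ small on the middle annulus, and non-concentration (Proposition~\ref{prop:nonconcentration}) gives $d_i\leq C\epsilon\,d_i + \gamma D d_i$, a contradiction once $\epsilon<(4C)^{-1}$ and then $\gamma<(4D)^{-1}$ (note $D$ depends on $L$, so the order of choices — first $\epsilon$, then $k$ hence $L$, then $\gamma$ — is forced). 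This ``many annulus'' cascade is exactly what replaces the lossy direct bookkeeping you propose, and it is the reason $L$ must be free to be taken large.

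A preliminary step you also omit (Step 1 in the paper): one must first show a uniform bound $D_{\lambda T_i}(\lambda M_i)<Dd_i$ for $\lambda\in[1,L^2]$, again by a compactness/non-concentration argument, so that the rescaled graphs $d_i^{-1}u_i$ are uniformly controlled and admit a nonzero limiting Jacobi field $U$ with $|r^{2.1}U|\leq D$. Without this, the extraction of $U$ in Step 2 has no a priori bound to hang on.
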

\begin{proof}
  We prove (i), since the other statement is completely analogous. Suppose for contradiction that we have $\delta_i \to 0, d_i \to 0$, and surfaces $M_i\in\mathcal{M}$ satisfying $D_{LT_i}(LM_i)\to 0$, violating the statement. Let us write $T_i = \Lambda_i T_{\delta_i}$. So
  \[ \label{eq:a1} D_{LT_i}(LM_i) \geq L^\alpha D_{T_i}(M_i), \text{ and } D_{L^2T_i}(L^2M_i) < L^\alpha D_{LT_i}(LM_i). \]
We write $d_i = D_{LT_i}(LM_i)$.

  \bigskip
\noindent{\em Step 1.} We first show that there is a constant $D$ such that for sufficiently large $i$ and any $\lambda\in [1, L^2]$ we have
\[ D_{\lambda T_i}(\lambda M_i) < Dd_i, \]
i.e. $M_i$ is within distance $Dd_i$ from $T_i$ on the annulus $B_1\setminus B_{L^{-2}\rho_0}$. 
To see this suppose that there is no such $D$, and let us denote by $\lambda_i$ the value of $\lambda$ maximizing $D_{\lambda T_i}(\lambda M_i)$ for each $i$. Let us also write $D_i = D_{\lambda_i T_i}(\lambda_i M_i)$, so that up to choosing a subsequence we have $D_i / d_i \to \infty$. In addition we have $D_i\to 0$, since by assumption $M_i$ and $T_i$ both converge to $C\times \mathbf{R}$ on the annulus $\rho\in[L^{-2}\rho_0, 1]$ as $i\to\infty$. It follows from Lemma~\ref{lem:smalldgraph} that for a sequence $r_i\to 0$ we can write $M_i$ as graphs of $u_i$ over $T_i$ on the region $\{r > r_i\}$ in this annulus, and we have an estimate $|r^{2.1} u_i| < C D_i$. The rescaled functions $D_i^{-1} u_i$ then converge, after choosing a subsequence, to a Jacobi field $U$ on $C\times \mathbf{R}$ (on the same annulus), satisfying $|r^{2.1} U| \leq C$. At the same time by \eqref{eq:a1} on the annuli $\rho\in [\rho_0, 1]$ and $\rho\in [L^{-2}\rho_0, L^{-2}]$ we have better estimates $|r^{2.1}u_i| < C L^{-\alpha} d_i$ and $|r^{2.1} u_i| < C L^{-2+\alpha} d_i$ respectively, so the rescaled limit $U$ vanishes on these two annuli. It follows that $U$ is identically zero. Using the non-concentration result with a sufficiently small $\gamma $, this contradicts $D_i = D_{\lambda_i T_i}(\lambda_i M)$ for sufficiently large $i$. More precisely, by the assumption that $\lambda_i$ maximizes $D_{\lambda T_i}(\lambda M_i)$ we have
\[ D_{\lambda_i T_i}(\lambda_i M_i; B_{3/2}\setminus B_{\rho_0 /2}) < CD_i \]
for a uniform $C$. Note that we may modify the $\lambda_i$ slightly if necessary to ensure that $\lambda_i M_i$ is defined on this slightly larger annulus, since by \eqref{eq:a1} the maximal distance does not occur at the ends of the annulus $B_1\setminus B_{L^{-2}\rho_0}$. Proposition~\ref{prop:nonconcentration} (applied to the annuli $B_{3/2}\setminus B_{\rho_0/2}$ and $B_1\setminus B_{\rho_0}$) implies that given $\gamma > 0$ there is an $r_0 > 0$ such that for sufficiently large $i$ we have
\[ D_i = D_{\lambda_i T_i}(\lambda_i M; B_1\setminus B_{\rho_0}) \leq C \sup_{\{r > r_0\}}|r^2u_i| + \gamma CD_i,\]
Since the $D_i^{-1} u_i$ converge to zero uniformly away from the singular set, this is a contradiction for sufficiently large $i$ if $\gamma$ is chosen small enough. 

\bigskip
\noindent{\em Step 2.} As above we still write $M_i$ as the graph of $u_i$ over $T_i$ on the set where $r > r_i$ for a sequence $r_i\to 0$. Using Step 1 we have $|r^{2.1} u_i| < D d_i$ on this set (note that $D$ may depend on $L$). Choosing a subsequence, the rescaled functions $d_i^{-1}u_i$ converge to a Jacobi field $U$ with $|r^{2.1} U| \leq D$. As above this limit cannot be identically zero, since then the non-concentration result would contradict $d_i = D_{LT_i}(LM_i)$ for sufficiently large $i$.

By \eqref{eq:a1} we have the estimates
\[ | r^{2.1} U | \leq C L^{-\alpha}, \text{ on } B_1\setminus B_{\rho_0}, \\
  L^2 |r^{2.1} U| \leq  C L^\alpha, \text{ on } B_{L^{-2}}\setminus B_{L^{-2}\rho_0}. \]
Let us assume that $L$ is of the form $L = \rho_0^{-k}$ for some $k > 0$. 
In the notation of Lemma~\ref{lem:L23annulus} this implies (using $\rho=\rho_0$) that
\[ \Vert U \Vert_0 \leq C' \rho_0^{k\alpha}, \quad \Vert U\Vert_{2k} \leq C' \rho_0^{-k\alpha}, \]
for a constant $C'$ (independent of $L$).

Let $\xi = -1, 0$, or 1. 
Suppose that $\Vert U\Vert_{k+\xi} \geq \rho_0^{-(k+\xi)\alpha_1} \Vert U\Vert_0$. Then for some $i\leq k$ we have $\Vert U\Vert_{i+1} \geq \rho_0^{-\alpha_1}\Vert U\Vert_i$ and so by Lemma~\ref{lem:L23annulus} $\Vert U\Vert_{j+1} \geq \rho_0^{-\alpha_2} \Vert U\Vert_j$ for all $j \geq k+1$. We would then have $\Vert U\Vert_{2k} \geq \rho_0^{-(k-1)\alpha_2} \Vert U\Vert_{k+\xi}$, i.e.
\[ \Vert U\Vert_{k+\xi} \leq C' \rho_0^{-k\alpha + (k-1)\alpha_2}.  \]
If instead $\Vert U\Vert_{k+\xi} \leq \rho_0^{-(k+\xi)\alpha_1}\Vert U\Vert_0$, then
\[ \Vert U\Vert_{k+\xi} \leq C' \rho_0^{k\alpha - (k+\xi)\alpha_1}, \]
Either way, for any $\epsilon > 0$ we have $\Vert U\Vert_{k+\xi} \leq \epsilon$, if $k$ is sufficiently large (depending on $\epsilon, \alpha$).

Using the $L^\infty$ estimate, Lemma~\ref{lem:L2Linfty}, we then have
\[ \rho_0^{-k} |r^2 U| \leq C \epsilon, \text{ on the annulus } B_{\rho_0^{-(k-1)}/2} \setminus B_{2\rho_0^{-(k+2)}}, \]
for a uniform constant $C$. For any $\gamma > 0$ we can apply the non-concentration result (choosing $r_0$ appropriately and $i$ sufficiently large), and we obtain
\[ d_i = D_{LT_i}(LM_i) \leq Cd_i \epsilon  + \gamma D d_i. \]
First we choose $\epsilon < 1/4 C^{-1}$. This determines a large choice of $k$, and therefore $L$ (note that the constant $D$ may depend on $L$). We can then choose $\gamma < 1/4 D^{-1}$, so that in sum we get $d_i < \frac{1}{2} d_i$ for sufficiently large $i$, giving a contradiction. 
\end{proof}

\section{The main argument}\label{sec:mainargument}
In this section we will give the proof of our main result, Theorem~\ref{thm:main}. We first need some preliminary definitions and results. 
Let us denote by $\mathcal{T}$ the set of all comparison surfaces $T_{\delta}$ and their rotations, i.e.
\[ \mathcal{T} = \{ RT_\delta\,:\, |\delta| < \delta_0, \text{ and } R\text{ is a rotation of }\mathbf{R}^9\}. \]
Similarly we let
\[ \mathcal{W} = \{ RW_\delta\,:\, |\delta| < \delta_0, \text{ and } R\text{ is a rotation of }\mathbf{R}^9\}, \]
where $W_\delta$ are the graphs of $-h(\delta)\xi_\delta$ over $V_\delta = C(\Sigma_\delta)$ as in Remark~\ref{rem:Wcone}. 
Here $\delta_0 > 0$ is a fixed constant so that the $T_\delta, W_\delta$ are defined for $|\delta| < \delta_0$. We let $a_1 < 4/3 < a_2$ be constants so that for sufficiently small $\delta$ we have
\[ |\delta|^{a_2} \leq |h(\delta)| \leq |\delta|^{a_1}. \]
We extend our notion of distance to such rotated reference surfaces by letting $D_{RW_\delta}(M) = D_{W_\delta}(R^{-1}M)$ and $D_{RT_\delta}(M) = D_{T_\delta}(R^{-1}M)$. We will need the following results analogous to the triangle inequality for our distance.
\begin{lemma}\label{lem:triangle}
  There is a constant $C>0$ satisfying the following. Suppose that $|\delta|, |\delta'|, |R - \mathrm{Id}|, D_{W_\delta}(M) < C^{-1}$. Then
  \[ \label{eq:tr1} D_{RW_{\delta'}}(M) \leq C( D_{W_\delta}(M) + |\delta - \delta'| + |R - \mathrm{Id}|). \]
  The same inequality holds with $\Lambda T_\delta, \Lambda T_{\delta'}$ instead of $W_\delta, W_{\delta'}$. For relating the distance from $W_\delta$ to the distance from $T_\delta$ we have
  \[ \label{eq:tr2} D_{\Lambda T_\delta}(M) \leq C\Big( D_{W_\delta}(M) + (1 +|\ln \Lambda|) |h(\delta)|\Big) \]
 where $|\ln \Lambda| < |\delta|^{-\kappa}$. The same holds with $T_\delta$ and $W_\delta$ interchanged. 
\end{lemma}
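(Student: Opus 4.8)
The plan is to prove the three inequalities in Lemma~\ref{lem:triangle} by systematically unwinding the definitions of the neighborhoods $N_{\beta,d}(S)$ from Definition~\ref{defn:nhood} and using the quantitative control we already have on how the reference surfaces $W_\delta, T_\delta, V_\delta$ vary with their parameters. The overarching principle is that all of $W_\delta, \Lambda T_\delta$ agree with $\delta^{1/3}y H$ in each $y$-slice up to an error of size $O(|\delta| r^{-2+\kappa})$ (by Proposition~\ref{prop:Sdelta}(ii), Proposition~\ref{prop:Tdelta1}, and Remark~\ref{rem:Wcone}), so that two such surfaces with nearby parameters are very close to each other as graphs, and the definition of $D$ is robust under such perturbations. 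Throughout I would work $y$-slice by $y$-slice, using that $1/2 < y < 1$ so that $y$ and its powers are bounded above and below.

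\emph{First inequality \eqref{eq:tr1}.} I would show that $N_{\beta, d}(W_\delta) \subset N_{C\beta, C(d + |\delta-\delta'| + |R-\mathrm{Id}|)}(RW_{\delta'})$ for suitable $C$, which immediately yields the claimed bound on $D_{RW_{\delta'}}(M)$ once we observe that the $\beta$ in the definition may be taken as the fixed small constant and a harmless change of $\beta$ by a bounded factor does not matter (formally one should either absorb this or note $N_{\beta',d}\subset N_{\beta,Cd}$ for $\beta'<\beta$, which follows as in Lemma~\ref{lem:Ndprop}). The key geometric input is that $RW_{\delta'}$ is, in each $y$-slice, the graph of a function over $W_\delta$ of size $O\big((|\delta-\delta'| + |R-\mathrm{Id}|) r^{-2}\big)$: the $\delta$-dependence is controlled because $W_{\delta'}$ is an $O(|\delta'| r^{-2+\kappa})$-graph over $\delta'^{1/3}yH$ and $\delta^{1/3}yH$ versus $\delta'^{1/3}yH$ differ by $O(|\delta-\delta'| r^{-2})$ via Lemma~\ref{lem:Phidefn1} (after rescaling), and the rotation contributes an $O(|R-\mathrm{Id}| \rho) = O(|R-\mathrm{Id}|)$ displacement, which in graphical terms over a cone-like surface is $O(|R-\mathrm{Id}| r^{-2})$ relative to the $r^{-2}$-scale once we pass through the $y$-slices. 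Then in case (a) of Definition~\ref{defn:nhood} ($d \geq \beta|\delta|$) one checks the leaf $(\beta^{-2}d)^{1/3}H$ is contained in $(C\beta^{-2}(d+\cdots))^{1/3}H$ around $RW_{\delta'}$ using Lemma~\ref{lem:Phidefn1}, and in case (b) ($d < \beta|\delta|$) one checks the graphical bound $\min\{(\beta|\delta|+d)r^{-2}, dr^{-2.1}\}$ over $W_\delta$ translates, after adding the $O((|\delta-\delta'|+|R-\mathrm{Id}|)r^{-2})$ perturbation, into a bound of the form $\min\{(C\beta|\delta'|+Cd')r^{-2}, Cd' r^{-2.1}\}$ over $RW_{\delta'}$ with $d' = d + |\delta-\delta'|+|R-\mathrm{Id}|$; here one uses $r \geq c|\delta|^{1/3}$ on these surfaces exactly as in the proof of Lemma~\ref{lem:Ndprop}. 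The identical argument with $\Lambda T_\delta$ in place of $W_\delta$ works verbatim, since $\Lambda T_\delta$ enjoys the same $O(|\delta|r^{-2+\kappa})$-graph-over-$\delta^{1/3}yH$ property by Proposition~\ref{prop:Tdelta1}.

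\emph{Second inequality \eqref{eq:tr2}.} Here the point is that $\Lambda T_\delta$ is the graph of $F'$ over $W_\delta$ with $\Vert F'\Vert_{C^{2,\alpha}_{1,-2}(B_1\setminus B_{1/2})} \leq C(1+|\ln\Lambda|)|h(\delta)|$ by Remark~\ref{rem:Wcone}, so in each $y$-slice $\Lambda T_\delta$ sits in the graph of an $O\big((1+|\ln\Lambda|)|h(\delta)| r^{-2}\big)$ function over $W_\delta$. Writing $e = (1+|\ln\Lambda|)|h(\delta)|$, I would show $N_{\beta,d}(W_\delta) \subset N_{C\beta, C(d+e)}(\Lambda T_\delta)$ by the same slice-wise case analysis: in the regime $d \geq \beta|\delta|$ (so $d+e \geq \beta|\delta|$) one passes between leaves of the Hardt-Simon foliation, absorbing the $O(e\,r^{-2})$ graphical shift using $r\geq c|\delta|^{1/3}$ and $e \gg \beta|\delta|$ being compatible with case (a); in the regime $d < \beta|\delta|$ one keeps graphical bounds over $\Lambda T_\delta$, noting the shift changes $\min\{(\beta|\delta|+d)r^{-2}, dr^{-2.1}\}$ into something dominated by $\min\{(C\beta|\delta|+C(d+e))r^{-2}, C(d+e)r^{-2.1}\}$ — one must be a little careful that the $r^{-2.1}$-part survives, which it does because the shift is $O(e\,r^{-2})$ and we are allowed to enlarge the constant in front. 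The symmetric statement ($T_\delta$ and $W_\delta$ interchanged) follows by the same argument applied to $F'$ with the roles reversed, or directly since being a graph over $W_\delta$ with small $C^{2,\alpha}_{1,-2}$ norm is a symmetric relation up to constants.

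\emph{Main obstacle.} I expect the delicate point to be the regime $d < \beta|\delta|$ in both inequalities, where the neighborhoods are defined by the two-scale expression $\min\{(\beta|\delta|+d)r^{-2}, dr^{-2.1}\}$ and one must verify that after perturbing the reference surface the containment still holds with the $r^{-2.1}$-enhancement intact and with $\beta$ only enlarged by a bounded factor — this requires tracking how the crossover radius between the two branches moves and using $c|\delta|^{1/3} \leq r \leq 1$ quantitatively, exactly in the spirit of the computation at the end of the proof of Lemma~\ref{lem:Ndprop}. A secondary care point is making the $O(|R-\mathrm{Id}|\rho)$ displacement from a rotation genuinely compatible with the $r^{-2}$-normalized distance; since rotations move points by a Euclidean amount proportional to the distance from the axis, and on $W_\delta$ (which is cone-like) this is comfortably absorbed into the $r^{-2}$-scale, but it should be stated carefully.
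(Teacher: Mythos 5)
Your proposal moves in the right direction—compare graphs over $\delta^{1/3}yH$ slice by slice, invoke the foliation via Lemma~\ref{lem:Phidefn1}, and split cases according to the structure of Definition~\ref{defn:nhood}—but the case split you propose is the wrong one, and this creates a genuine gap.

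You split on $d \gtrless \beta|\delta|$, i.e.\ on which branch of Definition~\ref{defn:nhood} governs the \emph{source} neighborhood $N_{\beta,d}(W_\delta)$. But what actually determines the shape of the argument is which branch governs the \emph{target} neighborhood $N_{\beta,Cd'}(RW_{\delta'})$, where $d' = d + |\delta-\delta'| + |R-\mathrm{Id}|$. Your split does not control this: if $d < \beta|\delta|$ but $|\delta-\delta'| + |R-\mathrm{Id}|$ is comparable to or larger than $|\delta'|$, then $d'$ can be much larger than $\beta|\delta'|$, so the target is in the non-graphical regime (a). In that situation your proposed "add an $O(K' r^{-2})$ perturbation to the graphical bound" argument breaks down: the inequality $\beta|\delta| + d + CK' \leq C\beta|\delta'| + Cd'$ fails when $K'$ dominates $|\delta'|$ and $d$ (the $\beta|\delta|$ on the left is not dominated by $(C-1)d$, which can be tiny). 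The paper's trichotomy on $K = d + |\delta-\delta'| + |R-\mathrm{Id}|$ versus $\epsilon|\delta'|$ is precisely engineered around this: when $K \geq \epsilon^{-1}|\delta'|$ the graphical description is abandoned entirely and one uses only that both neighborhoods reduce to regions between foliation leaves $\pm(\beta^{-2}\cdot)^{1/3}H$; when $K \leq \epsilon|\delta'|$ both source and target are genuinely graphical over each other; the intermediate range is handled by monotonicity ($N_{2d}\subset N_{2d + \epsilon^{-1}|\delta'|}$) and the large-$K$ case. You would need to insert the same trichotomy to make your sketch go through.

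Two secondary points. First, you repeatedly pass to $N_{C\beta, \cdot}$ and wave at "a harmless change of $\beta$," but $\beta$ is fixed once and for all in Proposition~\ref{prop:nonconcentration}, and enlarging $\beta$ \emph{grows} the neighborhood while also shifting the crossover $d = \beta|\delta|$ between the two branches of Definition~\ref{defn:nhood}; this is not covered by Lemma~\ref{lem:Ndprop} and needs a separate (if routine) argument. The paper sidesteps it entirely by never changing $\beta$. Second, your estimate that a rotation contributes $O(|R-\mathrm{Id}| r^{-2})$ as a graph over $W_\delta$ is unnecessarily crude: the key structural fact (used explicitly in the paper's case $K \leq \epsilon|\delta'|$) is that rotations are generated by Jacobi fields on $C\times\mathbf{R}$ which remain \emph{bounded} along $\{0\}\times\mathbf{R}$, so the graphical contribution is $O(|R-\mathrm{Id}|)$ with no $r^{-2}$ blowup. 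This is needed for the perturbation to sit comfortably inside the $\min\{(\beta|\delta|+d)r^{-2}, d r^{-2.1}\}$ window.
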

\begin{proof}
  To prove \eqref{eq:tr1}, let $d = D_{W_\delta}(M)$, so that by definition $M \subset N_{2d}(W_\delta)$. Recall that in each $y$-slice $W_\delta$ is an $O(|\delta| r^{-2+\kappa})$-graph over $\delta^{1/3}y H$, and our convention is that $D_{W_\delta}$ is measuring the distance on the annulus $B_1 \setminus B_{\rho_0}$. Let us define
  \[ K = d + |\delta - \delta'| + |R-\mathrm{Id}|. \]
We need to show that for a sufficiently large $C$, if $|\delta|, K < C^{-1}$, then
$R^{-1}M \subset N_{CK}(W_{\delta'})$, 
using that $R^{-1} M \subset R^{-1} N_{2d}(W_{\delta})$. In other words our goal is to show that
\[ \label{eq:N10}
  R^{-1} N_{2d}(W_\delta) \subset N_{CK}(W_{\delta'}). \]
Similarly to the proof of Lemma~\ref{lem:Phidefn1} we split into three cases depending on whether $K$ is much smaller than $|\delta'|$, much larger, or in between.
\begin{itemize}
\item Suppose $K \leq \epsilon |\delta'|$ for a small $\epsilon > 0$. It then follows that $d \ll |\delta|$, and so $N_{2d}(W_{\delta})$ is between the graphs of
  \[ \pm \min\{ (\beta|\delta| + 2d) r^{-2}, 2d r^{-2.1}\} \]
  over $W_\delta$. Since also $|R-\mathrm{Id}| \ll |\delta|$, and rotations are generated by linear growth Jacobi fields on $C\times \mathbf{R}$ that are bounded along the ray $\{0\} \times \mathbf{R}$ (i.e. have milder singularities than $r^{-2}$), it follows that if $\epsilon$ and $K$ are sufficiently small, then $R^{-1}N_{2d} (W_{\delta})$ is contained between the graphs of
  \[ \pm \min\{ (\beta |\delta| + C_1K)r^{-2}, C_1K r^{-2.1}\} \]
  over $W_{\delta}$, for a suitable $C_1$ independent of the other choices. Since $|\delta - \delta'| \leq \epsilon |\delta'|$ we have that $W_{\delta'}$ is bounded between the graphs of $\pm C_2 K r^{-2}$ over $W_\delta$, and so $R^{-1}N_{2d}(W_\delta)$ is bounded between the graphs of
  \[ \pm \min\{ (\beta |\delta'| + C'K)r^{-2}, C' K r^{-2.1}\} \]
  over $W_{\delta'}$ for suitable $C'$. If $\epsilon$ is sufficiently small, this implies $R^{-1}N_{2d}(W_\delta) \subset N_{C'K}(W_{\delta'})$.
\item Suppose that $K\geq \epsilon^{-1} |\delta'|$ for small $\epsilon > 0$. Since $\delta' \ll K$, it follows that for any $C' > 1$ the neighborhood $N_{C'K}(W_{\delta'})$ contains the region between the surfaces $\pm (\beta^{-2} C'K)^{1/3}H$ in each $y$-slice. At the same time $N_{2d}(W_\delta)\subset N_{2d + |\delta|}(W_\delta)$, and by assumption
  \[ 2d + |\delta| \leq 2K + |\delta'| \leq 3K, \]
  so $N_{2d}(W_\delta) \subset N_{3K}(W_\delta)$.  Using that $|\delta|\leq 3K$, this region is contained between the surfaces $\pm (\beta^{-2} 3K)^{1/3}H$. Since $|R-\mathrm{Id}| \leq K$, we then also have that $R^{-1}N_{3K}(W_\delta)$ is contained between the surfaces $\pm (\beta^{-2} C_4K)^{1/3} H$, for suitable $C_4$. If we increase the $C'$ from the previous case so that also $C' > C_4$, then this implies $R^{-1}N_{2d}(W_\delta) \subset N_{C'K}(W_{\delta'})$. 
  
\item Suppose that for the $\epsilon$ found above we have $\epsilon |\delta'| \leq K \leq \epsilon^{-1}|\delta'|$. Then applying the previous case
  \[ R^{-1}N_{2d}(W_\delta) &\subset R^{-1}N_{2d + \epsilon^{-1}|\delta'|}(W_\delta) \\
    &\subset N_{C'(K+\epsilon^{-1}|\delta'|)}(W_{\delta'}) \\
    &\subset N_{(C'+\epsilon^{-2})K}(W_{\delta'}). 
  \]
  Setting $C= C' +\epsilon^{-2}$ we get \eqref{eq:N10}. 
\end{itemize}

For \eqref{eq:tr2} we can argue similarly, letting $K = d + (1+|\ln \Lambda|)|h(\delta)|$, with $d = D_{W_\delta}(M)$. We need to show that
\[ \label{eq:N11} N_{2d}(W_\delta) \subset N_{CK}(\Lambda T_\delta) \]
for sufficiently large $C$
\begin{itemize} 
\item When $K\leq \epsilon |\delta|$ for sufficiently small $\epsilon$, then $N_{2d}(W_\delta)$ is contained between the graphs of
  \[ \pm \min\{ (\beta |\delta| + 2K) r^{-2}, 2K r^{-2.1}\} \]
  over $W_\delta$. By Proposition~\ref{prop:Tdelta1} and Remark~\ref{rem:Wcone} we have that $\Lambda T_\delta$ is the graph of a function $f$ over $V_\delta$, with $\Vert f\Vert_{C^{2,\alpha}_{1,-2}} \lesssim (1 + |\ln \Lambda|)|h(\delta)|$. Therefore $N_{2d}(W_\delta)$ is contained between the graphs of
  \[ \pm \min \{ (\beta |\delta| + C'K)r^{-2}, C'Kr^{-2.1}\} \]
over $\Lambda T_\delta$ for some $C'$. If $\epsilon$ is sufficiently small, this implies that $N_{2d}(W_\delta) \subset N_{C'K}(\Lambda T_\delta)$.  
\item When $K \geq \epsilon^{-1} |\delta|$ for small $\epsilon$, then $N_{CK}(\Lambda T_\delta)$ is the region between $\pm (\beta^{-2} CK)^{1/3}  H$. We also have $N_{2d}(W_\delta) \subset N_{2d+|\delta|}(W_\delta)$ and so $N_{2d}(W_\delta) \subset N_{3K}(W_\delta)$. The latter neighborhood in turn is the region between $\pm (\beta^{-2}3 K)^{1/3} H$. Therefore $N_{2d}(W_\delta) \subset N_{C'K}(\Lambda T_\delta)$ for $C' > 3$. 
 \item $\epsilon |\delta| \leq K \leq \epsilon^{-1} |\delta|$, then we can argue as above, setting $C=C'+\epsilon^{-2}$ for the $C'$ that works in the two previous cases. 
\end{itemize}
\end{proof}

For a surface $M$ defined in the annulus $B_1\setminus B_{\rho_0}$ let us write
\[ \mathcal{A}(M) = \mathrm{Area}(M \cap B_{1/2}) - \mathrm{Area}( (C\times \mathbf{R}) \cap B_{1/2}) \]
for the area excess. By monotonicity if $M\in\mathcal{M}$ has the same density as $C\times\mathbf{R}$ at the origin, then $\mathcal{A}(M) \geq 0$ and also $\mathcal{A}(LM) \leq \mathcal{A}(M)$ for any $L > 1$. Given $B > 0$ we define
\[ \label{eq:EBdefn} E_B(M) = \inf\{ D_{W}(M) + D_{W}(L^BM)\,:\, W\in \mathcal{W}\}. \]
The purpose of this will be to compare $M$ to $L^BM$ using that the $W$ are cones. Note that it is not clear how to define a distance similar to $D$ between $M$ and $L^BM$ directly, since we do not have precise enough information about $M$ near the singular set of the tangent cone. However $E_B(M)$ can be used to control other weaker types of distances between $M$ and $L^BM$, such as the flat distance $d_{\mathcal{F}}(M, L^BM)$ over the annulus $B_1\setminus B_{\rho_0}$. 
\begin{lemma}\label{lem:DFbound}
  Suppose that $M\in \mathcal{M}$ is a minimal surface in the ball $B_2$, with $\mathrm{Area}(M) < 2\mathrm{Area}(B_2\cap (C\times\mathbf{R}))$. There is a constant $C > 0$ independent of $M$  such that over the annulus $B_1\setminus B_{\rho_0}$ we have $d_{\mathcal{F}}(M, L^BM)  \leq C E_B(M)$. 
\end{lemma}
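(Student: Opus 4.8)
The plan is to prove the bound $d_{\mathcal{F}}(M, L^B M) \le C E_B(M)$ over the annulus $B_1 \setminus B_{\rho_0}$ by unwinding the definition of $E_B(M)$ and exploiting that the comparison surfaces $W \in \mathcal{W}$ are \emph{cones}, so that a single scaling identifies $W$ with $L^B W$.

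\textbf{Setup.} Fix $\eta > 0$ and choose $W = R W_\delta \in \mathcal{W}$ nearly realizing the infimum, so that $D_W(M) + D_W(L^B M) \le E_B(M) + \eta$. Write $d_1 = D_W(M)$ and $d_2 = D_W(L^B M)$. Since $W_\delta$ is the cone $C(\Sigma_\delta)$ with the $-h(\delta)\xi_\delta$ graph attached, $W_\delta$ is scale-invariant, hence $L^B W = W$ as subsets of the annulus. Thus $M$ and $L^B M$ both lie in a controlled neighborhood of the \emph{same} cone $W$: by Definition~\ref{defn:D}, $M \cap (B_1 \setminus B_{\rho_0}) \subset N_{2d_1}(W)$ and $L^B M \cap (B_1\setminus B_{\rho_0}) \subset N_{2d_2}(W)$, after rotating by $R$.

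\textbf{Main step: from $D_W$-closeness to flat-distance control.} The goal is to estimate the flat norm of the current $M - L^B M$ on $B_1 \setminus B_{\rho_0}$ by $C(d_1 + d_2)$. I would split into two regions according to the distance $r$ from the singular ray. On the region $\{r > c\}$ for a fixed small $c$, Lemma~\ref{lem:smalldgraph} gives that both $M$ and $L^B M$ are graphs over $W$ of functions bounded by $C d_i r^{-2.1}$; the difference of two graphs over a common smooth surface has flat distance bounded by the $L^1$ norm of the difference of the graphing functions (via the foliation/coarea argument: $M - L^B M = \partial(\text{region between the graphs})$, and the area of that region is $\lesssim \int |u_M - u_{L^B M}|$), which is $\le C(d_1 + d_2)$. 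On the region $\{r \le c\}$ near the singular ray, we cannot use graphs, but we do know that both $M$ and $L^B M$ are trapped inside $N_{2\max(d_1,d_2)}(W)$, and by Lemma~\ref{lem:Ndprop} this neighborhood shrinks to $W$ as the parameter goes to zero; more importantly its \emph{volume} inside $\{r \le c\}$ is $O(d_1 + d_2 + |\delta|)$-small, or can be made as small as we like by shrinking $c$. Here I would use the mass bound $\mathrm{Area}(M) < 2\,\mathrm{Area}(B_2 \cap (C\times\mathbf{R}))$ together with monotonicity to control $\mathrm{Area}(M \cap \{r < c\})$ and likewise for $L^B M$: the point is that $M - L^B M$ restricted to $\{r < c\}$ has mass $\le \mathrm{Area}(M\cap\{r<c\}) + \mathrm{Area}(L^B M \cap \{r<c\})$, and both terms are small in $c$ independently of everything else, so this region contributes a term that can be absorbed. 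Actually the cleaner route: since both currents lie in the thin set $N = N_{2\max(d_1,d_2)}(W)$, which is a region of small volume around the $7$-dimensional cone $W$, one writes $M - L^B M = \partial Q$ where $Q$ is supported in $N$ together with a cone-construction over the boundary slices, and $\mathbf{M}(Q) \le \mathrm{vol}(N \cap (B_1\setminus B_{\rho_0})) + (\text{boundary term}) \le C(d_1 + d_2)$, using that on each $y$-slice $N$ is the region between two graphs over $\delta^{1/3}yH$ whose separation is $O(d_i r^{-2})$ (case (b) of Definition~\ref{defn:nhood}) or between $\pm(\beta^{-2}d)^{1/3}H$ (case (a)), and in either case the enclosed $8$-dimensional volume integrates to $O(d_1 + d_2)$ since $r^{-2}$ is integrable against $r^6\,dr$.

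\textbf{Conclusion.} Combining the two regions, $d_{\mathcal{F}}(M, L^B M; B_1 \setminus B_{\rho_0}) \le C(d_1 + d_2) \le C(E_B(M) + \eta)$. Letting $\eta \to 0$ gives the claim. The constant $C$ depends only on $\beta$, the fixed annulus, and the uniform geometry bounds for the surfaces in $\mathcal{W}$ (regularity scale comparable to $r$, and the $O(|\delta|r^{-2+\kappa})$ graph description from Proposition~\ref{prop:Sdelta}(ii) and Remark~\ref{rem:Wcone}), not on $M$ or on $\delta$.

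\textbf{Main obstacle.} The delicate point is handling the region near the singular ray $\{0\}\times\mathbf{R}$, where $M$ and $L^B M$ are not graphs: one must produce a filling $Q$ with $M - L^B M = \partial Q$ of controlled mass purely from the trapping $M, L^B M \subset N_{2d}(W)$, without any regularity of $M$ there. The standard way is a cut-and-fill / pushforward argument: foliate the neighborhood $N$ by the leaves $tH$ (scaled Hardt–Simon leaves) in each $y$-slice, and use the nearest-point retraction onto $W$ within $N$ to build the homotopy; the mass of the homotopy image is bounded by (Lipschitz constant of retraction) $\times$ (mass of $M$, $L^B M$ in $N$) $\times$ (diameter of $N$ transverse to $W$), all of which are controlled. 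The Lipschitz constant of the retraction degenerates as $r \to 0$, but only like a power of $r/\rho$, and this is exactly compensated by the extra vanishing of the transverse width ($\sim d\, r^{-2}$ against the measure $r^{m-1}\,dr$ with $m = 8$), so the integral converges; making this rigorous while keeping all constants independent of $\delta$ is where the bulk of the work lies, and it is why the hypothesis $\mathrm{Area}(M) < 2\,\mathrm{Area}(B_2 \cap (C\times\mathbf{R}))$ (giving a uniform mass bound via monotonicity) is needed.
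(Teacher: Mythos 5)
Your proposal is correct in spirit and uses the same core idea as the paper: the neighborhood $N_d(W)$ of the cone $W$ is thin, with separation $\lesssim d\,r^{-2.1}$ (or $\lesssim d^{1/3}$ transversally near the ray), so the region it encloses in the annulus has volume $O(d)$. However, the paper's organization is cleaner and avoids the heavier filling machinery you describe: instead of constructing a homotopy/retraction directly between $M$ and $L^B M$, it simply bounds $d_{\mathcal{F}}(W_\delta, M) \leq C\, D_{W_\delta}(M)$ for a single surface $M$, applies the same bound to $L^B M$, and uses the triangle inequality $d_{\mathcal{F}}(M, L^B M) \leq d_{\mathcal{F}}(W_\delta, M) + d_{\mathcal{F}}(W_\delta, L^B M)$. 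For the single-surface bound the filling is trivial: outside a $d$-dependent cutoff $r > C_1 d^{1/3}$ (not a fixed $c$!), $M$ is a graph of $u$ over $W_\delta$ with $|u| \leq C_1 d\, r^{-2.1}$, and the region between the graphs has mass $\int_{C_1 d^{1/3}}^1 C_1 d\, s^{-2.1} C_2 s^6\, ds \leq Cd$; inside the cutoff both $M$ and $W_\delta$ have mass $O\bigl((d^{1/3})^7\bigr) = O(d^{7/3})$ by the density bound and a covering argument along the singular ray, so the currents can simply be discarded there. One point to flag in your write-up: the first version of your ``main step,'' splitting at a \emph{fixed} small $c$, does not work — the mass of $M\cap\{r<c\}$ is $O(c^7)$, which cannot be absorbed into $C\,E_B(M)$ since $E_B(M)$ can be arbitrarily small; the cutoff must shrink with $d$, as in your ``cleaner route'' and in the paper.
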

\begin{proof}
  Suppose that $W \in \mathcal{W}$. Up to a rotation we can assume that $W = W_\delta$ for some $|\delta| < \delta_0$. We will show that for a uniform constant $C$ we have $d_{\mathcal{F}}(W_\delta, M) \leq C D_{W_\delta}(M)$ over the annulus $B_1\setminus B_{\rho_0}$.  The required result will follow easily from this and the corresponding bound for $L^BM$, since then
  \[ d_{\mathcal{F}}(M, L^BM) &\leq d_{\mathcal{F}}(W_\delta, M) + d_{\mathcal{F}}(W_\delta, L^BM) \\
    &\leq C(D_{W}(M) + D_{W}(L^BM)), \]
  and we can minimize over all $W\in\mathcal{V}$.

  Let us write $d = D_{W_\delta}(M)$. Then $M\subset N_{2d}(W_\delta)$, and from the definition of the distance it is not hard to see that for some $C_1 > 0$ (independent of $\delta$), on the set where $r > C_1d^{1/3}$, the surface $M$ is a graph over $W_\delta$ of a function $u$ such that
  \[ |u| \leq \min\{ (\beta |\delta| + C_1d) r^{-2}, C_1d r^{-2.1}\} \leq C_1 d r^{-2.1}. \]
Since we have a bound for the densities of $W_\delta, M$ at each point we also know that, for any $s > 0$, on the region where $s/2 < r < s$ the areas of $W_\delta, M$ are both bounded by $C_2 s^7$ for some $C_2 > 0$. Using this we have
  \[ d_{\mathcal{F}}(W_\delta, M) &\leq  C(d^{1/3})^7 + \int_{C_1d^{1/3}}^1 C_1ds^{-2.1} C_2s^6\,ds  \\
    &\leq Cd, \]
  as required. 
\end{proof}

Recall that the distance between $W_\delta$ and $T_\delta$ is of order $|h(\delta)|$. When the distance from $M$ to $T_\delta$ is much smaller than this, then we can obtain more information by comparing $M$ to the minimal surface $T_\delta$ rather than to the (non-minimal) cone $W_\delta$. 
The following result shows that in this case we can control the change in area excess of $M$ in terms of the corresponding change for $T_\delta$. 
\begin{prop}\label{prop:areadrop}
  Suppose that $M\in \mathcal{M}$ is area minimizing in $B_1$. There are $C, \epsilon, \delta_0 > 0$ with the following property. Suppose that $D_{T_\delta}(M) < \epsilon |h(\delta)|$, where $|\delta| < \delta_0$. Then
  \[ \label{eq:Adrop1} \mathcal{A}(M) - \mathcal{A}(2M) > C^{-1} |h(\delta)|^2. \]
  It follows that for sufficiently small $\theta > 0$ we have
  \[ \label{eq:Adrop2} \mathcal{A}(M)^\theta - \mathcal{A}(2M)^\theta > C^{-1} |h(\delta)|. \]
\end{prop}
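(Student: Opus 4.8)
\textbf{Proof strategy for Proposition~\ref{prop:areadrop}.}
The plan is to compare the area excess of $M$ to that of the minimal surface $T_\delta$, exploiting that $M$ lies extremely close to $T_\delta$ (closer than $|h(\delta)|$) and that $M$ is area-minimizing. First I would observe that the estimate \eqref{eq:Adrop2} is an elementary consequence of \eqref{eq:Adrop1}: since $\mathcal A(M)\ge \mathcal A(2M)\ge 0$ and both are bounded (say by a constant $\Gamma$ depending only on the density), the elementary inequality $a^\theta-b^\theta\ge \theta\,\Gamma^{\theta-1}(a-b)$ for $0\le b\le a\le\Gamma$ and $\theta\in(0,1]$ turns \eqref{eq:Adrop1} into \eqref{eq:Adrop2} after shrinking $C$. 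So the real content is \eqref{eq:Adrop1}.

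For \eqref{eq:Adrop1}, the key idea is the monotonicity formula: for an area-minimizing $M$ of the right density,
\[ \mathcal A(M) - \mathcal A(2M) \;=\; c\int_{M\cap (B_{1/2}\setminus B_{1/4})} \frac{|z^\perp|^2}{|z|^{n+2}}\,d\mathcal H^n \;+\;(\text{boundary terms}), \]
where $z^\perp$ is the normal component of the position vector; more precisely one integrates the monotonicity identity between the radii $1/4$ and $1/2$ against a suitable weight, and the localized version gives a lower bound of the displayed integral type (up to harmless constants and lower-order terms in the annulus). I would then argue that because $D_{T_\delta}(M)<\epsilon|h(\delta)|$, the surface $M$ is, on the region $r>C|h(\delta)|^{1/3}$ say, a graph over $T_\delta$ of a function $u$ with $|u|\lesssim \epsilon|h(\delta)| r^{-2.1}$ (by Lemma~\ref{lem:smalldgraph}), and so $z^\perp$ for $M$ differs from $z^\perp$ for $T_\delta$ by an error of size $O(\epsilon|h(\delta)|)$ pointwise on that region, with the region $r\le C|h(\delta)|^{1/3}$ contributing a negligible amount to the integral because its volume is tiny (of order $|h(\delta)|^{7/3}$) while the integrand is controlled. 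Hence
\[ \int_{M\cap(B_{1/2}\setminus B_{1/4})}\frac{|z^\perp|^2}{|z|^{10}} \;\ge\; \tfrac12\int_{T_\delta\cap(B_{1/2}\setminus B_{1/4})}\frac{|z^\perp|^2}{|z|^{10}} - C\epsilon^2|h(\delta)|^2 \;\ge\; \tfrac12 C^{-1}|h(\delta)|^2 - C\epsilon^2|h(\delta)|^2, \]
using Proposition~\ref{prop:Tm} for the lower bound on the $T_\delta$-integral. Choosing $\epsilon$ small makes this $\ge C^{-1}|h(\delta)|^2$ after adjusting constants, which combined with the monotonicity identity gives \eqref{eq:Adrop1}.

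\textbf{Main obstacle.} The delicate point is the passage from the monotonicity formula to the clean lower bound $\mathcal A(M)-\mathcal A(2M)\gtrsim \int |z^\perp|^2|z|^{-10}$, because strictly speaking the monotonicity identity relates the area ratio at two radii to the integral of $|z^\perp|^2/|z|^{n+2}$ over the \emph{solid} region $B_{1/2}\setminus B_{1/4}$, and one must be careful that $M$ has the correct density at the origin (so that $\mathcal A(M)\ge 0$ and the area ratios are monotone) and that the identity holds without extra boundary contributions — this uses that $M$ is a genuine area-minimizing current with $0\in M$ having density equal to that of $C\times\mathbf R$, a hypothesis that enters precisely here (and is the only place the minimizing property is used, as remarked in the introduction). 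A secondary technical nuisance is controlling the transition region near the singular ray $r\sim|h(\delta)|^{1/3}$ when comparing the $M$- and $T_\delta$-integrals; this is handled by the crude volume bound on that region together with the graphical estimate from Lemma~\ref{lem:smalldgraph}, since the integrand $|z^\perp|^2|z|^{-10}$ is at worst of size $O(r^{-10}\cdot r^{2})\cdot(\text{bounded})$ there and the region has $7$-volume $O(|h(\delta)|^{7/3})$, so its total contribution is a genuinely lower-order power of $|h(\delta)|$.
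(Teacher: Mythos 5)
Your argument for \eqref{eq:Adrop1} is essentially the paper's: monotonicity identity plus Proposition~\ref{prop:Tm} plus the graphical comparison of the $M$- and $T_\delta$-integrals via Lemma~\ref{lem:grapharea} and Lemma~\ref{lem:smalldgraph}, with a soft volume bound near $r\sim|h(\delta)|^{1/3}$. That part is fine.

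However, the claim that \eqref{eq:Adrop2} is "an elementary consequence" of \eqref{eq:Adrop1} is a genuine gap. Your concavity inequality $a^\theta-b^\theta\ge\theta\,\Gamma^{\theta-1}(a-b)$ with $\Gamma$ a \emph{fixed} upper bound only upgrades the lower bound $\mathcal A(M)-\mathcal A(2M)\gtrsim|h(\delta)|^2$ to $\mathcal A(M)^\theta-\mathcal A(2M)^\theta\gtrsim|h(\delta)|^2$, which is much weaker than the asserted $\gtrsim|h(\delta)|$ since $|h(\delta)|\to 0$. The passage from a quadratic to a linear lower bound cannot come for free; it requires the fact that $\mathcal A(M)$ itself is \emph{small}, specifically $\mathcal A(M)\le C|\delta|\,|h(\delta)|$, so that $\mathcal A(M)^{\theta-1}$ (with $\theta-1<0$) is \emph{large} and the mean-value-theorem bound $\mathcal A(M)^\theta-\mathcal A(2M)^\theta\ge\theta\,\mathcal A(M)^{\theta-1}(\mathcal A(M)-\mathcal A(2M))$ produces the gain. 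The paper proves this upper bound on $\mathcal A(M)$ by comparing $M\cap B_{1/2}$ with the cone over $M\cap\partial B_{1/2}$ (using the area-minimizing hypothesis) and then estimating that cross-section's area via the graphical structure over $\tfrac12\Sigma_\delta$ together with the area bound \eqref{eq:ASbound} on the $\Sigma_\delta$. This step is entirely missing from your argument.

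Relatedly, you place the area-minimizing hypothesis in the wrong step. The monotonicity identity used to equate $\mathcal A(M)-\mathcal A(2M)$ with $\int_{M\cap(B_{1/2}\setminus B_{1/4})}|z^\perp|^2|z|^{-10}$ holds for stationary varifolds; the minimizing property is used precisely and only in the cone-comparison that yields $\mathcal A(M)\le C|\delta|\,|h(\delta)|$, i.e.\ in the very step you omitted.
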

\begin{proof}
  Using the monotonicity formula (see \cite{SimonGMT}), we have
  \[ \mathcal{A}(M) - \mathcal{A}(2M) = \int_{M\cap (B_{1/2}\setminus B_{1/4})} \frac{ |z^\perp|^2}{|z|^{10}}. \]
  From Proposition~\ref{prop:Tm} we have an estimate for the corresponding integral over $T_\delta$:
  \[ \label{eq:Tm1} \int_{T_\delta \cap (B_{1/2}\setminus B_{1/4})} \frac{ |z^\perp|^2}{|z|^{10}} > C^{-1} |h(\delta)|^2. \]
  To compare the two integrals, let us work on $T_\delta$, and write $z_M, n_M, dA_M$ (resp. $z_T, n_T, dA_T$) for the position vector, normal vector and area form of $M$ (resp. $T_\delta$). 

 Since $|h(\delta)| < |\delta|^{a'}$ with $a' > 1$, for sufficiently small $\delta$ we have $D_{T_\delta}(M) < \epsilon |h(\delta)| < \beta |\delta|$. It follows that  $M$ is the graph of a function $f$ over $T_\delta$ on the annulus $B_1\setminus B_{\rho_0}$ which satisfies
 \[ \label{eq:fb20} |f| \leq \min\{ (\beta|\delta| + \epsilon |h(\delta)|) r^{-2}, \epsilon |h(\delta)| r^{-2.1} \} \leq \epsilon |h(\delta)|r^{-2.1}. \]
 Since the regularity scale of $T_\delta$ at each point in the annulus is of order $r$, and $M$ is minimal, we deduce that on a smaller annulus we have $|\nabla f| < Cr^{-3.1}\epsilon |h(\delta)|$. We have $z_M = z_T + f n_T$, and using Lemma~\ref{lem:grapharea} we have
 \[ n_M = n_T - \nabla f + O( \epsilon^2 |h(\delta)|^2 r^{-6.2}). \]
 It follows that
 \[ z_M \cdot n_M &= z_T \cdot n_T + f - z_T\cdot\nabla f + O( \epsilon^2 |h(\delta)|^2 r^{-6.2}) \\
   &= z_T \cdot n_T + O( \epsilon |h(\delta)| r^{-3.2}), \]
 where we used that $|h(\delta)| r^{-3} \leq C$ on $T_\delta$. From \eqref{eq:zTperp} we also have  $z_T\cdot n_T = O(|h(\delta)| r^{-2})$, and so the previous formula implies
 \[ z_M\cdot n_M = O( |h(\delta)| r^{-3.2}). \]
 It follows that
 \[ |z_M^\perp|^2 - |z_T^\perp|^2 = O( \epsilon |h(\delta)|^2 r^{-6.4}). \]
 We also have $|z_M|^{-10} - |z_T|^{-10} = O(\epsilon |h(\delta)| r^{-2.1})$ and $\frac{dA_M}{dA_T} - 1 = O(\epsilon |h(\delta)| r^{-2.1})$. Combining these estimates we have
 \[\int_{M\cap (B_{1/2}\setminus B_{1/4})} \frac{ |z^\perp|^2}{|z|^{10}} -
   \int_{T_\delta \cap (B_{1/2}\setminus B_{1/4})} \frac{ |z^\perp|^2}{|z|^{10}} = O(\epsilon |h(\delta)|^2). \]
Together with \eqref{eq:Tm1}, we obtain \eqref{eq:Adrop1} once $\epsilon$ is sufficiently small.

To see Inequality~\eqref{eq:Adrop2} we first use that $M$ is area minimizing to bound $\mathrm{Area}(M\cap B_{1/2})$ in terms of the cone over $M\cap \partial B_{1/2}$:
\[ \mathrm{Area}(M\cap B_{1/2}) \leq \frac{1}{16} \mathrm{Area}(M\cap \partial B_{1/2}). \]
To estimate the area of $M\cap \partial B_{1/2}$ note that, by the construction of $T_\delta$ together with \eqref{eq:fb20}, this cross section is the graph of a function $F$ over $\frac{1}{2}\Sigma_\delta$, satisfying
\[ |r^{-1}F| + |\nabla F| \leq C |h(\delta)| r^{-3.1}. \]
Using Lemma~\ref{lem:grapharea} and \eqref{eq:ASbound} we then have
\[ \mathrm{Area}(M\cap \partial B_{1/2}) &\leq \mathrm{Area}((C\times\mathbf{R})\cap \partial B_{1/2}) + C|\delta||h(\delta)| + O(|h(\delta)|^2). \]
It follows that
\[ \mathrm{Area}(M\cap B_{1/2}) \leq \mathrm{Area}((C\times\mathbf{R}) \cap B_{1/2}) + C|\delta| |h(\delta)|, \]
and so
\[ \mathcal{A}(2M) \leq \mathcal{A}(M) \leq C|\delta| |h(\delta)|. \]
It now follows from the mean value theorem that for $\theta\in (0,1)$
  \[ \mathcal{A}(M)^\theta - \mathcal{A}(2M)^\theta &\geq \theta \mathcal{A}(M)^{\theta-1}(\mathcal{A}(M) - \mathcal{A}(2M)) \\
    &\geq c_\theta |\delta|^{\theta - 1} |h(\delta)|^{\theta-1} |h(\delta)|^2 \\
    &\geq c_\theta |\delta|^{\theta-1}|\delta|^{\alpha_2\theta} |h(\delta)|,  \]
for a $c_\theta > 0$ depending on $\theta$. The required result 
follows since $\theta-1 + \alpha_2 \theta  < 0$ for sufficiently small $\theta > 0$.
\end{proof}

We will need the following extension lemma.
\begin{lemma}\label{lem:extension}
Given any $\epsilon, K > 0$ we have $c  > 0$ with the following property. Suppose that $M\in\mathcal{M}$ is a minimal surface in $B_1$ with density equal to that of $C\times \mathbf{R}$ at the origin, and such that $D_{\Lambda T_\delta}(M) < c$ with $|\delta| < c, |\ln \Lambda| < K$. Then $D_{K\Lambda T_\delta}(KM) < \epsilon$. 
\end{lemma}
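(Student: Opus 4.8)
Looking at this statement, Lemma~\ref{lem:extension} is a compactness/continuity statement: it says that if $M$ is uniformly close to $\Lambda T_\delta$ on the unit-scale annulus (with $|\delta|$ small and $|\ln\Lambda|$ bounded by $K$), then on a $K$-times larger scale $M$ is still within a prescribed $\epsilon$ of the (rescaled) reference surface $K\Lambda T_\delta$. I will prove it by contradiction using the standard blow-up/limiting machinery that has already been set up.

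\medskip

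\textbf{Proof plan.} The plan is to argue by contradiction. Fix $\epsilon, K > 0$ and suppose no such $c$ exists. Then there is a sequence $M_i \in \mathcal{M}$ with density equal to that of $C\times\mathbf{R}$ at the origin, together with $\delta_i \to 0$ and $\Lambda_i$ with $|\ln\Lambda_i| < K$, such that $d_i := D_{\Lambda_i T_{\delta_i}}(M_i) \to 0$ but $D_{K\Lambda_i T_{\delta_i}}(K M_i) \geq \epsilon$ for all $i$. Passing to a subsequence we may assume $\Lambda_i \to \Lambda_\infty$ with $|\ln\Lambda_\infty| \leq K$. First I would observe that since $\delta_i \to 0$, the surfaces $\Lambda_i T_{\delta_i}$ converge (smoothly away from the singular ray, and as currents) to the cone $C\times\mathbf{R}$ on any fixed annulus, because by Proposition~\ref{prop:Tdelta1} $\Lambda_i T_{\delta_i}$ is a graph over $\delta_i^{1/3}H$-type surfaces with error $O(|\delta_i| r^{-2+\kappa})$, and $\delta_i^{1/3}H \to C$. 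Since $D_{\Lambda_i T_{\delta_i}}(M_i) = d_i \to 0$, the $M_i$ also converge to $C\times\mathbf{R}$ as currents on $B_1\setminus B_{\rho_0}$, and by Allard's theorem (as in Lemma~\ref{lem:smalldgraph}) $M_i$ is, for large $i$, a graph of a function $u_i$ over $\Lambda_i T_{\delta_i}$ on $\{r > r_i\}$ for some $r_i \to 0$, with $\sup_{\{r>r_i\}}|r^{2.1} u_i| \leq C d_i \to 0$.

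\medskip

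\textbf{Key steps.} The main point is then to propagate this closeness to the larger scale $B_K \setminus B_{K\rho_0}$. Since $K M_i$ is minimal in $B_K$ (using that $M_i$ is minimal in $B_1$ — here I would note the statement assumes $M\in\mathcal{M}$ is minimal in $B_1$, so $KM$ is minimal in $B_K$, and we work in a fixed annulus inside $B_K$), and $K\Lambda_i T_{\delta_i}$ is defined on this annulus for $i$ large (since $|\ln(K\Lambda_i)| \leq |\ln K| + K$ is still $\ll |\delta_i|^{-\kappa}$ eventually, so $K\Lambda_i T_{\delta_i}$ is defined there for small $\delta_i$), I would apply the non-concentration result, Proposition~\ref{prop:nonconcentration}, on the annuli $B_K\setminus B_{K\rho_0/2}$ and a slightly smaller one. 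Actually the cleanest route is: on the larger annulus $B_K\setminus B_{K\rho_0}$ the currents $KM_i$ and $K\Lambda_i T_{\delta_i}$ both converge to $C\times\mathbf{R}$, and $KM_i$ is a graph $u_i'$ over $K\Lambda_i T_{\delta_i}$ on $\{r > r_i'\}$ with $\sup_{\{r>r_i'\}}|r^2 u_i'| \to 0$ (this follows from unique continuation of the harmonic-type limit: the rescaled limits $d_i^{-1}u_i$ converge to a Jacobi field which vanishes identically because of the hypothesis $d_i \to 0$ forcing the limit surface to be exactly $C\times\mathbf{R}$ — more simply, $u_i' \to 0$ locally uniformly away from the singular ray because both $KM_i$ and $K\Lambda_i T_{\delta_i}$ tend to the same cone). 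Then Proposition~\ref{prop:nonconcentration} applied with any fixed $\gamma$ gives
\[ D_{K\Lambda_i T_{\delta_i}}(K M_i; B_{4K/5}\setminus B_{3K/5}) \leq C\sup_{\{r > r_0\}}|r^2 u_i'| + C\gamma D_{K\Lambda_i T_{\delta_i}}(KM_i; B_K\setminus B_{K\rho_0}). \]
The subtlety is that the right-hand side contains the distance on the larger annulus itself, which I must first bound. For that I would first run the argument of Step 1 in the proof of Proposition~\ref{prop:D3annulus}: using that the $\lambda$-maximal distance $D_{\lambda K\Lambda_i T_{\delta_i}}(\lambda K M_i)$ over $\lambda \in [1, \text{const}]$ cannot blow up relative to $d_i$ (again by non-concentration and the vanishing limit), one gets $D_{K\Lambda_i T_{\delta_i}}(KM_i) \leq C d_i \to 0$ directly. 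This already contradicts $D_{K\Lambda_i T_{\delta_i}}(K M_i)\geq \epsilon$.

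\medskip

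\textbf{Main obstacle.} The genuine difficulty is exactly the one handled by Step 1 of Proposition~\ref{prop:D3annulus}: a priori, even though $M_i$ is $d_i$-close to $\Lambda_i T_{\delta_i}$ on $B_1\setminus B_{\rho_0}$, its distance on the larger annulus $B_K \setminus B_{K\rho_0}$ (or on intermediate scalings $\lambda$) could conceivably be much larger than $d_i$ — the surface could ``drift away'' from $T_{\delta_i}$ as one zooms out. Ruling this out requires the non-concentration machinery together with the observation that the limiting Jacobi field over $C\times\mathbf{R}$ (obtained by rescaling $u_i$ by the maximal distance) must vanish on the inner annulus where we have the $d_i$-closeness, and hence vanishes identically by the $L^2$ three-annulus lemma / unique continuation (Lemma~\ref{lem:L23annulus} and Lemma~\ref{lem:L2Linfty}). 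Since everything else — defining the neighborhoods $N_{\beta,d}$, comparing $M$ as a graph, Allard regularity — is routine given the earlier results, I expect this drift-control step to be where essentially all the content lies, and I would simply cite the corresponding argument from the proof of Proposition~\ref{prop:D3annulus} rather than repeat it.
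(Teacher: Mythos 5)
The paper's own proof of this lemma is a short, soft compactness argument that hinges entirely on the \emph{density hypothesis} at the origin, which your proposal sets up but never actually uses. The paper argues: take a contradicting sequence $M_i$ with $d_i := D_{\Lambda_i T_{\delta_i}}(M_i)\to 0$, $\delta_i\to 0$, $|\ln\Lambda_i|<K$, and extract a subsequential limit $M_\infty$ in $B_1$. Because each $M_i$ has density at $0$ exactly equal to that of $C\times\mathbf{R}$ while its density ratio at scale $1$ tends to that of the cone (from closeness on $B_1\setminus B_{\rho_0}$), the monotonicity formula forces the limit $M_\infty$ to be a \emph{cone}; since it agrees with $C\times\mathbf{R}$ on the annulus, $M_\infty = C\times\mathbf{R}$. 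Then $KM_i\to C\times\mathbf{R}$ and $K\Lambda_i T_{\delta_i}\to C\times\mathbf{R}$ on $B_1\setminus B_{\rho_0}$, so $D_{K\Lambda_i T_{\delta_i}}(KM_i)\to 0$, contradiction. No non-concentration, no three-annulus machinery is invoked.

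Your proposal has a genuine gap precisely where you say the content lies. You want to import Step 1 of Proposition~\ref{prop:D3annulus} to rule out ``drift'' and conclude $D_{K\Lambda_i T_{\delta_i}}(KM_i)\leq Cd_i$. But Step 1 of that proof crucially uses \emph{two-sided} distance bounds: the hypotheses \eqref{eq:a1} give smallness of the distance on \emph{both} the outer annulus $\rho\in[\rho_0,1]$ and the inner annulus $\rho\in[L^{-2}\rho_0,L^{-2}]$, and this is used in two essential ways — (a) the rescaled limit $U$ is shown to vanish on both annuli (you only know vanishing on one), and (b) more importantly, to justify that the maximizing scale $\lambda_i$ does not sit at an end of the allowed range, which is what lets one enlarge the annulus slightly and apply Proposition~\ref{prop:nonconcentration}. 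In the extension lemma you have distance control on only one annulus, so the maximum could occur at the far end, the enlargement step breaks, and the argument does not close. Nor is the quantitative bound $D_{K\Lambda_i T_{\delta_i}}(KM_i)\leq Cd_i$ obviously true: by the three-annulus dichotomy, passing to the new scale can \emph{increase} the distance by roughly $L^{\alpha_2}$ per scale, and there is nothing in your setup to prevent that; the density hypothesis, which you do not exploit, is exactly what prevents the surface from behaving badly near the origin. The correct fix is to follow the paper: use the density pinch plus monotonicity to identify the limit as the cone $C\times\mathbf{R}$ outright, which makes the conclusion immediate without any rate information.
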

\begin{proof}
  We can argue by contradiction. Suppose that $M_i\in\mathcal{M}$ is a sequence of minimal surfaces with density equal to that of $C\times\mathbf{R}$ at the origin, and such that $D_{\Lambda_i T_{\delta_i}}(M_i) \to 0$ for a sequence $|\ln \Lambda_i| < K$ and $\delta_i\to 0$. After choosing a subsequence we can assume that the $M_i$ converge to a minimal cone $M_\infty$. Since $\Lambda_i T_{\delta_i} \to C\times\mathbf{R}$, we must have $M_\infty = C\times\mathbf{R}$. Since we also have $K\Lambda_i T_{\delta_i} \to C\times\mathbf{R}$ and $KM_i\to C\times\mathbf{R}$, it follows that $D_{K\Lambda T_{\delta_i}}(KM_i) < \epsilon$ for sufficiently large $i$. 
\end{proof}

We also have the following, characterizing minimal cones near $C\times\mathbf{R}$. In particular this result says that $C\times \mathbf{R}$ is not integrable, since there is no family of minimal cones near $C\times\mathbf{R}$ modeled on the Jacobi field $\phi$. 
\begin{prop}\label{lem:nearbycones}
  There is an $\epsilon > 0$ with the following property. Suppose that $M$ is a minimal cone in $\mathbf{R}^9$, such that on the unit ball the Hausdorff distance $d_H(M, C\times\mathbf{R}) < \epsilon$. Then $M$ is a rotation of $V_0 = C\times\mathbf{R}$. 
\end{prop}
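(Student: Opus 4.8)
The plan is to argue by contradiction and compactness. Suppose no such $\epsilon$ exists: then there is a sequence of minimal cones $M_i$ in $\mathbf{R}^9$ with $d_H(M_i, C\times\mathbf{R}) \to 0$ on the unit ball, but such that no $M_i$ is a rotation of $V_0$. By Allard's regularity theorem, away from the singular ray $\{0\}\times\mathbf{R}$ the cones $M_i$ converge smoothly to $C\times\mathbf{R}$, and for $i$ large we may write $M_i$, on the region $\{r>c\}$ of the unit sphere, as the graph of a function $u_i$ over $\Sigma_0$ with $u_i \to 0$ in $C^{2,\alpha}_{loc}$ away from the singular points. Because $M_i$ is a cone, $u_i$ extends to a homogeneous degree one function. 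The first task is to get a uniform bound: normalizing by $\lambda_i = \sup |r^{2.1} u_i|$ (taken on the link, or equivalently using the scale-invariant $L^2$-norm of Lemma~\ref{lem:L23annulus}), the rescaled functions $\lambda_i^{-1} u_i$ should be shown to be locally bounded near the singular points, which requires a non-concentration statement: the mass of $M_i$ near the singular ray cannot dominate. This is exactly the kind of estimate captured by Proposition~\ref{prop:nonconcentration} (with $\delta=0$, $V_0$ in place of $\Lambda T_\delta$), or can be obtained directly from the barrier surfaces of Proposition~\ref{prop:barrier1}, which trap $M_i$ between leaves $\pm(\lambda_i c)^{1/3} H \times \mathbf{R}$ near each singular point.

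Granting the uniform bound, a subsequence of $\lambda_i^{-1} u_i$ converges to a homogeneous degree one Jacobi field $u$ on $C\times\mathbf{R}$ with $r^{2.1}u$ locally bounded and $u \not\equiv 0$ (the normalization is preserved because the sup is attained away from the singular ray by the non-concentration estimate, so the limit inherits $\sup|r^{2.1}u|=1$, or at least is nonzero). By Lemma~\ref{lem:zetaJacobi} (with $a=0$), $u = f + \lambda\phi$ where $f$ corresponds to a rotation of $\mathbf{R}^9$ and $\lambda\in\mathbf{R}$, and $\phi = y^3 r^{-2}-y$. The crucial point is to rule out $\lambda \neq 0$: if $\lambda \neq 0$, then the cones $M_i$ are, to leading order, modeled on the Jacobi field $\phi$, i.e.\ $M_i$ is close to the graph of $\lambda_i\lambda\phi$ over $C\times\mathbf{R}$. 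But $\phi$ blows up like $r^{-2}$ near the singular points and is \emph{not} induced by a symmetry of $\mathbf{R}^9$, so $M_i$ cannot actually be a cone: near each singular point the minimal cone $M_i$ would have to have a tangent cone that is again $C\times\mathbf{R}$ (by the dimension/stratification of the singular set of an area-minimizing, or even stable, hypersurface, the singular set of $M_i$ has the right codimension), and the homogeneity of $M_i$ forces its cross-section $\Sigma_i$ to be a smoothing of $\Sigma_0$ that is \emph{exactly} minimal and modeled on $\delta_i\phi$ for some $\delta_i\to 0$. This contradicts the non-integrability established via Proposition~\ref{prop:Sdelta} and Corollary~\ref{cor:hexpand}: any such minimal cross-section would have to coincide with $\Sigma_{\delta_i}$ up to the kernel direction, but $m(\Sigma_{\delta_i}) = h(\delta_i)\zeta \neq 0$ for $\delta_i\neq 0$ small, so no exactly minimal smoothing modeled on $\delta_i\phi$ exists. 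Hence $\lambda = 0$, and $u = f$ corresponds to a rotation.

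Now that $u$ corresponds to an infinitesimal rotation, one applies a rotation $R_i \to \mathrm{Id}$ to $M_i$ so that, after replacing $M_i$ by $R_i^{-1}M_i$, the new discrepancy $u_i'$ has no degree one component (subtract off the best-fit rotation). The rescaled $u_i'$ then converge to a Jacobi field $u'$ on $C\times\mathbf{R}$ with $r^{2.1}u'$ locally bounded and \emph{no degree one component}. By the three annulus lemma Lemma~\ref{lem:L23annulus} (the last clause: with no degree one component, either (i) or (ii) must hold), the scale-invariant $L^2$-norm of $u'$ on successive annuli is either strictly growing or strictly decaying at a definite rate. Iterating inward (if decaying, iterate toward the vertex; if growing, iterate outward, using that $M_i$ is a cone so $u_i'$ is defined on all of $\mathbf{R}^9\setminus(\{0\}\times\mathbf{R})$ and homogeneous) forces $u'$ to vanish to infinite order at one end, hence $u' \equiv 0$ by unique continuation / homogeneity — contradicting the normalization $\sup|r^{2.1}u'|=1$ once we again invoke non-concentration to see the normalization survives in the limit. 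Alternatively, and more directly: since $u_i'$ is homogeneous degree one with no degree one component it must already be zero, so $M_i = R_i V_0$ for large $i$, contradicting the assumption that $M_i$ is not a rotation of $V_0$.

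\textbf{Main obstacle.} The delicate step is the non-concentration input needed to keep the normalization $\sup|r^{2.1}u_i|$ (or the $L^2$-norm) from escaping into the singular ray in the limit — i.e.\ ensuring the rescaled limit $u$ is genuinely nonzero and that $\lambda \neq 0$ leads to a contradiction. For a minimal \emph{cone} this is somewhat easier than in the general setting of Proposition~\ref{prop:nonconcentration}, since homogeneity reduces everything to the link $\Sigma_0$ and its cross-sectional smoothings, and the barrier surfaces of Proposition~\ref{prop:barrier1} (or simply the Hardt-Simon foliation leaves $\pm\epsilon H\times\mathbf{R}$) directly trap the cone near the singular ray. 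The conceptual heart is that a minimal cone close to $C\times\mathbf{R}$ has cross-section an \emph{exactly minimal} smoothing of $\Sigma_0$, and Proposition~\ref{prop:Sdelta} together with Corollary~\ref{cor:hexpand} says such exactly minimal smoothings modeled on $\phi$ do not exist for small $\delta\neq 0$ — this is the precise sense in which $C\times\mathbf{R}$ fails to be integrable, and it is what closes the argument.
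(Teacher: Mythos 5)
Your compactness framework is in the right spirit, and you correctly identify non-concentration and non-integrability as the crucial inputs. But the step where you invoke non-integrability --- to directly rule out $\lambda\neq 0$ in the limit Jacobi field $u = f + \lambda\phi$ --- has a genuine gap, and the paper's proof is organized precisely to avoid it. You claim that if $\lambda\neq 0$ then the cross-section $\Sigma_i$ of $M_i$ would be an exactly minimal smoothing of $\Sigma_0$ modeled on $\delta_i\phi$, which would ``have to coincide with $\Sigma_{\delta_i}$ up to the kernel direction,'' contradicting $h(\delta_i)\neq 0$. But Proposition~\ref{prop:Sdelta} does not assert uniqueness of near-minimal cross-sections: the uniqueness in Proposition~\ref{prop:Sigma1} is only among graphs over $\tilde\Sigma_\delta$ satisfying the auxiliary normalization $u=0$ on $\{r=r_0,\,y>0\}$ and a small weighted norm. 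Nothing a priori guarantees the cross-section of a nearby minimal cone has that graphical structure, or even that it is smooth away from exactly two singular points modeled on $C$ --- establishing such removable-singularity and regularity facts is exactly the kind of direct analysis near the singular set that the distance $D$ and the barrier constructions are designed to sidestep.

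The paper never tries to kill $\lambda$ directly. Instead it compares $M_i$ to the full family $\{RV_\delta\}$ of not-quite-minimal cones rather than just to $V_0$, and chooses a near-best-fit $R_iV_{\delta_i}$. Non-integrability enters in two places: the fact that $\inf_{R,\delta}D_{RV_\delta}(M_i)=0$ forces $M_i = RV_0$ (because $V_\delta$ with $\delta\neq 0$ is not minimal), and the bound $D_i > C^{-1}|h(\delta_i)|$ coming from $m(V_{\delta_i}) = h(\delta_i)\zeta\rho^{-2}$, which allows one to divide the graph equation by $D_i$ and pass to a bounded limit equation with source term $a\zeta\rho^{-1}$. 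The limit $U$ is then decomposed by Lemma~\ref{lem:zetaJacobi} as $U = f + \lambda\phi$, and \emph{both} pieces are absorbed by moving to a nearby reference $R_i'V_{\delta_i'}$ with $\delta_i' = \delta_i + D_i\lambda$; after this shift the renormalized difference tends to zero, and Proposition~\ref{prop:nonconcentration} applied on a sub-annulus contradicts the near-optimality of $D_i$, since for cones the distance on a sub-annulus determines the distance on the full annulus by scale invariance. So the linear obstruction $\phi$ is handled by enlarging the comparison family, not by proving non-existence of exact minimal smoothings.

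Two smaller points. Your ``alternative'' ending is incorrect as stated: each $u_i'$ solves a nonlinear graph equation over a cone and is not a Jacobi field, so ``has no degree one component'' does not apply to it; that property is a feature of the limit $u'$ only, and one must pass through the limit to get a contradiction. Also, when you say the normalization ``survives in the limit,'' that is exactly the content of the non-concentration estimate and needs to be invoked explicitly rather than asserted.
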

\begin{proof}
  We will argue by contradiction.
  Let $M_i$ be a sequence of minimal cones converging to $C\times\mathbf{R}$ on the unit ball, such that the $M_i$ are not rotations of $V_0$. For any $i$, if
  \[ \inf\{ D_{RV_\delta}(M_i)\,:\, R\text{ is a rotation and }|\delta|\leq \delta_0 \} = 0, \]
then we must have $M_i = RV_{\delta}$ for some rotation $R$ and $\delta$, but the $V_\delta$ are not minimal for $\delta\not=0$, so $M_i= RV_0$. Therefore we can assume that the infimum above is positive for all $i$.  We choose a sequence of rotations $R_i$ and $\delta_i\to 0$ such that
  \[ \label{eq:Didefn} D_i := D_{R_iV_{\delta_i}}(M_i) < 2\inf\{ D_{RV_\delta}(M_i)\}. \]
  We have $D_i\to 0$ and by applying rotations to the $M_i$ we can assume that $R_i=\mathrm{Id}$.  Note also that since the mean curvature of $V_{\delta_i}$ is $h(\delta_i)\zeta \rho^{-2}$, we must have $D_i > C^{-1} |h(\delta_i)|$ for a uniform constant $C$. We can write $M_i$ as the graph of $u_i$ over $V_{\delta_i}$ on the annulus $B_1\setminus B_{\rho_0}$, where $r > r_i$ for some $r_i\to 0$. Choosing a subsequence we can assume that $D_i^{-1}u_i \to U$ locally uniformly, where $|r^{2.1} U|\leq C$ and $U$ satisfies the equation
  \[ L_{C\times \mathbf{R}}U + a \zeta\rho^{-1} = 0 \]
  for a constant $a\in\mathbf{R}$. Indeed, away from the support of $\zeta \rho^{-1}$ both $V_{\delta_i}$ and $M_i$ are minimal, so in the limit $U$ is a Jacobi field. On the support of $\zeta \rho^{-1}$ the functions $u_i$ satisfy non-linear equations of the form
  \[ \label{eq:Ueq1}
    h(\delta_i)\zeta \rho^{-2} + L_{V_{\delta_i}} u_i + Q_i(u_i) =0, \]
  where $Q_i$ collects the higher order terms in the mean curvature operator for a graph over $V_{\delta_i}$. Dividing through by $D_i$ we have
  \[ D_i^{-1} h(\delta_i) \zeta \rho^{-2} + L_{V_{\delta_i}}(D_i^{-1}u_i) = O(D_i).\]
  Using that $|D_i^{-1} h(\delta_i) | < C$, we have uniform derivative bounds for $D_i^{-1}u_i$ and can pass to a limit $U$ along a subsequence, satisfying \eqref{eq:Ueq1}. 
In addition since the $M_i$ and the $V_{\delta_i}$ are cones, the function $U$ is homogeneous of degree one. 

 Using Lemma~\ref{lem:zetaJacobi} we can write $U = f + \lambda(y^3r^{-2}-y)$, where $f$ is the Jacobi field corresponding to a rotation and $\lambda\in \mathbf{R}$. Let $R_i'$ denote the rotation corresponding to the Jacobi field $D_if$, and let $\delta_i' = \delta_i + D_i\lambda$. We can also view the $M_i$ as graphs of functions $u_i'$ over $R_i'V_{\delta_i'}$, over sets $r > r_i'$. By Lemma~\ref{lem:triangle} we still have $D_{R_i'V_{\delta_i'}}(M_i) \leq CD_i$ and in addition by construction we have $D_i^{-1}u_i' \to 0$. We can apply the non-concentration estimate Proposition~\ref{prop:nonconcentration} since the $M_i$ are minimal near the singular set (note also that since $M_i$ is a cone the estimate is much simpler in this case). It follows that for any $\epsilon > 0$ we have $D_{R_i'V_{\delta_i'}}(M_i; B_{4/5}\setminus B_{3/5}) < \epsilon D_i$ for sufficiently large $i$. Since $M_i$ and $R_i'V_{\delta_i'}$ are cones, this contradicts the definition of $D_i$, i.e. Equation~\eqref{eq:Didefn}. 
\end{proof}

The main result leading to uniqueness of the tangent cone is the following.
\begin{prop}\label{prop:EBdecay}
  There are $\theta, C , B > 0$ with the following property. Let $M\in \mathcal{M}$ be a minimal surface in $B_1$, with density equal to that of the cone $C\times\mathbf{R}$ at the origin. If the Hausdorff distance from $M$ to $C\times \mathbf{R}$ on $B_1$ is sufficiently small, then one of the following holds for the quantity $E_B$ defined in \eqref{eq:EBdefn}:
  \begin{itemize}
  \item[(i)] $E_B(L^BM) \leq \frac{1}{2} E_B(M)$. 
  \item[(ii)] $E_B(L^BM) \leq C\Big(\mathcal{A}(L^BM)^\theta - \mathcal{A}(2L^BM)^\theta\Big)$.
  \end{itemize}
\end{prop}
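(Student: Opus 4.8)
The plan is to analyze a minimal surface $M$ that is very close to $C\times\mathbf{R}$ by comparing it, at the scale of $M$ and at the scale $L^BM$, to the best-fitting cone $W\in\mathcal{W}$, and then dichotomizing according to whether or not $M$ is very close to the associated comparison surface $T_\delta$ relative to $|h(\delta)|$. Concretely, for each scale pick $W=W_\delta$ (up to rotation) nearly achieving the infimum in $E_B(M)$, and observe that by the three annulus lemma for $D$, Proposition~\ref{prop:D3annulus}, applied on the scales between $M$ and $L^BM$, the surface $M$ relative to $T_\delta$ is modeled on a Jacobi field over $C\times\mathbf{R}$ with no degree one component; this is where Lemma~\ref{lem:zetaJacobi} and Proposition~\ref{prop:nonconcentration} enter, absorbing the effect of the singular ray. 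The three annulus lemma then says that either (a) the distance $D_{T_\delta}$ from $M$ to $T_\delta$ is \emph{decaying} as we pass from scale $1$ to scale $L^B$, or (b) it is \emph{growing}.

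In case (a), decay of $D_{T_\delta}$ over a fixed number $B$ of scales, combined with the triangle-type inequality Lemma~\ref{lem:triangle} relating $D_{T_\delta}$ to $D_{W_\delta}$ (with the error term $(1+|\ln\Lambda|)|h(\delta)|$ controlled since $B$ is a fixed constant), gives that $D_{W'}(L^BM)+D_{W'}(L^{2B}M)$ for a suitably updated cone $W'\in\mathcal{W}$ is at most $\tfrac12$ of $D_{W}(M)+D_W(L^BM)$, provided $B$ was chosen large enough to beat the fixed constants; taking infima over $\mathcal{W}$ yields conclusion (i). The updating of the cone — replacing $W_\delta$ by the $W_{\delta'}$ or $RW_{\delta'}$ that tracks the drift of the best-fit parameter as we change scale — is handled exactly by \eqref{eq:tr1} together with the fact, from Lemma~\ref{lem:nearbycones} and the construction of the $V_\delta$, that the only relevant degree-one deformations are rotations and the $\phi$-direction, the latter being absorbed into the parameter $\delta$.

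In case (b), growth of $D_{T_\delta}$ means that at the smaller scale $L^BM$ the distance $D_{T_\delta}(L^BM)$ has become small relative to $|h(\delta)|$; more precisely, iterating the three annulus lemma we can arrange that $D_{T_{\delta}}(L^BM) < \epsilon|h(\delta)|$ for the $\epsilon$ of Proposition~\ref{prop:areadrop}. (Here one uses that $D_{W}(M)$ is at least of order $|h(\delta)|$ unless $\delta$ is essentially pinned, since $W_\delta$ is not minimal, so growth of $D_{T_\delta}$ forces it below the $|h(\delta)|$ threshold within a bounded number of scales; if it did not, we would be back in case (a).) Then Proposition~\ref{prop:areadrop} applies at scale $L^BM$ and gives $\mathcal{A}(L^BM)^\theta-\mathcal{A}(2L^BM)^\theta > C^{-1}|h(\delta)|$. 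Since in case (b) we also have $E_B(L^BM) \lesssim D_{W}(L^BM)+D_W(L^{2B}M) \lesssim |h(\delta)|$ — using Lemma~\ref{lem:triangle} to pass from the small $D_{T_\delta}$ back to $D_{W}$ at the cost of $|h(\delta)|$, and using that both terms are then controlled by $|h(\delta)|$ — we obtain conclusion (ii).

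The main obstacle, and the step requiring the most care, is showing that the dichotomy of the three annulus lemma for $D$ cleanly translates into the stated dichotomy for $E_B$: one must track the best-fit cone $W\in\mathcal{W}$ as the scale changes, control the accumulated parameter drift $|\delta-\delta'|+|R-\mathrm{Id}|$ over $B$ scales so that Lemma~\ref{lem:triangle} loses only a controlled amount, and verify that the ``growth'' alternative genuinely pushes $D_{T_\delta}$ below $\epsilon|h(\delta)|$ rather than merely oscillating — this last point is where one needs that a surface staying near but not collapsing onto $T_\delta$ over many scales would, via the non-integrability encoded in $h(\delta)\neq 0$ and Proposition~\ref{lem:nearbycones}, eventually fail to be close to any cone, forcing the collapse. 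Choosing $B$ large (after the constants from Propositions~\ref{prop:D3annulus}, \ref{prop:nonconcentration} and Lemma~\ref{lem:triangle} are fixed) and $\theta$ small (after Proposition~\ref{prop:areadrop}) makes all the inequalities close.
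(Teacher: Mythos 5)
Your overall plan matches the paper's in outline (pick a best-fit $T_\delta$, apply the three-annulus lemma to split into growth/decay, and invoke Proposition~\ref{prop:areadrop} for the area-drop alternative), but there is a genuine logical gap in the decay case. You claim that decay of $D_{T_\delta}$ from scale $1$ to scale $L^B$ always yields conclusion (i). This fails precisely when $D_{T_\delta}(L^BM)$ is already small compared to $|h(\delta)|$: in that regime the triangle inequality \eqref{eq:tr2} forces $D_{W}(L^kM)\gtrsim |h(\delta)|$ for every scale under consideration, because $W_\delta$ and $T_\delta$ sit at a fixed distance of order $(1+|\ln\Lambda|)|h(\delta)|$ from each other. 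Thus $E_B(M)$ and $E_B(L^BM)$ are both pinned at size $\approx B|h(\delta)|$ and there is no factor-$\tfrac12$ decay. This is exactly the sub-case (b1) in the paper, which must instead be routed through Proposition~\ref{prop:areadrop} to land in conclusion (ii). Without that sub-case your decay branch is incomplete.

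A second set of issues concerns the growth case. The mechanism by which growth forces $D_{T_\delta}(L^BM)$ below the threshold $\epsilon|h(\delta)|$ is not, as you suggest, via a ``bounded number of scales''; rather, one iterates Proposition~\ref{prop:D3annulus} \emph{forward} through all $k$ with $kB < |\delta|^{-\kappa}$ (the full range on which $T_\delta$ is defined), and uses that the distance remains bounded by $d_0(B)$ at the far end to conclude the starting distance is exponentially small — eventually smaller than $|h(\delta)| \gtrsim |\delta|^{a_2}$. Your parenthetical that ``$D_W(M)$ is at least of order $|h(\delta)|$ unless $\delta$ is essentially pinned'' is not a step used in the paper and is not true as stated: a minimal $M$ can lie much closer to the non-minimal cone $W_\delta$ than $|h(\delta)|$, since the non-minimality only manifests at that size. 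You also omit the sub-case (the paper's (a2)) where the iterated distance exceeds $d_0(B)$ before reaching the end of the annulus; this requires a separate contradiction using Proposition~\ref{lem:nearbycones} and the triangle inequalities, which is one of the places the largeness of $B$ is actually used. Finally, your Step 1 is fuzzier than the paper's: the paper chooses $R_iT_{\delta_i}$ to nearly minimize $D_{L^BT}(L^BM_i)$ over $T\in\mathcal{T}$ (not to nearly achieve the infimum in $E_B(M)$), and then establishes the three-annulus dichotomy by a blowup argument in which the degree-one Jacobi field is absorbed by re-choosing $\delta$ and the rotation; your description gestures at this but does not pin down the argument.
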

\begin{proof}
  Suppose that we have a sequence $M_i \to C\times\mathbf{R}$ on $B_1$ in the Hausdorff sense where each $M_i\in\mathcal{M}$ has the same density as $C\times \mathbf{R}$ at the origin. Let $B > 0$ be a large integer. We will show that if $B$ is chosen sufficiently large, then there are constants $C, \theta$ so that along a subsequence of the $M_i$ either (i) or (ii) will hold.

  \bigskip
  \noindent{\em Step 1.} Let $\alpha_1', \alpha_2'$ be such that $\alpha_1 < \alpha_1' < \alpha_2' < \alpha_2$, where $\alpha_1,\alpha_2$ are the constants from Proposition~\ref{prop:D3annulus}. We first show that if $B$ is large, then along a subsequence we can find $\delta_i \to 0$ and rotations $R_i$ converging to the identity such that one of the following two possibilities holds:
  \begin{itemize}
  \item[(a)] $D_{L^{2B} R_i T_{\delta_i}}(L^{2B}  M_i) \geq L^{\alpha_1' B} D_{L^B R_i T_{\delta_i}}(L^B  M_i)$,
    \item[(b)] $D_{ R_i T_{\delta_i}}(M_i) \geq L^{\alpha_2' B} D_{L^B R_i T_{\delta_i}}(L^B  M_i)$. 
    \end{itemize}
    This essentially follows from the fact that the family of surfaces $RT_\delta$ accounts for all the linear growth Jacobi fields on $C\times\mathbf{R}$, and so by comparing $M_i$ to the correct surface of this type, we are in the setting of the 3-annulus lemma for a Jacobi field with no degree one component. 
    
    In more detail, let us first choose sequences $R_i, \delta_i$ such that
    \[ D_{L^BR_iT_{\delta_i}}(L^BM_i) < 2\inf\{ D_{L^BT}(L^BM_i)\,:\, T = RT_\delta \}, \]
    where the infimum is over all rotations $R$ and $\delta$ for which $T_\delta$ is defined. I.e. $R_iT_{\delta_i}$ is approximately a best fit surface to $M_i$ among the family of comparison surfaces $RT_\delta$ on the annulus $B_{L^{-B}}\setminus B_{L^{-B}\rho_0}$. 
    By applying rotations to the $M_i$ we can assume that $R_i = \mathrm{Id}$. Suppose that (a) and (b) both fail for the sequence $\delta_i$ and $R_i = \mathrm{Id}$.  Let us write $T_i = T_{\delta_i}$. Letting $d_i = D_{L^B T_i}(L^BM_i)$, we have $d_i\to 0$ and since (a) and (b) fail, we have
    \[ \label{eq:b1} D_{L^{2B} T_i}(L^{2B} M_i) &< L^{\alpha_1'B} d_i, \\
      D_{T_i}(M_i) &< L^{\alpha_2'B} d_i. \]
    On the annulus $B_1\setminus B_{L^{-2B}\rho_0}$ we can write $M_i$ as the graph of $u_i$ over $T_i$ on the set where $r > r_i$, for a sequence $r_i\to 0$. Arguing as in Step 1 of the proof of Proposition~\ref{prop:D3annulus} there is a constant $D$ (which depends on $B$) such that $|r^{2.1} u_i| < D d_i$ on the set where $r > r_i$. Choosing a subsequence we can assume that $d_i^{-1} u_i$ converges to a Jacobi field $U$ on  $C\times \mathbf{R}$, defined in $B_1\setminus B_{L^{-2B}\rho_0}$, such that $|r^{2.1} U| \leq D$. The inequalities \eqref{eq:b1} imply that in the notation of Lemma~\ref{lem:L23annulus}, with $L = \rho_0^{-k}$ as in Proposition~\ref{prop:D3annulus}, we have
    \[ \label{eq:b2}\Vert U\Vert_0 \leq C' L^{\alpha_2' B}, \quad \Vert U\Vert_{2Bk} \leq C' L^{\alpha_1' B} \]
    for a constant $C'$ independent of $B$.

    Let $U_1$ be the degree one homogeneous Jacobi field, such that $U-U_1$ has no degree one piece. By Lemma~\ref{lem:zetaJacobi} we can write $U_1 = f + \lambda(y^3 r^{-2} - y)$, where $f$ is the Jacobi field corresponding to a rotation. Let $\delta_i' = \delta_i + d_i\lambda$ and let $R_i'$  denote the rotation corresponding to the Jacobi field $d_i f$. Let $T_i' = R_i' T_{\delta_i'}$. By construction the $M_i$ are graphs of new functions $u_i'$ over $T_i'$ over the set where $r > r_i'$, for a sequence $r_i' \to 0$. In addition by Lemma~\ref{lem:triangle} we have a bound $D_{\Lambda T_i'}(\Lambda M_i) < D' d_i$
    for any $\Lambda \in [L^{-2B}, 1]$ and the functions $d_i^{-1} u_i'$ converge to $U' = U-U_1$.

    Since $U_1$ has degree one, the function $U'$ also satisfies the inequalities \eqref{eq:b2}, for a possibly larger constant $C'$ (still independent of $B$). As $U'$ has no degree one component, Lemma~\ref{lem:L23annulus} implies that either we have
    \[ \Vert U'\Vert_{2Bk} \geq \rho_0^{-(Bk-1)\alpha_2} \Vert U'\Vert_{Bk+1}, \]
    or
    \[ \Vert U'\Vert_{0} \geq \rho_0^{-(Bk+1)\alpha_2}\Vert U'\Vert_{Bk+1}. \]
    Either way, we have $\Vert U'\Vert \leq C' L^{-B(\alpha_2 -\alpha_2')}$, using also that $\alpha_1' < \alpha_2'$. Lemma~\ref{lem:L2Linfty} then implies that
    \[ L^B \sup_{B_{L^{-B+1}}\setminus B_{L^{-B-1}\rho_0}} | r^2 U' | \leq C' L^{(\alpha_2' - \alpha_2)B}, \]
    for a larger $C'$, still independent of $B$. 
    After rescaling this estimate to the annulus $B_L\setminus B_{L^{-1}\rho_0}$, we can apply Proposition~\ref{prop:nonconcentration}, keeping in mind that we have the bound $D_{\Lambda T_i'}(\Lambda M_i) < D'd_i$. This implies that for any $\gamma > 0$, once $i$ is sufficiently large, we will have
    \[ D_{L^B T_i'}(L^B M_i) < C' L^{(\alpha_2'-\alpha_2)B}d_i + \gamma D'd_i. \]
    Here $D'$ depends on $B$, but given $B$ we can choose $\gamma$ small so that for sufficiently large $i$ we end up with
    \[ D_{L^B T_i'}(L^B M_i) < 2C' L^{(\alpha_2'-\alpha_2)B}d_i, \]
    with $C'$ independent of $B$. By our choice of $T_i$ we have
    \[ D_{L^B T_i'} (L^B M_i) > \frac{1}{2} D_{L^B T_i}(L^B M_i) = \frac{d_i}{2}, \]
    and so $d_i < 4 C' L^{(\alpha_2' - \alpha_2)B} d_i$. 
Since $\alpha_2' < \alpha_2$, this is a contradiction if $B$ is chosen to be sufficiently large. We now assume that $B$ is chosen large enough, although we may need to choose $B$ even larger below. Replacing the original sequence with a subsequence we will also assume that either condition (a) or (b) holds for all $i$. Finally, we can apply rotations to the sequence so that we can assume $R_i=\mathrm{Id}$. 

    \bigskip
    \noindent{\em Step 2.} Let us suppose that condition (a) holds for all $i$, so
    \[ D_{L^{2B} T_{\delta_i}}(L^{2B}M_i) \geq L^{\alpha_1' B} D_{L^B T_{\delta_i}}(L^B M_i), \]
    for a sequence $\delta_i\to 0$. We want to apply Proposition~\ref{prop:D3annulus} repeatedly to estimate $D_{L^{kB} T_{\delta_i}}(L^{kB}M_i)$ for $k=2,3,\ldots$. We can do this as long as $D_{L^{kB}T_{\delta_i}}(L^{kB}M_i)$ remains sufficiently small and in addition we also need $kB \leq |\delta_i|^{-\kappa}$ for the scaled surfaces $L^{kB} T_{\delta_i}$ to be defined. More precisely, we use a $\kappa$ that is slightly smaller than the $\kappa_{5.11}$ appearing in Proposition~\ref{prop:D3annulus}. Then for any given $B,L$, if $kB\leq |\delta_i|^{-\kappa}$, it follows that $|\ln (L^{(k+2)B})| < |\delta_i|^{-\kappa_{5.11}}$ for sufficiently large $i$, and so $L^{(k+2)B} T_{\delta_i}$ is defined, and Proposition~\ref{prop:D3annulus} can be applied. 
    
 Using the extension result Lemma~\ref{lem:extension} we can choose $d_0(B), d_1(B) > 0$ depending on $B$ such that if
    $D_{L^{kB}T_{\delta_i}}(L^{kB}M_i) < d_0(B)$ then $D_{L^{(k+2)B}T_{\delta_i}}(L^{(k+2)B}M_i) < d_1(B)$. We choose $d_0(B), d_1(B)$ small enough for Proposition~\ref{prop:D3annulus} to be applied to $T=L^{(k+\xi)B}T_{\delta_i}$ and $M=L^{(k+\xi)B}M_i$ with $\xi=0,1,2$, for sufficiently large $i$, as long as also $kB\leq |\delta_i|^{-\kappa}$. There are two possibilities.
    \begin{itemize}
    \item[\bf (a1)] Suppose that as long as $kB < |\delta_i|^{-\kappa}$ we have $D_{L^{kB}T_{\delta_i}}(L^{kB}M_i) < d_0(B)$ for all large $i$. Then we can apply the three-annulus lemma $B^{-1}|\delta_i|^{-\kappa}$ times to find that
      \[ \label{eq:D2} D_{L^{B}T_{\delta_i}}(L^B M_i), D_{L^{2B}T_{\delta_i}}(L^{2B}M_i) < L^{- B^{-1}|\delta_i|^{-\kappa}\alpha_1' B} d_0(B). \]
      Recall that $|h(\delta)| > |\delta|^{a_2}$. For any $\epsilon > 0$, it follows that for sufficiently large $i$ (depending on $\epsilon, B$) we have  $D_{L^{B}T_{\delta_i}}(L^B M_i) < \epsilon |h(\delta_i)|$. From Proposition~\ref{prop:areadrop} we get
      \[ \mathcal{A}(L^BM_i)^\theta - \mathcal{A}(2L^{B}M_i)^\theta > C^{-1} |h(\delta_i)|. \]
      In addition from Equations \eqref{eq:D2} and \eqref{eq:tr2} we have
      \[ D_{W_{\delta_i}}(L^BM_i), D_{W_{\delta_i}}(L^{2B}M_i) < CB|h(\delta_i)|. \]
      Combining these inequalities we get
      \[ \label{eq:EBb1} E_B(L^BM) < CB\Big(\mathcal{A}(L^BM_i)^\theta - \mathcal{A}(2L^{B}M_i)^\theta\Big). \]
    \item[\bf (a2)] Along a subsequence there are $k_i$ with $k_i < B^{-1}|\delta_i|^{-\kappa}$
      such that
      \[ D_{L^{k_iB}T_{\delta_i}}(L^{k_iB}M_i) \geq d_0(B), \]
      and $k_i$ is the smallest such choice for each $i$. By the three-annulus result we still have
      \[\label{eq:m2} D_{L^{(k_i+1)B}T_{\delta_i}}(L^{(k_i+1)B}M_i) \geq L^{\alpha_1'B} D_{L^{k_iB}T_{\delta_i}}(L^{k_iB}M_i) \geq L^{\alpha_1' B}d_0(B). \]
      By area monotonicity, after choosing a subsequence we can assume that $L^{(k_i-1)B}M_i, L^{k_iB}M_i, L^{(k_i+1)B}M_i$ all converge to a minimal cone $M_\infty$. Using that $D_{L^{(k_i-1)B}T_{\delta_i}}(L^{(k_i-1)B}M_i) \leq d_0(B)$, and that $L^{(k_i-1)B}T_{\delta_i}$ converges to $V_0 = C\times\mathbf{R}$, Proposition~\ref{lem:nearbycones} implies that $M_\infty = RV_0$ for a rotation $R$ and in addition $|R-\mathrm{Id}|  < Cd_0(B)$ for a $C$ independent of $B$.

      Using the fact that $L^{(k_i+1)B}T_{\delta_i} \to V_0$ and $L^{(k_i+1)B} M_i \to RV_0$, we deduce using Lemma~\ref{lem:triangle} that
      \[ D_{L^{(k_i+1)B}T_{\delta_i}}(L^{(k_i+1)B} M_i) &\leq C\Big(D_{W_{\delta_i}}(L^{(k_i+1)B}M_i) + (1 + k_iB) |h(\delta_i)|\Big) \\
        &\leq C\Big( C(D_{RV_0}(L^{(k_i+1)B}M_i) + |\delta_i| + |R - \mathrm{Id}|) \\&\qquad\qquad
        + (1 +k_iB)|h(\delta_i)|\Big). \]
      As $i\to \infty$ we have $D_{RV_0}(L^{(k_i+1)B}M_i), |\delta_i| \to 0$, and also since $k_i < B^{-1}|\delta_i|^{-\kappa}$ we have $k_i |h(\delta_i)| \to 0$. It follows that
      \[ D_{L^{(k_i+1)B}T_{\delta_i}}(L^{(k_i+1)B} M_i) &\leq C( 1+ d_0(B)) \]
      for sufficiently large $i$, for $C$ independent of $B$. Comparing this with \eqref{eq:m2} we have
      \[ C(1 + d_0(B)) \geq  L^{\alpha_1' B}d_0(B), \]
      which is a contradiction if $B$ is chosen sufficiently large. 
    \end{itemize}
    \bigskip
    \noindent{\em Step 3.} Finally we suppose that condition (b) holds for all $i$, while condition (a) fails. We assume that $R_i = \mathrm{Id}$ and we let $d_i = D_{L^BT_{\delta_i}}(L^B M_i)$. Again there are two possibilities.
    \begin{itemize}
    \item[\bf (b1)] Let $\epsilon > 0$ be the constant from Proposition~\ref{prop:areadrop} and suppose that $d_i < \epsilon |h(\delta_i)|$. Then arguing similarly to case (a1) above, we find that
      \[ E_B(L^BM) < C_B (\mathcal{A}(L^B M_i)^\theta - \mathcal{A}(2L^B M_i)^\theta), \]
      for a constant $C_B$ depending on $B$.
    \item[\bf (b2)] We have $d_i \geq \epsilon |h(\delta_i)|$, and in addition
      \[ \label{eq:db4} D_{T_{\delta_i}}(M_i) &\geq L^{\alpha_2' B} d_i, \\
        D_{L^{2B}T_{\delta_i}}(L^{2B}M_i) &\leq L^{\alpha_1' B} d_i. \]
      We can estimate $E_B(L^BM)$ from above, using Lemma~\ref{lem:triangle}:
      \[\label{eq:EBest20} E_B(L^BM) &\leq D_{W_{\delta_i}}(L^BM_i) + D_{W_{\delta_i}}(L^{2B}M_i) \\
        &\leq C(d_i + CB|h(\delta_i)|) + C(L^{\alpha_1'B}d_i + CB|h(\delta_i)|) \\
        &\leq Cd_i(1 + B\epsilon^{-1}) (1 + L^{\alpha_1'B}). \]
      We claim that for sufficiently large $B$ we have $E_B(M_i) \geq 2E_B(L^B M_i)$, once $i$ is large enough. Suppose that this were not the case, i.e. $E_B(M_i) < 2E_B(L^BM_i)$ for all $i$. By definition there are rotations $R_i$, and $\delta_i'$, such that
      \[ \label{eq:d3} D_{R_iW_{\delta_i'}}(M_i) + D_{R_i W_{\delta_i'}}(L^B M_i) < 2E_B(L^BM_i). \]
      In particular $L^B M_i$ is contained in the neighborhood $N_{2E_B(L^BM_i)}(R_i W_{\delta_i'})$, while by the definition of $d_i$ it is also contained in the neighborhood $N_{2d_i}(L^B T_{\delta_i})$. Recall that on a fixed region away from the singular ray, say $\{r > 1/10\}$, these neighborhoods are uniformly equivalent to neighborhoods defined in terms of the Hausdorff distance. In addition on such a neighborhood the Hausdorff distance from $L^B T_{\delta_i}$ to $W_{\delta_i}$ is at most $CB |h(\delta_i)|$. It follows from this that we must have (after multiplying $R_i$ by an element of the symmetry group $O(4)\times O(4)$)
      \[ \label{eq:Rd3} |R_i - \mathrm{Id}| + |\delta_i - \delta_i'| \leq C(E_B(L^BM_i) + d_i + B|h(\delta_i)|) \leq Cd_iB L^{\alpha_1' B}, \]
      where we absorbed various factors (including $\epsilon^{-1}$) into $C$ using \eqref{eq:EBest20} and $|h(\delta_i)|\leq \epsilon^{-1}d_i$, and we assumed that $B\epsilon^{-1}, L^{\alpha_1' B} > 1$.

      We now use the other part of \eqref{eq:d3}, i.e. $D_{R_iW_{\delta_i'}}(M_i) < 2E_B(L^BM_i)$. This implies, using also \eqref{eq:Rd3} and Lemma~\ref{lem:triangle}, that for large enough $i$
      \[ D_{W_{\delta_i}}(M_i) &\leq  C( E_B(L^BM_i) + d_i B L^{\alpha_1' B}) \\
        &\leq Cd_i B L^{\alpha_1' B}, \]
      increasing $C$ further. 
      Then by Lemma~\ref{lem:triangle}
      \[ D_{T_{\delta_i}}(M_i) &\leq Cd_i BL^{\alpha_1'B} + CB|h(\delta_i)|\\
        &\leq Cd_i BL^{\alpha_1'B}, \]
      using $|h(\delta_i)| \leq \epsilon^{-1}d_i$ again, absorbing further factors into $C$. 
      At the same time by \eqref{eq:db4} we have $D_{T_{\delta_i}}(M_i) \geq L^{\alpha_2'B} d_i$, so we obtain $L^{\alpha_2'B} \leq CBL^{\alpha_1'B}$ for sufficiently large $i$. Since $\alpha_2' > \alpha_1'$, this is a contradiction if $B$ is sufficiently large. 
    \end{itemize}
This completes the proof of the Proposition. 
\end{proof}

The proposition implies our main result, Theorem~\ref{thm:main}, which we restate here. 
\begin{thm}\label{thm:main2}
 Let $M\in\mathcal{M}$ be a minimal surface in $B_1$, with $C\times \mathbf{R}$ as a multiplicity one tangent cone at the origin. Then the tangent cone at the origin is unique. 
\end{thm}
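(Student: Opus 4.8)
The plan is to combine Proposition~\ref{prop:EBdecay} with the monotonicity formula in the now-standard iteration of Allard--Almgren and Simon. I would fix once and for all the constants $\theta, C, B$ from Proposition~\ref{prop:EBdecay} and $L$ from Proposition~\ref{prop:D3annulus}; recall that $L>1$ and that $B$ is a large integer, so $L^B\ge 2$. For a surface $X\in\mathcal{M}$ of the same density as $C\times\mathbf{R}$ at $0$ to which Proposition~\ref{prop:EBdecay} applies at every scale $L^{kB}$, set $e_k=E_B(L^{kB}X)$ and $a_k=\mathcal{A}(L^{kB}X)^\theta$. Since $\mathcal{A}(\lambda Y)$ is nonincreasing in $\lambda\ge 1$ and $L^B\ge 2$, we have $\mathcal{A}(2L^{(k+1)B}X)\ge\mathcal{A}(L^{(k+2)B}X)$, hence $\mathcal{A}(2L^{(k+1)B}X)^\theta\ge a_{k+2}$, and so each of the alternatives (i), (ii) in Proposition~\ref{prop:EBdecay} yields
\[ e_{k+1}\le \tfrac12 e_k + C\big(a_{k+1}-a_{k+2}\big). \]
Iterating this elementary inequality gives $e_k\le e_0+2Ca_0$ for every $k$ and $\sum_k e_k<\infty$; in particular $e_k\to 0$.

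Next I would choose the base scale. Since $C\times\mathbf{R}$ is a tangent cone to $M$ at $0$, there is a sequence $r_j\downarrow 0$ with $r_j^{-1}M\to C\times\mathbf{R}$; all rescalings of $M$ then have the same density as $C\times\mathbf{R}$ at $0$, and moreover $\mathcal{A}(r_j^{-1}M)\to 0$ (the area excess of a cone vanishes, and convergence of area-minimizers is strong) and $E_B(r_j^{-1}M)\to 0$ (take $W=W_0=V_0=C\times\mathbf{R}$, using $h(0)=0$). Let $\epsilon_0$ be the Hausdorff-closeness threshold in Proposition~\ref{prop:EBdecay}, and fix a smaller $\epsilon_1>0$ with the property that whenever $S\in\mathcal{M}$ has the correct density, is $\epsilon_0$-close to $C\times\mathbf{R}$, and $E_B(S)<\epsilon_1$, the best-fit comparison cone $W_\delta\in\mathcal{W}$ (after a rotation) realizing $E_B(S)$ has $|\delta|$ so small that $W_\delta$, and hence any surface that is $\epsilon_1$-close to $W_\delta$, is $\epsilon_0$-close to $C\times\mathbf{R}$; such an $\epsilon_1$ exists by a routine compactness argument, since $V_\delta=C(\Sigma_\delta)$ degenerates continuously to $C\times\mathbf{R}$ as $\delta\to 0$. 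Now pick $j$ so large that $M':=r_j^{-1}M$ is $\epsilon_0$-close to $C\times\mathbf{R}$ on $B_1$ and $e_0+2Ca_0<\epsilon_1$. A straightforward induction then shows that Proposition~\ref{prop:EBdecay} applies to $L^{kB}M'$ for every $k\ge 0$: the case $k=0$ is the choice of $M'$, and if it holds for $0,\dots,k$ then the displayed recursion gives $e_{k+1}\le e_0+2Ca_0<\epsilon_1$, and $E_B(L^{(k+1)B}M')<\epsilon_1$ together with the inductive closeness of $L^{kB}M'$ and the fact that $L^{(k+1)B}M'=L^B(L^{kB}M')$ is $e_k$-close to the cone $W_\delta$ places $L^{(k+1)B}M'$ within $\epsilon_0$ of $C\times\mathbf{R}$.

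With the induction in place, $\sum_k E_B(L^{kB}M')<\infty$. By Lemma~\ref{lem:DFbound}, whose area hypothesis holds because all the rescalings lie near $C\times\mathbf{R}$, the flat distances $d_{\mathcal{F}}(L^{kB}M',L^{(k+1)B}M')$ over $B_1\setminus B_{\rho_0}$ are summable, so $\{L^{kB}M'\}_k$ converges in the flat topology, and since $e_k\to 0$ each $L^{kB}M'$ is Hausdorff-close to $C\times\mathbf{R}$ with closeness tending to $0$, so the limit is $C\times\mathbf{R}$. For an arbitrary large scale, writing $\lambda=\mu L^{kB}$ with $\mu\in[1,L^B)$ and using that $W_\delta$ is a \emph{cone} (Remark~\ref{rem:Wcone}), so $\mu W_\delta=W_\delta$, together with $D_{W_\delta}(L^{kB}M')\le e_k$, one sees that $\lambda M'$ is Hausdorff-close to $C\times\mathbf{R}$ with closeness controlled by $e_k\to 0$. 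Hence $\lambda M\to C\times\mathbf{R}$ as $\lambda\to\infty$ through all scales, so every tangent cone of $M$ at $0$ equals $C\times\mathbf{R}$, which is the desired uniqueness.

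Granting Proposition~\ref{prop:EBdecay} and the constructions it rests on, the only point in this argument that requires real care is the propagation of Hausdorff-closeness through all the scales $L^{kB}$: the decay dichotomy needs closeness as an input, while closeness at the next scale must be re-extracted from the smallness of $E_B$ via the best-fit cone $W_\delta\in\mathcal{W}$. This is where it is essential both that the $W_\delta$ are genuine cones and that their geometry collapses continuously to $C\times\mathbf{R}$ as $\delta\to 0$. All of the substantive analytic difficulty has already been absorbed into the construction of the comparison surfaces $T_\delta$ (and the properties of $h(\delta)$) and into the non-concentration estimate behind Proposition~\ref{prop:EBdecay}.
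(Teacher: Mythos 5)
Your argument is correct and follows essentially the same route as the paper's proof of Theorem~\ref{thm:main2}: iterate Proposition~\ref{prop:EBdecay}, telescope the area drops using monotonicity, and invoke Lemma~\ref{lem:DFbound} to get summable flat distances between consecutive rescalings, then propagate closeness and identify the limit. Your unified recursion $e_{k+1}\le\tfrac12 e_k+C(a_{k+1}-a_{k+2})$ (using $L^B\ge 2$ to replace $\mathcal{A}(2L^{(k+1)B}M')^\theta$ by $a_{k+2}$) is a cleaner packaging of the paper's run-by-run telescoping, and your step-by-step induction via the best-fit cone in $\mathcal{W}$ plays the same role as the paper's cumulative $d_{\mathcal{F}}$ bound, but the underlying mechanism is identical.
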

\begin{proof}
  We apply Proposition~\ref{prop:EBdecay} repeatedly. Let us suppose that $L^{kB}M$ is sufficiently close to $C\times \mathbf{R}$ on $B_1$ to apply the proposition for $k=0,1,\ldots, N$. Define $e_k = E_B(L^{kB}M)$ and let $k_1,k_2, \ldots, k_m$ denote the values of $k$ for which alternative (ii) holds, so that alternative (i) holds for the remaining values. We then have
  \[ e_0 + e_1 + \ldots + e_N &= (e_0 + \ldots + e_{k_1-1}) + (e_{k_1} + \ldots + e_{k_2-1}) + \ldots + (e_{k_m} + \ldots + e_N) \\
    &\leq 2e_0 + 2e_{k_1} + \ldots + 2e_{k_m} \\
    &\leq 2e_0 + 2C\sum_{j=1}^m (\mathcal{A}(L^{k_j B}M)^\theta - \mathcal{A}(2L^{k_j B}M)^\theta) \\
    &\leq 2e_0 + 2C(\mathcal{A}(L^{k_1 B}M)^\theta - \mathcal{A}(2L^{k_m B}M)^\theta) \\
    &\leq 2e_0 + 2C\mathcal{A}(M)^\theta\]
  using the monotonicity of the area.

  Using Lemma~\ref{lem:DFbound} we find that
  \[ \label{eq:dFM1}
    d_{\mathcal{F}}(M, L^{(N+1)B}M) &\leq d_{\mathcal{F}}(M, LM) + \ldots d_{\mathcal{F}}(L^{NB}M, L^{NB+1}M) \\
    &\leq C(e_0 + e_1 + \ldots + e_N) \\
    &\leq C(e_0 + \mathcal{A}(M)^\theta). \]
If $M$ is sufficiently close to $C\times\mathbf{R}$ on $B_1$, then this implies that $L^{(N+1)B}M$ is also close enough to $C\times\mathbf{R}$ to keep applying Proposition~\ref{prop:EBdecay}. We can therefore take $N$ above to be arbitrarily large.  Since $C\times \mathbf{R}$ is a tangent cone, we know that for a sequence $N_i\to \infty$ we have $d_{\mathcal{F}}(L^{N_iB}M, C\times\mathbf{R}) \to 0$ and by applying the argument above to $L^{N_iB}M$ instead of $M$, we then find that $d_{\mathcal{F}}(L^{kB}M, C\times\mathbf{R})\to 0$ as $k\to\infty$. Therefore $C\times\mathbf{R}$ is the unique tangent cone at the origin.
\end{proof}

\section{The case of $C\times \mathbf{R}^k$}\label{sec:CRk}
In this section we reprove Simon's uniqueness result, Theorem~\ref{thm:CRk}, for tangent cones of the form $C\times \mathbf{R}^k$, where $C \subset \mathbf{R}^n$ satisfies the following conditions (see Simon~\cite{Simon94})

\bigskip
\noindent{\bf Conditions (\ddag)}
\begin{itemize}
\item[(a)] $C$ is strictly stable and strictly minimizing in the sense of Hardt-Simon~\cite{HS85}.
\item[(b)] All homogeneous degree 0 and degree 1 Jacobi fields on $C$ are generated by rotations and translations in $\mathbf{R}^n$.
\item[(c)] There are no homogeneous Jacobi fields on $C$ with degree $d \in \mathbf{Z} \cap\left( \frac{3-n}{2}, 0\right)$.
\end{itemize}

\bigskip
The cones $C(S^2\times S^4)$ and $C(S^3\times S^3)$ in $\mathbf{R}^8$ do not satisfy this, because they have the Jacobi field $r^{-2}$. However many other minimal cones do, such as $C(S^p\times S^q)$ with $p+q > 6$, as discussed in \cite{Simon94}. 

The strategy
to  prove Theorem~\ref{thm:CRk} is very similar to what we used for $C(S^3\times S^3)\times\mathbf{R}$, but there are significant simplifications. The cones $C\times\mathbf{R}^k$ we consider here are integrable, and in fact all degree one Jacobi fields arise from rotations in $\mathbf{R}^{n+k}$. As a consequence we do not need to construct surfaces similar to $T_\delta$, which also means that it is enough to show non-concentration relative to the cones $C\times\mathbf{R}^k$. This simplifies both the definition of the distance and the construction of barrier surfaces.

According to Hardt-Simon~\cite{HS85}, there are smooth minimal hypersurfaces $H_{\pm}$ in the two connected components of $\mathbf{R}^n\setminus C$, asymptotic to the graphs of $\pm r^\mu\phi(\omega)$ over $C$.  Here $\phi$ is the eigenfunction of $-L_\Sigma$ on the link $\Sigma$ with the smallest eigenvalue, and $r^\mu\phi(\omega)$ is the corresponding Jacobi field on $C$. We have $\mu\in \left(\frac{3-n}{2}, 0\right)$. The role of Condition (\ddag c) is the following, analogous to Lemma~\ref{lem:zetaJacobi}. 
\begin{prop}
  Let $u$ be a homogeneous degree one function on $C\times\mathbf{R}^k$ such that $r^{-\mu} u$ is locally bounded away from the origin, and $L_{C\times\mathbf{R}^k} u =0$. Then $u$ corresponds to a rotation in $\mathbf{R}^n\times\mathbf{R}^k$. More precisely, writing $z_i=x_i$ for $i\leq n$ and $z_i = y_{i-n}$ for $n < i \leq n+k$, we have $u(z) = Az \cdot \nu(z)$ for $A\in \mathfrak{so}(n+k)$ and $\nu(z)$ the unit normal to $C\times\mathbf{R}^k$. 
\end{prop}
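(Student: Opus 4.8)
The plan is to follow the structure of the proof of Lemma~\ref{lem:zetaJacobi} closely, but with the simplification that the equation is homogeneous ($a=0$ for free, since here there is no $\zeta$ term to worry about) and the simplification that, after identifying the relevant indicial roots, no extra Jacobi field like $\phi = y^3r^{-2}-y$ appears. First I would pass to the link: since $u$ is homogeneous of degree one on $C\times\mathbf{R}^k$, the equation $L_{C\times\mathbf{R}^k}u=0$ on the cone is equivalent to $L_{\Sigma_1}u=0$ on the link $\Sigma_1$ of $C\times\mathbf{R}^k$ (a spherical object with conical singularities modeled on $C$), and the growth assumption $r^{-\mu}u$ locally bounded ensures $u$ lies in the appropriate weighted space so that separation of variables / spectral decomposition is valid.

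Next I would invoke the separation-of-variables expansion exactly as Simon does (the same reference \cite[Appendix 1]{Simon94} used in Lemma~\ref{lem:zetaJacobi}). Writing the link $\Sigma$ of $C$ itself, with eigenfunctions $\phi_j$ of $-L_\Sigma$ and eigenvalues $\lambda_j$, solutions of $L_{C\times\mathbf{R}^k}u=0$ have an expansion
\[
u = \sum_j r^{\mu_j}\sum_{k_0,l} a^j_{k_0,l}\, r^{2k_0} P_l(y)\,\phi_j(\omega),
\]
where the exponents satisfy $\mu_j + 2k_0 + l = 1$ on nonzero terms, $P_l$ is a polynomial of degree $l$ in the $\mathbf{R}^k$ variables, and $\mu_j = -\frac{n-2}{2} + \sqrt{\left(\frac{n-2}{2}\right)^2 + \lambda_j}$ are the indicial roots. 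The constraint $r^{-\mu}u$ locally bounded rules out all terms with $\mu_j < \mu$, and I would combine this with Condition (\ddag c) — which says there are no homogeneous Jacobi fields on $C$ of integer degree in $\left(\frac{3-n}{2},0\right)$ — to conclude that the only $\mu_j$ that can actually occur with $\mu_j \le 1$ are $\mu_j = 0$ and $\mu_j = 1$. (The lowest one, $\mu$ itself, is non-integer and produces a contribution of degree $\mu + 2k_0 + l = 1$, impossible for integers $k_0,l\geq 0$ since $\mu\notin\mathbf{Z}$; any integer $\mu_j\in(\frac{3-n}{2},0)$ is excluded by (\ddag c); and negative $\mu_j$ below $\frac{3-n}{2}$ do not exist because $\lambda_j\geq 0$.) The surviving pieces are therefore exactly the degree $0$ and degree $1$ homogeneous Jacobi fields on $C$ — translations and rotations of $\mathbf{R}^n$ by Condition (\ddag b) — together with the ``$y$-linear'' terms $\mu_j=0$, $l=1$ (linear functions of $y$, i.e. translations in $\mathbf{R}^k$) and the constant-in-$y$ degree-one terms. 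Following \cite[Equation (17), p.~25]{Simon94}, all of these assemble into the single form $Az\cdot\nu(z)$ for a skew-symmetric $A\in\mathfrak{so}(n+k)$: rotations mixing the $x$-variables, rotations mixing the $y$-variables, rotations mixing $x$ with $y$ (these produce precisely the degree-one-in-$x$, degree-zero-in-$y$ and degree-zero-in-$x$, degree-one-in-$y$ combinations), and translations. I would justify the integration-by-parts / spectral-orthogonality steps using that $r^{-\mu}u$ locally bounded forces $u\in W^{1,2}_{loc}$ away from the origin on the link and that the tail of the expansion converges, exactly as in the cited lemma.

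The main obstacle — really the only non-routine point — is verifying that Condition (\ddag c), stated for Jacobi fields on $C$ of integer degree in $\left(\frac{3-n}{2},0\right)$, actually closes off every unwanted term in the expansion on $C\times\mathbf{R}^k$, where the effective growth exponent in $r$ of a mixed term $r^{\mu_j}r^{2k_0}P_l(y)$ is $\mu_j + 2k_0$ rather than $\mu_j$ alone. One must check that any term with $\mu_j + 2k_0 + l = 1$, $\mu_j \ge \mu$, and $(\mu_j,k_0,l)$ not corresponding to a rotation/translation would force $\mu_j$ to be a (nonpositive) integer strictly greater than $\frac{3-n}{2}$ — since $\mu_j = 1 - 2k_0 - l$ is automatically an integer once $k_0,l$ are — and hence contradicts (\ddag c); the only escape is $\mu_j\in\{0,1\}$, handled by (\ddag b). I would present this case-check explicitly but briefly, mirroring the four lines at the end of the proof of Lemma~\ref{lem:zetaJacobi} where the analogous bookkeeping is done for $C(S^3\times S^3)$. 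Everything else — the reduction to the link, the separation of variables, the assembly into $Az\cdot\nu(z)$ — is identical in form to what has already been carried out, so I would cite \cite{Simon94} for those steps rather than reproducing them.
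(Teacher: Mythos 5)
Your proposal follows the same route as the paper's proof: pass to the link, invoke Simon's separation-of-variables expansion from \cite[Appendix 1]{Simon94}, observe that the degree-one constraint forces each indicial root $\mu_j = 1 - 2k_0 - |\mathbf{l}|$ to be an integer, eliminate the integer values in $\left(\frac{3-n}{2},0\right)$ via Condition (\ddag c), and assemble the surviving $\mu_j\in\{0,1\}$ pieces into $Az\cdot\nu(z)$ using Condition (\ddag b). The only differences are cosmetic (monomials $y^{\mathbf{l}}$ versus a degree-$l$ polynomial $P_l(y)$) or a matter of spelling out details the paper leaves to the citation (e.g.\ why $\mu$ itself is non-integer and why roots below $\frac{3-n}{2}$ cannot occur), so the argument is correct and matches the paper.
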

\begin{proof}
  We follow Simon~\cite[Appendix 1]{Simon94}. We have
  \[ u = \sum_j r^{\mu_j} \sum_{k, \mathbf{l} \geq 0} a^j_{k,\mathbf{l}} r^{2k} y^{\mathbf{l}} \phi_j(\omega). \]
  Here $\phi_j$ is the $j^{\mathrm{th}}$ eigenfunction of $-L_{\Sigma}$ on the link of $C$, and $r^{\mu_i}\phi_j(\omega)$ are the corresponding homogeneous Jacobi fields on $C$. We have $\mu_0=\mu$. In addition $\mathbf{l} \in (\mathbf{Z}_{\geq 0})^k$ denotes a multiindex, and we write $y^{\mathbf{l}} = y_1^{l_1}\cdots y_k^{l_k}$, $|\mathbf{l}| = l_1 + \ldots + l_k$. Since $u$ has degree one, we must have $\mu_j + 2k + |\mathbf{l}| = 1$. Since $2k, |\mathbf{l}|\geq 0$ are integers, by Condition (\ddag c) the only possibilities are
  \begin{itemize}
  \item[(i)] $\mu_j=0, k=0, |\mathbf{l}|=1$: the corresponding functions $u$ are spanned by $\nu_j(z) y_i$ where $j=1,\ldots, n$ and $i=1, \ldots, k$ (note that $\nu_j=0$ for $j > n$). 
  \item[(ii)]  $\mu_j=1, k=0, |\mathbf{l}|=0$: the corresponding functions $u$ are spanned by $ \nu_j x_i - \nu_i x_j$, for $i,j=1,\ldots, n$.
  \end{itemize}
  The functions $u$ spanned by these can be written as $u = Az \cdot \nu(z)$, for $A\in \mathfrak{so}(n+k)$. Note that since $\nu_j =0$ for $j > n$, the coefficients $A_{ij}$ for $i > n$ do not affect $u$.  
\end{proof}

For $\lambda\in \mathbf{R}$ we will let $\lambda H = |\lambda| H_{\pm}$, using $H_+$ or $H_-$ depending on the sign of $\lambda$. Note that $\lambda H$ is asymptotic to the graph of $\lambda^{1-\mu} r^\mu$. We let $w=1-\mu$, so in our earlier setting of $C=C(S^3\times S^3)$ we have $\mu=-2, w=3$. As before we will write $(x, y)\in \mathbf{R}^n\times \mathbf{R}^k$ and $r = |x|, \rho = (|x|^2 + |y|^2)^{1/2}$. For a function $f(y)$ we will denote by $f(y)H \subset\mathbf{R}^n\times \mathbf{R}^k$ the hypersurface given for each $y\in \mathbf{R}^k$ by $f(y)H$ in the slice $\mathbf{R}^n\times \{y\}$. We will also write $V=C\times\mathbf{R}^k$. 

\begin{definition}
  For $d > 0$ let us define the neighborhood $N_d(V)$ to be the region between the surfaces $\pm d^{1/w} H$. For an open set $U\subset \mathbf{R}^n\times\mathbf{R}^k$ and a hypersurface $M$
  we define the distance $D_V(M;U)$ to be the infimum of all $d >0$ such that $M\cap U\subset N_d(V)$. 
\end{definition}

In analogy with Lemma~\ref{lem:Fadefn} we can define functions $F_a$ on $\pm H$ for $a > \mu$, satisfying  $F_a = r^a$ for sufficiently large $r$ and $L_{\pm H} F_a > C_a^{-1} r^{a-2}$. We extend $F_a$ to $\mathbf{R}^n\setminus \{0\}$ to be homogeneous of degree $a$. Then on all scalings of $H$ we have $L_{\lambda H} F_a > C_a^{-1}r^{a-2}$ and $|F_a| \leq C_ar^{a-2}$. Using this $F_a$ we construct barrier surfaces analogous to those in Proposition~\ref{prop:barrier1}. The construction is simpler since our surfaces will remain on one side of the cone $C$, i.e. the function $f$ is positive. In particular we do not have to deal with the case $|f(0)| \leq C_0R$ that appears in the proof of Proposition~\ref{prop:barrier1}. 

\begin{prop}\label{prop:barrier2}
  Let $f: \Omega \to (0,\infty)$ be a $C^2$ function on $\Omega\subset \mathbf{R}^k$. There is a large $C_f > 0$ and small $r_0, \epsilon_0 > 0$ depending on upper bounds for the $C^2$-norms of $f, f^{-1}$ satisfying the following. For $\epsilon \in (0,\epsilon_0)$ define the surface $X$ to be the graph of $-C_f \epsilon f(y) F_{\mu+2}$ over $\epsilon^{1/w} f(y) H$, in the slices $\mathbf{R}^n\times \{y\}$ for $y\in \Omega$, and $r < r_0$. Then $X$ has negative mean curvature. 
\end{prop}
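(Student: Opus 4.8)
The plan is to adapt the proof of Proposition~\ref{prop:barrier1} to this simpler setting, where the function $f$ is positive and the surface $X$ stays on one side of the cone $C$, so the case split in that earlier proof collapses to a single case. First I would reduce, by translation, to studying the mean curvature of $X$ at a point $(x_0, y_0)$ with $y_0 = 0$ and, by scaling, work in a region $R/2 < r < 2R$ around this point, scaling up by $R^{-1}$. On this rescaled region the surface $R^{-1}X$ is the graph of $B := -C_f\epsilon f(Ry) F_{\mu+2}$ over $E(y)\cdot H$, where $E(y) := R^{-1}\epsilon^{1/w} f(Ry)$. Since $f$ is bounded above and below on $\Omega$ with a $C^2$ bound, for a point with $y=0, r=1$ to exist in the rescaled surface we need $R^{-1}\epsilon^{1/w} f(0) \lesssim 1$, i.e.\ $E(0) \lesssim 1$; and as long as $R$ is small (which we may force by taking $r_0$ small and working in the annulus $r<r_0$) we get $E(0)^{-1}E(y) - 1 = O(R)$ uniformly on the rescaled region, since $f$ has bounded logarithmic derivative. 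This is precisely the analog of the regime ``$|f(0)| \geq C_0 R$'' in Proposition~\ref{prop:barrier1}, and it is now the \emph{only} regime, because $f > 0$ is bounded below.

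Next, using Lemma~\ref{lem:Phidefn1} I would write $E(y)\cdot H$ as the graph of $A := E(0)\Phi_{E(0)^{-1}E(y)-1}(E(0)^{-1}\cdot)$ over $E(0)\cdot H$, with $A = E(0)(E(0)^{-1}E(y)-1)\Phi(E(0)^{-1}\cdot) + O(E(0)^3(E(0)^{-1}E(y)-1)^2)$. Then Lemma~\ref{lem:gg} lets me view $R^{-1}X$ as the graph of $A + B$ over $E(0)\cdot H \times \mathbf{R}^k$, up to an error of order $A^2 + B^2 + AB$, so that $m = L_{E(0)\cdot H\times\mathbf{R}^k}(A+B) + O(A^2+B^2)$. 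The $A$ term I would estimate at $y=0$ as before: $L(A)$ involves $E''(0)$ and a quadratic remainder, and using $E(0) \lesssim R^{-1}\epsilon^{1/w}f(0)$, $E''(0) \lesssim R\epsilon^{1/w}$ together with the constraints above, this contributes a term that is small relative to the useful negative term. The useful negative term comes from $L_{E(0)\cdot H} B = -C_f\epsilon f(0) L_{E(0)\cdot H} F_{\mu+2} + (\text{lower order from } y\text{-derivatives of } f)$, and by the defining property of $F_{\mu+2}$ (namely $L_{\lambda H} F_{\mu+2} > C_{\mu+2}^{-1} r^{\mu}$, which is positive), this is bounded above by $-c_1 C_f\epsilon f(0) r^{\mu} + (\text{errors})$. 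Collecting all error terms, each is a product of $\epsilon f(0)$ with a factor that is either a positive power of $R$ (from $\epsilon$-independent geometry and $y$-derivatives), a positive power of $C_f^{-1}$, or a positive power of $\epsilon^{1/w}$; the quadratic terms $A^2+B^2$ are $O(\epsilon^{2/w})$ or smaller. Hence by first choosing $C_f$ large, then $r_0$ (hence $R$) small, then $\epsilon_0$ small, all depending only on the $C^2$ bounds for $f$ and $f^{-1}$, we get $m < 0$ on the region $r < r_0$.

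The main obstacle, as in the earlier proposition, is the careful bookkeeping of the error terms to make sure every one of them can be absorbed into the negative leading term $-c_1 C_f \epsilon f(0) r^{\mu}$ with the \emph{correct} order of choosing constants: $C_f$ must be chosen before $r_0$ and $\epsilon_0$, while the $C^2$-dependence of $r_0, \epsilon_0$ on $f$ must be made explicit so the statement is uniform over all $f$ with fixed $C^2$ bounds. Unlike Proposition~\ref{prop:barrier1}, however, there is no singular point $\{r=0, f(y_0)=0\}$ to analyze separately, since $f$ is assumed strictly positive, so the proof terminates once the mean curvature estimate on $\{0 < r < r_0\}$ is established; in particular no tangent cone analysis at the boundary is needed. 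I would remark that the role of Condition~(\ddag c) enters only through the fact that $F_{\mu+2}$ is a well-defined homogeneous barrier (via invertibility of $L_H$ on the appropriate weighted spaces, exactly as in Lemma~\ref{lem:Fadefn}), and that the exponent $\mu + 2 > \mu$ is admissible.
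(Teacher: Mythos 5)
Your proposal is correct and follows essentially the same route as the paper: reduce to $y=0$, rescale by $R^{-1}$, observe that positivity of $f$ and the bound on $f^{-1}$ force the regime $|f(0)|\geq C_0 R$, write the rescaled $X$ as a graph of $A+B$ over $E(0)\cdot H\times\mathbf{R}^k$ via Lemmas~\ref{lem:Phidefn1} and \ref{lem:gg}, and absorb $LA$ and the quadratic errors into the negative term from $L_H F_{\mu+2}$ by choosing $C_f$, then $r_0$, then $\epsilon_0$. One small bookkeeping slip: after rescaling by $R^{-1}$ the graph function should carry a factor $R^{\mu+1}$ (homogeneity of $F_{\mu+2}$ and the overall $R^{-1}$), so $B = -R^{\mu+1}C_f\epsilon f(Ry)F_{\mu+2}$ rather than $-C_f\epsilon f(Ry)F_{\mu+2}$; this does not change the structure of the argument but is needed for the powers of $R$ in the error comparison to come out right.
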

\begin{proof}
  The proof is essentially identical to that of Proposition~\ref{prop:barrier1}. We work around the point $y=0$, in the region defined $-R < y < R$ and $R/2 < r < 2R$, for $R < r_0$. The scaled up surface $R^{-1}X$ is the graph of the function $B$ over $E(y)\cdot H$, where
  \[ E(y) &= R^{-1} \epsilon^{1/w} f(Ry) , \\
    B(x, y) &= -R^{\mu+1}C_f \epsilon f(Ry) F_{\mu+2}. \]
  By taking $r_0$ small (depending on the lower bound for $f$), we can arrange that $|f(0)| \geq C_0R$ for any large $C_0$, so we are in the setting analogous to the first case studied in Proposition~\ref{prop:barrier1}. In the same way as in that proof, we find that $R^{-1}X$ can be viewed as the graph of $B$ over the graph of $A$ over the surface $E(0)\cdot H \subset \mathbf{R}^n\times \mathbf{R}^k$, where
  \[ A = E(0) \Phi_{E(0)^{-1}E(y) - 1} (E(0)^{-1}\cdot). \]
  Here $\Phi_t$ is the function on $H$ such that $(1+t)H$ is the graph of $\Phi_t$ over $H$ for small $t$. In terms of the corresponding Jacobi field $\Phi$ we have $\Phi_t(x) = t\Phi(x) + O(t^2 |x|^{\mu})$ and $\Phi(x) = O(|x|^\mu)$. Since $E(0)^{-1} E(y) = f(0)^{-1} f(Ry)$, we have that $E(0)^{-1}E(y) - 1 = O(R)$, and so
  \[ A \lesssim (R^{-1} \epsilon^{1/w} f(0))^w R = R^\mu \epsilon f(0)^w. \]
  The same estimate holds for the derivatives of $A$, and so 
  \[ A, \nabla^i A &\lesssim \epsilon R^\mu \\
    B, \nabla^i B &\lesssim \epsilon C_f R^{\mu+1}, \]
  where $a \lesssim b$ means $|a|\leq Cb$ for $C$ depending on the $C^2$ norms of $f, f^{-1}$. The mean curvature of $X$ is
  \[ m = L_{E(0)\cdot H \times \mathbf{R}^k} (A+B) + O(A^2 + B^2). \]
  At $y=0$ we have
  \[ L_{E(0)\cdot H \times \mathbf{R}^k}A &= \Delta E(0) \Phi(E(0)^{-1}\cdot) + O( R^{1+\mu}\epsilon) \\
    &\lesssim R^{1+\mu}\epsilon, \]
  using that $\Delta E(0) \lesssim R\epsilon^{1/w}$ and $\Phi(E(0)^{-1}\cdot) \lesssim R^\mu \epsilon^{-\mu/w}$.
  As for $LB$ we have
  \[ L_{E(0)\cdot H \times \mathbf{R}^k}B &\leq -C^{-1} R^{\mu+1}C_f \epsilon f(0) + C R^{\mu+3} C_f \epsilon \Delta f(0), \]
  for some $C > 0$. It follows that
  \[ m &\leq -C^{-1} R^{\mu+1}C_f \epsilon f(0) + CR^{\mu+3}C_f \epsilon + CR^{1+\mu}\epsilon + \epsilon^2R^{2\mu} + \epsilon^2C_f^2 R^{2\mu + 2} \\
    &\leq  R^{\mu+1}\epsilon C_f\Big[ -C^{-1}f(0) + CR^2 + CC_f^{-1} + C_f^{-1}\epsilon R^{\mu-1} + \epsilon C_f R^{\mu+1}\Big]. \]
  Note that a point on $X$ with $y=0, r\in (R/2, 2R)$ can only exist if $R \geq C^{-1} \epsilon^{1/w}f(0)$ for a uniform $C > 0$. In particular $R^{\mu-1}\epsilon \lesssim 1$. 
 We first choose $C_f$ large (depending on the lower bound for $f$), to ensure that $CC_f^{-1} + C_f^{-1}\epsilon R^{\mu-1} < \frac{1}{4} C^{-1} f(0)$. Then we choose $R$ sufficiently small (depending on $C_f$), to ensure $C_f\epsilon R^{\mu-1}R^2 < \frac{1}{4} C^{-1}f(0)$. It follows that we will then have $m < 0$ as required. 
\end{proof}

Given these barrier surfaces, we can prove a non-concentration result analogous to Proposition~\ref{prop:nonconcentration}. The statement is identical, except we only need to consider the distance to the cone $V$. 
\begin{prop}\label{prop:nonconc2}
  There is a $C > 0$, such that given any $\gamma > 0$, there are $r_0=r_0(\gamma) > 0, d_0=d(\gamma, r_0)$ satisfying the following. Suppose that $M\in\mathcal{M}$ is a minimal surface with $D_V(M; B_1\setminus B_{1/2}) = d < d_0$, that on the region $\{ r > r_0\}$ in the annulus $B_1\setminus B_{1/2}$ can be written as the graph of a function $u$ over $V$. Then
  \[ \label{eq:nc2} D_V (M; B_{4/5}\setminus B_{3/5}) \leq C\sup_{(B_1\setminus B_{1/2})\cap \{r > r_0\}} |r^{-\mu} u| + C\gamma d. \]
\end{prop}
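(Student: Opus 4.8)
The plan is to follow the barrier-and-scaling strategy that in the main body of the paper proves Proposition~\ref{prop:nonconcentration}, but with the considerable simplification that the reference surface is the minimal cone $V = C\times\mathbf{R}^k$ itself, so there is no parameter $\delta$ to carry along, no logarithmic perturbation $T_\delta$, and only one regime in the definition of the neighborhoods $N_d(V)$ (the region between $\pm d^{1/w}H$). Fix $\gamma > 0$. Write $d = D_V(M; B_1\setminus B_{1/2})$, and for $r_0 > 0$ let $D(r_0) = \sup_{\{r > r_0\}}|r^{-\mu}u|$ where $M$ is the graph of $u$ over $V$ on $\{r > r_0\}$ (this graphical representation exists for $d$ small by Allard's theorem and the fact that $M$ converges to $C\times\mathbf{R}^k$). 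On the region $\{r \geq r_0\}$ the distance is already controlled by $D(r_0)$, so the entire content is to bound $D_V(M; B_{4/5}\setminus B_{3/5})$ on the region $\{r < r_0\}$; by symmetry one can also reduce to a fixed coordinate chart near the singular set of $V$.

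The core step is to trap $M$ between two barrier surfaces built from Proposition~\ref{prop:barrier2}. Set $\bar d = d + \gamma^{-1}D(r_0)$ (there is no $\beta|\delta|$ floor here). Apply Proposition~\ref{prop:barrier2} with a function of the form
\[ f(y) = (\text{const})\,\epsilon^{-1/w}\bigl(\text{affine bump adapted to the annulus}\bigr) + (\text{poles at the boundary faces of the }y\text{-region}), \]
so that for $\epsilon$ in a suitable range the surface $X_\epsilon$, modeled on $\epsilon^{1/w}f(y)H$, has negative mean curvature on $\{r < r_0,\ y\in(1/2+\gamma, 1-\gamma)\}$, lies on the positive side of $(C^{-1}\epsilon)^{1/w}H$ on the interior, and on the positive side of $(C^{-1}\gamma^{-1}\epsilon)^{1/w}H$ on the $y$-boundary faces; on the face $\{r = r_0\}$ one compares $X_\epsilon$ with the graph $|u|\leq D(r_0)r^{-\mu}\cdot\mathbf{1}$ directly. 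The claim, proved exactly as the ``Claim'' in the proof of Proposition~\ref{prop:nonconcentration}, is that for $r_0 = r_0(\gamma)$ small and $d, D(r_0)$ small, $M$ lies on the negative side of $X_{\epsilon_0}$ on the whole region and on the negative side of $X_{C'(\gamma\bar d + D(r_0))}$ along the boundary. Sweeping $\epsilon$ from $\epsilon_0$ down to $C'(\gamma\bar d + D(r_0))$ and invoking the strong maximum principle (no interior contact is possible with a negatively-curved barrier, and contact over the singular set of $V$ is excluded since there the tangent cone of $X_\epsilon$ is the graph of $-\epsilon r$ over the stable cone $C\times\mathbf{R}^k$), we conclude that on $B_{9/10}\setminus B_{11/20}$, $\{r < r_0\}$, $M$ lies on the negative side of $(C''(\gamma\bar d + D(r_0)))^{1/w}H$, and symmetrically on the positive side of the negatively-oriented version. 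This traps $M$ inside $N_{C''(\gamma\bar d + D(r_0))}(V)$, which, unwinding $\bar d$, gives $D_V(M; B_{4/5}\setminus B_{3/5}) \leq C(D(r_0) + \gamma d) \leq C\sup_{\{r > r_0\}}|r^{-\mu}u| + C\gamma d$, the desired \eqref{eq:nc2}. Because the reference surface here is the honest cone $V$ and the neighborhoods have a single regime, the second (graphical-barrier) stage of the proof of Proposition~\ref{prop:nonconcentration}, using Proposition~\ref{prop:linearbarrier}, is not needed.

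The main obstacle is purely bookkeeping: choosing the function $f(y)$ so that, simultaneously, its $C^2$-norm (and that of $f^{-1}$) is controlled in terms of $\gamma$ alone — so that $r_0, \epsilon_0$ from Proposition~\ref{prop:barrier2} depend only on $\gamma$ — and the induced surface $X_\epsilon$ has the right ordering relative to the relevant rescaled copies $(\text{const}\cdot\epsilon)^{1/w}H$ on the interior and on each boundary face of the region, uniformly in the small parameters. This is exactly the calibration carried out in the proof of Proposition~\ref{prop:nonconcentration}, and the argument transfers essentially verbatim once one drops the $\delta$-dependence and replaces the exponent $1/3$ by $1/w$ and the Jacobi field $r^{-2}$ by $r^\mu$ throughout. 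One small additional point to check is that Proposition~\ref{prop:barrier2} already incorporates the case $f(y)\to\infty$ at the boundary faces (the poles in $f$), so the ``$|f(0)|\le C_0R$'' regime that complicated Proposition~\ref{prop:barrier1} genuinely does not arise; and Lemma~\ref{lem:smalldgraph}-type reasoning (using Allard and the density bound in the class $\mathcal{M}$) supplies the graphical representation of $M$ needed to start and to conclude the sweep.
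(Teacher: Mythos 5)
Your proposal is correct and follows essentially the same barrier-sweep argument as the paper's proof, with the same reduction to a single regime, the same choice of $f$ consisting of pole terms at the faces $|y|=\tfrac12+\gamma$ and $|y|=1-\gamma$ (so that $r_0,\epsilon_0$ from Proposition~\ref{prop:barrier2} depend only on $\gamma$), and the same sweep that traps $M$ between $\pm(C\epsilon_0)^{1/w}H$ on the middle annulus. One cosmetic inconsistency that does not affect validity: since $f>0$ the barriers $X_\epsilon$ stay at a strictly positive distance from the singular ray $\{r=0\}$, so the tangent-cone-at-the-singular-set argument ("graph of $-\epsilon r$ over $C\times\mathbf{R}^k$") you invoke to rule out contact there simply does not arise in this case — consistent with your own subsequent remark that the $|f(0)|\le C_0 R$ regime of Proposition~\ref{prop:barrier1} does not occur.
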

\begin{proof}
  The proof is essentially the same as the first part of the proof of Proposition~\ref{prop:nonconcentration}. Fix $\gamma > 0$. Let $\Omega_\gamma\subset\mathbf{R}^k$ denote the annulus $\frac{1}{2} +\gamma < |y| < 1-\gamma$. Define the function
  \[ f(y) = (|y| - 1/2)^{-1} + (1-|y|)^{-1} \]
  for $y\in \Omega_\gamma$. We then have $C^2$ bounds for $f$ on $\Omega_\gamma$, depending on $\gamma$. By Proposition~\ref{prop:barrier2} we have the surfaces $X_\epsilon$ defined in the region $\{r < r_0\} \times \Omega_\gamma$, for sufficiently small $r_0$. By a result similar to Proposition~\ref{prop:Xc}, given a small $c > 0$ and replacing $r_0$ by a smaller constant, we can assume that $X_\epsilon$ lies between the surfaces $(\epsilon f(y)^w \pm c\epsilon)^{1/w}H$. We can choose $c$ small enough (independent of $\gamma$) so that $\frac{1}{2} f(y)^w  + c < f(y)^w  < 2f(y)^w - c$, and $X_\epsilon$ lies between $(\frac{1}{2}f(y)^w \epsilon)^{1/w} H$ and $(2f(y)^w\epsilon)^{1/w} H$. In particular if we denote by $U_\gamma$ the region $\{ r < r_0\} \times \Omega_\gamma$, then we have (using $w > 1$):
  \begin{itemize}
  \item[(i)] On the boundary pieces $\{r < r_0\} \times \partial \Omega_{\gamma}$ the surface $X_\epsilon$ lies on the positive side of  $(\frac{1}{2}\gamma^{-1} \epsilon)^{1/w}H$.
  \item[(ii)] On all of $U_\gamma$ the surface $X_\epsilon$ lies on the positive side of $(C^{-1} \epsilon)^{1/w}H$ for $C$ depending only on $f$.
  \item[(iii)] On the smaller region $\{ r < r_0\} \times \{3/5 < |y| < 4/5\}$ the surface $X_\epsilon$ lies on the negative side of $(C \epsilon)^{1/w}H$ for $C$ depending on $f$.  
 \end{itemize}

  Let us define $D(r_0) = \sup_{\{r > r_0\}} |r^{-\mu}u|$, and 
  \[ \epsilon_0 &= C' D(r_0) + C' \gamma d, \\
    \epsilon_1 &= C' d. \]
  For a suitable $C'$. If $C'\geq 2$ then we have the following. Since $(\frac{1}{2}\gamma^{-1}\epsilon_0)^{1/w} \geq d^{1/w}$, on $\{r < r_0\} \times \partial\Omega_\gamma$ the surface $M$ lies on the negative side of $X_{\epsilon_0}$. For fixed $r_0$, if $d$ is sufficiently small, and if $M$ is the graph of $u$ on the region $\{r > r_0\}$, then $|u| \leq D(r_0) r^\mu$. In particular along $\{r=r_0\}$ the surface $M$ is on the negative side of $(CD(r_0))^{1/w}H$ for a fixed $C>0$ once $D(r_0)$ is sufficiently small. It follows that on $\partial\{ r < r_0\} \times \Omega_\gamma$ the surface $M$ lies on the negative side of $X_{\epsilon_0}$ if $C'$ is sufficiently large. At the same time, $M$ lies on the negative side of $X_{\epsilon_1}$ on all of $\Omega_\gamma$. Considering the family of surfaces $X_{\epsilon}$ ranging between $\epsilon_0$ and $\epsilon_1$ we find that $M$ must lie on the negative side of $X_{\epsilon_0}$ as well.

  By property (iii) above, on the region $\{r < r_0\} \times \{3/5 < |y| < 4/5\}$ the surface $X_{\epsilon_0}$ lies on the negative side of $(C\epsilon_0)^{1/w}H$, and therefore so does $M$. Reversing the orientations we find that on the same region $M$ lies on the positive side of $-(C\epsilon_0)^{1/w}H$. If $d$ is small enough, then $M$ lies between $\pm (C\epsilon_0)^{1/w}H$ on the region $\{r > r_0\}$ as well, and so $D_V(M; B_{4/5}\setminus B_{3/5}) \leq C\epsilon_0$, which is the required bound \eqref{eq:nc2}. 
\end{proof}

The three annulus Lemma~\ref{lem:L23annulus} holds for the cone $C\times\mathbf{R}^k$, and we obtain corresponding $\alpha_1 < \alpha_2$ and $\rho_0$. The $L^2$ to $L^\infty$ estimate (Lemma~\ref{lem:L2Linfty}) also holds, in the form
\[ \sup_{B_{1/2}(0)} |r^{-\mu} u| \leq C \Vert u\Vert_{L^2(B_1)}, \]
for any Jacobi field $u$ on $C\times\mathbf{R}^k$ such that $r^{-\mu}u \in L^\infty(B_1)$.  Proposition~\ref{prop:nonconc2} together with these two results implies the following, with essentially the same proof as Proposition~\ref{prop:D3annulus}. As before, we use the notation $D_V(M) = D_V(M; B_1\setminus B_{\rho_0})$. 
\begin{prop}\label{prop:D3ann2}
  There is an $L > 0$ such that for sufficiently small $d > 0$ we have the following. Suppose that $M\in\mathcal{M}$ is a minimal hypersurface in $B_1$. Suppose that $D_V(M) < d$ and $\alpha\in (\alpha_1, \alpha_2)$. Then
  \begin{itemize}
  \item[(i)] If $D_V(LM) \geq L^\alpha D_V(M)$, then $D_V(L^2M) \geq L^\alpha D_V(LM)$.
  \item[(ii)] If $D_V(LM)\geq L^\alpha D_V(L^2M)$, then $D_V(M) \geq L^\alpha D_V(LM)$.
  \end{itemize}
  \end{prop}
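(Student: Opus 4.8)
The plan is to follow the proof of Proposition~\ref{prop:D3annulus} essentially verbatim, with the simplification that the reference cone $V = C\times\mathbf{R}^k$ is fixed and minimal, so there is no need to track a best-fit surface among a family $RT_\delta$; the relevant three-annulus behavior is forced by the fact that the linear model is a genuine Jacobi field on $V$ with no degree one component. I would prove part (i) and remark that (ii) is completely analogous (running time backwards).

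First I would argue by contradiction: assume there are $d_i \to 0$ and minimal hypersurfaces $M_i \in \mathcal{M}$ in $B_1$ with $D_V(M_i) < d_i$, satisfying $D_V(LM_i) \geq L^\alpha D_V(M_i)$ but $D_V(L^2M_i) < L^\alpha D_V(LM_i)$, and set $d_i = D_V(LM_i)$ (note $d_i \to 0$ since $M_i \to C\times\mathbf{R}^k$ on the relevant annulus). The first step, exactly as in Step 1 of the proof of Proposition~\ref{prop:D3annulus}, is to show there is a constant $D$ (depending on $L$) with $D_{\lambda V}(\lambda M_i) < D d_i$ for all $\lambda \in [1, L^2]$, i.e.\ $M_i$ stays within distance $D d_i$ of $V$ on the slightly larger annulus $B_1\setminus B_{L^{-2}\rho_0}$; this uses Lemma~\ref{lem:smalldgraph} (or rather the analogous statement for $V$, which follows from Allard regularity since $V$ is fixed) to write $M_i$ as the graph of $u_i$ over $V$ where $r > r_i$, with $|r^{-\mu}u_i| < C D_i$, extracts a limit Jacobi field on $C\times\mathbf{R}^k$, shows by the improved estimates coming from \eqref{eq:a1}-type inequalities that this limit vanishes on two annuli hence is identically zero, and then derives a contradiction with the maximality of $D_i$ using the non-concentration result Proposition~\ref{prop:nonconc2} with $\gamma$ small.

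The second step is again parallel to Step 2 of the proof of Proposition~\ref{prop:D3annulus}: write $M_i$ as the graph of $u_i$ over $V$ on $\{r > r_i\}$, with $|r^{-\mu}u_i| < D d_i$; rescale by $d_i^{-1}$ to get a nonzero limiting Jacobi field $U$ on $C\times\mathbf{R}^k$ with $|r^{-\mu}U| \leq D$. The hypotheses \eqref{eq:a1} translate, with $L = \rho_0^{-k}$, into $\Vert U\Vert_0 \leq C'\rho_0^{k\alpha}$ and $\Vert U\Vert_{2k} \leq C'\rho_0^{-k\alpha}$ in the notation of the $L^2$ three annulus lemma (Lemma~\ref{lem:L23annulus}, which holds for $C\times\mathbf{R}^k$). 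The crucial input is Condition~(\ddag) and the Proposition immediately preceding this one: the only degree one Jacobi fields on $V$ with $r^{-\mu}u$ locally bounded are those coming from rotations in $\mathbf{R}^{n+k}$, \emph{and} — here is the key difference from the Simons case — these rotational Jacobi fields on $V = C\times\mathbf{R}^k$ are generated by the ambient symmetry group and so they are tangent to the family of cones $\{R V : R \in SO(n+k)\}$, which are all minimal; hence after subtracting an infinitesimal rotation (which changes the reference cone only by a rotation, keeping it minimal and leaving the distance estimates essentially unchanged), $U$ has no degree one component. Then by the ``no degree one component'' clause of Lemma~\ref{lem:L23annulus}, for $k$ large the norm $\Vert U\Vert_{k+\xi}$ for $\xi = -1,0,1$ is as small as we like, the $L^2$ to $L^\infty$ estimate gives $\rho_0^{-k}|r^{-\mu}U| \leq C\epsilon$ on a central annulus, and Proposition~\ref{prop:nonconc2} yields $d_i \leq C d_i \epsilon + \gamma D d_i$; choosing $\epsilon$ then $\gamma$ small gives $d_i < \tfrac12 d_i$, a contradiction.

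The main obstacle — really the only genuinely new point over the $C(S^3\times S^3)$ argument — is the bookkeeping in handling the rotational part of $U$: one must observe that, because all degree one Jacobi fields here come from rotations of $\mathbf{R}^{n+k}$ (by the preceding Proposition), replacing $V$ by a nearby rotated cone $RV$ absorbs the degree one component without leaving the class of minimal reference cones, so the non-concentration estimate Proposition~\ref{prop:nonconc2} still applies with the same constants. This is where the integrability of $C\times\mathbf{R}^k$ is used, and it is precisely what lets us dispense with the surfaces $T_\delta$ and the more elaborate Lemma~\ref{lem:triangle} machinery of Section~\ref{sec:mainargument}. Once this is set up, everything else is a direct transcription of the proof of Proposition~\ref{prop:D3annulus}, and I would write it that way, pointing to that proof for the details.
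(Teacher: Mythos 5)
Your overall skeleton matches the paper's: argue by contradiction, use non-concentration (Proposition~\ref{prop:nonconc2}) to get a uniform bound $D_V(\lambda M_i)<Dd_i$ on the full annulus, pass to a rescaled limit Jacobi field $U$ on $C\times\mathbf{R}^k$, feed the hypotheses into the $L^2$ three-annulus lemma and the $L^2$-to-$L^\infty$ estimate, and then close with one more application of non-concentration. That is indeed ``essentially the same proof as Proposition~\ref{prop:D3annulus},'' which is all the paper says here.

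However, you insert a step --- subtracting the degree-one rotational component of $U$ by replacing $V$ with a rotated cone $RV$, and then invoking the ``no degree one component'' clause of Lemma~\ref{lem:L23annulus} --- that is not part of the proof of Proposition~\ref{prop:D3annulus}, and is not needed here either. In the paper's Step~2 the limit $U$ is fed directly into parts (i) and (ii) of Lemma~\ref{lem:L23annulus} \emph{without} removing anything: the bounds $\Vert U\Vert_0\leq C'\rho_0^{k\alpha}$ and $\Vert U\Vert_{2k}\leq C'\rho_0^{-k\alpha}$ coming from the two hypotheses, together with the one-sided propagation in (i)/(ii) and the fact that $\alpha\in(\alpha_1,\alpha_2)$, already force $\Vert U\Vert_{k+\xi}$ to be as small as one likes for $k$ large, whether or not $U$ has a degree-one part (if the degree-one part dominates, all ratios are close to $1<\rho_0^{-\alpha_1}$, so $\Vert U\Vert_{k+\xi}\lesssim\Vert U\Vert_0\to 0$). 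The ``no degree one component'' clause, and the subtraction of rotational parts by drifting the best-fit cone, are used in Proposition~\ref{prop:EBdecay} (and its analogue Proposition~\ref{prop:decay10}), not in the three-annulus lemma; you appear to have imported that step from the wrong place. So your added detour is not wrong, but it is unnecessary, and the claim that it is ``the main obstacle'' and ``the only genuinely new point'' mischaracterizes the proof: there is nothing new beyond replacing $T_\delta$ by the fixed cone $V$. If you did insist on replacing $V$ by $RV$, you would also owe a comparison between $D_V$ and $D_{RV}$ to restate the hypotheses/conclusions, which is a distraction the paper's argument avoids.
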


  To prove the uniqueness, we let $\mathcal{V}$ denote the cone $V$ and all of its rotations. In analogy with Proposition~\ref{lem:nearbycones}, all minimal cones sufficiently close to $V$ must be in the family $\mathcal{V}$. For $B > 0$ we define
  \[ E_B(M) = \inf\{ D_V(M) + D_V(L^BM) \,:\, V\in\mathcal{V}\}, \]
as in \eqref{eq:EBdefn}. 
  
  The uniqueness of the tangent cone is implied by the following decay estimate, whose proof is essentially identical to that of Proposition~\ref{prop:EBdecay}, except case (ii) of that proposition does not appear here. 
  \begin{prop}\label{prop:decay10}
    There is a $B > 0$ with the following property. Suppose that $M\in\mathcal{M}$ is a minimal surface in $B_1$ with density equal to that of the cone $C\times \mathbf{R}^k$ at the origin. If the Hausdorff distance from $M$ to $C\times\mathbf{R}^k$ on $B_1$ is sufficiently small, then we have
    \[ E_B(L^BM) \leq \frac{1}{2} E_B(M). \]
  \end{prop}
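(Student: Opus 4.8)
The proof runs exactly parallel to the proof of Proposition~\ref{prop:EBdecay}, but with only the single mechanism corresponding to case (i) present, since in the integrable setting there are no surfaces $T_\delta$ and no area-drop alternative to account for. First I would argue by contradiction: suppose there is a sequence $M_i\in\mathcal{M}$ of minimal surfaces in $B_1$, each with the same density as $V=C\times\mathbf{R}^k$ at the origin, with $M_i\to V$ on $B_1$ in the Hausdorff sense, but $E_B(L^BM_i) > \tfrac12 E_B(M_i)$ for all $i$, for every $B$. I will show this fails once $B$ is chosen large enough.

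\textbf{Key steps.} Fix $\alpha_1',\alpha_2'$ with $\alpha_1 < \alpha_1' < \alpha_2' < \alpha_2$. The core observation — the analogue of Step~1 in Proposition~\ref{prop:EBdecay} — is that the family $\mathcal{V}$ of rotations of $V$ accounts for \emph{all} degree one Jacobi fields on $C\times\mathbf{R}^k$, by the Proposition preceding Proposition~\ref{prop:barrier2}. So, choosing rotations $R_i$ with $D_{R_iV}(L^BM_i)$ within a factor $2$ of the infimum over $\mathcal{V}$, and then applying rotations to the $M_i$ so that $R_i=\mathrm{Id}$, I claim that along a subsequence either
\[
D_V(L^{2B}M_i) \geq L^{\alpha_1' B} D_V(L^B M_i)
\qquad\text{or}\qquad
D_V(M_i) \geq L^{\alpha_2' B} D_V(L^B M_i).
\]
If both fail, then writing $d_i = D_V(L^BM_i)\to 0$, on $B_1\setminus B_{L^{-2B}\rho_0}$ (away from the singular set) $M_i$ is a graph of $u_i$ over $V$ with $|r^{-\mu}u_i| < D d_i$, and $d_i^{-1}u_i$ converges to a Jacobi field $U$ on $V$ with $|r^{-\mu}U|\leq D$, satisfying the bounds $\Vert U\Vert_0 \leq C' L^{\alpha_2'B}$ and $\Vert U\Vert_{2Bk}\leq C'L^{\alpha_1'B}$ in the notation of Lemma~\ref{lem:L23annulus} (with $L=\rho_0^{-k}$). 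Subtracting the degree one part $U_1$ — which by the cited Proposition corresponds to a rotation, hence can be absorbed into a new choice of reference surface $R_i'V$ via Lemma~\ref{lem:triangle}'s analogue, at the cost of enlarging constants independently of $B$ — we obtain $U' = U - U_1$ with no degree one component, satisfying the same bounds, so by Lemma~\ref{lem:L23annulus} $\Vert U'\Vert \leq C' L^{-B(\alpha_2-\alpha_2')}$ and hence by the $L^\infty$ estimate $L^B\sup_{B_{L^{-B+1}}\setminus B_{L^{-B-1}\rho_0}}|r^{-\mu}U'| \leq C'L^{(\alpha_2'-\alpha_2)B}$. Rescaling to $B_L\setminus B_{L^{-1}\rho_0}$ and applying the non-concentration estimate Proposition~\ref{prop:nonconc2} (with $\gamma$ chosen small depending on $B$) yields $D_V(L^BM_i)$ strictly smaller than a $B$-independent constant times $L^{(\alpha_2'-\alpha_2)B}d_i$, contradicting the near-minimality of the choice $R_iV$ once $B$ is large. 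This settles the dichotomy.

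\textbf{Finishing and main obstacle.} In the first alternative, I iterate Proposition~\ref{prop:D3ann2}(i): $D_V(L^{kB}M_i) \geq L^{\alpha_1'B} D_V(L^{(k-1)B}M_i)$ for all $k\geq 2$, as long as the distances stay small enough for the three annulus lemma to apply. Using the extension Lemma~\ref{lem:extension} (which has an obvious analogue for $V$ with $\delta$ absent), either the distances stay small for all $k$, in which case geometric growth forces them to blow up, contradicting that $M_i\to V$; or there is a first $k_i$ where $D_V(L^{k_iB}M_i)$ reaches a fixed threshold $d_0(B)$, and then by area monotonicity a subsequence of $L^{(k_i\pm 1)B}M_i$ converges to a minimal cone which, by the analogue of Proposition~\ref{lem:nearbycones}, must be a rotation of $V$ close to $V$; combining the three annulus growth at step $k_i$ with the triangle inequality then gives $C(1+d_0(B)) \geq L^{\alpha_1'B}d_0(B)$, a contradiction for large $B$. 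In the second alternative one argues exactly as in case (b2) of Proposition~\ref{prop:EBdecay}: if $E_B(M_i) < 2E_B(L^BM_i)$, comparing the two neighborhoods containing $L^BM_i$ on $\{r>1/10\}$ pins down the reference cone up to $O(d_i B L^{\alpha_1'B})$, and then pushing this back to scale $B_1\setminus B_{\rho_0}$ via the triangle inequality forces $D_V(M_i) \leq Cd_i B L^{\alpha_1'B}$, contradicting $D_V(M_i)\geq L^{\alpha_2'B}d_i$ for large $B$. The main obstacle is simply bookkeeping the $B$-dependence of constants so that every contradiction is genuinely produced by taking $B$ large — there is no essential new difficulty beyond Proposition~\ref{prop:EBdecay}, and in fact, because case (ii) (the area drop) never arises in the integrable setting, the argument here is strictly shorter.
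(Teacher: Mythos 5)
Your proposal is correct and takes essentially the same route as the paper, which disposes of Proposition~\ref{prop:decay10} by noting that the proof of Proposition~\ref{prop:EBdecay} applies verbatim with $\delta=0$, so that the area-drop sub-cases (a1) and (b1) never arise (the comparison surfaces $RV$ are all cones). One small imprecision in your sketch: in the first alternative, the phrase ``the distances stay small for all $k$, in which case geometric growth forces them to blow up, contradicting that $M_i\to V$'' is garbled — geometric growth simply shows that (assuming $d_i>0$) the distances cannot remain below the threshold $d_0(B)$ for all $k$, so a first $k_i$ always exists and the (a2)-type contradiction applies; no separate appeal to $M_i\to V$ is needed at that point.
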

  \begin{proof}
    We can follow the proof of Proposition~\ref{prop:EBdecay} closely, but the situation now is much simpler. Cases (a1) and (b1) that appeared in the earlier proof can not happen here, since our comparison surfaces $RV \in \mathcal{V}$ are all cones (we can think of setting $\delta=0$ in the earlier proof). In particular option (ii) in the statement of Proposition~\ref{prop:EBdecay} does not happen. 
  \end{proof}

  The uniqueness of the tangent cone follows as before, although in this case polynomial convergence to the tangent cone follows.
  \begin{proof}[Proof of Theorem~\ref{thm:CRk}]
    We can use the same argument as in the proof of Theorem~\ref{thm:main2} to prove that $L^{kB}M \to C\times \mathbf{R}^k$ on the unit ball. In addition, since case (ii) of Proposition~\ref{prop:EBdecay} does not appear in Proposition~\ref{prop:decay10}, the inequality analogous to \eqref{eq:dFM1} implies (replacing $M$ by $L^{kB}M$ and letting $N\to\infty$)
    \[ d_{\mathcal{F}}(L^{kB}M, C\times\mathbf{R}^k) \leq C E_B(L^{kB}M) < C2^{-k} E_B(M), \]
    for all $k > 0$ if $M$ is already sufficiently close to the cone. This is enough to conclude the polynomial convergence. 
  \end{proof}

  \begin{remark}
    It is natural to ask whether Theorem~\ref{thm:main} can be extended to the case $C_S\times\mathbf{R}^k$ for $k > 1$, where $C_S=C(S^3\times S^3)$. In this case there are many more degree one Jacobi fields, namely for any homogeneous degree 3 polynomial $P(y)$ we have the Jacobi field $\phi_P = P(y) r^{-2} - \frac{1}{6}\Delta P(y)$. The method of proof of Theorem~\ref{thm:main} would require us to construct corresponding perturbations of the cone $C_S\times\mathbf{R}^k$, and the fact that the link of $C_S\times\mathbf{R}^k$ has non-isolated singular set for $k > 1$ introduces substantial new difficulties. 
  \end{remark}


\begin{thebibliography}{1}    
\bibitem{AS88}
  Adams, D. and Simon, L. \emph{Rates of asymptotic convergence near isolated singularities of geometric extrema}, Indiana Univ. Math. J., 37 (1988), no. 2, 225--254.
\bibitem{All72}
  Allard, W. \emph{On the first variation of a varifold}, Ann. of Math (2) 95 (1972), 417--491.
\bibitem{AA81}
  Allard, W. and Almgren, F. \emph{On the radial behavior of minimal surfaces and the uniqueness of their tangent cones}, Ann. of Math. 95 (1972), 417--491.
\bibitem{BDG}
  Bombieri, E., De Giorgi, E. and Giusti, E. \emph{Minimal cones and the Bernstein problem}, Invent. Math. 7 (1969), 243--268.
\bibitem{BK17}
  Becker-Kahn, S. \emph{Transverse singularities of minimal two-valued graphs in arbitrary codimension}, J. Differential Geom. 107 (2017), no. 2, 241--325.
\bibitem{CT94}
  Cheeger, J. and Tian, G. \emph{On the cone structure at infinity of Ricci flat manifolds with Euclidean volume growth and quadratic curvature decay}, Invent. Math. 118 (1994), no. 3, 493--571.
\bibitem{CM}
  Colding, T. H. and Minicozzi, W. P. \emph{On uniqueness of tangent cones for Einstein manifolds}, Invent. Math. 196 (2014), no. 3, 515--588.
\bibitem{CES}
  Colombo, M. and Edelen, N. and Spolaor, L. \emph{The singular set of minimal surfaces near a polyhedral cone}, to appear in J. Differential Geom.
\bibitem{Dav05}
  Davini, A. \emph{On calibrations for Lawson's cones}, Rend. Sem. Mat. Univ. Padova 111 (2004), 55-70.
\bibitem{DSS}
  De Lellis, C. and Spadaro, E. and Spolaor, L. \emph{Uniqueness of tangent cones for two-dimensional almost-minimizing currents}, Comm. Pure Appl. Math. 70 (2017), no. 7, 1402--1421.
\bibitem{ES20}
  Edelen, N. and Spolaor, L. \emph{Regularity of minimal surfaces near quadratic cones}, preprint 2020.
\bibitem{ESV}
  Engelstein, M. and Spolaor, L. and Velichkov, B. \emph{(Log-)epiperimetric inequality and regularity over smooth cones for almost area-minimizing currents}. Geom. Topol. 23 (2019), no. 1, 513--540.
\bibitem{HS85}
  Hardt, R. and Simon, L. \emph{Area minimizing hypersurfaces with isolated singularities}, J. Reine Angew. Math. 362 (1985), 102--129.
\bibitem{Horn96}
Horn, J. \emph{Ueber die Reihenentwickelung der Integrale eines Systems von Differentialgleichungen in der Umgebung gewisser singulärer Stellen},
J. Reine Angew. Math. 116 (1896), 265--306.
\bibitem{LM85}
  Lockhart, R. B. and McOwen, R. C. \emph{Elliptic differential operators on noncompact manifolds}, Ann. Sc. Norm. Super. Pisa Cl. Sci. (4) 1 (1985), no. 3, 409--447.
\bibitem{Marsh02}
  Marshall, S. P. \emph{Deformations of special Lagrangian submanifolds}, PhD Thesis, Wadham College, Oxford, 2002
\bibitem{NV2}
  Naber, A. and Valtorta, D. \emph{Rectifiable-Reifenberg and the regularity of stationary and minimizing harmonic maps}, Ann. of Math. (2) 185 (2017), no. 1, 131--227.
\bibitem{NV1}
  Naber, A. and Valtorta, D. \emph{The singular structure and regularity of stationary varifolds}, J. Eur. Math. Soc. (JEMS) 22 (2020), no. 10, 3305--3382.
\bibitem{Simon83}
  Simon, L. \emph{Asymptotics for a Class of Non-Linear Evolution Equations, with Applications to Geometric Problems}, Ann. of Math. (2) 118 (1983), no. 3, 525--571.
\bibitem{SimonGMT}
  Simon, L.  \emph{Lectures on geometric measure theory}, Proceedings of the Centre for Mathematical Analysis, Australian National University, 3., 1983. 
\bibitem{Simon93}
  Simon, L. \emph{Cylindrical tangent cones and the singular set of minimal submanifolds}, J. Differential Geom. 38 (1993), no. 3, 585--652.
\bibitem{Simon94}
  Simon, L. \emph{Uniqueness of some cylindrical tangent cones}, Comm. Anal. Geom. 2 (1994), no. 1, 1--33.
\bibitem{Simon93_1}
  Simon, L. \emph{Rectifiability of the singular sets of multiplicity 1 minimal surfaces and energy minimizing maps}, Surveys in differential geometry, Vol. II, Int. Press. Cambridge, MA, 1995, 246--305.
\bibitem{Simon95}
  Simon, L. \emph{Rectifiability of the singular set of energy minimizing maps}, Calc. Var. Partial Differential Equations 3 (1995), no. 1, 1--65.
\bibitem{SS86}
  Simon, L. and Solomon, B. \emph{Minimal hypersurfaces asymptotic to quadratic cones in $\mathbf{R}^{n+1}$}, Invent. Math. 86 (1986), no. 3, 535--551.
\bibitem{Simons}
  Simons, J. \emph{Minimal varieties in Riemannian manifolds}, Ann. of Math. (2) 88 (1968), 62--105.
\bibitem{Sz17}
  Sz\'ekelyhidi, G. \emph{Degenerations of $\mathbf{C}^n$ and Calabi-Yau metrics}, Duke Math. J. 168 (2019), no. 14, 2651--2700.
\bibitem{Tay76}
  Taylor, J. E. \emph{The structure of singularities in soap-bubble-like and soap-film-like minimal surfaces}, Ann. of Math. (2) 103 (1976), no. 3, 489--539.
\bibitem{White92}
  White, B. \emph{Nonunique tangent maps at isolated singularities of harmonic maps}, Bull. Amer. Math. Soc. (N.S.) 26 (1992), no. 1, 125--129.
\bibitem{White83}
  White, B. \emph{Tangent cones to two-dimensional area minimizing currents are unique}, Duke Math. J. 50 (1983), no. 1, 143--160.
\end{thebibliography}
\end{document}